\newcounter{dummy}
\newcommand\myitem[1][]{\item[#1]\refstepcounter{dummy}\def\@currentlabel{#1}}
\tikzstyle{arrow}=[draw]
\tikzset{
  cross/.pic = {
    \draw[rotate = 45] (-#1,0) -- (#1,0);
    \draw[rotate = 45] (0,-#1) -- (0, #1);
  }
}
\providecommand{\Rd}{\setR^d}
\providecommand{\front}{\texttt{front}}
\providecommand{\back}{\texttt{back}}
\providecommand{\gen}{{\tt gen}}
\providecommand{\type}{{\tt type}}
\providecommand{\level}{{\tt lvl}}
\providecommand{\gensharp}{{\gen^\sharp}}
\providecommand{\levelsharp}{{\level^\sharp}}
\providecommand{\typesharp}{{\type^\sharp}}
\providecommand{\simplex}[1]{\llbracket\,{#1}\,\rrbracket}
\providecommand{\leveljump}{\mid}
\providecommand{\head}{{\tt head}}
\providecommand{\tail}{{\tt tail}}
\providecommand{\tria}{\mathcal{T}}
\providecommand{\forest}{\mathcal{F}}
\providecommand{\Bisec}{{\tt Bisec}}
\providecommand{\BisecT}{\Bisec(\tria_0)}
\providecommand{\mtree}{\mathbb{T}}
\providecommand{\vertices}{\mathcal{V}}
\providecommand{\edges}{\mathcal{E}}
\providecommand{\children}{{\tt children}}
\providecommand{\parent}{{\tt parent}}
\providecommand{\hprod}{\odot}
\providecommand{\bse}{\texttt{bse}}
\providecommand{\bsv}{\textup{mid}}
\providecommand{\genconst}{\Gamma}
\providecommand{\patchbd}{\partial \omega_m(v)}
\providecommand{\doublebar}[1]{\Bar{\Bar{#1}}}
\renewcommand{\jump}{\mathcal{J}}
\begin{document}

\author[L.\ Diening]{Lars Diening}
\author[J.\ Storn]{Johannes Storn}
\author[T.\ Tscherpel]{Tabea Tscherpel}

\address[L.\ Diening, J.\ Storn]{Department of Mathematics, Bielefeld University, Postfach 10 01 31, 33501 Bielefeld, Germany}
\email{lars.diening@uni-bielefeld.de}
\email{jstorn@math.uni-bielefeld.de}
\address[T.\ Tscherpel]{Department of Mathematics, Technische Universit\"at Darmstadt, Dolivostraße 15, 64293 Darmstadt, Germany}
\email{tscherpel@mathematik.tu-darmstadt.de}
\thanks{The work of the authors was supported by the Deutsche Forschungsgemeinschaft (DFG, German
Research Foundation) – SFB 1283/2 2021 – 317210226.}

\subjclass[2020]{
 65M15, 
65N50, 
 65N12,  
 65N15
}
\keywords{mesh grading, newest vertex bisection, $L^2$-projection, adaptivity}

\title{Grading of Triangulations Generated by Bisection}

\begin{abstract}  
   For triangulations generated by the adaptive bisection algorithm by Maubach and Traxler we prove existence of a regularized mesh function with grading two. 
   This sharpens previous results in two dimensions for the newest vertex bisection and generalizes them to arbitrary dimensions. 
   In combination with \cite{DieningStornTscherpel21} this yields $H^1$-stability of the $L^2$-projection onto Lagrange finite element spaces for all polynomial degrees and dimensions smaller than seven. 
\end{abstract}
\maketitle
%
\section{Introduction}
Adaptive mesh refinement is of uttermost importance for efficient approximation of partial differential equations that exhibit singular solutions or to resolve geometric features of solutions. 
For elliptic problems such schemes are well understood and even allow for optimal rates of convergence~\cite{CarstensenFeischlPagePraetorius14,DieningKreuzerStevenson16}.
Much less is known for parabolic problems. 
A major difficulty is the need for Sobolev stability of the $L^2$-projection onto finite element spaces. 
Such stability is crucial to avoid an unnatural coupling of spacial mesh size and time step size for time-stepping schemes, see for example \cite{BreitDieningStornWichmann21}. 
Indeed, for time-discrete schemes quasi-optimality is equivalent to Sobolev stability of the~$L^2$-projection, cf.~\cite{TV.2016}. 
This observation extends to simultaneous space-time methods as in~\cite{StevensonWesterdiep20}. 

The need for stability of the $L^2$-projection has inspired strong scientific interest starting with contributions by Crouzeix, Thom\'ee~\cite{CrouzeixThomee87} and by Bramble, Pasciak, Steinbach~\cite{BramblePasciakSteinbach02}. 
A key observation is the relation between stability and mesh grading. 
The latter quantifies the local change of the mesh sizes in the underlying triangulation. 
In~\cite{Carstensen02} Carstensen generalizes the concept of mesh grading to prove an upper bound on the grading of meshes generated by adaptive refinement schemes applied to a fixed initial triangulation~$\tria_0$. 
Refinement strategies with known upper bounds on the grading include the red-green-blue refinement~\cite{C.2004}, the red-green refinement, and the newest vertex bisection~\cite{GHS.2016}, but all results to date are restricted to triangulations of two dimensional domains. 

This work presents the first result for higher dimensions~$d\geq 2$. 
More precisely, we investigate the grading of meshes generated by the (equivalent) algorithms due to Maubach~\cite{Maubach95} and Traxler~\cite{Traxler97} (see Algorithms~\ref{algo:Maubach} and~\ref{algo:Traxler}), which we refer to as~\Bisec{}. 
Under suitable assumptions on the initial triangulation these schemes generalize the newest vertex bisection by Mitchell~\cite{Mitchell91} for two dimensional domains and the refinement strategy by Kossaczk\'y~\cite{Kossaczky94} for three dimensional domains to any dimension.  
Due to its beneficial properties such as the preservation of shape regularity and the closure estimate~\cite[Eqn.~2.9]{CarstensenFeischlPagePraetorius14} proved in~\cite{BinevDahmenDeVore04, Stevenson08} 
this refinement strategy is of particular interest. 

Let $\BisecT$ denote the set of all (conforming) triangulations generated by successive application of the~$\Bisec{}$ routine to an initial triangulation~$\tria_0$ (see Section~\ref{sec:bisection-algorithm-1} below for a more precise definition).
The resulting triangulations are shape-regular, that is, the~$d$-dimensional measure~$\abs{T}$ of any simplex~$T\in \tria\in \BisecT$ is equivalent to its diameter~$\diameter(T)$ to the power $d$ in the sense that~$\abs{T} \eqsim \textup{diam}(T)^d$ with equivalence constants depending only on the shape regularity of~$\tria_0$ and on~$d$. 
\begin{definition}
	\label{def:grading}
For a triangulation~$\tria$ we consider a function~$h_{\tria}\colon \tria \to (0,\infty)$. 
	\begin{enumerate}
		\item \label{itm:meshfct}
		We call~$h_{\tria}$ a \emph{mesh size function} with equivalence constants~$c_1, c_2>0$ if 
		\begin{align*} 
			c_1 \diameter(T) &\leq h_\tria(T) \leq c_2 \diameter(T) \qquad\text{for all }T\in \tria. 
		\end{align*}
		\item \label{itm:grading}
		We say that~$h_{\tria}$ has \emph{grading}~$\gamma\geq 1$, if 
		\begin{align*}
			h_\tria(T) \leq \gamma \, h_\tria(T') \qquad \text{ for all } T,T' \in \tria \text{ with } T \cap T' \neq \emptyset. 
		\end{align*}
	\end{enumerate} 
	A family of triangulations has grading~$\gamma\geq 1$ with equivalence constants~$c_1, c_2 >0$ if each~$\tria$ in the family has a mesh size function~$h_\tria$ with grading~$\gamma$ and equivalence constants~$c_1$ and~$c_2$ independently of $\tria$. 
\end{definition}
Note that by definition quasi-uniform triangulations have grading~$\gamma = 1$. 
The main result in this work is that~$\gamma \leq 2$ for any mesh resulting from the adaptive refinement of a fixed initial triangulation $\tria_0$ for arbitrary dimension. This bound is optimal.  
See Section~\ref{sec:summary} for the summary of all arguments that constitute the proof.  
\begin{theorem}[Main result]
	\label{thm:main-grading2}
	Assume that~$\tria_0$ is an initial triangulation which is colored (see Assumption~\ref{ass:initial-triangulation}) or that satisfies the matching neighbor condition by Stevenson (see Section~\ref{sec:stevenson}). 
	All triangulations $\tria\in \BisecT$ have grading~$\gamma \leq 2$ with uniformly bounded equivalence constants~$c_1$ and~$c_2$ depending solely on~$d$ and on the maximal number of simplices in initial vertex patches in $\tria_0$. 
\end{theorem}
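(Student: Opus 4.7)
The plan is to produce a mesh size function of the form $h_\tria(T) \eqsim 2^{-\gensharp(T)/d}$, where $\gen(T) \in \mathbb{N}_0$ counts the number of Maubach/Traxler bisections needed to obtain $T$ from its ancestor $T_0 \in \tria_0$ and $\gensharp$ is a smoothed, real-valued variant of $\gen$ to be constructed. Since a bisection step halves volumes and simplices remain shape regular, one has $\textup{diam}(T) \eqsim 2^{-\gen(T)/d}$ with constants depending only on $d$ and $\tria_0$. Under this ansatz, $h_\tria$ is a mesh size function as soon as $|\gensharp - \gen|$ is uniformly bounded, and the grading condition $h_\tria(T) \leq 2\, h_\tria(T')$ for $T \cap T' \neq \emptyset$ reads precisely as $\gensharp(T') - \gensharp(T) \leq d$. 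Thus the theorem reduces to constructing $\gensharp$ which is close to $\gen$ pointwise and whose jumps between touching simplices are at most $d$.

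A natural candidate is the distance-relaxation
\begin{equation*}
  \gensharp(T) := \min_{T' \in \tria}\, \bigl(\gen(T') + d \cdot N_\tria(T,T')\bigr),
\end{equation*}
where $N_\tria(T,T')$ denotes the minimal length of a chain of pairwise intersecting simplices in $\tria$ from $T$ to $T'$. By construction, $\gensharp$ is $d$-Lipschitz with respect to this mesh adjacency distance, which gives the grading bound for free. The mesh-size equivalence of $h_\tria$ then amounts to a \emph{generation-drop estimate}: along every chain of pairwise touching simplices $T = T_0, T_1, \ldots, T_k = T'$ in $\tria$,
\begin{equation*}
  \gen(T_0) - \gen(T_k) \leq dk + C
\end{equation*}
for a uniform constant $C = C(d, \tria_0)$. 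This is the full analytic content of the theorem.

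I would prove the generation-drop estimate in two stages using the type/level bookkeeping of Maubach/Traxler bisection encoded by $\type$ and $\level$ (and likely the regularized labels $\typesharp$ and $\levelsharp$ suggested by the paper's notation). First, within a single initial simplex $T_0 \in \tria_0$, each descendant inherits a deterministic type and bisection edge, and one needs to analyse how these labels on two descendants must match across a shared face; this constrains how much their generations can differ and, once summed over chains, bounds the drop locally by $dk + C_{T_0}$. Second, to pass across $\tria_0$, the colored or Stevenson matching-neighbor condition ensures that bisection edges are compatible on faces shared between different initial simplices, so that the local estimates glue together. The dependence of the equivalence constants on the maximal initial vertex patch size enters exactly at this gluing step.

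The main obstacle is the intra-simplex combinatorial analysis in arbitrary dimension~$d$. In $d=2$, newest vertex bisection has such a restricted local structure that the generation drop across a shared edge can be handled by a direct case analysis, leading to the sharp $\gamma \leq 2$. In higher dimensions the Maubach/Traxler rule cycles through $d$ types and the bisection edge of a face depends on both type and level in an intricate way, so the interaction between two descendants meeting along a shared face is substantially more delicate. The plan is to exploit the tree structure of $\Bisec$ together with the type/level bookkeeping, reducing the generation drop across a shared face to a finite local configuration analysis on $\mtree$, and then propagating the bound inductively on the generation.
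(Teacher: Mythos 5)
Your reduction is essentially the paper's: your infimal-convolution definition of $\gensharp$ is, up to replacing $\gen/d$ by $\level=\lceil \gen/d\rceil$ and a constant shift, exactly the regularized mesh size function of Definition~\ref{def:regMeshSize-T}, the automatic $2$-Lipschitz bound is~\eqref{est:h-grading2}, and what you call the generation-drop estimate is the level estimate~\eqref{eq:mainAbstract-lvl} that Lemma~\ref{lem:lvlDiffToMain} reduces the theorem to. So far so good.

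The gap is in how you propose to prove that estimate. You want to obtain a per-pair (or per-initial-simplex) bound from a finite local configuration analysis on $\mtree$ and then sum along the chain. But a naive sum of local estimates destroys the form of the bound: if each touching pair satisfies only $\gen(T_j)-\gen(T_{j+1}) \le d + c$ for some fixed $c>0$, summing gives $(d+c)k$, which is \emph{not} $dk + C$ and yields no mesh-size equivalence. To recover $dk + C$, the additive slack must be incurred only a uniformly bounded number of times along the whole chain, no matter how long the chain is or how many times it revisits the same region. Your sketch does not explain how the slack telescopes. Moreover, the ``finite local configuration analysis'' step genuinely fails in $d\geq 3$: vertices lying on initial $n$-subsimplices with $n\le d-2$ (the critical macro vertices of Definition~\ref{def:MacroVertex}) can have arbitrarily high valence, so their local neighborhoods in $\mtree$ are not of bounded combinatorial complexity, and the local level jump across such a vertex need not be $\le 2$ (Proposition~\ref{prop:MaxLvlJump} gives $2+J_n$ with $J_0$ depending on the patch diameter). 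This is exactly why the $d=2$ case-analysis route of Gaspoz--Heine--Siebert does not extend.

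The paper resolves both issues simultaneously with a nonlocal argument: around each vertex one builds an auxiliary strongly graded triangulation with a layer structure (Section~\ref{sec:leavingTheNgh}) and shows that a chain which stays inside the neighborhood obeys the level estimate with \emph{zero} additive slack per step (Proposition~\ref{prop:stayInNgh}), while a chain that leaves the neighborhood must pass through enough layers that again no extra slack accrues (Proposition~\ref{prop:leavongOldNode}). The only places where an additive constant $J_n$ is paid are when the chain first enters the neighborhood of a critical macro vertex of each macro-dimension $n$; since the macro-dimension can only increase along the way, this happens at most $d+1$ times, giving the uniform constant $\Gamma$ in Proposition~\ref{pro:genest}. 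Without an argument of this kind---controlling when the slack is paid and proving it can only be paid boundedly often---your plan does not close.
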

Notice that the upper bound~$\gamma \leq 2$ is sharp, cf.~Remark~\ref{rem:sharpGradingEst}. 
In combination with the results in~\cite{DieningStornTscherpel21} the grading estimate in Theorem~\ref{thm:main-grading2} ensures that
for any dimension~$2 \leq d\leq 6$ and any~$\tria$ generated by $\Bisec{}$ the $L^2$-projection mapping to Lagrange finite element spaces~$\mathcal{L}^1_k(\tria)$ of polynomial degree $k\in \mathbb{N}$ is stable in~$H^1(\Omega)$. 
Previously, this was available only for~$d = 2$ and~$k \leq 12$ with stability constant depending additionally on the number~$\# \vertices(\tria_0)$ of initial vertices, cf.~\cite{BanYse14, GHS.2016}.  
More precisely, we have the following result.
\begin{theorem}[Stability of $L^2$-projection]
	\label{thm:L2-stab}
	Let~$\Omega \subset \Rd$ be a bounded, polyhedral domain with~$d \geq 2$.
	Suppose that $\tria_0$ is an initial triangulation of $\Omega$ satisfying the assumptions of Theorem~\ref{thm:main-grading2} and let~$\BisecT$ be the family of triangulations generated by~\Bisec{}.  
	For any triangulation~$\tria \in \BisecT$ and any polynomial degree~$k \in \mathbb{N}$ let~$\Pi_2\colon L^1(\Omega) \to \mathcal{L}^1_k(\tria)$ be the $L^2(\Omega)$-orthogonal projection. 
Let~$p\in [1,\infty]$ and let~$s \in \lbrace 0,1\rbrace$ be such that
\begin{align*}
2^{s+\left|\frac{1}{2}-\frac{1}{p} \right|} \leq \frac{\sqrt{2k+d} + \sqrt{k}}{\sqrt{2k+d} - \sqrt{k}}. 
\end{align*}
Then there exists a constant~$C < \infty$ depending only on~$d$, the shape regularity of~$\tria_0$, and the constants~$c_1,c_2$ in Theorem~\ref{thm:main-grading2} such that
\begin{align*}
\lVert \nabla^s \Pi_2 u \rVert_{L^p(\Omega)} \leq C \lVert \nabla^s u \rVert_{L^p(\Omega)}\qquad\text{for all }u\in W^{s,p}(\Omega).
\end{align*}
This result extends to zero boundary traces,  see~\cite{DieningStornTscherpel21} for details.  
\end{theorem}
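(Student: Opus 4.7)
The plan is to deduce Theorem~\ref{thm:L2-stab} as a direct corollary of Theorem~\ref{thm:main-grading2} combined with the abstract stability criterion for the $L^2$-projection established in~\cite{DieningStornTscherpel21}. The argument has essentially two steps: specialize the grading bound and substitute it into the abstract criterion; no genuinely new analysis is carried out here, which is the whole point of having separated the combinatorial/geometric question of mesh grading from the analytic question of $L^2$-stability.

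First, I would apply Theorem~\ref{thm:main-grading2} to the initial triangulation~$\tria_0$: every triangulation~$\tria \in \BisecT$ admits a mesh size function~$h_\tria$ with grading~$\gamma \leq 2$ and with equivalence constants~$c_1, c_2$ depending only on~$d$ and on the maximal number of simplices in an initial vertex patch of~$\tria_0$. In particular these constants are uniform over the whole family~$\BisecT$, so that $\BisecT$ has grading~$\gamma \leq 2$ in the sense of Definition~\ref{def:grading}.

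Second, I would invoke the main stability theorem of~\cite{DieningStornTscherpel21}, which asserts that whenever a triangulation~$\tria$ is shape-regular and admits a mesh size function with grading~$\gamma$ and equivalence constants~$c_1, c_2$, the $L^2$-projection~$\Pi_2$ onto~$\mathcal{L}^1_k(\tria)$ is bounded on~$W^{s,p}(\Omega)$ with a constant depending only on~$d$, the shape regularity of~$\tria_0$, and~$c_1, c_2$, provided that
\begin{equation*}
\gamma^{s+\lvert 1/2 - 1/p\rvert} \leq \frac{\sqrt{2k+d} + \sqrt{k}}{\sqrt{2k+d} - \sqrt{k}}.
\end{equation*}
Here the right-hand side arises from a sharp contraction estimate for the $L^2$-projection across neighboring simplices on the reference element, and the left-hand side encodes the deterioration due to (i) the change of the mesh size between neighbors (contributing one factor of~$\gamma^s$ for each derivative) and (ii) the scaling mismatch when changing the integrability exponent from~$p$ to~$2$. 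Substituting~$\gamma = 2$ from the first step reproduces verbatim the hypothesis on~$s$ and~$p$ stated in Theorem~\ref{thm:L2-stab}, and the claimed bound follows for every~$\tria \in \BisecT$ uniformly.

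The extension to finite element functions with vanishing traces on~$\partial \Omega$ is not reproved here: as noted in the statement, it is carried out in~\cite{DieningStornTscherpel21}, where grading is again the only geometric input. The main ``obstacle'' is therefore nothing analytic but purely a matter of bookkeeping, namely verifying that the normalization of the equivalence constants and of the grading factor in Theorem~\ref{thm:main-grading2} matches the normalization used in the hypotheses of~\cite{DieningStornTscherpel21}. Once this matching is checked, Theorem~\ref{thm:L2-stab} is immediate.
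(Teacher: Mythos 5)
Your proposal matches the paper's approach exactly: the paper provides no separate proof of Theorem~\ref{thm:L2-stab} but states it in the introduction as a direct consequence of the grading bound~$\gamma \leq 2$ from Theorem~\ref{thm:main-grading2} fed into the abstract $W^{s,p}$-stability criterion of~\cite{DieningStornTscherpel21}. Your two-step substitution --- instantiate~$\gamma = 2$ and observe that the hypothesis $2^{s+|1/2-1/p|} \leq (\sqrt{2k+d}+\sqrt{k})/(\sqrt{2k+d}-\sqrt{k})$ is precisely the condition required there --- is the intended argument.
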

In the special case~$d = 2$ Theorem~\ref{thm:main-grading2} is due to Gaspoz, Heine, and Siebert in~\cite{GHS.2016}.  
Their proof is based on a case by case investigation of constellations resulting from local adaptive mesh refinement and does not treat initial vertices with fine-tuned arguments. 
For this reason in their result the constant in Theorem~\ref{thm:main-grading2} depends on the number of vertices in the initial triangulation, which is a massive overestimation. 
Moreover, their arguments cannot be generalized to higher dimensions because the number of vertices on initial hyperfaces with high valence can be unbounded in dimensions~$d\geq 3$.   
This makes the situation in higher dimensions much more intricate and requires new ideas. 
To face these challenges we employ the following tools:
\begin{itemize}
	\item 
	We reformulate the bisection algorithm by Maubach and Traxler using a \emph{generation structure} of vertices. 
	This gives rise to a \emph{bisection routine for sub\-simplices} (Section~\ref{sec:reform-using-gener}) and allows to conclude fine properties of the algorithm (Section~\ref{subsec:fineProperties}). 	
	\item A novel notion of \emph{edge generation} allows us to bound changes of the size of neighboring simplices (Section~\ref{sec:locGenDiff}).
	This circumvents the case by case study in \cite{GHS.2016}. 
	\item To resolve the difficulties caused by high-valence vertices we employ \emph{neighborhood arguments}. 
	This is based on the observation that every time a chain of simplices leaves a certain neighborhood, the size of the simplices cannot have decreased too fast. 
	Also the worse bounds due to high-valence vertices cannot enter too often. 
	This results from the fact that for two high-valence vertices to be connected by a chain of simplices, the chain has to leave certain neighborhoods (Section~\ref{sec:leavingTheNgh}). 
	\item To prove the final grading estimates we follow the approach by Carstensen in~\cite{Carstensen02,C.2004} and consider chains of intersecting simplices. 
	Employing estimates on the size of the simplices in such a chain (Section~\ref{sec:GenDiffEdgeChain}) yields estimates on the mesh grading (Section~\ref{sec:bisec-grading}). 
 \end{itemize}
The grading result in Theorem~\ref{thm:main-grading2} for arbitrary dimensions improves the ones in \cite{GHS.2016} for dimension two in the sense that the constants entering only depend on the maximal valence rather than the number of initial vertices. 
Furthermore, it is the first result on any kind of grading estimate in general dimensions for genuinely adaptive refinement schemes. 
Previous results on Sobolev stability of the $L^2(\Omega)$-orthogonal projection rely on the assumption that a certain grading is available in 3D, see \cite{BanYse14,DieningStornTscherpel21}. 
Therefore, Theorem~\ref{thm:main-grading2} is the first result on Sobolev stability for general dimensions~$d \geq 3$ for an adaptive refinement scheme. 

\section{Bisection and Grading}
\label{sec:bisec-grading}
In Section~\ref{sec:bisection-algorithm-1} we introduce the bisection routine~$\Bisec{}$ by Maubach \cite{Maubach95} and Traxler \cite{Traxler97} as well as the notion of generations and levels of~$d$-simplices. 
With this at hand, Section~\ref{subsec:meshGrading} portrays the relation of the mesh grading and levels of simplices that are central in all previous grading results, see, e.g.,~\cite{Carstensen02}. 

\subsection{Bisection algorithm}
\label{sec:bisection-algorithm-1} 
The definition of the bisection routine uses the notion of simplices.
A~$k$-simplex with~$0\leq k \leq d$ is the convex hull of~$k+1$ affinely independent points~$v_0,\dots, v_k\in \setR^d$ and is denoted by~$[v_0,\dots,v_k]\subset \mathbb{R}^d$. 
In particular, a~$0$-simplex is a point, a~$1$-simplex is an edge, and a~$(d-1)$-simplex is a face.
We call~$v_0,\dots, v_k$ vertices of the~$k$-simplex~$S = [v_0,\dots, v_k]$ and we denote the set of all vertices of $S$ by~$\vertices(S) = \lbrace v_0,\dots,v_k\rbrace$.  
An~$m$-simplex spanned by vertices~$\set{w_0,\dots, w_m} \subset \vertices(S)$ with~$0 \leq m \leq k$ is called an~$m$-subsimplex of~$S$.
\begin{definition}[Triangulation]
\label{def:triangulation}
  Let~$\Omega \subset \Rd$ be a bounded, polyhedral domain with~$d \geq 2$.   
  Let~$\tria$ be a partition of~$\overline{\Omega}$ into  closed~$d$-simplices~$T\subset \Rd$ with pairwise disjoint interior. 
  Such a partition~$\tria$ is called \emph{(conforming) triangulation} if the intersection of any two~$d$-simplices~$T_1,T_2 \in \tria$ is either empty or an~$m$-subsimplex of both~$T_1$ and~$T_2$ with~$m \in \set{0,\dots, d}$.
\end{definition}
Let~$\tria_0$ denote a triangulation.
To each~$d$-simplex~$T \in \tria_0$ we assign the \emph{generation}~$\gen(T) \coloneqq 0$. 
The bisection of a~$d$-simplex~$T$ generates two $d$-simplices~$T_1$ and~$T_2$ by bisection of the \emph{bisection edge}~$\bse(T)$ of $T$.  
This generates the mid point~$b = \textrm{mid}(\bse(T))$ of~$\bse(T)$ as new vertex, the so-called \emph{bisection vertex} of $T$. 
We call~$T$ \emph{parent} of~$T_1$ and~$T_2$ and the simplices~$T_1$ and~$T_2$ \emph{children} of~$T$ and write 
\begin{align*}
T = \parent(T_1) = \parent(T_2)\qquad\text{and}\qquad \lbrace T_1,T_2\rbrace = \children(T).
\end{align*}
Intuitively, we call~$T_1, T_2$ \emph{siblings} and define their generation as the one of their parent~$T$ increased by one, i.e.,
\begin{align}
  \label{eq:gen-T}
  \gen(T_1) \coloneqq \gen(T_2) \coloneqq \gen(T)+1.
\end{align}
Furthermore, we introduce the \emph{level} of a simplex~$T$ as
\begin{align} \label{eq:lvl-T}
 \level(T) \coloneqq 
  \left\lceil \frac{\gen(T)}{d} 
  \right\rceil.
\end{align}
Increasing the generation by one corresponds to halving the volume, and increasing the level by one corresponds roughly to halving the diameter of a~$d$-simplex. 
Indeed, with equivalence constants depending only on the shape regularity of~$\tria_0$ and on~$d$, any simplex~$T$ resulting from successive bisection of a simplex~$T_0\in \tria_0$ satisfies
\begin{align}
	\label{eq:diam-level}
  \abs{T} =  2^{-\gen(T)}\,\abs{T_0}\qquad\text{and}\qquad	\diameter(T) \eqsim 2^{-\level(T)}\, \diameter(T_0).
\end{align}

The equivalent bisection rules by Maubach and Traxler are displayed in Algorithms~\ref{algo:Maubach} and~\ref{algo:Traxler}, respectively. 

\begin{figure}[h]
\begin{algorithm}[H]
  \caption{Bisection of $d$-simplex (version by Maubach~\cite{Maubach95})}
  \label{algo:Maubach}
  \SetAlgoLined%
  \SetKwFunction{FuncBisection}{Bisection}
  \FuncBisection{$T$}{%

    \KwData{$d$-simplex~$T=[v_0, \dots, v_d]$ of generation~$\gen(T)$ }
    \KwResult{two children~$\lbrace T_1, T_2 \rbrace = \children(T)$ with generation~$\gen(T)+1$}
    
    $k \coloneqq d - (\gen(T) \bmod d) \in \set{1, \dots, d}$\;
    
    $\bse(T) \coloneqq [v_0,v_k]$\tcp*{bisection edge}
    
    $v' \coloneqq  \bsv(\bse(T)) = (v_0+v_k)/2$\tcp*{bisection vertex}
    
    $T_1 \coloneqq [v_0, v_1, \dots, v_{k-1}, v', v_{k+1}, \dots,
    v_d]
    $\;%
    
    $T_2 \coloneqq [v_1,v_2,\ldots, v_k,v', v_{k+1}, \dots,
    v_d]
    $\;%
  }
\end{algorithm}
\end{figure}
\begin{figure}[h]
\begin{algorithm}[H]
  \caption{Bisection of~$d$-simplex (version by Traxler~\cite{Traxler97,Stevenson08})}
  \label{algo:Traxler}
  \SetAlgoLined%
  \SetKwFunction{FuncBisection}{Bisection}
  \FuncBisection{$T$}{%
    
    \KwData{$d$-simplex~$T=[v_0, \dots, v_d]$ of generation~$\gen(T)$ }
    \KwResult{two children~$\lbrace T_1, T_2 \rbrace = \children(T)$ with generation~$\gen(T)+1$}
    
    $\gamma \coloneqq \gen(T) \bmod d \in \set{0, \dots, d-1}$\tcp*{tag}
    
    $\bse(T) \coloneqq [v_0,v_d]$\tcp*{bisection edge}
    
    $v' \coloneqq  \bsv(\bse(T)) =  (v_0+v_d)/2$\tcp*{bisection vertex}
    
    $T_1 \coloneqq [v_0, v', v_1, \dots, v_\gamma, v_{\gamma+1}, \dots,
    v_{d-1}]
    $\;%
    
    $T_2 \coloneqq [v_d, v', v_1, \dots, v_\gamma, v_{d-1}, \dots,
    v_{\gamma+1}]
    $\;
  }
\end{algorithm}
\end{figure}
If we bisect a single~$d$-simplex in a triangulation, the resulting partition is in general no longer conforming. 
To restore conformity a recursive conforming closure routine is employed, cf.~Algorithm~\ref{algo:closure}.
\begin{figure}[h]
\begin{algorithm}[H]
  \caption{Bisection with conforming closure (\Bisec{})}\label{algo:closure}
  \SetAlgoLined%
  \SetKwFunction{FuncBisect}{BisectionClosure}
  \FuncBisect{$\tria, T$}{
    
    \KwData{triangulation~$\tria$ and a~$d$-simplex~$T\in \tria$}
    \KwResult{triangulation~$\tria'$, where~$T$ is bisected}
   
    $\tria' \coloneqq (\tria \setminus \set{T}) \cup \texttt{Bisection}(T)$\;
    \While{%
      there are~$T',T'' \in \tria'$ with~$S \coloneqq T' \cap T'' \neq \emptyset$ not a subsimplex of~$T'$
    }{%
      $\tria' \coloneqq  (\tria' \setminus \set{T'}) \cup \texttt{Bisection}(T')$\;
    }%
  }
\end{algorithm}
\end{figure}
To ensure that Algorithm~\ref{algo:closure} terminates after finitely many iterations, additional properties on the initial triangulation~$\tria_0$ have to be assumed. 
A sufficient condition is the so-called \emph{matching neighbor condition} by Stevenson~\cite[Sec.~4]{Stevenson08}. 
It is rather general in the sense that it is equivalent to the fact that all uniform refinements of~$\tria_0$ are conforming. 
Let us assume that the initial triangulation~$\tria_0$ satisfies this initial condition. 
By~$\BisecT$ we denote the set of triangulations obtained by iterative application of Algorithm~\ref{algo:closure} (\Bisec) starting from~$\tria_0$. 
The set of all simplices in~$\BisecT$ is denoted by
\begin{align}
\label{def:mtree}
\mtree \coloneqq \lbrace T \in \tria \colon \tria \in \BisecT\rbrace.
\end{align}
For~$T\in\mtree$ we denote the set of its vertices ($0$-simplices) and the set of its edges ($1$-simplices) by~$\vertices(T)$ and $\edges(T)$, respectively. 
This definition extends to collections of simplices~$\mathcal{U} \subset \mtree$ by
\begin{align*}
\vertices(\mathcal{U}) = \bigcup_{T\in \mathcal{U}} \vertices(T)\qquad \text{and}\qquad \edges(\mathcal{U}) = \bigcup_{T\in \mathcal{U}} \edges(T).
\end{align*}
In particular,~$\vertices(\mtree)$ and~$\edges(\mtree)$ denote the sets of all vertices and edges that can be created by~$\Bisec{}$. 
If~$T,T'\in \mtree$ are simplices with~$T'\subsetneq T$, we call~$T'$ a \textit{descendant} of~$T$ and we call~$T$ an \emph{ancestor} of~$T'$. 
\begin{remark}[Kuhn cube and simplices]
\label{rem:algo-kuhn}
  The seemingly complicated bisection algorithm by Maubach and Traxler can be derived in an intuitive way. 
  We consider the Kuhn cube~$[0,1]^d$ and split it into~$d!$ equivalent Kuhn simplices. 
  With permutations~$\pi \colon \set{1,\dots,d} \to \set{1,\dots,d}$ and unit vectors~$\fre_1,\dots, \fre_d \in \mathbb{R}^d$ those Kuhn simplices are given by
  \begin{align}
    \label{eq:kuhn-MT}
    T_\pi = [0, \fre_{\pi(1)}, \fre_{\pi(1)}+\fre_{\pi(2)}, \dots, \fre_{\pi(1)}+\dots+\fre_{\pi(d)}],
  \end{align}
see also~\cite[Sec.~4.1]{B.2000}. 
  The simplices~$T_\pi$ are assigned with generation zero. 
  The bisection algorithm by Maubach and Traxler is constructed in such a way that an application of~$d$ uniform refinements leads to a Tucker--Whitney triangulation~\cite[Sec.~2.3]{WeissDeFloriani11}. 
  The latter consists of~$2^d$ reflected Kuhn cubes of half the diameter, see Figure~\ref{fig:TuckerWhitner}. 
  By this procedure the bisection algorithm is uniquely determined. 
  The ordering of the vertices in~\eqref{eq:kuhn-MT} is chosen such that it is compatible with both Algorithms~\ref{algo:Maubach} and~\ref{algo:Traxler}.
  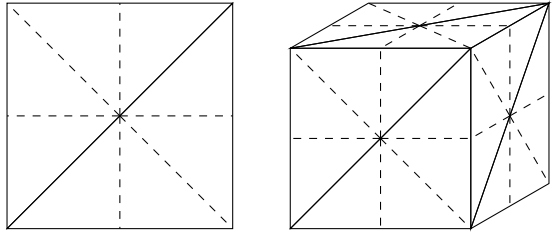
\begin{figure}[ht!]
    \begin{tikzpicture}[scale=3]
      \draw (0,0) -- (1,0) -- (1,1) -- cycle; \draw (0,0) -- (0,1) -- (1,1) -- cycle; \draw[dashed] (.5,.5) -- (0,1); \draw[dashed] (.5,.5) -- (0,.5); \draw[dashed] (.5,.5) -- (1,0); \draw[dashed] (.5,.5) -- (.5,0); \draw[dashed] (.5,.5) -- (.5,1); \draw[dashed] (.5,.5) -- (1,.5);
    \end{tikzpicture}
    \hspace*{.5cm}
    \begin{tikzpicture}[%
      x={(0:1cm)}, y={(30:0.5cm)}, z={(90:1.0cm)}, scale=12/5]
      \draw (0,0,0) -- (1,0,0) -- (1,0,1) -- cycle; \draw (0,0,0) -- (0,0,1) -- (1,0,1) -- cycle; \draw (1,0,0) -- (1,1,0) -- (1,1,1) -- cycle; \draw (1,0,0) -- (1,0,1) -- (1,1,1) -- cycle; \draw (0,0,1) -- (1,0,1) -- (1,1,1) -- cycle; \draw (0,0,1) -- (0,1,1) -- (1,1,1) -- cycle; \draw[dashed] (1/2,0,1/2) -- (0,0,1); \draw[dashed] (1/2,0,1/2) -- (1,0,0); \draw[dashed] (1,1/2,1/2) -- (1,0,1); \draw[dashed] (1,1/2,1/2) -- (1,1,0); \draw[dashed] (1/2,1/2,1) -- (1,0,1); \draw[dashed] (1/2,1/2,1) -- (0,1,1); \draw[dashed] (1/2,0,1/2) -- (0,0,1/2); \draw[dashed] (1/2,0,1/2) -- (1,0,1/2); \draw[dashed] (1/2,0,1/2) -- (1/2,0,0); \draw[dashed] (1/2,0,1/2) -- (1/2,0,1); \draw[dashed] (1,1/2,1/2) -- (1,0,1/2); \draw[dashed] (1,1/2,1/2) -- (1,1,1/2); \draw[dashed] (1,1/2,1/2) -- (1,1/2,0); \draw[dashed] (1,1/2,1/2) -- (1,1/2,1); \draw[dashed] (1/2,1/2,1) -- (0,1/2,1); \draw[dashed] (1/2,1/2,1) -- (1,1/2,1); \draw[dashed] (1/2,1/2,1) -- (1/2,0,1); \draw[dashed] (1/2,1/2,1) -- (1/2,1,1);
    \end{tikzpicture}
    \caption{Partition of Kuhn cube~$[0,1]^d$ for~$d=2,3$ (solid lines) and after~$d$ uniform refinements (dashed lines)}\label{fig:TuckerWhitner}
  \end{figure}
\end{remark}

\begin{remark}[Longest edge]
  \label{rem:longest-edge}
  The bisection rule~$\Bisec$ by Maubach and Traxler can also be interpreted as a {longest edge bisection}. 
  Indeed, let~$\tria_0$ denote the Kuhn triangulation of the Kuhn cube~$[0,1]^d$ into~$d!$ Kuhn simplices. 
  Then the bisection rule is equivalent to the longest edge bisection using the Euclidean distance for~$d\leq 3$. 
  For~$d \geq 4$ this is not true.  
  However,~$\Bisec$ is the longest edge bisection for the Kuhn triangulation for all~$d \geq 2$, if we replace the Euclidean distance by the norm
  \begin{align*}
    \normm{x} \coloneqq \norm{x}_\infty + \frac{1}{d} \norm{x}_1 = \max_{i=1,\dots, d} \abs{x_i} + \frac 1d \sum_{i=1}^d\abs{x_i} \quad \text{ for } x \in \setR^d. 
  \end{align*}
\end{remark}

\subsection{Mesh grading}
\label{subsec:meshGrading}
Theorem~\ref{thm:main-grading2} states that there is a mesh size function with grading~$\gamma = 2$.
Its construction is based on the following distance function. 
\begin{definition}[Simplex distance in $\tria$]
	\label{def:distances}
    For~$\tria \in \BisecT$ we define the distance function~$\delta_{\tria}\colon \tria \times \tria \to \mathbb{N}_0$ as the length of the shortest chain of intersecting simplices minus one, that is, the minimal number~$N\in  \mathbb{N}_0$ of simplices~$T_0,\dots,T_N\in \tria$ with~$T_0 = T$,~$T_N = T'$, and
    \begin{align*}
    T_j \cap T_{j+1}\neq \emptyset \qquad\text{for all }j=0,\dots,N-1.
    \end{align*}
\end{definition}
To verify the grading result stated in Theorem~\ref{thm:main-grading2} we use a mesh size function based on a design by Carstensen in~\cite{Carstensen02}.
\begin{definition}[Regularized mesh size function] \label{def:regMeshSize-T}
	For an initial triangulation $\tria_0$, let $h_{0} \coloneqq (\# \tria_0)^{-1} \sum_{T_0 \in \tria_0} \diameter(T_0)$ denote its averaged diameter. 
	For any trian\-gulation~$\tria \in \BisecT$ the \emph{regularized mesh size function}~$h_\tria\colon \tria \to \mathbb{R}$ reads
	\begin{align}\label{eq:defhTria-v}
		h_\tria (T) \coloneqq h_0\, \min_{T'\in \tria} 2^{-\level(T')+\delta_{\tria}(T,T')}\qquad \text{ for any } T \in \tria. 
	\end{align}
\end{definition}
By definition the function $h_{\tria}$ satisfies
\begin{align}
	\label{est:h-grading2}
	\frac{h_{\tria}(T)}{h_{\tria}(T')} \leq 2\qquad\text{for all }T, T' \in \tria\text{ with }\delta_\tria(T,T') = 1.
\end{align}
According to Definition~\ref{def:grading}~\ref{itm:grading} this means that~$h_{\tria}$ has grading $\gamma = 2$.
The main challenge consists in verifying that~$h_{\tria}$ is indeed a mesh size function in the sense of Definition~\ref{def:grading}~\ref{itm:meshfct}, i.e., to show the equivalence of~$h_{\tria}(T)$ and~$\textup{diam}(T)$.

\begin{lemma}[Level estimate implies grading]
	\label{lem:lvlDiffToMain}
	Suppose there is a constant~$c_0 < \infty$ depending only on the initial triangulation~$\tria_0$ with
	\begin{align}
		\label{eq:mainAbstract-lvl}
		\level(T) - \level(T')\leq  \delta_{\tria}(T,T') + c_0  \qquad\text{for all }T,T'\in \tria\in \BisecT.
	\end{align}
	Then all triangulations $\tria\in \BisecT$ have grading~$\gamma \leq 2$ with constants~$c_1 \eqsim 2^{-c_0}$ and~$c_2\eqsim 1$, where the hidden equivalence constants depend solely on the quasi-uniformity, the shape regularity of~$\tria_0$, and on~$d$.
	In particular, Theorem~\ref{thm:main-grading2} holds. 
\end{lemma}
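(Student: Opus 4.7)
The lemma splits naturally into three independent claims: (a)~$h_\tria$ has grading~$\gamma\leq 2$; (b)~$h_\tria(T)\leq c_2 \diameter(T)$ with $c_2\eqsim 1$; and (c)~$h_\tria(T)\geq c_1\diameter(T)$ with $c_1\eqsim 2^{-c_0}$. The plan is to verify each of these directly from the definition of $h_\tria$ in~\eqref{eq:defhTria-v}, invoking the hypothesis~\eqref{eq:mainAbstract-lvl} only for~(c). I expect all three steps to be short; the only substantive step is~(c), which is precisely where the assumed level estimate enters.

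For~(a), I would argue that~\eqref{est:h-grading2} is already recorded by the authors and needs nothing beyond the triangle inequality for~$\delta_\tria$: for $T,T'\in\tria$ with $\delta_\tria(T,T')=1$ and any candidate $T''\in\tria$ in the minimum defining $h_\tria(T')$, one has $\delta_\tria(T,T'')\leq\delta_\tria(T,T')+\delta_\tria(T',T'')=1+\delta_\tria(T',T'')$, so that $h_\tria(T)\leq 2\,h_0\,2^{-\level(T'')+\delta_\tria(T',T'')}$; taking the minimum over $T''$ yields $h_\tria(T)\leq 2 h_\tria(T')$, which is grading~$\gamma\leq 2$ by Definition~\ref{def:grading}\,\ref{itm:grading}.

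For~(b), I would simply take $T'=T$ in the minimum in~\eqref{eq:defhTria-v}, which gives $h_\tria(T)\leq h_0\,2^{-\level(T)}$. Combining with the diameter equivalence~\eqref{eq:diam-level}, namely $\diameter(T)\eqsim 2^{-\level(T)}\diameter(T_0)$ for the initial ancestor $T_0\in\tria_0$ of $T$, and with the quasi-uniformity of $\tria_0$ (so that $h_0\eqsim\diameter(T_0)$ with constants depending only on $\tria_0$), I obtain $h_\tria(T)\lesssim \diameter(T)$, i.e., $c_2\eqsim 1$.

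For~(c), which is the only place the hypothesis is used, I would fix $T\in\tria$ and apply~\eqref{eq:mainAbstract-lvl} to every $T'\in\tria$ to deduce
\begin{align*}
-\level(T')+\delta_\tria(T,T')\;\geq\;-\level(T)-c_0,
\end{align*}
uniformly in $T'$. Taking the minimum gives $h_\tria(T)\geq h_0\, 2^{-c_0}\,2^{-\level(T)}$, and once more invoking~\eqref{eq:diam-level} and the quasi-uniformity of $\tria_0$ to identify $h_0\,2^{-\level(T)}\eqsim\diameter(T)$, I arrive at $h_\tria(T)\gtrsim 2^{-c_0}\diameter(T)$, i.e., $c_1\eqsim 2^{-c_0}$. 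Together~(a), (b), (c) show that $h_\tria$ is a mesh size function of grading at most $2$ with the asserted equivalence constants, and the last sentence of Theorem~\ref{thm:main-grading2} is exactly the statement that such an~$h_\tria$ exists on every $\tria\in\BisecT$, so the reduction to~\eqref{eq:mainAbstract-lvl} is complete. The only mild subtlety — hardly an obstacle — is bookkeeping of which constants depend on what; all hidden constants depend solely on $d$, the shape regularity, and quasi-uniformity of~$\tria_0$, while the exponential factor $2^{-c_0}$ is the sole source of smallness in $c_1$.
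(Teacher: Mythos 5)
Your proof is essentially identical to the paper's: you take $T'=T$ in the minimum for the upper bound, apply the hypothesis~\eqref{eq:mainAbstract-lvl} termwise in the minimum for the lower bound, and invoke~\eqref{eq:diam-level} together with quasi-uniformity of $\tria_0$ to replace $h_0 2^{-\level(T)}$ by $\diameter(T)$; the grading bound~\eqref{est:h-grading2} is the same triangle-inequality observation the paper records just before the lemma. No gap, same approach.
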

\begin{proof}
Let~$\tria \in \BisecT$ be arbitrary and let~$h_{\tria}$ be as in Definition~\ref{def:regMeshSize-T}.  
Recall that by \eqref{est:h-grading2} the function~$h_{\tria}$ satisfies Definition~\ref{def:grading}~\ref{itm:grading} with $\gamma = 2$, i.e., it has grading $\gamma = 2$. 
It remains to prove the equivalence~$h_{\tria}(T) \eqsim \diameter(T)$ for all $T\in \tria$. 

By~\eqref{eq:diam-level} the increase of the level halves the diameter of a simplex. Hence, the quasi-uniformity of $\tria_0$ yields with the averaged diameter $h_0$ in $\tria_0$ that
\begin{align}
	\label{eq:diam-level-h0}
\textup{diam}(T) \eqsim h_0 2^{-\level(T)} \qquad\text{for all }T\in \tria.
\end{align}
It follows that for any simplex~$T\in \tria$ one has that
\begin{align*}
		h_{\tria}(T)
		&		= h_0 \min_{T'\in \tria} 2^{- \level(T')+\delta_{\tria}(T,T')}  \leq h_0 2^{- \level(T)} \lesssim \diameter(T). 
\end{align*}
	Moreover, applying the level estimate~\eqref{eq:mainAbstract-lvl} and the property in~\eqref{eq:diam-level-h0} we find that
	\begin{align*}
		h_{\tria}(T)
		&
		= h_0 \min_{T'\in \tria} 2^{ - \level(T')+\delta_{\tria}(T,T')}
		\geq h_0 2^{- \level(T) - c_0} 
		\gtrsim \, 2^{-c_0}\diameter(T). 
	\end{align*}
	Combining both estimates shows the equivalence of the regularized mesh size function~$h_{\tria}$ with grading~$\gamma = 2$ to the local mesh size in the sense that
	\begin{align*}
		 2^{-c_0} \diameter(T)  \lesssim  h_{\tria}(T) \lesssim 	 \diameter(T)\qquad\text{for all } T\in \tria.
	\end{align*}
The equivalence constants depend on $d$ as well as on the shape regularity and quasi-uniformity of $\tria_0$.
\end{proof}

The remainder of this work is dedicated to prove the estimate in~\eqref{eq:mainAbstract-lvl} with an explicit bound for~$c_0$, see Propositions~\ref{pro:genest} and~\ref{pro:genest-stevenson} below.  
Notice that our analysis gives rise to a constant~$c_0$ that depends only on the shape regularity of~$\tria_0$ (more precisely, the maximal length of edge paths on surfaces of vertex patches, cf.~Section~\ref{subsec:gen-diff-vtx} below). 

\section{Properties of the bisection routine}
\label{sec:bisection-algorithm}

In this section we collect properties of triangulations generated by \Bisec{}. 
We start by presenting basic properties in Section~\ref{sec:lattice-structure}. 
In Section~\ref{ssec:gener-level-struct} we introduce a vertex based generation structure which allows us to formulate an equivalent generation based version of~\texttt{Bisec} in Section~\ref{sec:reform-using-gener}. 
This formulation gives us access to additional fine properties presented in Section~\ref{subsec:fineProperties}. 
Finally, in Section~\ref{subsec:genSharp} we investigate a novel alternative notion of generation with structural advantages. 

\subsection{Notation and lattice structure}
\label{sec:lattice-structure}
The refinement by~$\Bisec{}$ defines a natural partial order on~$\BisecT$. 
More precisely, let~$\tria_1,\tria_2 \in \BisecT$ be two triangulations. 
Then we say that~$\tria_1$ is a refinement of~$\tria_2$ (and~$\tria_2$ is a coarsening of~$\tria_1$), in short $\tria_2 \leq \tria_1$, if~$\tria_1$ can be obtained from~$\tria_2$ by repeated~\Bisec{} refinements. 
This makes~$(\BisecT,\leq)$ a partially ordered set.

To any triangulation~$\tria\in\BisecT$ we can assign the \emph{forest}~$\forest(\tria)$ with root elements in~$\tria_0$ and children structure from the bisection algorithm. 
In fact,~$\forest(\tria)$ is a rooted forest of full binary trees each of them rooted in some~$T \in \tria_0$. 
Note that~$\tria$ is just the set of leaves of~$\forest(\tria)$ and that~$\tria_2 \leq \tria_1$ is equivalent to~$\forest(\tria_2) \subset \forest(\tria_1)$.
For each~$\tria_1,\tria_2 \in \BisecT$ there exists a coarsest common refinement~$\tria_1 \vee \tria_2$ and a finest common coarsening~$\tria_1 \wedge \tria_2$. 
The triangulations~$\tria_1 \vee \tria_2$ and $\tria_1 \wedge \tria_2$ are determined by 
\begin{align}
  \label{eq:lattice}
  \begin{aligned}
    \forest(\tria_1 \vee \tria_2) = \forest(\tria_1) \cup \forest(\tria_2)
    \quad \text{and}\quad
    \forest(\tria_1 \wedge \tria_2) = \forest(\tria_1) \cap \forest(\tria_2).
  \end{aligned}
\end{align}
With this structure~$(\BisecT,\vee,\wedge)$ is an order-theoretic lattice with bottom element~$\tria_0$, see~\cite{DieningKreuzerStevenson16}. 
Moreover,~$(\BisecT,\vee,\wedge)$ is distributive, i.e.,
\begin{align}
  \label{eq:distributive}
  \begin{aligned}
    \tria_1 \vee (\tria_2 \wedge \tria_3) &= (\tria_1 \vee \tria_2) \wedge (\tria_1 \vee \tria_2), \\
    \tria_1 \wedge (\tria_2 \vee \tria_3) &= (\tria_1 \wedge \tria_2) \vee (\tria_1 \wedge \tria_2).
  \end{aligned}
\end{align} 
Let~$(\overline{\BisecT},  \vee, \wedge)$ be the Dedekind-MacNeille completion of~$(\BisecT, \vee, \wedge)$. We denote by~$\tria_\infty$ its top element, meaning that~$\tria \leq \tria_\infty$ for all~$\tria \in \overline{\BisecT}$.
We refer to any such~$\tria$ as triangulation, even if it is not finite. 
The forest of~$\tria_{\infty}$ is denoted by~$\forest(\tria_\infty) \coloneqq \bigcup_{\tria \in \BisecT} \forest(\tria)$. 
The collection of nodes of the forest~$\forest(\tria_\infty)$ coincides with the set of simplices~$\mtree$ defined in~\eqref{def:mtree}. 
Both~$\forest(\tria_\infty)$ and~$\mtree$ are referred to as \emph{master tree} in the literature. 
We define also the forests
\begin{align*}
\forest(\tria) \coloneqq \bigcup_{\tria' \in \BisecT \colon \tria' \leq \tria} \forest(\tria') \qquad \text{for any } \tria \in \overline{\BisecT}.
\end{align*}
This allows us to compare two possibly infinite triangulations~$\tria_1, \tria_2 \in \overline{\BisecT}$ in the sense that~$\tria_1 \leq \tria_2$ if~$\forest(\tria_1)\subset \forest(\tria_2)$, which is consistent with the above. 

\subsection{Generation and level structure}
\label{ssec:gener-level-struct}
By Section~\ref{sec:bisection-algorithm-1} each~$d$-simplex~$T \in \mtree$ is assigned with a generation~$\gen(T)$. 
To obtain finer properties of~\Bisec{} we associate to each vertex~$v \in \vertices(\mtree)$ a generation in a way that ensures compatibility with the generation of $d$-simplices. 
More specifically, we define the generation of any vertex~$v\in \vertices(\mtree)$ that is generated by the bisection of a simplex~$T\in \mtree$ as
\begin{align}
  \label{eq:gen-inductive}
  \gen(v) &\coloneqq \gen(T) +1.
\end{align}
In order to assign a generation also to each initial vertex~$v \in \vertices(\tria_0)$ in a consistent manner, we use the following assumption on the initial triangulation which is equivalent to the reflected domain partition condition in~\cite[Sec.~6]{Traxler97}. 

\begin{assumption}[Coloring condition]
  \label{ass:initial-triangulation}
  We call an initial triangulation~$\tria_0$ \emph{colored}, if it is~$(d+1)$-chromatic in the sense that there exists a proper vertex coloring~$\frc\colon \vertices(\tria_0) \to \set{0,\dots, d}$ with
  \begin{align*}
    \set{ \frc(v)\colon v \in \vertices(T)} = \set{0,\dots, d} \qquad \text{for each } T \in \tria_0.
  \end{align*}
\end{assumption}

Suppose that~$\tria_0$ is a colored triangulation. 
We define
\begin{align}
 \label{eq:gen-initial}
  \gen(v) &\coloneqq -\frc(v) \qquad \text{for any } v \in \vertices(\tria_0).
\end{align}

\begin{remark}[Comparison]
\label{rmk:initial-tria}
With a suitable sorting of the initial vertices in initial $d$-simplices as presented below in~\eqref{eq:coloringToMatching}
the coloring condition leads to a partition that satisfies the matching neighbor condition in~\cite[Sec.~4]{Stevenson08}, see Section~\ref{sec:stevenson} below. 
Hence, Assumption~\ref{ass:initial-triangulation} is a stronger assumption. 
Notice however that any triangulation with matching neighbor condition is colored after one full refinement, see Lemma~\ref{lem:tria0plus_colored}.
Furthermore, for an arbitrary triangulation a specific initial refinement as described in \cite[Appendix A]{Stevenson08} leads  to a colored triangulation $\tria_0$.
\end{remark}
The bisection vertex of~$T$ is the youngest vertex of its children~$T_1$ and~$T_2$. 
In other words, the bisection vertex has the highest generation in~$T_1$ and~$T_2$. 
Hence, the construction ensures that for each simplex~$T=[v_0,\dots, v_d]\in \mtree$ we have
\begin{align}
\label{eq:gen-simplices-vertices}
  \gen(T) = \gen([v_0,\dots,v_d]) = \max_{j=0,\dots,d} \gen(v_j).
\end{align}
This also holds for initial simplices~$T \in \tria_0$ since~$\gen(T) = \max \set{-d,-d+1,\dots, 0} = 0$. 
Consistently, for every~$m$-simplex~$S=[w_0,\dots, w_m]$ we set its generation to
\begin{align}
\label{eq:gen-simplices-sub}
  \gen(S) = \gen([w_0,\dots, w_m]) \coloneqq \max_{j=0,\dots,m} \gen(w_j).
\end{align}
In particular, the definition of the generation~$\gen$ for vertices extends to the complex of~$\tria$ and this extension is compatible with the generation structure for $d$-simplices by Maubach and Traxler. 
In the following we shall associate generation with age and refer $m$-simplices as \emph{older} or \emph{younger} than others depending on their generation.
We define for every~$m$-simplex~$S$ its level and type by
\begin{align}
\label{eq:def-type-level}
  \begin{aligned}
    \level(S) &\coloneqq \left\lceil \frac{\gen(S)}{d} \right\rceil,
    \\
    \type(S) &\coloneqq \gen(S) - d\, \big(\level(S) -1\big) \in \set{1,\dots,d}.
  \end{aligned}
\end{align} 
Note that for the quantities $k$ and $\gamma$ in the Algorithms~\ref{algo:Maubach} and~\ref{algo:Traxler} we have that~$k=d - (\type(T) \bmod d)$ and~$\gamma=\type(T) \bmod d$. 
The definition of~$\type$ ensures that
\begin{align}
  \label{eq:generation-level-type}
  \gen(\bigcdot) &= d\, \big(\level(\bigcdot) -1\big) + \type(\bigcdot).
\end{align}
In particular, we have~$\gen(\bigcdot) \bmod d = \type (\bigcdot) \bmod d$. 
We exemplify the corresponding values for dimension~$d=3$ in Table~\ref{tab:types}.
\begin{table}[h!]
  \centering
  \begin{tabular}{c|cc|c|c|l}
    $\gen$ & $\level$ & $\type$ & $k$ Maubach & $\gamma$ Traxler  & description
    \\
    \hline
    0  & 0 & 3 & 3 & 0 & initial triangulation
    \\ \hline
    1  & 1 & 1 & 2 & 1 & first refinement
    \\
    2  & 1 & 2 & 1 & 2 & 
    \\  
    3  & 1 & 3 & 3 & 0 &full uniform refinement
    \\
    \hline
    4  & 2 & 1 & 2 & 1&
  \end{tabular}
  \vspace*{1mm}
  \caption{Generations, levels, and types of~$d$-simplices for~$d=3$.}
  \label{tab:types}
\end{table}%
\subsection{Reformulation of \Bisec{} using generations}
\label{sec:reform-using-gener}
In this section we present an equivalent formulation of the~\Bisec{} algorithm based on the generation structure introduced before. 
To verify the equivalence we rewrite Algorithm~\ref{algo:Maubach} in a more illustrative manner. 
For a simplex~$T = [v_0,\dots,v_d]$ with~$k = d - (\gen(T) \bmod d) \in \{1, \ldots, d\}$ we denote its~$\front$ and its possibly empty~$\back$ by 
\begin{align*}
\front(T) \coloneqq [v_0,\dots,v_k]\qquad\text{and}\qquad\back(T) \coloneqq [v_{k+1},\dots,v_d].
\end{align*}
The division into~$\front$ and~$\back$ is indicated by a vertical line, that is,
\begin{align*}
T = [v_0,\dots,v_k\mid v_{k+1},\dots,v_d].
\end{align*}
The routine by Maubach in Algorithm~\ref{algo:Maubach} determines the bisection edge as the edge between the first and the last vertex of~$\front(T)$. 
The bisection vertex~$v' = \bsv([v_0,v_k])$ is moved to the first position of~$\back(T)$ in the children of $T$, i.e., the children are
\begin{equation}
\label{eq:RefNew1}
\begin{alignedat}{2}
T_1 &= [v_0,\dots,v_{k-1}&\mid v', v_{k+1},\dots,v_d],\\
T_2 &= [v_1,\dots,v_k&\mid v', v_{k+1},\dots,v_d].
\end{alignedat}
\end{equation}
The corresponding~$k$ has reduced by one, which is represented by the position of the vertical line having moved one to the left. 
Notice that for~$k=1$ we have to redefine the children of $T = [v_0, v_1 \mid v_2, \ldots, v_d]$ as
\begin{align*}
\begin{aligned}
T_1 &= [v_0 \mid v' , v_2 ,\dots,v_d] \eqqcolon [v_0 , v' , v_2 ,\dots,v_d \mid\, ] ,\\
T_2 &= [v_1 \mid v' , v_2,\dots,v_d] \eqqcolon [v_1 , v' , v_2 ,\dots,v_d \mid\, ].
\end{aligned}
\end{align*}
This corresponds to the fact that $k = d$ for $T_1$ and $T_2$.  
Suppose that~$\tria_0$ is a colored triangulation and that the vertices in initial simplices~$T = [v_0,\dots,v_d] \in \tria_0$ are sorted such that their coloring reads
\begin{align}
\label{eq:coloringToMatching}
\frc(T) = (\frc(v_0),\dots,\frc(v_d)) = (d,0,1,\dots,d-1).
\end{align}
A simple induction and the properties illustrated in~\eqref{eq:RefNew1} reveal that simplices in~$\back(T)$ have generations of consecutive order, they share the same level, and $\type(v_j) = d-j+1$ for~$j=k+1,\dots,d$. 
Moreover, inductively the following lemma can be proved.

\begin{lemma}[Structure of Maubach sorting]
  \label{lem:maubach}
  Let~$T = [v_0,\dots,v_d]$ with~$k = d- (\gen(T) \bmod d)$ result from bisections according to  Algorithm~\ref{algo:Maubach} of an initial simplex satisfying \eqref{eq:coloringToMatching}.
  \begin{enumerate}
  \item 
  \label{itm:maubach1} If~$k=d$, then
    \begin{alignat*}{2}
      \level(v_0) &< \level(v_1) = \dots = \level(v_d),\;\;\;\; &&
      \\
      \gen(v_0) &< \gen(v_1),
      \\
      \gen(v_j) &= \level(T)d + 1 - j&\;\;\qquad&\text{for $j=1,\dots, d$}.
    \end{alignat*}
  \item 
  \label{itm:maubach2} If~$1 \leq k < d$ and~$\level(v_0) < \level(v_1)$, then
    \begin{alignat*}{2}
      \level(v_1) &= \dots = \level(v_k),
      \\
      \level(v_{k+1}) &= \dots = \level(v_d) =\level(v_k)+1, \,&&
      \\
      \gen(v_0) &< \gen(v_1),
      \\
      \gen(v_j) &= \level(T)d + 1 - d - j &\quad&\text{for $j=1,\dots, k$},\\
      \gen(v_j) &= \level(T)d + 1 - j &\quad&\text{for $j=k+1,\dots, d$}. 
    \end{alignat*}
  \item 
  \label{itm:maubach3} If~$1 \leq k < d$ and~$\level(v_0) = \level(v_1)$, then
    \begin{alignat*}{2}
      \level(v_0) &= \dots = \level(v_k),
      \\
      \level(v_{k+1}) &= \dots = \level(v_d) =\level(v_k)+1, &&
      \\
	\gen(v_{k+1}) &= \gen(v_k) + 2d - \type(v_0),
	\\      
      \gen(v_j) &= \gen(v_{j+1}) +1 &\quad&\text{for $j=0,\dots, k$},
      \\
      \gen(v_j) &= \level(T)d + 1 - j  &&\text{for $j=k+1,\dots, d$}.
    \end{alignat*}
  \end{enumerate}
  \end{lemma}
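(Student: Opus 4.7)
The plan is to prove the claim by induction on $\gen(T)$ (equivalently, on the number of Maubach bisections applied to reach $T$), propagating the structure of each case through one step of \eqref{eq:RefNew1}. The two engines of the induction are the identity $\gen(v') = \gen(T)+1$ from \eqref{eq:gen-inductive} and the fact that the new parameter $k'$ equals $k-1$ when $k>1$ and $k'=d$ when $k=1$ (so $v'$ is appended to the back except when the bisection edge is $[v_0,v_1]$, in which case $v'$ enters the front and both children jump to type $d$ at the next level).

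The base case $T\in\tria_0$ follows at once from \eqref{eq:coloringToMatching} and \eqref{eq:gen-initial}: one reads $\gen(v_0)=-d$, $\gen(v_j)=1-j$ for $j\ge 1$, so $\level(v_0)=-1<0=\level(v_1)=\dots=\level(v_d)$ and $k=d$, placing $T$ in case \ref{itm:maubach1} with $\gen(v_j)=\level(T)\cdot d+1-j$. For the inductive step I would distribute over three transitions. Case \ref{itm:maubach1} ($k=d$): both children acquire $v'$ at the last back position and $k'=d-1$; $T_1=[v_0,\dots,v_{d-1}\mid v']$ preserves $\level(v_0)<\level(v_1)$, landing in case \ref{itm:maubach2}, while $T_2=[v_1,\dots,v_d\mid v']$ has $\level(v_1)=\level(v_2)$, landing in case \ref{itm:maubach3}. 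Cases \ref{itm:maubach2}/\ref{itm:maubach3} with $k>1$: $T_1$ keeps the prefix $v_0,v_1$ and so stays in the same case as $T$; $T_2$ starts with $v_1,v_2$, which share a common level by hypothesis, so $T_2$ always falls in case \ref{itm:maubach3}. Cases \ref{itm:maubach2}/\ref{itm:maubach3} with $k=1$: $v'$ is inserted in the front, $k'=d$, both children have $\level(T_i)=\level(T)+1$, and one verifies $\level(v_0')<\level(v_1')=\dots=\level(v_d')$ from $\level(v')=\level(T)+1$ and the hypothesis on $\level(v_0)\le\level(v_1)=\level(T)$, so both children enter case \ref{itm:maubach1}.

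With the correct case identified in each transition, the generation formulas for the children follow by substituting $\level(T_i)$ into the inherited formulas for $T$ together with $\gen(v')=\gen(T)+1$, and relabeling the indices according to \eqref{eq:RefNew1}. The main obstacle I expect is the bookkeeping for the new identity $\gen(v_{k+1})=\gen(v_k)+2d-\type(v_0)$ in case \ref{itm:maubach3}: it must be shown to propagate through the inductive step even though neither side is immediately visible in case \ref{itm:maubach1} or in $T_1$ of case \ref{itm:maubach2}. I plan to handle this by computing the two sides directly from \eqref{eq:def-type-level} using the fact that, in every relabeling coming from \eqref{eq:RefNew1}, the new $v_{k+1}'$ is either the freshly created $v'$ (for which $\gen(v')=\gen(T)+1$) or the former $v_{k+1}$ whose generation is already pinned down, while $\type(v_0')$ is read off from the preserved generation of $v_0$ in $T_1$ or from $\type(v_1)$ in $T_2$; a short calculation then collapses the difference to $2d-\type(v_0')$, closing the induction.
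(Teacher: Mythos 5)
Your plan is essentially the paper's proof: a direct induction through the three case transitions of \eqref{eq:RefNew1}, with the delicate identity $\gen(v_{k+1}) = \gen(v_k) + 2d - \type(v_0)$ derived by an algebraic computation from \eqref{eq:def-type-level} rather than by separate case bookkeeping, which is exactly how the paper handles it. The only slip is in the $k=1$ transition: since $\gen(T_i) = \gen(T)+1 = d\,\level(T)$, both children have $\level(T_i) = \level(T)$ (the type jumps to $d$ at the \emph{same} level, not the next), $\level(v') = \level(T)$, and the parent has $\level(v_1) = \level(T)-1$ rather than $\level(T)$; the ordering $\level(v_0') < \level(v_1') = \cdots = \level(v_d')$ still holds, so the classification into case \ref{itm:maubach1} and the conclusion are unaffected.
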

  \begin{proof}
  	Most of the statements follow by a straight-forward induction on the refinement routine as illustrated in~\eqref{eq:RefNew1}. 
  	Thus, the only claim we want to show explicitly is the fact that~$\gen(v_{k+1}) = \gen(v_k) + 2d - \type(v_0)$ in~\ref{itm:maubach3}. 
  	Noting that~$\type(v_{k+1}) = d-k$ we find that 
  	\begin{align}\label{eq:Proffsdas1}
  	\begin{aligned}
  		\gen(v_{k+1}) &= d (\level(v_{k+1})- 1) + \type(v_{k+1}) \\
  		&= d (\level(v_{k+1})- 1) + d - k 
  		= d \, \level(v_{k+1}) - k. 
  	\end{aligned}
  	\end{align}
  	On the other hand, by the identities on the generations and the levels we have
  	\begin{align*}
	\gen(v_0) = \gen(v_k) + k \quad \text{ and } \quad \level(v_0) = \level(v_k) = \level(v_{k+1})-1. 
  	\end{align*}
  	Hence, it follows that
  	\begin{align}\label{eq:Proffsdas2}
  		\type(v_0) = \gen(v_0) - d (\level(v_0) - 1) = \gen(v_k) + k - d (\level(v_{k+1})-2). 
  	\end{align}
  	Combining the identities in \eqref{eq:Proffsdas1} and \eqref{eq:Proffsdas2} yields the claim. 
\end{proof}
 \noindent Thanks to the previous lemma the vertices of~$T = [v_0,\dots,v_d]$ can be divided into 
  \begin{itemize}
  	\item two blocks of the same level in cases \ref{itm:maubach1} and \ref{itm:maubach3}, and into 
  	\item three blocks of the same level in case \ref{itm:maubach2}. 
  \end{itemize} 
Within each of those blocks the vertices are of consecutive decreasing generation. 
The goal is to reformulate the routine such that the vertices in each~$d$-simplex are sorted by decreasing generation. 
This can be achieved by rearranging the blocks. 

Before doing so, let us introduce some notation.
For an~$m$-simplex~$S$ with vertices~$v_0, \ldots, v_m \in \vertices(S)$ we use the notation 
\begin{align}\label{eq:sorted}
	\simplex{v_0,\dots, v_m} \quad \text{ if } \quad  \gen(v_0) > \dots > \gen(v_m)
\end{align}
to indicate that the vertices are sorted by decreasing generation. 
That is, for any~$m$-simplex denoted with brackets~$\simplex{\;}$ rather than~$[\;]$ the vertices are sorted by decreasing generation. 
Since no two vertices have the same generation in one simplex, this representation is unique and we write~$S = \simplex{v_0,\dots, v_m}$. 
Moreover, it is useful to indicate the last level jump within the simplex by a vertical line. More precisely,~$T=\simplex{v_0,\dots, v_{\ell-1} \leveljump v_\ell,\dots,v_d}$ means that
\begin{align*}
	\level(v_{\ell-1}) > \level(v_\ell) = \dots = \level(v_d)
\end{align*}
and we refer to
\begin{align*}
\head(T) \coloneqq \simplex{v_0, \dots, v_{\ell-1}} \quad \text{ and } \quad \tail(T) \coloneqq \simplex{v_\ell,\dots v_d}
\end{align*}
as the \emph{head} and the \emph{tail} of $T$. 
Note that within~$\head(T)$ there might be another level jump but this is not very important in the following. 
We use the same notation for~$m$-simplices~$S = \simplex{v_0,\dots, v_m}$, i.e., the tail contains all vertices which are of the same level as~$v_m$ and the head contains all vertices with higher level than~$v_m$. 
In contrast to~$d$-simplices the head of an~$m$-simplex with~$m <d$ might be empty.

Let~$T=[y_0,\dots, y_d]\in \mtree$ be a simplex resulting from the bisection routine by Maubach in Algorithm~\ref{algo:Maubach}. 
By Lemma~\ref{lem:maubach} each block of vertices of the same level is  sorted by decreasing generation. Thus, to sort the vertices by decreasing generation we only have to rearrange the blocks by decreasing level. 
Let us present this rearrangement in each of the three cases~\ref{itm:maubach1}--\ref{itm:maubach3} in Lemma~\ref{lem:maubach}: 
\begin{enumerate}
\myitem[(R-a)]
\label{itm:maubach1b} 
If~$k=d$, then~$\bse(T) = [y_0,y_d]$ and 
  \begin{align*}
    [y_0,\dots,y_d \mid \, ]=
    \simplex{y_1, \ldots, y_d \leveljump y_0}.
  \end{align*}
\myitem[(R-b)]
\label{itm:maubach2b} 
If~$1 \leq k < d$ and~$\level(y_0) < \level(y_1)$, then~$\bse(T) = [y_0,y_k]$ and 
  \begin{align*}
    [y_0,\dots, y_k \mid y_{k+1} ,\dots,y_d]=
    \simplex{y_{k+1}, \dots, y_d,  y_1, \dots, y_k \leveljump y_0}.
  \end{align*}
\myitem[(R-c)]
\label{itm:maubach3b} 
If~$1 \leq k < d$ and~$\level(y_0) = \level(y_1)$, then~$\bse(T) = [y_0,y_k]$ and 
\begin{align*}
    [y_0, \dots, y_k \mid y_{k+1} ,\dots,y_d] =\simplex{y_{k+1}, \dots, y_d \leveljump  y_0, \dots, y_k}.
\end{align*}
\end{enumerate}
The following lemma characterizes head, tail, and bisection edge of a $d$-simplex. 

\begin{lemma}[Bisection edge in generation sorting]
 \label{lem:genbased}
  For a colored initial triangulation~$\tria_0$ let~$T = \simplex{v_0,\dots, v_d} \in \mtree$ be a simplex with bisection edge~$\bse(T)\in\edges(T)$.  
  \begin{enumerate}
  \item 
  \label{itm:genbaseda} 
  If $\level(v_{d-1})\neq \level(v_d)$, then $	\head(T) = \simplex{v_0,\dots,v_{d-1}}$,   
  $\tail(T) = \simplex{v_d}$, 
  \begin{align*}
	 	\bse(T) = \simplex{v_{d-1} \mid v_d}.  
  \end{align*}
  \item 
  \label{itm:genbasedb} 
  If~$\level(v_{d-1}) = \level(v_d)$, then we have that $\type(v_0)<d$, as well as $\head(T) = \simplex{v_0,\dots,v_{ \type(v_0)-1}}$,~$\tail(T) = \simplex{v_{\type(v_0)},\dots,v_d}$ and 
  \begin{align*}
  	\bse(T) = \simplex{v_{\type(v_0)},v_d}.
  \end{align*}
  \end{enumerate}
\end{lemma}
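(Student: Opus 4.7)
The plan is to reduce the claim to a direct case analysis based on the three reformulation rules (R-a), (R-b), (R-c) derived from Lemma~\ref{lem:maubach}, which describe precisely how the Maubach-ordering of $T = [y_0,\dots,y_d]$ relates to the generation-sorted representation $\simplex{v_0,\dots,v_d}$. The bisection edge is read off directly from the Maubach representation, so the task reduces to translating the indices $y_i$ back to $v_j$ and checking which of the stated cases~\ref{itm:genbaseda} and~\ref{itm:genbasedb} is produced.

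First I would observe that in cases (R-a) and (R-b) the generation-sorted form has exactly one vertex at the lowest level, namely $v_d = y_0$, and the preceding vertex $v_{d-1}$ equals $y_d$ (in case R-a) respectively $y_k$ (in case R-b). Since in both cases the bisection edge from the Maubach routine is $[y_0,y_d]$ respectively $[y_0,y_k]$, this translates to $\bse(T) = \simplex{v_{d-1}\mid v_d}$, and by construction $\tail(T) = \simplex{v_d}$ and $\head(T) = \simplex{v_0,\dots,v_{d-1}}$, with $\level(v_{d-1}) > \level(v_d)$. This settles case~\ref{itm:genbaseda}.

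For case~\ref{itm:genbasedb} I would use (R-c), where the tail $\simplex{y_0,\dots,y_k}$ consists of $k+1 \geq 2$ vertices of the same (lowest) level, so $\level(v_{d-1}) = \level(v_d)$. The indexing translation gives $y_j = v_{d-k+j}$ for $j=0,\dots,k$ and $y_j = v_{j-k-1}$ for $j = k+1,\dots,d$; in particular $v_0 = y_{k+1}$ and $v_d = y_k$, so the Maubach bisection edge $[y_0,y_k]$ becomes $\simplex{v_{d-k},v_d}$. The remaining task is to identify $d-k$ with $\type(v_0)$. Using Lemma~\ref{lem:maubach}\ref{itm:maubach3}, one has $\gen(y_{k+1}) = d\,\level(T) - k$ and $\level(v_0) = \level(y_{k+1}) = \level(T)$, which via the defining identity \eqref{eq:generation-level-type} gives $\type(v_0) = d - k$; since $k \geq 1$, this also yields $\type(v_0) < d$.

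The main (minor) obstacle is purely bookkeeping: keeping the three index systems (Maubach order $y_i$, generation-sorted order $v_j$, and the Algorithm~\ref{algo:Maubach} parameter $k$) aligned, and converting between $\gen$, $\level$ and $\type$ via \eqref{eq:generation-level-type}. Once the index dictionary $v_j \leftrightarrow y_i$ is fixed for each of (R-a), (R-b), (R-c), everything else follows by direct substitution, and the characterization of $\head(T)$ and $\tail(T)$ in each case is immediate from the position of the level jump~$\mid$ in the reformulations.
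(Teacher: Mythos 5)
Your proof is correct and follows essentially the same route as the paper: translate the generation-sorted representation back to the Maubach ordering via the rearrangement rules (R-a)--(R-c), read off the bisection edge $[y_0,y_k]$, and convert indices. The only (immaterial) difference is how you obtain $\type(v_0)=d-k$ in case~\ref{itm:genbasedb}: the paper uses $k = d - (\type(T)\bmod d)$ together with $\type(T)=\type(v_0)$ directly, while you compute $\gen(v_0)=\gen(y_{k+1})=d\,\level(T)-k$ from Lemma~\ref{lem:maubach}\ref{itm:maubach3} and then apply~\eqref{eq:generation-level-type}; both are equivalent and equally short.
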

\begin{proof}
Let the simplex~$T = \simplex{v_0, \ldots, v_d}$ correspond to~$[y_0,\dots, y_d]$ in the sorting by Maubach. 
With~$k =d - (\type(T) \bmod d)$ the bisection edge is~$\bse(T) = [y_0,y_k]$. 

If~$\level(v_{d-1})\neq \level(v_d)$, then the rearrangements in~\ref{itm:maubach1b} and~\ref{itm:maubach2b} show that
\begin{align*}
\head(T) = \simplex{v_0,\dots, v_{d-1}}\qquad\text{and}\qquad \tail(T) = \simplex{v_d}.
\end{align*}
Moreover, the bisection edge~$[y_0,y_k]=\simplex{v_{d-1}\mid v_d}$ consists of the single vertex~$v_d$ in the tail and the oldest vertex~$v_{d-1}$ of the head. 
This proves~\ref{itm:genbaseda}. 

If~$\level(v_{d-1}) = \level(v_d)$, then the rearrangement in~\ref{itm:maubach3b} shows that
\begin{align*}
\head(T) = \simplex{v_0,\dots, v_{d-k-1}}\quad\text{and}\quad \tail(T) = \simplex{v_{d-k},\dots, v_d} = [y_0,\dots, y_k].
\end{align*}
Since in this case we have~$1\leq k < d$, it follows that~$k = d-\type(T)=d-\type(v_0)$. 
The bisection edge is~$[y_0,y_k] = \simplex{v_{d-k},v_d}=\simplex{v_{\type(y_0)},v_d}$. 
This yields~\ref{itm:genbasedb}. 
\end{proof}

Lemma~\ref{lem:genbased} allows us to equivalently reformulate the refinement routine by Maubach in Algorithm~\ref{algo:Maubach} as generation based routine in~Algorithm~\ref{algo:genbased}.
The children of~$T$ are obtained by removing one of the vertices of the bisection edge and appending the bisection vertex, for which we use the following notation. 
For vertices~$z_0, \ldots, z_{m+1}\in \mathbb{R}^d$ we define
\begin{align*}  
  \simplex{z_0, \ldots, z_{m+1}} \setminus [z_i] \coloneqq \simplex{z_0, \ldots, z_{i-1}, z_{i+1}, \ldots, z_{m+1}}.
\end{align*}
By~\eqref{eq:gen-initial} we have that~$\gen(v_j) = -j$ for initial simplices~$T = \simplex{v_0,\dots, v_d} \in \tria_0$ and~$j=0,\dots, d$. 
Thus, the levels are~$\level(v_j)=0$ for~$j=0,\dots, d-1$ and~$\level(v_d)=-1$. 
In particular, we have~$T = \simplex{v_0,\dots, v_{d-1} \leveljump v_d}$ and~$\type(T)=d$. 
This is consistent with the resorting of initial simplices in~\ref{itm:maubach1b} for the algorithm by Maubach.

\begin{figure}[h]
\begin{algorithm}[H]
  \label{algo:genbased}
  \caption{Generation based bisection of~$d$-simplex}
  \SetAlgoLined%
  \SetKwFunction{FuncBisection}{Bisection}
  \FuncBisection{$T$}{%

    \KwData{$d$-simplex~$T=\simplex{v_0, \dots, v_d}$ sorted by decreasing vertex generation}

    \KwResult{two children~$\lbrace T_1,T_2\rbrace = \children(T)$, bisection edge $\bse(T)=\simplex{b_0,b_1}$ (all sorted by decreasing vertex generation), and bisection vertex~$b$}
    
    \If{$\level(v_d) \neq \level(v_{d-1})$}{$\bse(T) \coloneqq \simplex{v_{d-1}\mid v_d}$\tcp*{two oldest vertices}}
    \If{$\level(v_d) = \level(v_{d-1})$}{$\bse(T)\coloneqq \simplex{v_{\type(v_0)},v_d}$\tcp*{youngest and oldest vertex of tail}}
    
    Bisection vertex~$b \coloneqq (b_0 + b_1)/2$ is midpoint of~$\bse(T)=\simplex{b_0,b_1}$\;
    
    $T_1 \coloneqq \simplex{b, v_0, \dots, v_d} \setminus [b_0]$\tcp*{remove~$b_0$ and add~$b$}
    
    $T_2 \coloneqq \simplex{b, v_0, \dots, v_d} \setminus [b_1]$\tcp*{remove~$b_1$ and add~$b$}
  }
\end{algorithm}
\end{figure}

Note that there is a one-to-one correspondence between the simplices in Maubach sorting and in the version sorted by generation, and the bisection edges uniquely determines the bisection routine. 
Consequently for equivalence of the bisection routines in Algorithms~\ref{algo:Maubach} and \ref{algo:genbased} there is nothing more to show. 

Using the structure of the sorting in the routine by Maubach, the Lemma~\ref{lem:maubach}, and~\ref{itm:maubach1b}--\ref{itm:maubach3b} we obtain additional structure of the generation based sorting. 

\begin{lemma}[Structure of generation based sorting]
  \label{lem:genbased2}
  Let~$\tria_0$ be colored and let $T = \simplex{v_0,\dots, v_{\ell-1} \leveljump v_\ell,\dots, v_d} \in \tria \in \BisecT$ with~$\ell \in \lbrace 1, \dots, d\rbrace$ be a simplex with bisection edge~$\bse(T)$ and bisection vertex~$b = \bsv(\bse(T))$. 
  \begin{enumerate}
  \item 
  \label{itm:genbased2a} 
  Both $\head(T)$ and $\tail(T)$ consist of vertices with consecutive generations.
  \item 
  \label{itm:genbased2b} 
  If~$\level(v_d) = \level(v_{d-1})$, then~$\level(v_{\ell-1}) = \level(v_\ell)+1$ and
    \begin{align*}
     \qquad \gen(v_0) &= \gen(v_d) + 2d - \type(v_\ell)
                  = \gen(v_d) + 2d - \type(\tail(T)).
     \end{align*}
Furthermore, there is no level jump within $\head(T)$.
\item 
\label{itm:genbased2d} 
 If~$\level(v_d) \neq \level(v_{d-1})$, then we have that
\begin{align*}
\level(v_0) = \level(v_{d-1}) \;\; \text{ if and only if }\;\; \type(T)=d.
\end{align*}
I.e., there is no level jump within~$\head(T)$ if and only if~$\type(T) = d$. 
  \item 
  \label{itm:genbased2c} 
  The level of the bisection vertex is~$\level(b) = \level(\bse(T))+1$. 
  \end{enumerate}
\end{lemma}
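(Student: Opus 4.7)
The plan is to derive all four statements by transferring the structural information of Lemma~\ref{lem:maubach} through the rearrangement dictionary \ref{itm:maubach1b}--\ref{itm:maubach3b}. In each of the three Maubach cases the vertices of $T$ fall into same-level blocks whose generations are consecutively decreasing; under the rearrangement, these blocks are precisely the portions of $T$ separated by level jumps in the generation sorting, so $\head(T)$ and $\tail(T)$ are each unions of such blocks.

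Part~\ref{itm:genbased2a} then follows immediately, since Lemma~\ref{lem:maubach} asserts consecutive generations within every same-level block. For part~\ref{itm:genbased2b}, the assumption $\level(v_{d-1})=\level(v_d)$ forces the situation of~\ref{itm:maubach3} (equivalently~\ref{itm:maubach3b}). Under the correspondence $v_0 = y_{k+1}$, $v_d = y_k$, $v_\ell = y_0$ with $\ell = d-k$, the identity $\gen(y_{k+1}) = \gen(y_k) + 2d - \type(y_0)$ from Lemma~\ref{lem:maubach}\ref{itm:maubach3} translates directly into $\gen(v_0) = \gen(v_d) + 2d - \type(v_\ell)$. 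Since the oldest vertex of $\tail(T)$ is $v_\ell$, we also have $\type(\tail(T)) = \type(v_\ell)$. The level equality $\level(v_{\ell-1})=\level(v_\ell)+1$ and the absence of further level jumps inside $\head(T)$ both read off the relation $\level(y_{k+1}) = \cdots = \level(y_d) = \level(y_k)+1$ in Lemma~\ref{lem:maubach}\ref{itm:maubach3}.

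For part~\ref{itm:genbased2d}, excluding case~\ref{itm:maubach3} leaves~\ref{itm:maubach1b} or~\ref{itm:maubach2b}. In~\ref{itm:maubach1b} we have $k=d$, so $\type(T)=d$ and $\head(T) = \simplex{y_1,\dots,y_d}$ is a single level block; in~\ref{itm:maubach2b} we have $1 \leq k < d$, so $\type(T) < d$ and $\head(T)$ consists of the two distinct level blocks $\simplex{y_{k+1},\dots,y_d}$ and $\simplex{y_1,\dots,y_k}$. Hence $\head(T)$ has no internal level jump precisely when $\type(T) = d$. For part~\ref{itm:genbased2c}, one uses $\gen(b) = \gen(T)+1$ together with $\level = \lceil \cdot/d \rceil$. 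Reading off $\gen(T)=\max_j \gen(y_j)$ from Lemma~\ref{lem:maubach} yields $\gen(T) = d\,\level(T)$ in~\ref{itm:maubach1b} and $\gen(T) = d\,\level(T) - k$ in~\ref{itm:maubach2b} and~\ref{itm:maubach3b}; applying the ceiling gives $\level(b) = \level(T)+1$ in the first case and $\level(b) = \level(T)$ in the other two. Comparing with the level of the oldest endpoint of $\bse(T)$---namely $\level(y_d) = \level(T)$ in~\ref{itm:maubach1b}, $\level(y_k) = \level(T)-1$ in~\ref{itm:maubach2b}, and $\level(y_0) = \level(T)-1$ in~\ref{itm:maubach3b}---shows in every case $\level(b) = \level(\bse(T))+1$.

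The only real obstacle here is organisational: maintaining the dictionary between the generation-sorted indices $v_i$ and the Maubach indices $y_j$ consistently across the three cases, and in particular locating $v_\ell$ correctly in case~\ref{itm:maubach3b}. Once that correspondence is pinned down, all four claims are direct arithmetic consequences of Lemma~\ref{lem:maubach} and require no additional combinatorial argument.
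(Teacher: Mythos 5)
Correct, and this is essentially the paper's proof: transfer Lemma~\ref{lem:maubach} through the rearrangement dictionary \ref{itm:maubach1b}--\ref{itm:maubach3b} and read off each claim; your handling of part~\ref{itm:genbased2c} is a slightly more explicit version of the same computation. One terminological slip is worth fixing: the generation, level, and type of a subsimplex are determined by its \emph{youngest} vertex (the one of highest generation number), so $v_\ell$ is the \emph{youngest} vertex of $\tail(T)$, not the oldest, and likewise the endpoint of $\bse(T)$ whose level you read off in part~\ref{itm:genbased2c} is the younger one in each of the three cases; the numbers you compute are correct, only the word ``oldest'' is misused.
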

\begin{proof}
Let~$T = \simplex{v_0,\dots,v_{\ell-1} \leveljump v_\ell,\dots, v_d} \in \tria \in \BisecT$ be a simplex with corresponding Maubach sorting~$[y_0,\dots, y_d]$ with~$k = d - (\type(T) \bmod d)$.
The property in~\ref{itm:genbased2a} follows from Lemma~\ref{lem:maubach} and the rearrangements in \ref{itm:maubach1b}--\ref{itm:maubach3b}.

If~$\level(v_d) = \level(v_{d-1})$, then due to Lemma~\ref{lem:genbased}~\ref{itm:genbasedb} we have that $k = d - \type(T) < d$. 
Hence, the rearrangement in~\ref{itm:maubach3b} applies and yields 
\begin{align*}
v_0 = y_{k+1},\qquad v_{\ell-1} = y_d,\qquad v_\ell = y_0,\qquad v_d =y_k. 
\end{align*}
Therefore, Lemma~\ref{lem:maubach}~\ref{itm:maubach3} implies~\ref{itm:genbased2b}.  

To prove~\ref{itm:genbased2d} note that the head has consecutive decreasing generations. 
The only case in which no level jump appears within the head is if~$\type(v_0) = d$.  

It remains to show~\ref{itm:genbased2c}. 
If~$\level(v_d) \neq \level(v_{d-1})$, then we have that~$\head(T) = \simplex{v_0,\dots, v_{d-1}}$ which  by~\ref{itm:genbased2a} has consecutive generation. 
Noting also that $\bse(T) = [v_{d-1},v_d]$, we find that
  \begin{align*}
    \gen(b) = \gen(v_0)+1 = \gen(v_{d-1})+d = \gen(\bse(T))+d.
  \end{align*}
  This proves~$\level(b) = \level(\bse(T))+1$.
  Suppose now that~$\level(v_d)=\level(v_{d-1})$. 
  This arises in case~\ref{itm:maubach3b} only, and thus Lemma~\ref{lem:maubach}~\ref{itm:maubach3} shows that~$\level(v_0) = \level(v_{\ell-1})$. 
  Applying~\ref{itm:genbased2b} shows that
  \begin{align*}
	\level(v_{\ell-1}) = \level(v_\ell) +1 = \level(\bse(T)) +1. 
  \end{align*}
  Combining both observations concludes the proof of~\ref{itm:genbased2c}.
\end{proof}

\noindent A benefit of our reformulation of~$\Bisec{}$ in Algorithm~\ref{algo:genbased} is that we can deduce the following bisection rule for subsimplices using only the structure of the subsimplex. 
The edge of the~$m$-subsimplex~$S=\simplex{v_0,\dots, v_m}$ that is first bisected during~$\Bisec{}$ is called the bisection edge of~$S$ and denoted by $\bse(S)$. 
Recall that~$\tail(S)$ denotes the vertices of~$S$ with level~$\level(v_m)$.

\begin{figure}[h]
\begin{algorithm}[H]
  \label{algo:subsimplex}
  \caption{Bisection of~$m$-simplex in dimension~$d$}
  \SetAlgoLined%
  \SetKwFunction{FuncBisection}{Bisection}
  \FuncBisection{$S$}{%
    
    \KwData{an~$m$-simplex~$S=\simplex{v_0, \dots, v_m}$, for~$m \in \set{1, \ldots, d} \leq d$ (sorted by decreasing vertex generation) }
    
    \KwResult{bisection edge $\bse(T)=\simplex{b_0,b_1}$, generation of the bisection vertex~$b$}
    
    \eIf{$\level(v_m) \neq \level(v_{m-1})$}{
      $\bse(S) =\simplex{v_{m-1} \leveljump v_m}$\tcp*{two oldest vertices}

      $\gen(b) = \gen(v_{m-1}) + d$\;
    }{
      $v_\ell \coloneqq$ youngest vertex of~$\tail(S)$\;
      
      $\bse(S)=\simplex{v_\ell,v_m}$\tcp*{youngest and oldest vertex of tail}

      $\gen(b) = \gen(v_m) + 2d+1 - \type(v_\ell)$\;
    }
  }
\end{algorithm}
\end{figure}

The following lemma shows that Algorithm~\ref{algo:subsimplex} works correctly.  

\begin{lemma}[Bisection of subsimplices]
\label{lem:subsimplex}
  Algorithm~\ref{algo:subsimplex} correctly determines  the bisection edge and bisection vertex of each~$m$-subsimplex~$S$ arising from~$\BisecT$ where~$\tria_0$ is a colored initial triangulation.
\end{lemma}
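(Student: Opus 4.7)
The plan is to verify Algorithm~\ref{algo:subsimplex} by reducing the claim to the $d$-simplex case already captured by Lemma~\ref{lem:genbased}. Any edge of $S$ that is ever cut arises as the bisection edge of some containing $d$-simplex $T \in \mtree$, and by Lemma~\ref{lem:genbased} that edge is determined by the head--tail decomposition of $T$. It thus suffices to construct, in each case, a $d$-simplex $T \supseteq S$ whose bisection edge is the one specified by Algorithm~\ref{algo:subsimplex} and which is the first edge of $S$ to be bisected; the corresponding bisection vertex generation then follows from Lemma~\ref{lem:genbased2}.

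I would split according to the two cases of Algorithm~\ref{algo:subsimplex}. If $\level(v_m) \neq \level(v_{m-1})$, then $\tail(S) = \simplex{v_m}$; I would extend $S$ to a $d$-simplex $T \in \mtree$ by adjoining $d-m$ vertices, all strictly older than $v_{m-1}$, so that $\tail(T) = \simplex{v_m}$ and $v_{m-1}$ is the youngest vertex of $\head(T)$. Then Lemma~\ref{lem:genbased}\ref{itm:genbaseda} pins the bisection edge of $T$ to $\simplex{v_{m-1} \leveljump v_m}$, and combining Lemma~\ref{lem:genbased2}\ref{itm:genbased2a} (consecutive generations in $\head(T)$) with~\ref{itm:genbased2c} gives $\gen(b) = \gen(v_0^T) + 1 = \gen(v_{m-1}) + d$, as claimed. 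If instead $\level(v_m) = \level(v_{m-1})$, I would extend $S$ to a $T \in \mtree$ with $\tail(T) = \tail(S)$; a short computation shows that the type of the oldest head vertex satisfies $\type(v_0^T) = \ell$, so Lemma~\ref{lem:genbased}\ref{itm:genbasedb} forces $\bse(T) = \simplex{v_\ell, v_m}$, and Lemma~\ref{lem:genbased2}\ref{itm:genbased2b} delivers $\gen(b) = \gen(v_m) + 2d + 1 - \type(v_\ell)$.

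Two remaining points need attention. First, one has to exhibit such an extension $T \in \mtree$. Starting from any $d$-simplex of $\mtree$ containing $S$, I would apply further bisections targeted away from the tail of $S$ in order to replace newer head vertices with older ancestors, using the rigidity of the Maubach ordering in Lemma~\ref{lem:maubach} and the coloring hypothesis on $\tria_0$ to guarantee that after finitely many steps a $d$-simplex with the desired head--tail structure appears. Second, one must verify that the claimed edge is indeed the first edge of $S$ to be cut: for any competitor edge $e' \in \edges(S)$, any containing $d$-simplex realizing $e'$ as its bisection edge would need, by Lemma~\ref{lem:genbased}, either a head with $e'$-endpoint as youngest head vertex or a tail matching the endpoints of $e'$. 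In both situations the generation of the resulting midpoint is, via the formulas of Lemma~\ref{lem:genbased2}, strictly larger than the one computed above, so $e'$ is bisected strictly later.

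The main obstacle is the existence of the extension $T \in \mtree$ with prescribed tail (and, in the first case, youngest head vertex). This is not a purely combinatorial matter of choosing vertices, since $T$ must actually be produced by the bisection process starting from $\tria_0$; the argument therefore hinges on the interplay between the coloring condition, the head--tail structure preserved by $\Bisec{}$, and the fact that the master tree is rich enough to furnish such a $T$ at some point of the refinement history. Once existence is established, the identification of the bisection edge and the generation formula reduce to a direct application of Lemmas~\ref{lem:genbased} and~\ref{lem:genbased2}.
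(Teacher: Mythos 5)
Your strategy inverts the direction of the paper's argument: the paper takes the $d$-simplex $T \in \mtree$ that is actually being bisected when the first edge of $S$ is cut --- so $\bse(T) \subset S \subset T$ --- and shows that this inclusion already forces Algorithm~\ref{algo:subsimplex} applied to $S$ to output $\bse(T)$. Because $S$ is a subsimplex of $T$ containing $\bse(T)$, the two oldest vertices of $S$ must be the two oldest of $T$ in Case~1, and in Case~2 the oldest and youngest vertices of $\tail(S)$ must coincide with those of $\tail(T)$; no $T$ has to be constructed and the ``first bisection'' property is built into the choice of $T$. You instead start from $S$, try to manufacture a $T$ with a prescribed head--tail profile, and then argue separately that the resulting edge is cut first. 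Both steps are genuine gaps, not details.

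On the construction: bisecting always produces \emph{younger} vertices (each new midpoint has generation one above the simplex being bisected), so ``applying further bisections targeted away from the tail'' cannot produce extra vertices that are \emph{older} than $v_{m-1}$ as your Case~1 requests. Moreover the request itself is off: to force $\bse(T)=[v_{m-1},v_m]$ the extras must be strictly \emph{younger} than $v_{m-1}$, so that $v_{m-1}$ remains the second-oldest vertex of $T$ and hence the \emph{oldest}, not youngest, vertex of $\head(T)$, cf.\ Lemma~\ref{lem:genbased}~\ref{itm:genbaseda}; with extras older than $v_{m-1}$, the bisection edge of $T$ would pair $v_m$ with an extra, not with $v_{m-1}$. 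On the competitor comparison: to show an edge $e' \neq e$ of $S$ is bisected strictly later you must know which $d$-simplices $T'$ with $\bse(T') = e'$ actually lie in $\mtree$ and what their generations are. Lemma~\ref{lem:genbased2} gives formulas for a \emph{fixed} $T'$ but does not rank candidates across different simplices, and the tool that would make this transparent (the $\gensharp$-minimality of the bisection edge, Lemma~\ref{lem:PropSharp}~\ref{itm:gsharp-oldest}) is derived later in the paper and depends on the present lemma. Dropping both parts and using the paper's inclusion observation --- the extremal vertices of $S$ are inherited from whatever $T$ first bisects it --- closes the argument; your two-case split and the generation formulae via Lemma~\ref{lem:genbased2} are correct once such a $T$ is simply taken as given.
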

\begin{proof}
Let~$T =\simplex{v_0,\dots, v_d} \in \tria \in \BisecT$ be a $d$-simplex with bisection edge~$e_T \coloneqq \bse(T)$ and bisection vertex~$b \coloneqq \bsv(e_T)$.  
Suppose~$S = \simplex{\bar{v}_0,\dots, \bar{v}_m}$ is an $m$-subsimplex of~$T$ with~$e_T \subset S$ and with bisection edge~$e_S$ chosen by Algorithm~\ref{algo:subsimplex}. 
We have to show that~$e_T = e_S$ and that Algorithm~\ref{algo:subsimplex} computes the generation of~$b$ correctly.  

  \textit{Case 1.} 
  If~$\level(v_{d-1}) \neq \level(v_d)$, then by Lemma~\ref{lem:genbased}~\ref{itm:genbaseda} the bisection edge of~$T$ is~$e_T = \simplex{v_{d-1} \mid v_d}$. 
  Since by assumption~$e_T \subset S$, we have that $\{v_{d-1},v_{d}\} \subset \{\bar{v}_{1}, \ldots \bar{v}_{m}\}$. 
  As~$S$ is a subsimplex of~$T$ there are no older vertices and hence~$\bar{v}_{m} = v_d$ and~$\bar{v}_{m-1} = v_{d-1}$.  
  Thus,~$\level(\bar{v}_{m-1}) \neq \level (\bar{v}_m)$ and Algorithm~\ref{algo:subsimplex} chooses the same bisection edge~$ \simplex{\bar{v}_{m-1}\mid \bar{v}_m} = \simplex{v_{d-1} \mid v_d}$ for bisecting~$S$.  
  Since~$\level(v_{d-1})\neq \level(v_d)$, we have that~$\head(T) = \simplex{v_0,\dots, v_{d-1}}$. 
  By Lemma~\ref{lem:genbased2}~\ref{itm:genbased2a}~$\head(T)$ has consecutive generations, and hence it follows that
  \begin{align*}
    \gen(b) = \gen(v_0)+1 = \gen(v_{d-1})+d = \gen(\bar{v}_{m-1})+d.
  \end{align*}
  
  \textit{Case 2.} 
  If~$\level(v_{d-1}) = \level(v_d)$, then~$e_T = \simplex{v_{\type(T)},v_d}$ is the bisection edge of~$T$, see Lemma~\ref{lem:genbased}~\ref{itm:genbasedb}. 
  As by assumption~$e_T \subset S$, we have~$\{v_{\type(T)},v_{d}\} \subset \{\bar{v}_{1}, \ldots  \bar{v}_{m}\}$ and $v_d = \bar{v}_{m}$.
  By construction of~$e_T$, the vertices~$v_{\type(T)}$ and~$v_d$ are the youngest and the oldest vertex of~$T$ with level~$\level(v_d)$.   
  Since~$S \subset T$, they are also the youngest and the oldest vertex of~$S$ with level~$\level(\bar{v}_m)$ and
  \begin{align*}
   \type(\tail(T))=\type(v_{\type(T)})=\type(\tail(S)). 
  \end{align*}
  Thus, Algorithm~\ref{algo:subsimplex} chooses the same bisection edge~$e_S=e_T$. Lemma~\ref{lem:genbased2}~\ref{itm:genbased2b} yields
  \begin{align*}
    \gen(v_0) = \gen(v_d) + 2d - \type(\tail(T)).
  \end{align*}
  Hence, with~$\gen(b) = \gen(v_0)+1$ we obtain that
  \begin{align*}
    \gen(b) &= \gen(v_d) + 2d +1 - \type(\tail(T))\\
   & =\gen(v_d) + 2d +1 - \type(\tail(S)).
  \end{align*}
  This completes the proof. 
\end{proof}

\subsection{Fine properties}
\label{subsec:fineProperties}
Algorithm~\ref{algo:genbased} and the routine for subsimplices in Algorithm~\ref{algo:subsimplex} are mere reformulations of the bisection routine by Maubach and Traxler. 
Still, the notion of vertex generations provides structure and a `language' that allows us to show useful properties of the bisection routine, provided the initial triangulation~$\tria_0$ is colored, cf.~Assumption~\ref{ass:initial-triangulation}. 
Here we shall present some of the immediate consequences under this assumption. 

\begin{lemma}
\label{lem:propertiesGenTypeLvl}
Let $T\in \mtree$ be an arbitrary simplex.
\begin{enumerate}
\item 
\label{itm:vertices-gen}
The vertices of the simplex~$T$ have pairwise different generations. 
\item 
\label{itm:oldestonbse}
The oldest vertex~$\widetilde{v}\in \vertices(T)$ of~$T$ is on the bisection edge of~$T$. 
\item 
\label{itm:oldest-edge}
The simplex~$T$ has a unique oldest edge~$\widetilde{e}\in\edges(T)$, and it satisfies  
\begin{align*} 
	  \gen(\widetilde{e})  \leq  \gen(T)-d+1.
\end{align*}
\item 
\label{itm:simplex-edge} 
The level of each edge~$e \in \edges(T)$ satisfies 
\begin{align*}
\level(e) \leq \level(T) \leq \level(e )+1.
\end{align*}
\item  \label{itm:simplex-refEdge}
The level of the bisection edge~$\bse(T) \in \edges(T)$ of~$T$ is
\begin{align*}
\level(\bse(T)) = \begin{cases}
\level(T) - 1 & \text{if }\type(T) < d,\\
\level(T) & \text{if }\type(T) = d.
\end{cases}
\end{align*}
\item 
\label{itm:refe-typeOneVertex}
The bisection edge of $T$ has~$\type(\bse(T)) = 1$ if and only if~$\type(T) = d$. 
\item 
\label{itm:simplex-face}
Let~$F$ be a face of~$T$. 
Then we have that
\begin{align*}
\gen(F) \leq \gen(T) \leq \gen(F)+1.
\end{align*}
\end{enumerate}
\end{lemma}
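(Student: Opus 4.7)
The unifying strategy is to adopt the generation-sorted representation $T = \simplex{u_0,\dots,u_d}$ and read each property off from the three structural configurations \ref{itm:maubach1b}--\ref{itm:maubach3b} of Lemma~\ref{lem:maubach}, together with the reformulations in Lemmas~\ref{lem:genbased} and~\ref{lem:genbased2} and the identity $\gen = d(\level-1)+\type$ in~\eqref{eq:generation-level-type}. Claims~\ref{itm:vertices-gen} and~\ref{itm:oldestonbse} are essentially immediate: the explicit formulas in Lemma~\ref{lem:maubach} show that the generations $\gen(v_0),\dots,\gen(v_d)$ are pairwise distinct in each of the three cases, and Lemma~\ref{lem:genbased} places the oldest vertex $u_d$ on $\bse(T)$ in both of its subcases.

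For~\ref{itm:oldest-edge}, since $\gen(u_0)>\cdots>\gen(u_d)$, every edge $[u_i,u_j]$ with $i<j$ has generation $\gen(u_i)$, which is minimized only when $i=d-1$; hence the unique oldest edge is $\widetilde{e}=\simplex{u_{d-1},u_d}$. In case~(a) of Lemma~\ref{lem:genbased} the head $\simplex{u_0,\dots,u_{d-1}}$ has consecutive generations by Lemma~\ref{lem:genbased2}~\ref{itm:genbased2a}, so $\gen(\widetilde{e})=\gen(T)-d+1$. In case~(b) the tail has consecutive generations and contains both $u_{d-1}$ and $u_d$, so $\gen(u_{d-1})=\gen(u_d)+1$, and combining this with $\gen(u_0)=\gen(u_d)+2d-\type(u_\ell)$ from Lemma~\ref{lem:genbased2}~\ref{itm:genbased2b} and $\type(u_\ell)\leq d$ yields the bound. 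Claim~\ref{itm:simplex-edge} follows from the observation that for any edge $e=[u_i,u_j]$ with $i<j$ one has $\level(e)=\level(u_i)$: the upper bound is immediate from $\level(u_i)\leq\level(u_0)=\level(T)$, while inspecting \ref{itm:maubach1b}--\ref{itm:maubach3b} shows that $\level(u_j)\geq\level(T)-1$ for every $j<d$. For~\ref{itm:simplex-refEdge} and~\ref{itm:refe-typeOneVertex} one reads off the bisection edge in the three cases: in~\ref{itm:maubach1b} (which corresponds exactly to $\type(T)=d$) one has $\level(\bse(T))=\level(T)$ and, by Lemma~\ref{lem:maubach}~\ref{itm:maubach1}, $\type(\bse(T))=1$; in cases~\ref{itm:maubach2b} and~\ref{itm:maubach3b} one has $\type(T)<d$, $\level(\bse(T))=\level(T)-1$, and a short computation based on Lemma~\ref{lem:maubach} yields $\type(\bse(T))=d+1-k\geq 2$ and $\type(\bse(T))=k+\type(u_d)\geq 2$, respectively.

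Only claim~\ref{itm:simplex-face} requires more care. The bound $\gen(F)\leq\gen(T)$ is trivial. For the reverse $\gen(T)\leq\gen(F)+1$: if the youngest vertex $u_0$ lies in $\vertices(F)$, then $\gen(F)=\gen(T)$, so we may assume $F=T\setminus[u_0]$ and $\gen(F)=\gen(u_1)$, reducing the claim to $\gen(u_0)-\gen(u_1)\leq 1$. When the head of $T$ contains both $u_0$ and $u_1$, this follows at once from the consecutive-generation property in Lemma~\ref{lem:genbased2}~\ref{itm:genbased2a}, giving equality. The main obstacle is the remaining configuration in which the head is the singleton $\set{u_0}$; the case analysis restricts this to~\ref{itm:maubach3b} with $\type(T)=1$. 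There Lemma~\ref{lem:genbased2}~\ref{itm:genbased2b} gives $\gen(u_0)-\gen(u_1)=d+1-\type(u_1)$, and substituting $\gen(u_0)=d(\level(T)-1)+1$ and $\gen(u_d)=d(\level(T)-2)+\type(u_d)$ into the formula forces $\type(u_1)=d-1+\type(u_d)$; together with $\type(u_1),\type(u_d)\in\set{1,\dots,d}$ this pins $\type(u_1)=d$ and yields the desired $\gen(u_0)-\gen(u_1)=1$.
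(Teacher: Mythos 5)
Your proof is correct and follows essentially the same approach as the paper, reading each property off from the generation-sorted representation via Lemmas~\ref{lem:maubach}, \ref{lem:genbased}, and~\ref{lem:genbased2}. The only differences are cosmetic: in~\ref{itm:oldest-edge} the paper shortens your case analysis by noting directly that the $d$ distinct integers $\gen(v_0)>\cdots>\gen(v_{d-1})$ force $\gen(v_{d-1})\leq\gen(v_0)-(d-1)$, and in~\ref{itm:simplex-face} the paper obtains $\type(v_1)=d$ by the structural observation that the tail has consecutive types $d,\dots,1$ rather than by your (equally valid) algebraic substitution.
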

\begin{proof}
The claims in~\ref{itm:vertices-gen}--\ref{itm:oldestonbse} follow directly by Algorithm~\ref{algo:genbased} and Lemma~\ref{lem:genbased}. 

\textit{Proof of~\ref{itm:oldest-edge}}. 
Since the vertices in~$T= \simplex{v_0, \ldots, v_d} \in \mtree$ are sorted by decreasing generation, the unique oldest edge is~$\widetilde{e} = \simplex{v_{d-1},v_d}$. 
Since the generations of the vertices are pairwise distinct by~\ref{itm:vertices-gen}, it follows that
 \begin{align*}
 &\gen(\widetilde{e}) + (d-1) =  \gen(v_{d-1}) + (d-1) \leq \gen(v_0) = \gen(T).
  \end{align*}
  
\textit{Proof of~\ref{itm:simplex-edge}}. 
Let~$T = \simplex{v_0,\dots,v_d}\in \mtree$.
Any edge~$e \in \edges(T)$ contains at least one vertex that is younger than~$v_{d-1}$ or has the same generation. 
Since we have that~$\level(v_0) \leq \level(v_{d-1})+1$, by Lemma~\ref{lem:genbased2}~\ref{itm:genbased2b} and~\ref{itm:genbased2d} we obtain
\begin{align*}
&\level(e) \leq \level(T) = \level(v_0) \leq \level(v_{d-1})+1 \leq \level(e)+1.
\end{align*}

\textit{Proof of~\ref{itm:simplex-refEdge}}. 
Let~$T = \simplex{v_0, \ldots, v_d}$ be the simplex under consideration. 
Distinguishing the cases~$\level(v_{d-1}) = \level(v_d)$ and~$\level(v_{d-1}) \neq \level(v_d)$   with Lemma~\ref{lem:genbased}~\ref{itm:genbaseda} and~\ref{itm:genbasedb} in combination with statements on the levels in~$\head(T)$ by Lemma~\ref{lem:genbased2}~\ref{itm:genbased2b} and~\ref{itm:genbased2d} yield the claim. 
  
\textit{Proof of~\ref{itm:refe-typeOneVertex}}. 
By~\ref{itm:simplex-refEdge} we have~$\type(T)= d$ if and only if one has $\level(\bse(T)) = \level(T)$. 
This can occur only if~$T$ is of the form~$T = \simplex{v_0, \ldots, v_{d-1}\leveljump v_d}$ with~$\bse(T) = \simplex{v_{d-1} \leveljump v_d}$, and if there is no level jump in~$\head(T)$. 
This is equivalent to~$\type(v_0) = d$, because the vertices in~$\head(T)$ are of consecutive generation by Lemma~\ref{lem:genbased2}~\ref{itm:genbased2a}. 

\textit{Proof of~\ref{itm:simplex-face}}. 
Let~$T = \simplex{v_0, \ldots, v_d}$. 
If $\head(T)$ consists of at least two vertices, then Lemma~\ref{lem:genbased} and the consecutive generation numbers in~$\head(T)$ yield that~$\gen(v_{0}) = \gen(v_1)+1$. 
Otherwise, if~$\head(T)$ consists only of~$v_0$, then the vertices~$v_1, \ldots, v_d$ are of the same level and have the types~$d, d-1, \ldots, 1$. 
By Lemma~\ref{lem:genbased2}~\ref{itm:genbased2b} applied for~$\ell = 1$ it also follows that~$\gen(v_0) = \gen(v_1)+1$. 
Since~$F$ contains at least one of the vertices~$v_0$ and~$v_1$, in both cases it follows that 
  \begin{align*}  
   &\gen(T)-1 = \gen(v_0)-1 = \gen(v_{1}) \leq \gen(F).
  \end{align*}
Since~$\gen(F) \leq \gen(T)$ holds by definition, this finishes the proof.
\end{proof}
  
\begin{lemma}[Vertices on bisection edges]
\label{lem:bisec-edge-1}
For a simplex~$T \in \mtree$ with child~$T_1$ let~$v\in \vertices(T) \cap \vertices(T_1)$ be a vertex on the bisection edge of~$T$. 
Then~$v$ is on the bisection edge of~$T_1$.
\end{lemma}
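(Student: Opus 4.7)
The plan is to exploit the generation-based bisection routine in Algorithm~\ref{algo:genbased} together with Lemma~\ref{lem:genbased}. Write $T = \simplex{v_0, \ldots, v_d}$ and $\bse(T) = \simplex{b_0, b_1}$ with $\gen(b_0) > \gen(b_1)$, and let $b$ denote the bisection vertex. A glance at the two cases of Lemma~\ref{lem:genbased} reveals that the older endpoint always coincides with the oldest vertex of $T$, i.e., $b_1 = v_d$; moreover both branches of Algorithm~\ref{algo:genbased} include the oldest vertex of the input in its bisection edge. Since $v \in \vertices(T) \cap \vertices(T_1)$ lies on $\bse(T)$, we must have either $v = b_1$ or $v = b_0$, and these two situations are treated separately.

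If $v = b_1 = v_d$, the argument is immediate: $T_1$ retains $v_d$, and since $\gen(b) = \gen(T)+1$ exceeds every vertex generation in $T$ by~\eqref{eq:gen-inductive}, the vertex $v_d$ remains the oldest vertex of $T_1$. By the observation above, $v$ therefore lies on $\bse(T_1)$.

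Now assume $v = b_0$, so that $v_d$ has been removed from $T_1$ and the new oldest vertex of $T_1$ is $v_{d-1}$. If $\level(v_{d-1}) \neq \level(v_d)$ in $T$, then Lemma~\ref{lem:genbased}~\ref{itm:genbaseda} gives $b_0 = v_{d-1}$, which becomes the oldest vertex of $T_1$, and the conclusion follows as in the previous paragraph. Otherwise $\level(v_{d-1}) = \level(v_d)$, and Lemma~\ref{lem:genbased}~\ref{itm:genbasedb} gives $b_0 = v_{\type(v_0)}$ with $\type(v_0) \in \{1, \ldots, d-1\}$; the sub-case $\type(v_0) = d-1$ again yields $b_0 = v_{d-1}$ and is handled in the same way.

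The decisive sub-case is $\type(v_0) \leq d-2$. Here $v_{d-2}$ still belongs to $\tail(T)$, so $\level(v_{d-2}) = \level(v_{d-1})$, and the two oldest vertices of $T_1$ share the same level. Algorithm~\ref{algo:genbased} therefore selects $\bse(T_1) = \simplex{w_{\type(b)}, w_d}$ with $w_d = v_{d-1}$. From $\gen(b) = \gen(v_0)+1$ and $\type(v_0)+1 \leq d-1 < d$ a direct computation yields $\type(b) = \type(v_0)+1$. Since the sorted ordering in $T_1$ satisfies $w_0 = b$ and $w_i = v_{i-1}$ for $1 \leq i \leq d$, this gives $w_{\type(b)} = v_{\type(v_0)} = b_0 = v$, as required. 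The main obstacle is isolating precisely this sub-case and carrying out the type arithmetic that identifies $w_{\type(b)}$ with $b_0$; every other step reduces to the universal fact that Algorithm~\ref{algo:genbased} places the oldest vertex of the input on its bisection edge.
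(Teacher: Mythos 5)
Your proof is correct and follows essentially the same case split as the paper: first the case where $v$ is the oldest vertex of $T$ (preserved as oldest in $T_1$, hence on its bisection edge), then the two subcases of Lemma~\ref{lem:genbased} for the other endpoint of $\bse(T)$, each reducing to $v$ becoming the oldest vertex of $T_1$ except in the final subcase. The only difference is in that last subcase ($\level(v_{d-1}) = \level(v_d)$ and $\type(v_0) \leq d-2$): the paper simply observes that $v$ is the youngest vertex of $\tail(T_1)$ and invokes the characterization of $\bse(T_1)$ from Lemma~\ref{lem:genbased}~\ref{itm:genbasedb}, whereas you carry out the explicit type arithmetic $\type(b) = \type(v_0)+1$ and match indices to reach the same conclusion; the two arguments are equivalent, with yours being slightly more computational.
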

\begin{proof}
	Let~$T = \simplex{v_0, \ldots, v_d}$ and~$v \in \bse(T)$. 
	Let us first note that the oldest vertex of a simplex is always contained in the bisection edge, cf.~Lemma~\ref{lem:genbased}. 
	
	\textit{Case 1}. 
	Let~$v$ be the oldest vertex of~$T$. 
	For the child~$T_1$ of~$T$ that contains the vertex~$v$, it is also the oldest vertex and thus it is on the bisection edge of~$T_1$.   
	
	\textit{Case 2}. 
	If~$v$ is not the oldest vertex, let us distinguish the following cases: 
	
	If~$T = \simplex{v_0, \ldots, v_{d-1} \leveljump v_d}$, then by Lemma~\ref{lem:genbased}~\ref{itm:genbaseda} the bisection edge is~$\bse(T) = \simplex{v_{d-1}\leveljump v_d}$, i.e.,~$v  = v_{d-1}$. 
	The child~$T_1$ that contains~$v = v_{d-1}$ does not contain~$v_d$. 
	Hence~$v = v_{d-1}$ is the oldest vertex of~$T_1$ and consequently on its bisection edge. 

	If~$T  = \simplex{v_0, \ldots, v_{\ell-1} \leveljump v_\ell, \ldots,  v_d}$ with~$\ell < d$, then by Lemma~\ref{lem:genbased}~\ref{itm:genbasedb} the bisection edge is~$\bse(T) = \simplex{v_{\ell} , v_d}$, i.e.,~$v  = v_{\ell}$. 
	Then the child, that contains~$v = v_\ell$ is of the form~$T_1 = \simplex{b, v_0, \ldots, v_{\ell-1} \leveljump v_{\ell}, \ldots, v_{d-1}}$. 
	If~$\ell = d-1$, then~$v_{\ell}$ is the oldest vertex, and thus on the bisection edge of~$T_1$. 
	Otherwise,~$\tail(T_1)$ contains at least two vertices, and~$v = v_\ell$ is the youngest vertex of it. 
	Thus, by Lemma~\ref{lem:genbased}~\ref{itm:genbaseda} the bisection edge~$\bse(T_1) = \simplex{v_{\ell},v_{d-1}}$ contains~$v = v_\ell$. 
\end{proof}
  
\begin{lemma}[Type one edges]
\label{lem:typeOneEdges}
  Any simplex~$T\in \mtree$ has exactly~$d-\type(T)+1$ edges of type one. 
  Their levels coincide with the level of~$T$. 
\end{lemma}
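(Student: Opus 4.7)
The plan is to directly determine which vertices of $T = \simplex{v_0,\dots,v_d}$ have type $1$ and then count edges. The main observation is: for an edge $e=\simplex{v_i,v_j}$ of $T$ with $i<j$, the younger endpoint is $v_i$, so $\type(e) = \type(v_i)$ and $\level(e) = \level(v_i)$. Thus type-$1$ edges correspond to pairs $(v_i,v_j)$ with $i<j$ and $\type(v_i)=1$, and every such edge has level equal to $\level(v_i)$.

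The key calculation is that in each of the three Maubach cases \ref{itm:maubach1b}--\ref{itm:maubach3b}, Lemma~\ref{lem:maubach} yields $\gen(y_d)=\level(T)d-d+1$, so that $\level(y_d)=\level(T)$ and $\type(y_d)=1$. Tracing through each rearrangement that converts Maubach ordering into generation ordering, one checks in all three cases that $y_d$ occupies position $\type(T)-1$ in $\simplex{v_0,\dots,v_d}$; in other words, $v_{\type(T)-1}=y_d$. Hence $v_{\type(T)-1}$ is a type-$1$ vertex of level $\level(T)$, and the $d-\type(T)+1$ edges $\simplex{v_{\type(T)-1},v_j}$ with $j=\type(T),\dots,d$ are type-$1$ edges of level $\level(T)$.

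It remains to verify that no other type-$1$ edges exist. Applying the same generation formulas from Lemma~\ref{lem:maubach} together with the identity $\type = \gen - d(\level-1)$, one finds $\type(v_i) = \type(T) - i$ for $i = 0,\dots,\type(T)-1$ and $\type(v_i) \in \{\type(T)+1,\dots,d\}$ for $i = \type(T),\dots,d-1$. Hence among $v_0,\dots,v_{d-1}$ only $v_{\type(T)-1}$ has type $1$. The one subtlety is the oldest vertex $v_d$: in case \ref{itm:maubach3b} the computation $\gen(y_k)=\level(T)d-k-2d+\type(y_0)$ shows that $\type(v_d)=\type(y_k)=\type(y_0)-k$, which may equal $1$. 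However, since $v_d$ is the oldest vertex of $T$, it never appears as the younger endpoint of an edge and therefore contributes no type-$1$ edges.

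The main obstacle is the case analysis across \ref{itm:maubach1b}--\ref{itm:maubach3b}, particularly the observation that the oldest vertex $v_d$, which may itself carry type $1$ in case \ref{itm:maubach3b}, is harmless for the edge count because it is always the older endpoint. Once the vertex-type computation is in place, the count $d-\type(T)+1$ and the level identity $\level(e)=\level(T)$ for every type-$1$ edge follow at once.
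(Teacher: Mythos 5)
Your overall strategy mirrors the paper's own proof: identify the unique type-$1$ vertex among $v_0,\dots,v_{d-1}$, show it sits at position $\type(T)-1$ with level $\level(T)$, observe that the oldest vertex $v_d$ (which may carry type $1$) is never the younger endpoint of an edge and hence contributes no type-$1$ edges, and count. This is the right structure and produces the correct answer, but one intermediate computation is false.

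The claim ``$\type(v_i) \in \{\type(T)+1,\dots,d\}$ for $i = \type(T),\dots,d-1$'' fails in case~\ref{itm:maubach3b}. Concretely, for $d=3$ start from the initial simplex $\simplex{u_0,u_1,u_2\mid u_3}$ with $\gen(u_j)=-j$; two bisections produce a simplex $T$ whose generation-sorted vertices have generations $2,1,-1,-2$, levels $1,1,0,0$ and types $2,1,2,1$. Then $\type(T)=2$, so your claimed set for $i=2$ is $\{3\}$, but $\type(v_2)=2$. The source of the error is the implicit assumption that the youngest tail vertex $v_{\type(T)}=y_0$ always has type $d$; in case~\ref{itm:maubach3b} the tail is an arbitrary block of $d-\type(T)+1$ consecutive types within $\{1,\dots,d\}$, and $\type(y_0)$ depends on the simplex's history rather than on $\type(T)$ alone. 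The correct and sufficient observation, which is what the paper uses, is that $\tail(T)$ has consecutive decreasing generations at constant level, hence consecutive decreasing types; since the smallest tail type sits at position $d$ and is at least $1$, every tail vertex at a position $\type(T)\leq i\leq d-1$ has type at least $2$, hence none of them has type $1$. With this repair the rest of your argument goes through unchanged.
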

\begin{proof}
  Let~$T = \simplex{v_0,\dots,v_d} \in \mtree$ be a $d$-simplex.  Lemma~\ref{lem:genbased} and the consecutively decreasing generations of vertices in~$\head(T)$ show that there is exactly one vertex~$v\in \vertices(\head(T))$ with $\type(v) = 1$. 
  It satisfies~$v = v_{\type(T) - 1}$ and so~$\level(v) = \level(T)$. 
  Since all vertices in the tail are of the same level, a type one vertex~$v \in \vertices(\tail(T))$ must satisfy~$v = v_d$. 
  Therefore, all type one edges in~$T$ are of the form $\simplex{v_{\type(T) - 1},v_\ell}$ with~$\ell = \type(T),\dots,d$ and their level is~$\level(T)$.
\end{proof}
  
\begin{lemma}[Type $d$ simplices]
\label{lem:typeDsimplices}
Each edge~$e\in \edges(\mtree)$ belongs to a type $d$ simplex~$T \in \mtree$ in the sense that
\begin{align}
\label{eq:TypeDsim}
e\in \edges(T)\qquad\text{and}\qquad \gen(T) = \level(e)d.
\end{align}
\end{lemma}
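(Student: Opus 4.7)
My plan is to construct the required type $d$ simplex by descending through the refinement forest, starting from a simplex of minimal generation containing~$e$. Specifically, I would first produce $T^{(0)} \in \mtree$ of generation $\max(0,\gen(e))$ containing~$e$, and then, for $k = 0, 1, \ldots, d - \type(e) - 1$, successively replace $T^{(k)}$ by a child $T^{(k+1)} \in \mtree$ that still contains~$e$. The terminal simplex will have generation $d\,\level(e)$, hence type~$d$, and will still contain~$e$ by construction.

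For the base simplex $T^{(0)}$, the case of an initial edge ($\level(e) = 0$) is immediate: any initial simplex containing~$e$ has generation $0 = d\,\level(e)$ and type~$d$, so no descent is needed. When $\level(e) \geq 1$, I would let $v_*$ denote the younger endpoint of~$e$. By definition $v_*$ is the bisection vertex of some $P \in \mtree$ with $\gen(P) = \gen(e) - 1$, and the other endpoint $v'$ of~$e$ is a vertex of~$P$; the child of~$P$ containing both $v_*$ and $v'$ then serves as~$T^{(0)}$. For the inductive step, I would observe that $\gen(T^{(k)}) = \gen(e) + k$, so that $\level(T^{(k)}) = \level(e)$ and $\type(T^{(k)}) = \type(e) + k < d$ throughout the descent. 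Lemma~\ref{lem:propertiesGenTypeLvl}~\ref{itm:simplex-refEdge} then forces $\level(\bse(T^{(k)})) = \level(T^{(k)}) - 1 = \level(e) - 1 < \level(e)$, so $\bse(T^{(k)}) \neq e$ and at least one child of~$T^{(k)}$ still contains~$e$.

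The main thing to be careful about is that the bisection edge really has level strictly smaller than $\level(e)$ at every intermediate step; this is where the condition $\type(T^{(k)}) < d$ is used, and it holds precisely because the type of $T^{(k)}$ strictly increases and only reaches~$d$ at the terminal step $k = d - \type(e)$, at which point the descent halts and no further refinement is required. Once this uniform drop of the level is secured via Lemma~\ref{lem:propertiesGenTypeLvl}~\ref{itm:simplex-refEdge}, the existence of the base simplex~$T^{(0)}$ and the preservation of~$e$ under each bisection step are routine.
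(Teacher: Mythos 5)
Your proof is correct and takes essentially the same route as the paper: start from the simplex created together with $e$ (i.e.\ with $\gen(T)=\gen(e)$, or an initial simplex when $\level(e)=0$), and descend to children while $\type < d$, using Lemma~\ref{lem:propertiesGenTypeLvl}~\ref{itm:simplex-refEdge} to see that $\bse$ is older than $e$ so the descent preserves $e$, terminating at type $d$. Your write-up is merely more explicit about the existence of the base simplex $T^{(0)}$ and about the arithmetic of the termination at $k = d - \type(e)$, which the paper leaves implicit.
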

\begin{proof}
Let~$e\in \edges(\mtree)$ and let~$T \in \mtree$ denote the simplex that is created at the same time as~$e$ in the sense that~$e\in \edges(T)$ and~$\gen(T) = \gen(e)$. 
If~$\type(T) = d$, then $T$ satisfies \eqref{eq:TypeDsim}. 
If~$\type(T)<d$, then by Lemma~\ref{lem:propertiesGenTypeLvl}~\ref{itm:simplex-refEdge} we have that~$\level(\bse(T)) = \level(T)-1$, and consequently~$e \neq \bse(T)$. 
Thus one of the children~$T'\in \children(T)$ contains~$e \in \edges(T')$. 
This allows us to proceed inductively until we reach a simplex of type~$d$. 
\end{proof}

\subsection{Alternative notion of generation}
\label{subsec:genSharp}
The previous subsection shows that the notions of generation, level, and type entail a lot of structure. 
However, there are some features that cause difficulties in our analysis. 
For example, when bisecting an edge for the first time, the type of its children may differ from the type of the edge. 
We remedy these drawbacks by introducing an alternative notion of generation which is of particular use  in the following Section~\ref{sec:locGenDiff}. 
Throughout this subsection we assume that~$\tria_0$ is colored according to Assumption~\ref{ass:initial-triangulation}.

\begin{definition}[$\sharp$-Generation]
\label{def:gsharp}
 For any edge~$e\in \edges(\mtree)$ with bisection vertex~$b$ we set~$\gensharp(e) \coloneqq \gen(b)$. 
  More generally, for an~$m$-simplex~$S$ with~$m\geq 1$ and bisection edge~$e$ we define~$\gensharp(S) \coloneqq \gensharp(e)$.
\end{definition}
If an~$m$-subsimplex~$S$ of~$T \in \tria$ is bisected into two $m$-simplices~$S_1$ and~$S_2$ by Algorithm~\ref{algo:genbased}, we call them children of~$S$.  
They satisfy
\begin{align}
 \label{eq:gensharp-children}
  \gen(S_1) = \gen(S_2) = \gensharp(S).
\end{align}
For a~$d$-simplex~$T \in \tria$ with children~$T_1$ and $T_2$ we obtain even
\begin{align}
  \label{eq:gensharp-childrenT}
  \gen(T_1) = \gen(T_2) = \gensharp(T) = \gen(T)+1.
\end{align}
In analogy to~\eqref{eq:def-type-level}, for an~$m$-simplex~$S$ with~$m \geq 1$ we define
\begin{align}
  \label{eq:def-type-level-sharp}
  \begin{aligned}
    \levelsharp(S) &\coloneqq \left\lceil \frac{\gensharp(S)}{d} \right\rceil,
    \\
    \typesharp(S) &\coloneqq \gensharp(S) - d\, \big(\levelsharp(S) -1\big) \in \set{1,\dots,d}.
  \end{aligned}
\end{align}
As before, this leads to the identity 
\begin{align*}
\gensharp(\bigcdot) = d\, \big(\levelsharp(\bigcdot) -1\big) + \typesharp(\bigcdot).
\end{align*}

\begin{lemma}
	\label{lem:PropSharp}
  Let~$T\in \mtree$ be arbitrary.
  \begin{enumerate}
  \item 
  \label{itm:lvlSharpEdge}
  For every edge~$e \in \edges(T)$ we have that~$ \levelsharp(e) = \level(e)+1$. 
  \item 
  \label{itm:gsharp-oldest} 
  For an $m$-subsimplex~$S$ of~$T$ with~$m \geq 2$ the bisection edge~$\bse(S)$ is the $\sharp$-oldest edge of $S$, i.e.,
  \begin{align*}
\gensharp(\bse(S)) < \gensharp(e')\quad \text{ for all }e' \in \edges(S) \setminus \set{\bse(S)}. 
  \end{align*}
  \item 
  \label{itm:gsharp-oldest2}
  We have that~$\gen(T) + 1 = \gensharp(\bse(T))$.
  \item 
  \label{itm:gsharp-edge-child} 
  If an edge~$e \in \edges(T)$ is bisected into edges~$e_1$ and $e_2$, then one has that
    \begin{alignat*}{2}
      \gensharp(e_1) &=
      \gensharp(e_2) &&= 
      \gensharp(e) +d,
      \\
      \levelsharp(e_1) &=
      \levelsharp(e_2) &&= 
      \levelsharp(e) +1,\\
      \typesharp(e_1) &=
      \typesharp(e_2) &&= 
      \typesharp(e).
    \end{alignat*}
  \end{enumerate}
\end{lemma}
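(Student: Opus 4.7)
The plan is to reduce all four statements to direct applications of Algorithm~\ref{algo:subsimplex}, which gives an explicit formula for $\gen(b)$ in terms of the sorted vertex data of any subsimplex. Since $\gensharp(S)=\gen(b)$ by Definition~\ref{def:gsharp}, the claims become essentially arithmetic consequences of~\eqref{eq:def-type-level} and~\eqref{eq:def-type-level-sharp}.

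First I would treat~\ref{itm:lvlSharpEdge}. For an edge $e=\simplex{w_0,w_1}$ Algorithm~\ref{algo:subsimplex} gives $\gen(b)=\gen(w_0)+d$ if $\level(w_0)\neq \level(w_1)$, immediately yielding $\levelsharp(e)=\level(w_0)+1=\level(e)+1$. If $\level(w_0)=\level(w_1)$, then $\gen(b)=\gen(w_1)+2d+1-\type(w_0)$; substituting $\gen(w_i)=d(\level(w_i)-1)+\type(w_i)$ and using $\type(w_0)>\type(w_1)$ shows that $\gen(b)=d\,\level(w_0)+r$ with $r\in\{2,\dots,d\}$, so again $\levelsharp(e)=\level(e)+1$.

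Claims~\ref{itm:gsharp-oldest2} and~\ref{itm:gsharp-edge-child} are then short. The third is immediate: by~\eqref{eq:gensharp-childrenT} the bisection vertex of $T$ has generation $\gen(T)+1$, so $\gensharp(\bse(T))=\gen(T)+1$. For the fourth, write $e=\simplex{b_0,b_1}$ with midpoint $b$. Part~\ref{itm:lvlSharpEdge} yields $\level(b)=\level(e)+1>\level(b_i)$, hence each child $e_i=\simplex{b,b_i}$ has endpoints of strictly different levels. Algorithm~\ref{algo:subsimplex} therefore falls into its first branch on $e_i$ and produces a midpoint of generation $\gen(b)+d$, giving $\gensharp(e_i)=\gensharp(e)+d$; the assertions for $\levelsharp$ and $\typesharp$ follow from~\eqref{eq:def-type-level-sharp}.

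The main obstacle is the strict minimality~\ref{itm:gsharp-oldest}, which says that Algorithm~\ref{algo:subsimplex} really picks the edge of $S$ whose midpoint has smallest generation. I would proceed by case analysis on $S=\simplex{v_0,\dots,v_m}$. If $\level(v_{m-1})\neq\level(v_m)$ then $\bse(S)=\simplex{v_{m-1},v_m}$ and $\gensharp(\bse(S))=\gen(v_{m-1})+d$; otherwise, letting $v_\ell$ be the youngest vertex of $\tail(S)$, one has $\bse(S)=\simplex{v_\ell,v_m}$ and $\gensharp(\bse(S))=\gen(v_m)+2d+1-\type(v_\ell)$. For any other edge $e'=\simplex{v_i,v_j}$ with $i<j$, Algorithm~\ref{algo:subsimplex} produces $\gensharp(e')$ in closed form according to whether $\level(v_i)=\level(v_j)$, and a term-by-term comparison shows $\gensharp(e')-\gensharp(\bse(S))>0$ in each subcase using $\gen(v_i)>\gen(v_j)$ for $i<j$, the bound $\type\in\{1,\dots,d\}$, and the estimate $\gen(v)\geq d(\level(v)-1)+1$. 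The most delicate subcase is when both $v_i,v_j$ lie in $\tail(S)$: there the difference equals $(\gen(v_j)-\gen(v_m))+(\type(v_\ell)-\type(v_i))$ with both summands nonnegative, and the exclusion $\{i,j\}\neq\{\ell,m\}$ forces at least one to be strictly positive.
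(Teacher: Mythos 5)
Your arguments for \ref{itm:lvlSharpEdge}, \ref{itm:gsharp-oldest2} and \ref{itm:gsharp-edge-child} are correct and run parallel to the paper's, which for \ref{itm:lvlSharpEdge} simply cites Lemma~\ref{lem:genbased2}\,\ref{itm:genbased2c} and for \ref{itm:gsharp-edge-child} reads off the first branch of Algorithm~\ref{algo:subsimplex} from the sorted form $e_i = \simplex{b \leveljump b_i}$; your explicit justification that $\level(b) > \level(b_i)$, so that the first branch indeed applies, is the step the paper leaves implicit. Where you genuinely diverge is \ref{itm:gsharp-oldest}: the paper settles it in one line by a structural observation --- $\bse(S)$ is by definition the first edge of $S$ ever bisected, while all other edges of $S$ can only be bisected later, inside descendants, so their bisection vertices necessarily have strictly larger generation. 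Your route instead applies Algorithm~\ref{algo:subsimplex} (valid for $1$-simplices via Lemma~\ref{lem:subsimplex}) to each edge of $S$, obtaining a closed formula for $\gensharp$, and verifies the strict inequality by a term-by-term case analysis. The arithmetic checks out --- e.g.\ when $\level(v_{m-1})\neq\level(v_m)$ and $\level(v_i)=\level(v_j)$ for $i<j\leq m-1$, the difference is $(\gen(v_j)-\gen(v_{m-1})) + (d+1-\type(v_i)) \geq 1$; and in your ``delicate'' subcase the decomposition into $(\gen(v_j)-\gen(v_m))+(\type(v_\ell)-\type(v_i))$ with both terms nonnegative and at least one positive is exactly right. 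The trade-off is a longer, purely computational argument in exchange for not having to invoke the temporal ordering of bisections; the paper's one-liner is cleaner but presupposes comfort with how $\gensharp$ tracks the first bisection time.
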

\begin{proof}
  The claim in~\ref{itm:lvlSharpEdge} is equivalent to the statement in Lemma~\ref{lem:genbased2}~\ref{itm:genbased2c}.
  The claim in~\ref{itm:gsharp-oldest} follows from the fact that all other edges are not split in the bisection of~$S$ but later, during the bisection of the descendants of~$S$. 
  The property in \ref{itm:gsharp-oldest2} follows by definition.
  Let us finally prove \ref{itm:gsharp-edge-child}.  
  Let~$e = [v_0,v_1] \in \edges(T)$ have the bisection vertex~$b = \bsv(e)$. 
  Then its children read~$e_1=\simplex{b \leveljump v_0}$ and $e_2=\simplex{b \leveljump v_1}$. 
  Thus, the first case in Algorithm~\ref{algo:subsimplex} applies and proves the claim.
\end{proof}

\begin{remark}[$\gensharp$ and edge length]
  \label{rem:longest-edge-sharp}
  There is a relation between $\gensharp$ and the interpretation of~$\Bisec{}$ as longest edge refinement as in Remark~\ref{rem:longest-edge}. 
  For this purpose let~$T_0$ denote the standard Kuhn simplex and~$\tria_0 \coloneqq \set{T_0}$ or~$\tria_0$ can be chosen as the Kuhn triangulation of a cube, cf.~Remark~\ref{rem:algo-kuhn}.  
  For any edge~$e \in \edges(\mtree)$ we may compute $\norm{e}_\infty = 2^{1-\levelsharp(e)}$ and $\norm{e}_1 = 2^{1-\levelsharp(e)} (d+1-\typesharp(e))$. 
  With the norm~$\normm{\cdot}$ as defined in Remark~\ref{rem:longest-edge} we obtain
  \begin{align*}
    \normm{e} &= \norm{e}_\infty + \frac 1d \norm{e}_1 = 2^{1-\levelsharp(e)} \Big( 1 + \frac{d+1-\typesharp(e)}{d}\Big).
  \end{align*}
  The~$\sharp$-oldest edge corresponds to the~$\normm{\cdot}$-longest edge and is the bisection edge.
\end{remark}
  
  \begin{lemma}[$\gensharp$ in a simplex]
  	\label{lem:gensharp-patch}
  	For any~$T \in \mtree$, for any of its vertices~$v \in \vertices(T)$, and for all edges~$e, e' \in \edges(T)$ with~$v \in e \cap e'$ the following estimate holds
  	\begin{align*}
  		\abs{\gensharp(e) - \gensharp(e')} \leq d-1. 
  	\end{align*}
  \end{lemma}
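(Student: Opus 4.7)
The strategy is to exploit the tight control on $\levelsharp$ provided by Lemma~\ref{lem:PropSharp}\ref{itm:lvlSharpEdge} together with Lemma~\ref{lem:propertiesGenTypeLvl}\ref{itm:simplex-edge}, and then to handle the residual type contribution using the fine structural information in Lemma~\ref{lem:genbased2}.

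First I would observe that combining these two lemmas yields $\levelsharp(e) \in \set{\level(T), \level(T)+1}$ for every edge $e \in \edges(T)$. Writing $\gensharp(e) = d(\levelsharp(e)-1) + \typesharp(e)$ with $\typesharp(e) \in \set{1,\dots,d}$, this gives
\begin{equation*}
\gensharp(e) - \gensharp(e') = d\bigl(\levelsharp(e)-\levelsharp(e')\bigr) + \typesharp(e) - \typesharp(e').
\end{equation*}
If $\levelsharp(e) = \levelsharp(e')$, the bound $\abs{\gensharp(e)-\gensharp(e')} \leq d-1$ follows immediately from $\typesharp \in \set{1,\dots,d}$. Otherwise, up to relabeling, I may assume $\levelsharp(e) = \level(T)+1$ and $\levelsharp(e') = \level(T)$, which forces $\level(e') = \level(T)-1$. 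Both endpoints of $e'$ therefore lie at the lowest level present in $T$, so $v$ and the second endpoint $w$ of $e'$ both belong to $\tail(T)$ with $\abs{\tail(T)} \geq 2$, while the second endpoint $u$ of $e$ lies in $\head(T)$ at level $\level(T)$.

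The crux is to combine $\abs{\tail(T)} \geq 2$ with Lemma~\ref{lem:genbased2}\ref{itm:genbased2a} and \ref{itm:genbased2b}: no level jump occurs within $\head(T)$, every head vertex has level $\level(T)$, both $\head(T)$ and $\tail(T)$ have consecutive vertex generations, and $\gen(v_0) = \gen(v_d) + 2d - \type(\tail(T))$. A short calculation from these identities will then show that the head types form precisely $\set{1,\dots,\ell}$ with $\ell = \abs{\head(T)}$, while the tail types span a window of width at most $d-\ell$. In particular $\type(u) \leq \ell$ and $\abs{\type(w)-\type(v)} \leq d-\ell$, so that $\type(u) + \abs{\type(w)-\type(v)} \leq d$.

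Finally, Algorithm~\ref{algo:subsimplex} applied to the head--tail edge $e$ and to the tail--tail edge $e'$ yields the explicit identities $\typesharp(e) = \type(u)$ and $\typesharp(e') = d+1 - \abs{\type(w)-\type(v)}$. The preceding inequality translates into $\typesharp(e') \geq \typesharp(e)+1$, whence $\gensharp(e) - \gensharp(e') = d + \typesharp(e) - \typesharp(e') \leq d-1$, and the desired bound follows since $\gensharp(e) > \gensharp(e')$ in this case. The main obstacle I anticipate is the precise bookkeeping of the head and tail type distributions: the coarse level-based estimate alone permits differences up to $2d-1$, so the delicate inequality $\type(u) + \abs{\type(w)-\type(v)} \leq d$ extracted from Lemma~\ref{lem:genbased2} is the heart of the argument.
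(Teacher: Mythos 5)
Your approach is genuinely different from the paper's and considerably more computational. The paper sorts the $d$ edges through $v$ by $\gensharp$ and shows that these values must be consecutive integers: it follows the chain of descendants of $T$ containing $v$, whose bisection edges all contain $v$ by Lemma~\ref{lem:bisec-edge-1} and whose $\gensharp$ increases by exactly one at each step, and all of these bisection edges descend from the original $e_1,\dots,e_d$; a gap among $\gensharp(e_1),\dots,\gensharp(e_d)$ would force some child $e_k'$ of some $e_k$ to satisfy $\gensharp(e_k')-\gensharp(e_k)<d$, contradicting Lemma~\ref{lem:PropSharp}~\ref{itm:gsharp-edge-child}. That argument needs no head/tail bookkeeping and is uniform over all $T$. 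Your route via the $\levelsharp$-split and a $\typesharp$-comparison is legitimate in spirit, but as written it has a genuine gap.

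The gap is in the structural reduction. From $\level(e')=\level(T)-1$ you infer that ``both endpoints of $e'$ lie at the lowest level present in $T$'' and hence $v,w\in\tail(T)$ with $\abs{\tail(T)}\geq 2$. This fails when $T$ has vertices on three distinct levels, which by Lemma~\ref{lem:genbased2}~\ref{itm:genbased2d} is precisely the case $\level(v_{d-1})\neq\level(v_d)$ with $\type(T)<d$: then $\tail(T)=\simplex{v_d}$ is a singleton, $\head(T)$ itself contains a level jump, and the vertices of $T$ with level below $\level(T)$ are $v_{\type(T)},\dots,v_d$ spread over two levels. Such simplices arise as soon as an initial simplex is bisected once, already for $d=2$. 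In that situation $v$ and $w$ need not lie in $\tail(T)$: both may sit in the lower block of $\head(T)$, or one of them may be $v_d$ at a strictly lower level. Your statement ``the tail types span a window of width at most $d-\ell$'' with $\ell=\abs{\head(T)}=d$ then reads $\abs{\type(v)-\type(w)}\leq 0$, which is false, and the identity $\typesharp(e')=d+1-\abs{\type(v)-\type(w)}$ also fails whenever $\level(v)\neq\level(w)$, since Algorithm~\ref{algo:subsimplex} then takes its first branch and returns $\typesharp(e')=\type(\cdot)$ of the younger endpoint. Moreover, the identity $\gen(v_0)=\gen(v_d)+2d-\type(\tail(T))$ you invoke comes from Lemma~\ref{lem:genbased2}~\ref{itm:genbased2b}, which is only available when $\level(v_{d-1})=\level(v_d)$. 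The target inequality $\typesharp(e)<\typesharp(e')$ is still true in the three-level case and can be extracted with extra sub-cases (the upper-head types are $\{1,\dots,\type(T)\}$ and the lower vertices have types in $\{\type(T)+1,\dots,d\}$), but your argument does not cover them.
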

  \begin{proof}
  	Let~$T\in \mtree$ and~$v \in \vertices(T)$ be an arbitrary but fixed vertex of~$T$. 
  	There are~$d$ edges in~$\edges(T)$ containing $v$. 
  	They can be sorted by strictly increasing~$\gensharp$ since the edges and their children containing~$v$ have non-empty intersection with the children of~$T$ containing~$v$, and since in each bisection only one edge can be bisected. 
  	Hence, let~$e_1, \ldots, e_d\in \edges(T)$ be the edges of~$T$ containing~$v$ labeled such that 
  	\begin{align*}
  		\gensharp(e_1)< \ldots < \gensharp(e_d).
  	\end{align*}
  	Let~$T_1$ denote the descendant of~$T$ that contains~$v$ and that has~$\gensharp(T) = \gensharp(e_1)$, i.e.,~$\bse(T_1) = e_1$. 
  	By Lemma~\ref{lem:bisec-edge-1} the vertex~$v$ is on the bisection edge of any further descendant of~$T_1$. 
  	More specifically, for the sequence of descendants~$T_1,T_2, \ldots$ of~$T$ containing~$v$ with~$T_{k+1} \in \children(T_{k})$ for~$k  \in \mathbb{N}$, we have that~$v \in \bse(T_k)$, for any~$k \in \mathbb{N}$. Consequently, we have that~$\gensharp(\bse(T_{k+1})) =\gensharp(\bse(T_k))+1$, for any~$k \in \mathbb{N}$. 
  	Note that all of these bisection edges are descendants of~$e_1, \ldots, e_d$.
  	
  	If there would be a~$j \in \{1, \ldots, d-1\}$ such that~$\gensharp(e_j)+1<\gensharp(e_{j+1})$, then we could choose the smallest such, i.e.,~$\gensharp(e_k) = \gensharp(e_1) + (k-1)$ for any~$k = 1, \ldots, j$. 
  	Then, there would be a~$k \in \{1, \ldots, j\}$ and a child~$e_k'$of $e_k$ with 
  	\begin{align*}
  		\gensharp(e_j)+1  = \gensharp(e_k') < \gensharp(e_{j+1}),
  	\end{align*}
  	which means that~$\gensharp(e_k') - \gensharp(e_k) \leq j < d-1$.  This, however, contradicts  Lemma~\ref{lem:PropSharp}~\ref{itm:gsharp-edge-child}, which states that 
  	\begin{align*}
  		\gensharp(e_k') = \gensharp(e_k) + d \qquad \text{ for any } k = 1, \ldots, d.  
  	\end{align*}
  	This suffices to conclude that~$\gensharp(e_k) = \gensharp(e_1) + (k-1)$ for any~$k \in \{1, \ldots, d\}$. 
  	In turn, this implies that we have for any~$e,e' \in \edges(T)$ with~$v \in e \cap e'$ 
  	\begin{align*}
&  		\abs{\gensharp(e) - \gensharp(e')} \leq d-1. \qedhere
  	\end{align*}
  \end{proof}

\begin{lemma}[Bisection edge]
\label{lem:bisec-edge-2}
Let~$T_1, T_2 \in \mtree$ be simplices such that~$T_1 \cap T_2$ is a non-empty subsimplex of~$T_1$ and~$T_2$.
  \begin{enumerate}
  \item 
  \label{itm:bisec-edge-a}
    If~$\bse(T_1) \subset T_2$ and~$\bse(T_2) \cap T_1 \neq \emptyset$, then we have~$\bse(T_1) \cap \bse(T_2) \neq \emptyset$. 
  \item 
  \label{itm:bisec-edge-b}
   If additionally~$\gen(T_1) = \gen(T_2)$, then we have~$\bse(T_1) = \bse(T_2)$. 
  \end{enumerate} 
\end{lemma}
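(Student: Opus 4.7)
The plan is to reduce both parts to properties of the common subsimplex $S \coloneqq T_1 \cap T_2$, combining the subsimplex bisection rule (Algorithm~\ref{algo:subsimplex}, validated by Lemma~\ref{lem:subsimplex}) with the $\sharp$-minimality of the bisection edge inside a simplex (Lemma~\ref{lem:PropSharp}). In both parts, $\bse(T_1) \subset T_1 \cap T_2 = S$, so Lemma~\ref{lem:subsimplex} applied to $S$ viewed as a subsimplex of $T_1$ identifies $\bse(T_1)$ with the edge $\bse(S)$ produced by Algorithm~\ref{algo:subsimplex}.

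Part~\ref{itm:bisec-edge-b} then follows by $\gensharp$-minimality. Lemma~\ref{lem:PropSharp}~\ref{itm:gsharp-oldest2} together with the hypothesis $\gen(T_1) = \gen(T_2)$ gives
\begin{align*}
\gensharp(\bse(T_1)) = \gen(T_1)+1 = \gen(T_2)+1 = \gensharp(\bse(T_2)),
\end{align*}
while Lemma~\ref{lem:PropSharp}~\ref{itm:gsharp-oldest} asserts that $\bse(T_2)$ is the strict $\sharp$-minimizer among $\edges(T_2)$. Since $\bse(T_1) \in \edges(T_2)$, this forces $\bse(T_1) = \bse(T_2)$.

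For part~\ref{itm:bisec-edge-a}, observe that $\bse(T_2) \cap T_1 = \bse(T_2) \cap S$, so the hypothesis says this intersection is non-empty. Since $S$ is spanned by a subset of $\vertices(T_2)$, this intersection is either all of $\bse(T_2)$ or a single vertex. In the first case, reading Lemma~\ref{lem:subsimplex} from the $T_2$-side gives $\bse(S) = \bse(T_2)$ as well, so $\bse(T_1) = \bse(T_2)$ and they trivially meet. Otherwise let $v'$ denote the unique vertex of $\bse(T_2)$ lying in $S$; I then split along Lemma~\ref{lem:genbased} applied to $T_2 = \simplex{w_0, \ldots, w_d}$ to prove $v' \in \bse(S)$. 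If $\level(w_{d-1}) \neq \level(w_d)$, then $\bse(T_2) = \simplex{w_{d-1} \mid w_d}$ consists of the two $\gen$-oldest vertices of $T_2$, so $v'$ is automatically the $\gen$-oldest vertex of $S$ and thus on $\bse(S)$ by either branch of Algorithm~\ref{algo:subsimplex}. The same ``oldest of $S$'' argument settles the sub-case $\level(w_{d-1}) = \level(w_d)$ with $v' = w_d$.

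The main obstacle is the remaining sub-case $\level(w_{d-1}) = \level(w_d)$ with $v' = w_{\type(T_2)}$, because there $v'$ need not be the oldest vertex of $S$ at all. The plan is to exploit that $v'$ is the youngest vertex of $\tail(T_2)$, which is the block of lowest-level vertices of $T_2$ and whose intersection with $S$ constitutes $\tail(S)$; by the consecutive tail generations from Lemma~\ref{lem:genbased2}~\ref{itm:genbased2a}, $v'$ remains the youngest vertex of $\tail(S)$. Algorithm~\ref{algo:subsimplex} then selects $v'$ via its second branch if $|\tail(S)| \geq 2$, while if $\tail(S) = \{v'\}$ the vertex $v'$ is even the oldest vertex of $S$ and therefore picked up by the first branch.
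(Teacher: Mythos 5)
Your proof is correct, but takes a genuinely different route for part~\ref{itm:bisec-edge-a}. The paper argues by contradiction: assuming $\bse(T_1) \cap \bse(T_2) = \emptyset$, it picks a vertex $v_0 \in \bse(T_2) \cap T_1$ disjoint from $\bse(T_1) = [v_1,v_2]$, forms the $2$-subsimplex $[v_0,v_1,v_2] \subset T_1 \cap T_2$, and derives incompatible $\gensharp$-inequalities: viewing the triangle as a subsimplex of $T_1$ makes $[v_1,v_2]$ the $\sharp$-minimal edge (Lemma~\ref{lem:PropSharp}~\ref{itm:gsharp-oldest}), while passing to the youngest descendant $T_2'$ of $T_2$ still containing the triangle forces $v_0 \in \bse(T_2')$ by Lemma~\ref{lem:bisec-edge-1}, so that some edge $[v_0, v_i]$ is $\sharp$-minimal instead. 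You argue directly: you reduce to the common face $S = T_1 \cap T_2$, identify $\bse(T_1)$ with $\bse(S)$ via Lemma~\ref{lem:subsimplex}, and analyze $\bse(T_2) \cap S$ case by case using the explicit form of $\bse(T_2)$ from Lemma~\ref{lem:genbased}, showing that the unique shared vertex $v'$ always lies on $\bse(S)$ — either because it is the oldest vertex of $S$ (so either branch of Algorithm~\ref{algo:subsimplex} picks it up), or because $\tail(S) = \tail(T_2) \cap S$ and $v'$ remains the youngest element of $\tail(S)$, so the second branch selects it. Both arguments are valid; yours is more constructive in explaining exactly which vertex of $\bse(T_2)$ must lie on $\bse(T_1)$, at the cost of a longer case split, while the paper's contradiction is terser but less transparent about the mechanism. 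Part~\ref{itm:bisec-edge-b} is handled identically in both. One small remark: the citation of Lemma~\ref{lem:genbased2}~\ref{itm:genbased2a} (consecutive tail generations) is not actually needed in the hard sub-case — the facts that $v'$ is the youngest vertex of $\tail(T_2)$, that $v' \in S$, and that $\tail(S) \subset \tail(T_2)$ already force $v'$ to be the youngest vertex of $\tail(S)$.
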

\begin{proof}
  To prove~\ref{itm:bisec-edge-a} we proceed by contradiction. 
  For this purpose we assume that~$\bse(T_1) \cap \bse(T_2) =\emptyset$.  
  By the assumption~$\bse(T_2) \cap T_1 \neq \emptyset$ there exists a vertex~$v_0 \in \bse(T_2) \cap T_1$.  
  The property~$\bse(T_1) \cap \bse(T_2) = \emptyset$ ensures that~$v_0 \notin \bse(T_1) = [v_1,v_2] \subset T_1 \cap T_2$. 
  Thus, the three vertices~$v_0, v_1, v_2$ are contained in~$T_1 \cap T_2$. 
Then, Lemma~\ref{lem:PropSharp}~\ref{itm:gsharp-oldest} yields that
  \begin{align}
  	\label{est:bisection-edge}
    \gensharp([v_1,v_2]) < \min_{i = 1,2}\gensharp([v_0, v_i])). 
  \end{align}
  Let~$T_2'$ be the youngest descendant of~$T_2$ that contains~$[v_0, v_1, v_2]$, i.e., in particular we have that~$\bse(T_2') \subset \edges([v_0, v_1, v_2])$.  
  Lemma~\ref{lem:bisec-edge-1} and the fact that~$v_0\in \bse(T_2)$ yield~$v_0 \in \bse(T_2')$. 
  Thus, we have
  \begin{align*}
    \min_{i = 1,2}(\gensharp([v_0, v_i])) < \gensharp([v_1, v_2]). 
  \end{align*}
  This contradicts~\eqref{est:bisection-edge} and hence proves~\ref{itm:bisec-edge-a}. 
  
  To prove~\ref{itm:bisec-edge-b} let~$\gen(T_1)=\gen(T_2)$ and thus by Lemma~\ref{lem:bisec-edge-2}~\ref{itm:gsharp-oldest2} we have that  
  \begin{align*}
\gensharp(\bse(T_1)) = \gen(T_1)+1 =  \gen(T_2)+1 = \gensharp(\bse(T_2)).   	
 \end{align*}
  Since~$\bse(T_1)$ is contained in~$T_2$ and~$\gensharp(\bse(T_2)) = \gen(T_2)+1$, it is also the unique bisection edge of~$T_2$, see Lemma~\ref{lem:PropSharp}~\ref{itm:gsharp-oldest}.
\end{proof}

\section{Local generation and level estimates}
\label{sec:locGenDiff}
In this section we use the properties derived in Section~\ref{sec:bisection-algorithm} to estimate the level differences of edges~$e,e'\in\edges(\tria)$ with~$\tria\in \BisecT$ connected by a vertex~$v = e\cap e' \in \vertices(\tria)$. 
Our analysis distinguishes the following types of vertices depending on the dimension of a corresponding initial subsimplex that contains it.
Recall that (sub)simplices are closed. 
\begin{definition}[Macro vertices]
	\label{def:MacroVertex}
	Let~$v\in \vertices(\mtree)$ be a vertex.
	\begin{enumerate}
		\item 
		If~$v\in \vertices(\tria_0)$, then we call the vertex a \emph{$0$-macro vertex}.
		\item 
		If~$v$ is in an~$n$-subsimplex but not in an $(n\!-\!1)$-subsimplex of~$\tria_0$ with~$n \in \lbrace 1 ,\dots, d\rbrace$, then we call the vertex an \emph{$n$-macro vertex}. 
		\item 
		If~$v$ is an~$n$-macro vertex with~$n\leq d-2$, we call~$v$ \emph{critical}. 
		Otherwise, i.e., if~$v$ is an~$n$-macro vertex with~$n \geq d-1$ we refer to~$v$ as \emph{non-critical}.  
	\end{enumerate}
	For an~$n$-macro vertex we refer to~$n$ as the dimension of~$v$. 
\end{definition}
By this definition any vertex $v\in \vertices(\mtree)$ has a unique dimension~$n \in \{0, \ldots, d\}$, depending on the dimension of a corresponding initial subsimplex that contains it.
In particular, non-critical vertices lie
\begin{enumerate}[label = (\roman*)]
	\item in the interior of an initial~$d$-simplex $T_0 \in \tria_0$ or 
	\item in the relative interior of a~$(d-1)$-subsimplex of an initial~$d$-simplex~$T_0 \in \tria_0$. 
\end{enumerate}
Any non-critical macro vertex is contained in at most two initial~$d$-simplices, while for example in dimension~$d = 3$ vertices on initial edges may have many more adjacent initial simplices. 

The objective of this section is to prove for any edge~$e,e'\in\edges(\tria)$ with~$n$-macro vertex~$v = e\cap e'$ and triangulation~$\tria\in \BisecT$ a level estimate of the form 
\begin{align*}
	\level(e) - \level(e') \leq c_n.
\end{align*}
Here~$c_n\in \mathbb{N}$ is a constant depending on~$n\in \{0, \ldots, d\}$. 
We shall see that for non-critical vertex dimension the constant~$c_n$ is smaller or equal to two. 
To achieve this, we first consider vertices in the Kuhn simplex in Section~\ref{subsec:gener-estim-kuhn} and estimate the generation differences of edges adjacent to it. 
Then, in Section~\ref{subsec:gen-diff-vtx} we derive upper bounds for differences with respect to $\gensharp$ for edges intersecting in general (macro) vertices. 
Finally, in Section~\ref{subsec:mainResultLocal-level} we transfer this to level estimates.

\subsection{Generation estimates in Kuhn simplices}
\label{subsec:gener-estim-kuhn}
In this subsection we consider the special case of Kuhn triangulations. 
This will be the basic building block for the generation estimates of edges intersecting in non-critical vertices. 

Let~$\tria_0$ denote the Kuhn partition of the cube~$[0,1]^d$ into~$d!$~Kuhn simplices~$T_\pi$ as introduced in Remark~\ref{rem:algo-kuhn}. 
For initial vertices~$v \in \set{0,1}^d$ we choose the coloring~$\frc(0) \coloneqq d$ and~$\frc(v) \coloneqq \norm{v}_1-1$ for $v \neq 0$. 
Let~$\fre_1,\dots, \fre_d\in \mathbb{R}^d$ denote the canonical unit vectors. 
With the convention~$\gen(v) = - \frc(v)$ in \eqref{eq:gen-initial}, for any permutation~$\pi\colon \lbrace 1,\dots,d\rbrace \to \lbrace 1,\dots,d\rbrace$ we obtain the sorted simplex
\begin{align}
\label{eq:kuhn-simplex-sorted}
	\begin{aligned}
		T_\pi &= [0, \fre_{\pi(1)}, \fre_{\pi(1)}+\fre_{\pi(2)}, \dots, \fre_{\pi(1)}+\dots+\fre_{\pi(d)}]
		\\
		& = \simplex{\fre_{\pi(1)}, \fre_{\pi(1)}+\fre_{\pi(2)}, \dots, \fre_{\pi(1)}+\dots+\fre_{\pi(d)}\leveljump 0}.
	\end{aligned}
\end{align}
\begin{figure}[h]
	\begin{tikzpicture}[%
		x={(0:1cm)}, y={(30:0.5cm)}, z={(90:1.0cm)}, scale=12/5]
		\draw[] (0,0,0) -- (1,0,0) node[midway,below] {\color{gray}0(3)};    
		\draw[] (1,0,0) -- (1,1,0) node[midway,below right] {\color{gray}0(3)};
		\draw[] (1,1,0) -- (1,1,1) node[midway,right] {\color{gray}-1(2)};
		\draw[] (0,0,0) -- (1,1,1) node[midway,above left] {\color{gray}-2(1)};		
		\draw[] (1,1,1) -- (1,0,0) node[midway,left] {{\color{gray}0(3)}};
		\draw[dashed] (0,0,0) -- (1,1,0) node[midway,above] {\color{gray}-1(2)};
		\draw (0,0,0) node[anchor=east] {-3};
		\draw (1,1,1) node[anchor=west] {-2};
		\draw (1,0,0) node[anchor=north west] {0};
		\draw (1,1,0) node[anchor=west] {-1}; 
		
		\begin{scope}[xshift = 2.5cm]
			x={(0:1cm)}, y={(30:0.5cm)}, z={(90:1.0cm)}, scale=12/5]
			\draw[] (0,0,0) -- (1,0,0) node[midway,below] {\color{gray}3};    
			\draw[] (1,0,0) -- (1,1,0) node[midway,below right] {\color{gray}3};
			\draw[] (1,1,0) -- (1,1,1) node[midway,right] {\color{gray}3};
			\draw[] (0,0,0) -- (1,1,1) node[midway,above left] {\color{gray}1};
			\draw[] (1,1,1) -- (1,0,0) node[midway,left] {{\color{gray}2}};
			\draw[dashed] (0,0,0) -- (1,1,0) node[midway,above] {\color{gray}2};
			\draw (0,0,0) node[anchor=east] {-3};
			\draw (1,1,1) node[anchor=west] {-2};
			\draw (1,0,0) node[anchor=north west] {0};
			\draw (1,1,0) node[anchor=west] {-1};		
		\end{scope}
	\end{tikzpicture}
	\caption{Kuhn simplex with vertex generation (black), edge generations and types (gray, left) and edge $\sharp$-generation (gray, right).}
\end{figure}
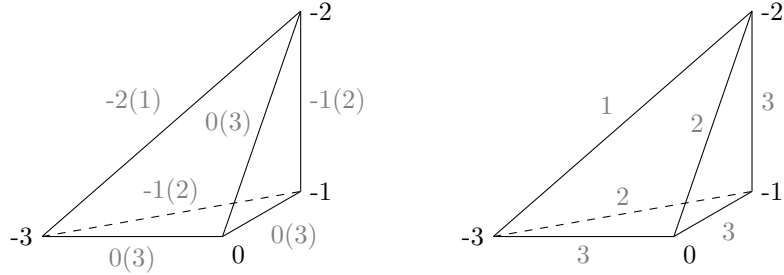%
With this generation structure one \emph{full refinement} ($d$~successive uniform refinements) leads to the Tucker--Whitney triangulation that consists of reflected Kuhn cubes of half the size, see Remark~\ref{rem:algo-kuhn}. 
This process can be repeated. 
Any vertex~$v\in\vertices(\mtree)$ generated in this process attains its maximal \emph{edge valence} (the number of edges containing~$v$) after one further full refinement. 
Moreover, after such a full refinement every interior vertex~$v$ is surrounded by~$2^d$ smaller Kuhn cubes as depicted for the vertex~$(\frac 12, \frac 12, \frac 12)$ in Figure~\ref{fig:kuhnedges3D}. 
Additional refinements neither increase the edge valence of~$v$ nor do they change the direction of the edges containing~$v$. 
This is because only edges emanating from~$v$ are bisected, cf.~Lemma~\ref{lem:bisec-edge-1}. 
Hence, the maximal edge valence of every vertex~$v$ is at most~$3^d-1$ and any~$v$ is contained in at most~$d!\, 2^d$ many $d$-simplices. 
For interior vertices these values are attained. 

\begin{remark}[ndof]
Similar arguments are used in~\cite[Sec.~3.1]{DieningStornTscherpel22} and in~\cite{ST.2023.A} to compare the number of degrees of freedom of mixed finite element space pairs.
\end{remark}

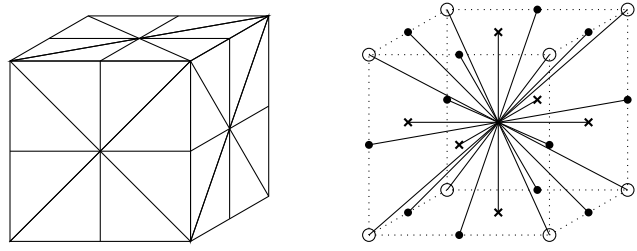
\begin{figure}[ht!]
	\begin{tikzpicture}[%
		x={(0:1cm)}, y={(30:0.5cm)}, z={(90:1.0cm)}, scale=12/5]
		\draw[] (0,0,0) -- (1,0,0) -- (1,0,1) -- cycle;
		\draw[] (0,0,0) -- (0,0,1) -- (1,0,1) -- cycle;
		\draw[] (1,0,0) -- (1,1,0) -- (1,1,1) -- cycle;
		\draw[] (1,0,0) -- (1,0,1) -- (1,1,1) -- cycle;
		\draw[] (0,0,1) -- (1,0,1) -- (1,1,1) -- cycle;
		\draw[] (0,0,1) -- (0,1,1) -- (1,1,1) -- cycle;
		\draw[] (1/2,0,1/2) -- (0,0,1);
		\draw[] (1/2,0,1/2) -- (1,0,0);
		\draw[] (1,1/2,1/2) -- (1,0,1);
		\draw[] (1,1/2,1/2) -- (1,1,0);
		\draw[] (1/2,1/2,1) -- (1,0,1);
		\draw[] (1/2,1/2,1) -- (0,1,1);
		\draw[] (1/2,0,1/2) -- (0,0,1/2);
		\draw[] (1/2,0,1/2) -- (1,0,1/2);
		\draw[] (1/2,0,1/2) -- (1/2,0,0);
		\draw[] (1/2,0,1/2) -- (1/2,0,1);
		\draw[] (1,1/2,1/2) -- (1,0,1/2);
		\draw[] (1,1/2,1/2) -- (1,1,1/2);
		\draw[] (1,1/2,1/2) -- (1,1/2,0);
		\draw[] (1,1/2,1/2) -- (1,1/2,1);
		\draw[] (1/2,1/2,1) -- (0,1/2,1);
		\draw[] (1/2,1/2,1) -- (1,1/2,1);
		\draw[] (1/2,1/2,1) -- (1/2,0,1);
		\draw[] (1/2,1/2,1) -- (1/2,1,1);
	\end{tikzpicture}
	\hspace{10mm}
	\begin{tikzpicture}[%
		x={(0:1cm)}, y={(30:0.5cm)}, z={(90:1.0cm)}, scale=12/5]
		\draw[dotted] (0,0,0) -- (1,0,0) -- (1,0,1);
		\draw[dotted] (0,0,0) -- (0,0,1) -- (1,0,1);
		\draw[dotted] (1,0,0) -- (1,1,0) -- (1,1,1);
		\draw[dotted] (1,0,1) -- (1,1,1);
		\draw[dotted] (0,0,1) -- (0,1,1) -- (1,1,1);
		\draw[dotted] (0,0,0) -- (0,1,0) -- (1,1,0);
		\draw[dotted] (0,1,0) -- (0,1,1);    
		
		\draw (  0,  0,  0) -- (1/2,1/2,1/2);
		\draw (  0,  0,1/2) -- (1/2,1/2,1/2);
		\draw (  0,  0,  1) -- (1/2,1/2,1/2);
		\draw (  0,1/2,  0) -- (1/2,1/2,1/2);
		\draw (  0,1/2,1/2) -- (1/2,1/2,1/2);
		\draw (  0,1/2,  1) -- (1/2,1/2,1/2);
		\draw (  0,  1,  0) -- (1/2,1/2,1/2);
		\draw (  0,  1,1/2) -- (1/2,1/2,1/2);
		\draw (  0,  1,  1) -- (1/2,1/2,1/2);
		\draw (1/2,  0,  0) -- (1/2,1/2,1/2);
		\draw (1/2,  0,1/2) -- (1/2,1/2,1/2);
		\draw (1/2,  0,  1) -- (1/2,1/2,1/2);
		\draw (1/2,1/2,  0) -- (1/2,1/2,1/2);
		\draw (1/2,1/2,  1) -- (1/2,1/2,1/2);
		\draw (1/2,  1,  0) -- (1/2,1/2,1/2);
		\draw (1/2,  1,1/2) -- (1/2,1/2,1/2);
		\draw (1/2,  1,  1) -- (1/2,1/2,1/2);
		\draw (  1,  0,  0) -- (1/2,1/2,1/2);
		\draw (  1,  0,1/2) -- (1/2,1/2,1/2);
		\draw (  1,  0,  1) -- (1/2,1/2,1/2);
		\draw (  1,1/2,  0) -- (1/2,1/2,1/2);
		\draw (  1,1/2,1/2) -- (1/2,1/2,1/2);
		\draw (  1,1/2,  1) -- (1/2,1/2,1/2);
		\draw (  1,  1,  0) -- (1/2,1/2,1/2);
		\draw (  1,  1,1/2) -- (1/2,1/2,1/2);
		\draw (  1,  1,  1) -- (1/2,1/2,1/2);
		
		\draw (  0,  0,  0)circle (1pt);
		\draw (  0,  0,  1) circle (1pt);
		\draw (  0,  1,  0) circle (1pt); 
		\draw (  0,  1,  1) circle (1pt);
		\draw (  1,  0,  0)  circle (1pt);
		\draw (  1,  0,  1) circle (1pt);
		\draw (  1,  1,  0)  circle (1pt);
		\draw (  1,  1,  1)  circle (1pt);
		
		\draw (  0,1/2,1/2)  pic[thick] {cross=2pt};
		\draw (1/2,  0,1/2) pic[thick] {cross=2pt};		
		\draw (1/2,1/2,  0) pic[thick] {cross=2pt};
		\draw (1/2,1/2,  1) pic[thick] {cross=2pt};		
		\draw (1/2,  1,1/2) pic[thick] {cross=2pt};
		\draw (  1,1/2,1/2) pic[thick] {cross=2pt};
		
		\node[circle = 0.5pt,fill, inner sep=0pt,minimum size=3pt] at (  0,  0,1/2) {};
	\node[circle = 0.5pt,fill, inner sep=0pt,minimum size=3pt] at (  0,1/2,  0) {};
	\node[circle = 0.5pt,fill, inner sep=0pt,minimum size=3pt] at (  0,1/2,  1) {};
	\node[circle = 0.5pt,fill, inner sep=0pt,minimum size=3pt] at (  0,  1,1/2) {};
	
	\node[circle = 0.5pt,fill, inner sep=0pt,minimum size=3pt] at (1/2,  0,  0) {};
	\node[circle = 0.5pt,fill, inner sep=0pt,minimum size=3pt] at (1/2,  0,  1) {};
	\node[circle = 0.5pt,fill, inner sep=0pt,minimum size=3pt] at (1/2,  1,  0) {};
	\node[circle = 0.5pt,fill, inner sep=0pt,minimum size=3pt] at  (1/2,  1,  1) {};
	
	\node[circle = 0.5pt,fill, inner sep=0pt,minimum size=3pt] at (  1,  0,1/2) {};
	\node[circle = 0.5pt,fill, inner sep=0pt,minimum size=3pt] at (  1,1/2,  0) {};
	\node[circle = 0.5pt,fill, inner sep=0pt,minimum size=3pt] at (  1,1/2,  1) {};
	\node[circle = 0.5pt,fill, inner sep=0pt,minimum size=3pt] at  (  1,  1,1/2) {};	
\end{tikzpicture}
	\caption{Partition of Kuhn cube~$[0,1]^3$ after~$d=3$~uniform refinements with exterior edges (left) and interior edges (right). 
		End points of edges of~$\typesharp$ equal~$1$ are marked with a circle, of~$\typesharp$ equal~$2$ with a bullet and of~$\typesharp$ equal~$3$ with a cross.}
	\label{fig:kuhnedges3D}
\end{figure}
We say that a vertex~$v\in\vertices(\tria)$ with~$\tria\in \BisecT $ has \emph{full valence} if further refinements do not increase its edge valence. 
Thanks to Lemma~\ref{lem:bisec-edge-1} this is the case if and only if~$v$ is contained in the bisection edge of all $d$-simplices containing~$v$. 
\begin{proposition}[$\sharp$-generation in Kuhn simplex]
	\label{pro:gsharp-2d-Kuhn}
	For some Kuhn simplex~$T_0 \subset \mathbb{R}^d$ let~$\tria \in \Bisec(\lbrace T_0 \rbrace)$ and let~$v \in \vertices(\tria)$. 
	Then we have the estimate
	\begin{align*}
		\gensharp(e) - \gensharp(e') \leq 2d\qquad\text{for all }e,e'\in \edges(\tria) \,\text{ with }\, v \in e \cap e'.
	\end{align*}
	In case of equality the types~$\typesharp(e) = \typesharp(e')$ coincide. If we have equality and $\typesharp(e) \in \lbrace 1,d\rbrace$, then the edge~$e'$ is uniquely determined by~$e$ and vice versa. 
\end{proposition}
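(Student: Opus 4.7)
The strategy is to combine the geometric interpretation of $\gensharp$ via the norm $\normm{\cdot}$ from Remark~\ref{rem:longest-edge-sharp} with an inductive analysis on the bisection history. From the identity $\gensharp = d(\levelsharp - 1) + \typesharp$ with $\typesharp \in \{1,\ldots,d\}$ I immediately obtain
\[
\gensharp(e) - \gensharp(e') = d\bigl(\levelsharp(e) - \levelsharp(e')\bigr) + \bigl(\typesharp(e) - \typesharp(e')\bigr),
\]
and since the second summand lies in $\{-(d-1),\ldots,d-1\}$, equality with $2d$ is possible only when $\levelsharp(e)-\levelsharp(e') = 2$ and $\typesharp(e) = \typesharp(e')$. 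This handles the type-equality assertion, provided the main inequality is established.

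For the bound $\gensharp(e) - \gensharp(e') \leq 2d$ itself, I would proceed by induction on the number of \texttt{BisectionClosure} steps needed to obtain $\tria$ from $\tria_0 = \{T_0\}$. The base case is immediate since all edges lie in the single Kuhn simplex, so Lemma~\ref{lem:gensharp-patch} gives spread at most $d-1 \leq 2d$. In the inductive step from $\tria$ to $\tria'$, the only vertices whose edge star is altered by the closure at $\bse(T)$ are the new midpoint $b = \bsv(\bse(T))$, the two endpoints of $\bse(T)$, and the remaining vertices of the simplices sharing $\bse(T)$. At the endpoints of $\bse(T)$ a single edge is replaced by its half of $\gensharp$ larger by $d$ (Lemma~\ref{lem:PropSharp}~\ref{itm:gsharp-edge-child}), so combining with the inductive bound keeps the spread at $\leq 2d$. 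At each remaining vertex $v$, a single new edge $[v,b]$ appears inside a child of the bisected simplex at $v$; applying Lemma~\ref{lem:gensharp-patch} in that child bounds its $\gensharp$-distance to preexisting edges at $v$ by $d-1$, which combined with the inductive spread preserves the $2d$-bound.

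The delicate case is the new vertex $b$, whose edges all lie inside children of the simplices sharing $\bse(T)$. For this I would use the two halves of $\bse(T)$ as a common anchor through $b$: by Lemma~\ref{lem:gensharp-patch} the spread at $b$ inside any single child is at most $d-1$, and the anchor edges relate the spreads across different children through the inductive hypothesis at the endpoints of $\bse(T)$. For the uniqueness statement when $\typesharp(e) \in \{1,d\}$ I invoke the explicit formula $\normm{e} = 2^{1-\levelsharp(e)}(2d+1-\typesharp(e))/d$: equality forces $\normm{e'} = 4\normm{e}$ with $\typesharp(e') = \typesharp(e)$, so the ratio of $\ell^1$ to $\ell^\infty$ components of the direction vector is fixed. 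For $\typesharp = d$ (coordinate-axis direction) and $\typesharp = 1$ (full-diagonal direction) the Kuhn-lattice structure around $v$ pins the direction down up to sign, and the length constraint together with $v \in e'$ forces $e'$ to be the unique edge in the antipodal direction with $\normm{e'} = 4\normm{e}$; for intermediate types $1 < \typesharp < d$ the remaining permutation freedom in the Kuhn coordinates breaks uniqueness. The main obstacle is the analysis at $b$ when $\bse(T)$ has high valence, where Lemma~\ref{lem:gensharp-patch} alone is insufficient and one must exploit the rigid Kuhn structure—most cleanly by tracking $\normm{\cdot}$ instead of $\gensharp$ throughout via Remark~\ref{rem:longest-edge-sharp}.
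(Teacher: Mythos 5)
Your opening arithmetic observation (that equality $\gensharp(e)-\gensharp(e')=2d$ forces $\levelsharp(e)-\levelsharp(e')=2$ and $\typesharp(e)=\typesharp(e')$) is correct and is in fact a cleaner derivation of the type-coincidence statement than the paper's case analysis; but it presupposes the main inequality, which is where your argument has a genuine gap.

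The inductive step you sketch does not go through. At an endpoint $w$ of $\bse(T)$ you assert that replacing $\bse(T)$ by its child of $\gensharp$ larger by $d$ ``keeps the spread at $\leq 2d$,'' but this is not automatic. The inductive hypothesis allows the edges at $w$ to span $[a,a+2d]$; since the $d$ edges of $T$ through $w$ occupy consecutive $\gensharp$-values with $\bse(T)$ at the bottom (proof of Lemma~\ref{lem:gensharp-patch}), the hypothesis only forces $\gensharp(\bse(T)) + d - 1 \leq a+2d$, i.e.\ $\gensharp(\bse(T))\leq a+d+1$. After bisection the new child edge has $\gensharp$ up to $a+2d+1$, so the spread could become $2d+1$ unless one also shows that the minimum at $w$ rises, which in turn requires knowing which edge at $w$ is globally $\sharp$-oldest across \emph{all} simplices at $w$, not merely inside $T$. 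That information is precisely what the paper establishes (and what Lemma~\ref{lem:bisec-edge-2} is designed for) in Step~1 of its proof, where it shows one can always bisect the globally $\sharp$-oldest bisection edge not through $v$ without closure. Your analysis of the ``remaining vertices'' and of the new vertex $b$ has the same defect: Lemma~\ref{lem:gensharp-patch} bounds the new edge's $\gensharp$ relative to one pre-existing edge in a single child simplex, but gives no control relative to the extremal pre-existing edges in the full patch, so combining it with the inductive $2d$ bound gives only $2d + (d-1)$, not $2d$. Since the $2d$ bound is sharp, there is no slack to absorb these discrepancies, and the induction as stated collapses. You correctly flag the new vertex $b$ as ``delicate,'' but you underestimate the endpoints of $\bse(T)$, and the proposed remedy (``exploit the rigid Kuhn structure'' via $\normm{\cdot}$) is left entirely to the reader.

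The paper avoids all of this by a structurally different argument: it first refines to full valence around $v$ (Step~1, justified via the bisection-edge lemmas so that no closure escapes the patch), obtaining the uniform patch isomorphic to $2^d$ reflected Kuhn cubes on $[-1,1]^d$; it then parametrizes edges through $v$ by $\tau\in\{-1,0,1\}^d\setminus\{0\}$ with $\typesharp(e_\tau)=(d+1)-\|\tau\|_1$, characterizes co-occurrence in a simplex combinatorially (Hadamard-product criterion \eqref{eq:kuhn-change-edge}), and constructs explicit chains $\tau_0\sim\cdots\sim\tau_N$ with $N\leq 4$ using the improved estimate \eqref{eq:reduce-to-d} when types match. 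This explicit path-finding is what yields the sharp constant $2d$ and simultaneously identifies the extremal configurations for the uniqueness claim. To salvage your approach one would essentially have to reprove these combinatorial facts along the induction, at which point the induction offers no advantage.
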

\begin{proof}
Let~$\tria\in\Bisec(\lbrace T_0\rbrace)$ with edges~$e,e'\in \edges(\tria)$, vertex~$e\cap e' = v \in \vertices(\tria)$, and
\begin{align*}
\gensharp(e') < \gensharp(e).
\end{align*} 
It suffices to consider interior vertices~$v \in \vertices(\tria)$, since the constellation at the boundary is the same as the one of interior vertices for a subset of edges. 

\textit{Step 1 (Reduction to full valence).} 
Our proof starts by constructing a conforming vertex patch~$\omega(v) \subset \mtree$ that contains the edges~$e$ and~$e'$ and has full valence. 
Notice that this patch does not necessarily belong to a triangulation in~$\Bisec(\lbrace T_0\rbrace)$. 
However, the left-hand side~$\gensharp(e)-\gensharp(e')$ of the estimate as stated in the lemma does not change. 
This allows us to restrict the proof of the estimate to vertices $v$ of full valence in Step~2 below.
	
If~$\omega(v) \coloneqq \lbrace T \in \tria\colon v\in \vertices(T)\rbrace$ already has full valence, then we proceed with Step~2.  
Otherwise, there exists at least one simplex~$\widetilde{T}$ with~$v \notin \bse(\widetilde{T})$.  
We choose an edge~$\widetilde{e} \in \edges(\omega(v))$ with minimal $\sharp$-generation among all bisection edges not containing~$v$. 
We claim that~$\widetilde{e}$ can be bisected without conforming closure within~$\omega(v)$. 
By this we mean that when restricting the bisection routine to the triangulation~$\omega(v)$, then no conformal closure is needed. 
	For this we have to show that for any~$T \in \omega(v)$ which as~$\widetilde{e}$ as edge, $\widetilde{e}$ is its bisection edge. 
	We proceed by contradiction and assume that~$T \in \omega(v)$ with~$\widetilde{e} \in \edges(T) $ is such that~$\bse(T) \neq \widetilde{e}$. 
	By Lemma~\ref{lem:PropSharp}~\ref{itm:gsharp-oldest} we have that~$\gensharp(\bse(T)) < \gensharp(\widetilde{e})$.  
	We have chosen~$\widetilde{e}$ as bisection edge with minimal $\sharp$-generation among all the bisection edges that do not contain~$v$. 
	Consequently, we have that~$v \in \bse(T)$. 
	Since~$v\in \widetilde{T}\cap T$, by Lemma~\ref{lem:bisec-edge-2}~\ref{itm:bisec-edge-a} it follows that~$\bse(T) \cap \widetilde{e} \neq \emptyset$. 
	In combination with the fact that~$v \in \bse(T)$ this shows that both bisection edges~$\widetilde{e}$ and~$\bse(T)$ are contained in~$\widetilde{T} \cap T$. 
	Since~$\widetilde{e}$ is the bisection edge of~$\widetilde{T}$ and~$\bse(T)\subset \widetilde{T}$, by Lemma~\ref{lem:PropSharp}~\ref{itm:gsharp-oldest} we have that~$\gensharp(\widetilde{e})<\gensharp(\bse(T))$, which is a contradiction to the above. 
	Hence, any simplex~$T \in \omega(v)$ has~$\widetilde{e}$ as bisection edge. 
	Thus, when~$\widetilde{e}$ is bisected no interior edges are bisected and in particular the edges~$e, e'$ are not bisected. 
	We update~$\omega(v)$ as the patch that contains all resulting simplices with vertex~$v$. 
	This does not alter the domain covered by the patch. 
	We repeat this procedure until~$v$ has full valence.
	
\textit{Step 2 ($\sharp$-generation).}
	The vertex~$v$ is contained in the uniform refinement~$\tria_k \coloneqq \lbrace T\in \mtree\colon \gen(T) = k\rbrace$, in which all simplices have generation~$k=(\level(v)+2) d \geq \gen(v) + 2d$. 
	By Lemma~\ref{lem:propertiesGenTypeLvl}~\ref{itm:oldestonbse} and~\ref{itm:simplex-edge} the vertex~$v$ has full valence in~$\tria_k$ and hence the number and directions of the edges containing~$v$ in the patch~$\omega(v)$ from Step~1 and~$\tria_k$ agree. 
	Therefore we can identify any edge~$e_1 \in \edges(\tria_k)$ containing~$v$ with an edge~$e_2\in \edges(\omega(v))$ containing~$v$ in the sense that either~$e_1 \subset e_2$ or~$e_2\subset e_1$. 
	
	The patch around~$v$ in~$\tria_k$ consists of~$2^d$ reflected smaller Kuhn cubes as depicted in Figure~\ref{fig:kuhnedges3D}. 
	To identify the edges we transform this patch of~$2^d$ cubes to~$[-1,1]^d$. 
	The vector~$0$ represents the vertex~$v$ and each $\tau \in \{-1,0,1\}^d\setminus \{0\}$ represents an edge~$e_\tau \in \edges(\omega(v))$ with~$v \in e_\tau$. 
	For~$\tau \in \{-1,0,1\}^d\setminus \{0\}$ we define the vector~$\abs{\tau} \coloneqq (\abs{\tau_i})_i\in \{0,1\}^d\setminus \{0\}$.
	The transformed patch~$[-1,1]^d$ consists of~$d!\,2^d$ Kuhn simplices. 
	With permutation~$\pi\colon \set{1,\dots,d}\to \set{1,\dots,d}$ and $\gamma \in \set{\pm 1}^d$ those read
		\begin{align*}
			T_{\pi,\gamma} \coloneqq \simplex{\gamma_1\fre_{\pi(1)}, 
				\gamma_1\fre_{\pi(1)}+\gamma_2 \fre_{\pi(2)}, \ldots,  \gamma_1\fre_{\pi(1)}+\dots+\gamma_d\fre_{\pi(d)} \leveljump 0 }.
		\end{align*}
	Each simplex~$T \in \omega(v)$ corresponds to one of the simplices~$T_{\pi,\gamma}$. 
	This allows us to determine whether two edges~$e_{\tau}$ and~$e_{\tau'}$ with representations~$\tau,\tau' \in \{-1,0,1\}^d\setminus \{0\}$ are contained in one~$d$-simplex~$T \in \omega(v)$. 
	This is the case if and only if we have with Hadamard product~$a \hprod b \coloneqq (a_i b_i)_i$ that
	\begin{align}
		\label{eq:kuhn-change-edge}
		\tau'= \tau \hprod \mu \qquad \text{or} \qquad \tau = \tau' \hprod \mu \qquad \text{ for some } \mu \in \set{0,1}^d \setminus \{0\}.
	\end{align}
	In other words,~$e_\tau$ and~$e_{\tau'}$ are contained in one  simplex if $\tau$ arises from~$\tau'$ by exactly one of the following two operations: 
	\begin{enumerate}[label = (\roman*)]
		\item 
		\label{itm:nonz-z} 
		an arbitrary number (but not all) non-zero entries are set to zero,
		\item 
		\label{itm:z-nonz} 
		an arbitrary number of zero entries are set to non-zero entries.
	\end{enumerate}
	If~$e_\tau$ and~$e_{\tau'}$ share a joint simplex, we write~$\tau \sim \tau'$.
	Lemma~\ref{lem:PropSharp}~\ref{itm:gsharp-edge-child} shows that the~$\sharp$-type is preserved for any edge~$e_\tau$ when bisected. 
	Thus, we can identify the $\sharp$-type with the ones of the corresponding edges in~$\tria_k$ that have the same direction. 
	Considering the types of edges in~\eqref{eq:kuhn-simplex-sorted} one can deduce
	\begin{align}
		\label{eq:kuhn-gensharp}
		\typesharp(e_\tau) = (d+1) - \norm{\tau}_{1}. 
	\end{align}
	
	Let now~$\sigma, \sigma' \in \set{-1,0,1}^d \setminus \set{0} $ be such that~$e = e_\sigma$ and~$e' = e_{\sigma'}$. 
	To prove the statement we aim to find sequences~$\tau_0, \tau_2, \ldots, \tau_N \in \set{-1,0,1}^d \setminus \set{0}$ with~$\tau_i \sim \tau_{i+1}$ for~$i = 0, \ldots, N-1$, and~$\tau_0=\sigma$,~$\tau_N = \sigma'$ with~$N\leq 4$. 
	Each~$\tau_i$ represents an edge containing the vertex~$v$.
	Due to Lemma~\ref{lem:gensharp-patch} we obtain
	\begin{align}
		\label{est:gen-edge-simpl}
		\abs{\gensharp(e_{\tau_i}) - \gensharp(e_{\tau_{i+1})}} \leq d-1 \qquad \text{for $i=0,\dots, N-1$.}
	\end{align}
	This would result in~$\abs{\gensharp(e_{\tau_0}) - \gensharp(e_{\tau_N})} \leq N(d-1)$.  
	We shall construct sequences of recurring $\sharp$-type to improve this estimate.
	For example, the estimate $\abs{\gensharp(e_{\tau_i}) - \gensharp(e_{\tau_{i+2}})} \leq 2(d-1)$ reduces to
	\begin{align}
		\label{eq:reduce-to-d}
		\abs{\gensharp(e_{\tau_i}) - \gensharp(e_{\tau_{i+2}})} \leq d \qquad \text{ if }\typesharp(e_{\tau_i}) = \typesharp(e_{\tau_{i+2}}).
	\end{align}
	
	Obviously, there is nothing to show if~$e = e'$. 
	We start with two cases, where a sequence of length~$N=2$ suffices.
	\begin{enumerate}
		\item (Same face of $[-1,1]^d$) \label{itm:2d-sameface}
		If~$\sigma, \sigma'$ share one non-zero entry, i.e.,~$\sigma_j=\sigma_j'\neq 0$ for some~$j \in \set{1,\dots d}$, then we set~$\tau_0 \coloneqq \sigma$, $\tau_2 \coloneqq \sigma'$, and 
		\begin{align*}
			\tau_1 \coloneqq  \sigma \hprod \fre_j = \sigma' \hprod \fre_j \neq 0.
		\end{align*}
		Then we have~$\tau_0 \sim \tau_1 \sim \tau_2$ and thus~$\abs{\gensharp(e) - \gensharp(e')} \leq 2(d-1)$.
		\item (Same sub-cube of $[-1,1]^d$)
		\label{itm:2d-samecube}
		If~$\sigma \hprod \sigma' = 0$, then we  set~$ \tau_0 \coloneqq \sigma$, $\tau_2 \coloneqq \sigma'$, and
		\begin{align*}
			\tau_1 \coloneqq \sigma + \sigma' 
			\in \set{-1,0,1}^d \setminus \set{0}.
		\end{align*}
		Since~$\sigma, \sigma' \neq 0$, we have that~$\abs{\sigma},\abs{\sigma'} \in \{0,1\}^d \setminus\{0\}$ and we obtain
		\begin{alignat*}{4}
			\tau_1 \hprod \abs{\sigma}\, &= \sigma \hprod \abs{\sigma} \,+ \sigma' \hprod \abs{\sigma} &&= \sigma &&= \tau_0,
			\\
			\tau_1 \hprod \abs{\sigma'} &= \sigma \hprod \abs{\sigma'} + \sigma' \hprod \abs{\sigma'} &&= \sigma' &&= \tau_2.
		\end{alignat*}
		Hence, we have~$\tau_0 \sim \tau_1 \sim \tau_2$, and thus~$\abs{\gensharp(e) - \gensharp(e')} \leq 2(d-1)$.
	\end{enumerate}
	It remains to consider the case, in which there exists an index~$j \in \{1, \ldots, d\}$ such that~$\sigma_j = - \sigma'_j \neq 0$.  
	We first investigate the case of a sequence of length~$N=3$, which suffices if the types of the edges do not agree. 
	\begin{enumerate}\setcounter{enumi}{2} 
		\item 
		\label{itm:2d-notsamet} 
		Let~$\typesharp(e) \neq \typesharp(e')$. 
		For this argument it does not matter which edge has higher generation. Thus, we may assume without loss of generality that~$\typesharp(e) > \typesharp(e')$, i.e., due to~\eqref{eq:kuhn-gensharp} we have~$\norm{\sigma}_1 < \norm{\sigma'}_1$.  
		First we construct~$\tau_1$ such that~$\typesharp(e_{\tau_1}) = \typesharp(e')$, i.e.,  $\norm{\tau_1}_1 = \norm{\sigma'}_1$. 
		For this purpose we choose a vector~$\mu \in \set{0,1}^d \setminus \{0\}$ with~$\norm{\mu}_1 = \norm{\sigma'}_1 - \norm{\sigma}_1 \in \{1, \ldots, d-1\}$, $\mu \hprod \sigma = 0$ and $\mu \hprod \abs{\sigma'} = \mu$. 
		We set 
		\begin{align*}
			\tau_0 \coloneqq \sigma,\quad \tau_1 \coloneqq \sigma + \mu \hprod \sigma',\quad
			\tau_2 \coloneqq \mu \hprod \tau_1 \quad \text{ and } \quad  \tau_3 \coloneqq \sigma'. 
		\end{align*}
		By construction of~$\mu$ we have $\norm{\tau_1}_1 = \norm{\sigma}_1 + \norm{\mu}_1 = \norm{\sigma'}_1$ and therefore~$\typesharp(e_{\tau_1}) = \typesharp(e')$. 
		Denoting~$\bfone \coloneqq (1,\dots, 1)$ and using~$\mu \hprod \sigma = 0$ shows that~$\tau_0 = (\bfone - \mu) \hprod \tau_1$ and $\tau_2 = \mu \hprod \sigma' = \mu \hprod \tau_3$. 
		Hence we have that~$\tau_0 \sim \tau_1 \sim \tau_2 \sim \tau_3$ and by~\eqref{est:gen-edge-simpl} and~\eqref{eq:reduce-to-d} we obtain
		\begin{align*}
			\qquad\quad \abs{\gensharp(e) - \gensharp(e')}
			&\leq
			\abs{\gensharp(e_{\tau_0}) - \gensharp(e_{\tau_1})}+
			\abs{\gensharp(e_{\tau_1}) - \gensharp(e_{\tau_3})}
			\\
			&\leq (d-1) + d = 2d-1.
		\end{align*}
	\end{enumerate}
	In case of coinciding types a sequence of length~$N=4$ can be found. 
	\begin{enumerate}\setcounter{enumi}{3} 
		\item 
		\label{itm:2d-samet-d}
		If~$\typesharp(e) = \typesharp(e') =  d$, then we have~$\sigma = \pm \fre_j$ for some~$j \in \set{1,\dots, d}$ and~$\sigma'=-\sigma$. 
		We choose~$k \in \{1, \ldots, d\}\setminus \{j\}$ and define
		\begin{align*}
			\tau_0 \coloneqq \sigma, \quad
			\tau_1 \coloneqq \sigma + \mathfrak{e}_k, \quad
			\tau_2 \coloneqq  \mathfrak{e}_k, \quad
			\tau_3 \coloneqq  \mathfrak{e}_k + \sigma', \quad
			\tau_4 \coloneqq \sigma'. 
		\end{align*}
		Then, we have~$\tau_1 \hprod \fre_j = \tau_0$, $\tau_1 \hprod \fre_k = \tau_2$, $\tau_3 \hprod \fre_k = \tau_2$ and~$\tau_3 \hprod \fre_j = \tau_4$. 
		Since $\typesharp(e_{\tau_0}) = \typesharp(e_{\tau_2}) =  \typesharp(e_{\tau_4}) =d$, by~\eqref{eq:reduce-to-d} we obtain
		\begin{align*}
			\qquad\quad \abs{\gensharp(e) - \gensharp(e')}
			&\leq
			\abs{\gensharp(e_{\tau_0}) - \gensharp(e_{\tau_2})}+
			\abs{\gensharp(e_{\tau_2}) - \gensharp(e_{\tau_4})}\\
			&\leq 2d.
		\end{align*}
		\item 
		\label{itm:2d-samet-notd} 
		If~$\typesharp(e) = \typesharp(e')\neq d$ and~$\sigma_j = -\sigma_j' \neq 0$, we set~$\tau_0 \coloneqq \sigma$, $\tau_4 \coloneqq \sigma'$ and
		\begin{align*}
			\tau_1 &\coloneqq (\bfone - \fre_j) \hprod \sigma,
			\\
			\tau_2 &\coloneqq \tau_1 + \fre_j \hprod \sigma' = \tau_1 - \fre_j \hprod \sigma,
			\\
			\tau_3 &\coloneqq  \tau_2 \hprod \fre_j = \sigma' \hprod \fre_j \neq 0.
		\end{align*}	
		Thus, comparing~$\tau_2$ with~$\tau_0$ the sign of the~$j$-th entry has changed.  
		Note that~$\tau_1 = (\bfone -\fre_j)\hprod \tau_2$, and hence~$\tau_1 \sim \tau_2$. 
		Since~$\tau_0 \sim \tau_1\sim \tau_2\sim \tau_3\sim \tau_4$ and~$\typesharp(\tau_0) = \typesharp(\tau_2) = \typesharp(\tau_4)$, by~\eqref{eq:reduce-to-d} we find that
		\begin{align*}
			\qquad\quad \abs{\gensharp(e) - \gensharp(e')}
			&\leq
			\abs{\gensharp(e_{\tau_0}) - \gensharp(e_{\tau_2})}+
			\abs{\gensharp(e_{\tau_2}) - \gensharp(e_{\tau_4})}
			\\
			&\leq 2d.
		\end{align*}
	\end{enumerate} 
	Note that the cases~\ref{itm:2d-samet-d} and \ref{itm:2d-samet-notd} are the only ones, in which a difference of~$2d$ is possible. 
	In both cases we have~$\typesharp(e) = \typesharp(e')$. 
	Moreover, in~\ref{itm:2d-samet-d} $\sigma = \fre_j$ uniquely determines~$\sigma' = -\fre_j$.  
	If~$\type(e) = \type(e') = 1$, then all entries in~$\sigma$ and~$\sigma'$ are non-zeros and hence we can apply~\ref{itm:2d-sameface} except for the case~$\sigma = -\sigma'$.  
	In the latter case again~$e$ uniquely determines~$e'$ if~$\gensharp(e) - \gensharp(e') = 2d$ and $\typesharp(e) = \typesharp(e') = 1$. 
	This finishes the proof.
\end{proof}

\subsection{Generation estimates in general vertex patches}
\label{subsec:gen-diff-vtx}
Our next goal is to obtain estimates on the~$\gensharp$ difference of edges intersecting in general vertices. 
For non-critical vertices as introduced in Definition~\ref{def:MacroVertex} this is a direct consequence of the estimates in the Kuhn simplex in Proposition~\ref{pro:gsharp-2d-Kuhn} in Section~\ref{subsec:gener-estim-kuhn}. 

\begin{lemma}[Non-critical vertex]
	\label{lem:genSharpNonMacro}
	For~$\tria\in \BisecT$ with colored initial triangulation~$\tria_0$ let~$v\in \vertices(\tria)$ be a  non-critical vertex. 
	Then we have that
	\begin{align*}
		\gensharp(e) - \gensharp(e') \leq 2d\qquad\text{for all }e,e'\in \edges(\tria)\text{ with } v \in e \cap e'.
	\end{align*}  
	If equality holds, the types~$\typesharp(e) = \typesharp(e')$ agree.
	If additionally~$\typesharp(e) = \typesharp(e') \in \lbrace 1,d\rbrace$, then the edge~$e'$ is uniquely determined by~$e$ and vice versa.
\end{lemma}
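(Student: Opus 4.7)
The plan is to reduce both cases in the lemma to the Kuhn case already treated in Proposition~\ref{pro:gsharp-2d-Kuhn}. Since $v$ is non-critical, Definition~\ref{def:MacroVertex} gives two possibilities: either $v$ lies in the relative interior of a single initial simplex $T_0 \in \tria_0$ (dimension $d$), or $v$ lies in the relative interior of an initial $(d-1)$-face $F \subset \tria_0$ (dimension $d-1$). In either case the patch of $v$ is contained in at most two initial simplices, and this is what allows the Kuhn reduction to work.

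For the first case, I would argue as follows. Every edge $e \in \edges(\tria)$ with $v \in e$ is contained in some descendant of $T_0$, because no refinement can move $v$ out of the interior of $T_0$. Since $\tria_0$ is colored, the vertices of $T_0$ carry generations $0,-1,\dots,-d$ determined by the coloring, and Algorithms~\ref{algo:Maubach} and~\ref{algo:genbased} depend only on the vertex generations. Thus the affine bijection sending $T_0$ to the standard Kuhn simplex while preserving the vertex generations intertwines the bisection routine on descendants of $T_0$ with the routine on the Kuhn simplex, and it preserves $\gensharp$ and $\typesharp$ of every edge. Applying Proposition~\ref{pro:gsharp-2d-Kuhn} inside this identified Kuhn simplex gives the desired bound, together with the two additional statements about equality and uniqueness.

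For the second case let $T_0, T_0' \in \tria_0$ be the (at most) two initial simplices containing $F$. After one full refinement of $T_0 \cup T_0'$ the vertex $v$ attains full edge valence and every edge of $\tria$ containing $v$ can be identified with an edge containing $v$ in this refined patch, exactly as in Step~1 of the proof of Proposition~\ref{pro:gsharp-2d-Kuhn}. Because $\tria_0$ is colored, the colorings on $T_0$ and $T_0'$ agree on the shared face $F$ with the coloring one obtains by reflecting $T_0$ across $F$. Consequently, $T_0 \cup T_0'$ is affinely equivalent, in a way preserving all generations, to a pair of adjacent Kuhn simplices inside a Kuhn cube, and the patch of $v$ embeds as a sub-patch of the corresponding interior-vertex patch in a Kuhn triangulation. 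I can then invoke Proposition~\ref{pro:gsharp-2d-Kuhn} in this enlarged Kuhn setting to obtain the inequality, the type equality in the extremal case, and the uniqueness statement.

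The main obstacle is the second case: one must verify that the coloring condition really produces, across the shared face $F$, a refinement structure that matches the Kuhn cube one gets by reflection. This is where the coloring assumption is used essentially — without it the generations on the two sides of $F$ could be inconsistent, the bisection edges on $F$ might be chosen differently from the two sides (destroying conformity or altering $\gensharp$), and the identification with the Kuhn cube would fail. Once this compatibility is established, the rest is just bookkeeping of the Kuhn estimate from Proposition~\ref{pro:gsharp-2d-Kuhn}.
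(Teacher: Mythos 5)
Your proof follows the same reduction as the paper's (which is only three sentences long): identify the at most two initial simplices containing $v$, generation-preservingly, with a configuration inside a Kuhn simplex so that Proposition~\ref{pro:gsharp-2d-Kuhn} applies. You correctly identify the role of the coloring assumption and verify (more explicitly than the paper does) that two colored initial simplices sharing a $(d-1)$-face have the reflection compatibility needed for the identification.

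The one place where your route diverges slightly from the paper's and picks up a small technical debt is the target of the identification in the face case. You map $T_0 \cup T_0'$ onto two adjacent Kuhn simplices \emph{of generation zero inside a Kuhn cube} and then invoke Proposition~\ref{pro:gsharp-2d-Kuhn} ``in this enlarged Kuhn setting,'' but the proposition as stated is for $\Bisec(\{T_0\})$ with $T_0$ a \emph{single} Kuhn simplex, so $v$ mapped onto the interface between two distinct Kuhn simplices of the cube is not literally covered by the statement. Your claim is still correct, because the proof of Proposition~\ref{pro:gsharp-2d-Kuhn} derives the bound purely from the local structure of the vertex patch as a sub-collection of $2^d$ reflected Kuhn cubes, and that structure is the same for a face-vertex in the cube triangulation; but this requires reopening the proof rather than citing the statement. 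The paper sidesteps this by instead embedding $T_0$ (and $T_1$) into two adjacent \emph{descendants} of a \emph{single} Kuhn simplex after one full refinement (those descendants containing the generation-$0$ vertex of $K$ have vertex generations $d,\dots,1,0$, a shift of $T_0$'s by $d$, which preserves types, levels, and the bisection rule), so that $v$ becomes an interior vertex of $K$ and Proposition~\ref{pro:gsharp-2d-Kuhn} applies as written. Either replace your target by the fully refined Kuhn simplex as the paper does, or add a one-line remark that the proof of Proposition~\ref{pro:gsharp-2d-Kuhn} extends verbatim to the Kuhn cube.
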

\begin{proof}
	By Definition~\ref{def:MacroVertex} a non-critical vertex~$v$ is either contained in the interior of an initial~$d$-simplex~$T_0\in \tria_0$ or in the relative interior of a~$(d-1)$-subsimplex of two initial~$d$-simplices~$T_0,T_1\in \tria_0$.
	In both cases we can embed the initial simplex~$T_0$ (and~$T_1$) into a (fully refined) Kuhn simplex. 
	Proposition~\ref{pro:gsharp-2d-Kuhn} then yields the lemma.
\end{proof}
For critical $n$-macro vertices, i.e., with~$n\leq d-2$, the estimate in Lemma~\ref{lem:genSharpNonMacro} does not hold in general. 
This is due to the fact that the valence might be too high to allow for a transformation to a Kuhn simplex, see Figure~\ref{fig:HighValence}.
Still, for~$n \geq 1$ the following lemma shows that the $\gensharp$ difference of edges~$e,e'$ intersecting in an $n$-macro vertex~$v$ can be controlled independently of the valence. 
 
\begin{lemma}[Critical $n$-macro vertices for $n\geq 1$]
	\label{lem:genSharpnMacro}
	For~$\tria\in \BisecT$ with colored initial triangulation~$\tria_0$, 
	let~$v\in \vertices(\tria)$ be an~$n$-macro vertex with~$n \in \lbrace 1,\dots, d-2\rbrace$ and let~$e,e'\in \edges(\tria)$ be edges intersecting in~$v \in e \cap e'$.
	\begin{enumerate}
		\item 
		\label{itm:genSharpnMacro1}
		We have the estimate
		\begin{align*}
			\abs{\gensharp(e) - \gensharp(e')}
			\leq 4d. 
		\end{align*}
		\item 
		\label{itm:genSharpnMacro2}
		If in addition~$e,e'\subset T_0$ for some~$T_0 \in \tria_0$, then we have that
		\begin{align*}
			\abs{\gensharp(e) - \gensharp(e')} \leq 2d.
		\end{align*}
	\end{enumerate}
\end{lemma}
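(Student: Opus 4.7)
The overall plan is to obtain part~(2) as an immediate consequence of Proposition~\ref{pro:gsharp-2d-Kuhn} applied to a single colored initial $d$-simplex (exactly as in the proof of Lemma~\ref{lem:genSharpNonMacro}), and then to bootstrap part~(1) from part~(2) by chaining through an auxiliary edge lying in the common initial subsimplex that carries $v$.

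For part~(2), both edges lie in one initial $d$-simplex $T_0\in\tria_0$. Because the coloring of $\tria_0$ together with the sorting in~\eqref{eq:coloringToMatching} ensures that the bisection tree $\Bisec(\lbrace T_0\rbrace)$ is combinatorially isomorphic to the refinement of a standard Kuhn simplex, the argument used for Lemma~\ref{lem:genSharpNonMacro} reduces the estimate on descendants of $T_0$ to the Kuhn setting, and Proposition~\ref{pro:gsharp-2d-Kuhn} then delivers $\abs{\gensharp(e)-\gensharp(e')}\leq 2d$.

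For part~(1), I would pick initial $d$-simplices $T_0,T_0'\in\tria_0$ with $e\subset T_0$ and $e'\subset T_0'$. Since $v$ is an $n$-macro vertex with $n\geq 1$, the unique minimal subsimplex $S_0\in\tria_0$ containing $v$ has $\dim S_0 = n \geq 1$, and every initial $d$-simplex incident to $v$ must contain $S_0$; in particular $S_0\subset T_0\cap T_0'$. Because $\Bisec$ preserves conformity with respect to every subsimplex of $\tria_0$, the triangulation $\tria$ induces a conforming $n$-dimensional triangulation of $S_0$ having $v$ as a vertex in its relative interior, so there is at least one edge $\bar e\in\edges(\tria)$ with $v\in\bar e$ and $\bar e\subset S_0\subset T_0\cap T_0'$. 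Applying part~(2) once inside $T_0$ (to $e$ and $\bar e$) and once inside $T_0'$ (to $\bar e$ and $e'$) and combining yields
\begin{align*}
\abs{\gensharp(e)-\gensharp(e')}\leq\abs{\gensharp(e)-\gensharp(\bar e)}+\abs{\gensharp(\bar e)-\gensharp(e')}\leq 2d+2d=4d.
\end{align*}

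The only genuinely new ingredient beyond the non-critical case is the existence of the intermediate edge $\bar e\subset S_0$, which is exactly where the assumption $n\geq 1$ enters the argument; once it is at hand, the bound is obtained by pure concatenation at the cost of doubling the non-critical estimate. I expect the reduction of part~(2) to Proposition~\ref{pro:gsharp-2d-Kuhn} to be entirely analogous to Lemma~\ref{lem:genSharpNonMacro} and thus routine, while the bookkeeping around $S_0$ and the induced conforming $n$-triangulation is the main conceptual point and relies on the subsimplex-conformity properties of $\Bisec$ established in Section~\ref{sec:bisection-algorithm}.
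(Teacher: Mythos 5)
Your proof is correct and follows essentially the same approach as the paper: part~(2) is obtained by reducing to a single initial simplex (and hence to Proposition~\ref{pro:gsharp-2d-Kuhn} via the argument of Lemma~\ref{lem:genSharpNonMacro}), and part~(1) follows from part~(2) and the triangle inequality via an intermediate edge $e_0$ with $v\in e_0\subset T_0\cap T_0'$. Your justification that all initial $d$-simplices meeting $v$ contain the common $n$-subsimplex $S_0$ (and hence that a suitable intermediate edge exists in the conforming restriction of $\tria$ to $S_0$) fills in a detail the paper leaves implicit, but this is the same idea, not a different route.
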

\begin{proof}
	The statement in~\ref{itm:genSharpnMacro2} follows by Lemma~\ref{lem:genSharpNonMacro} and the fact that all edges~$e,e'\in \edges(\tria)$ with~$\tria\in \BisecT$ and~$e,e'\subset T_0\in \tria_0$ are included in a triangulation~$\tria'\in \Bisec(\lbrace T_0\rbrace)$. 
	It remains to prove~\ref{itm:genSharpnMacro1}. 	
	Since~$v$ is an~$n$-macro vertex with $n\in \set{1, \ldots, d-2}$,
	there are initial simplices~$T_0, T_0'\in \tria_0$ sharing an~$n$-subsimplex such that~$v \in T_0 \cap T_0'$, and~$e \subset T_0$ as well as~$e'\subset T_0'$. 
	In particular, there exists an edge~$e_0 \in \edges(\tria)$ with~$v \in e_0 \subset T_0 \cap T_0'$. 
Using~\ref{itm:genSharpnMacro2} we have~$\abs{\gensharp(e)-\gensharp({e}_0)} \leq 2d$ and
~$\abs{\gensharp(e')-\gensharp({e}_0)} \leq 2d$. 
This proves that~$\abs{\gensharp(e)-\gensharp(e')} \leq 4d$.
\end{proof}
\begin{remark}[Improvement for small dimensions]
It is possible to slightly improve Lemma~\ref{lem:genSharpnMacro}~\ref{itm:genSharpnMacro1} due to the fact that there is some freedom in the choice of the edge $e_0$ in the proof of the lemma, but this is not crucial in the following. 
\end{remark}

The previous lemma shows that for critical $n$-macro vertices with~$ n \geq 1$ the difference of the $\sharp$-generation is rather moderate even for edges with high valence. 
In contrast, large differences of the $\sharp$-generation may occur for edges intersecting in a $0$-macro vertex. 
We shall see that its value depends on the `diameter' of the initial vertex patch in terms of simplex chains connected by an edge. 

Let us make this more precise. 
By \emph{$1$-chain} of simplices~$T_0,\dots,T_N$ we refer to a sequence of \emph{$1$-neighbors} in the sense that~$T_j$ and~$T_{j+1}$ share an edge for all~$j=0,\ldots,N-1$. 
Given~$\tria\in \BisecT$, let~$\delta^1_{\tria}(T,T')$ denote the minimal $1$-distance between two simplices~$T,T'\in \tria$, that is, 
\begin{align*}
	\delta^1_{\tria}(T,T') \coloneqq \min \lbrace N\in \mathbb{N}_0\colon& \text{ there exists a $1$-chain of simplices }T_0,\dots,T_N \in \tria\\
	&\text{ with }T_0 = T\text{ and }T_N = T'\rbrace.
\end{align*}
Let~$\tria_0^+ = \lbrace T \in \mtree \colon \gen(T) = d\rbrace$ denote the triangulation resulting from a full uniform refinement ($d$ bisections of each simplex) of~$\tria_0$  and consider the vertex patch~$\omega_{\tria_0^+}(v) \coloneqq \lbrace T \in \tria_0^+\colon v\in \vertices(T)\rbrace$ for~$v\in \vertices(\tria_0)$. 
We define the quantity 
\begin{align}
	\label{def:C0}
	C(\tria_0) \coloneqq \max_{v\in \vertices(\tria_0)}\, \max_{T,T'\in \omega_{\tria_0^+}(v)} \delta^1_{\omega_{\tria_0^+}(v)}(T,T').
\end{align}

\begin{lemma}[Critical $0$-macro vertex]
	\label{lem:genSharp0Macro}
	Let~$\tria\in \BisecT$ be a triangulation with colored~$\tria_0$. 
	Let~$v\in \vertices(\tria)$ be a $0$-macro vertex and let~$e,e'\in \edges(\tria)$ be edges intersecting in~$v \in e \cap e'$. 
	Then we have that
	\begin{align*}
		\gensharp(e) - \gensharp(e') \leq (C(\tria_0)+1) (d-1). 
	\end{align*}
\end{lemma}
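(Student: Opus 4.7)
The plan is to bound $\gensharp(e) - \gensharp(e')$ by a chain of edges through $v$ whose consecutive members lie in a common simplex of $\mtree$, so that Lemma~\ref{lem:gensharp-patch} bounds each link by $d-1$. Producing a chain with at most $C(\tria_0)+1$ links will then give the claimed bound $(C(\tria_0)+1)(d-1)$.

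First I pick simplices $T, T' \in \mtree$ containing $v$ with $e \in \edges(T)$ and $e' \in \edges(T')$; since $v$ is a $0$-macro vertex, these descend from initial simplices $T^0, T'^0 \in \tria_0$ sharing $v$ as a vertex. After $d$ uniform refinements, both $T^0$ and $T'^0$ split into simplices of $\omega_{\tria_0^+}(v)$, and I choose $S_0, S_N \in \omega_{\tria_0^+}(v)$ with $S_0 \subset T^0$, $S_N \subset T'^0$, and $v \in S_0 \cap S_N$. The definition of $C(\tria_0)$ then supplies a $1$-chain $S_0, S_1, \ldots, S_N$ in $\omega_{\tria_0^+}(v)$ with $N \leq C(\tria_0)$. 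Conformity of $\tria_0^+$ together with the fact that both $S_i$ and $S_{i+1}$ contain $v$ forces the shared subsimplex to contain $v$ along with an edge through it, so I can choose $f_i \in \edges(S_i) \cap \edges(S_{i+1})$ with $v \in f_i$.

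Within each $S_i \in \tria_0^+ \subset \mtree$ the edges $f_{i-1}$ and $f_i$ both pass through $v$ and share $S_i$ as common simplex, so Lemma~\ref{lem:gensharp-patch} yields $\abs{\gensharp(f_{i-1}) - \gensharp(f_i)} \leq d-1$. It remains to splice $e$ to $f_0$ and, symmetrically, $e'$ to $f_{N-1}$, through common simplices of $\mtree$ containing $v$. For the $e$-side, both $T$ and $S_0$ live in the subtree of $\mtree$ rooted at $T^0$; by walking within the bisection tree I find a simplex of $\mtree$ containing $v$ that witnesses both $e$ and $f_0$ (or reduces them to edges with the same $\gensharp$ inside a common simplex). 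Lemma~\ref{lem:bisec-edge-1} is decisive here: it guarantees that once $v$ lies on the bisection edge, its incident edges are preserved through further bisections, so the splice can be performed without changing the relevant $\gensharp$ values.

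Summing the $N+1$ links of the chain $e, f_0, \ldots, f_{N-1}, e'$ gives
\[
  \gensharp(e) - \gensharp(e') \leq (N+1)(d-1) \leq (C(\tria_0)+1)(d-1).
\]
The main obstacle is precisely this splicing step: the edges $e, e'$ need not be edges of any simplex of $\tria_0^+$, since $\tria$ may be coarser or finer than $\tria_0^+$ around $v$. Bridging this gap between the ambient triangulation and the reference patch $\omega_{\tria_0^+}(v)$ requires a careful tree-theoretic argument in $\mtree$ based on ancestor--descendant relations and the preservation of vertex--edge incidences under bisection.
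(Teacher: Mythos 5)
The chain-counting skeleton you propose is exactly the paper's: locate a $1$-chain of simplices of length at most $C(\tria_0)$ and bound each link via Lemma~\ref{lem:gensharp-patch}. But the splice you flag at the end is not a bookkeeping issue; it is the crux, and as described it fails. Your chain lives in $\omega_{\tria_0^+}(v)$, while $e$ and $e'$ are edges of $\tria$. If $e$ is a deeply refined descendant of an edge $\bar e \in \edges(\tria_0^+)$ through $v$, then no simplex of $\mtree$ has both $e$ and $f_0$ as edges, so there is no common simplex on which to apply Lemma~\ref{lem:gensharp-patch}. Passing to the ancestor is not free either: by Lemma~\ref{lem:PropSharp}~\ref{itm:gsharp-edge-child} one has $\gensharp(e) = \gensharp(\bar e) + kd$ where $k$ is the number of times $\bar e$ has been bisected along $v$ to produce $e$, and $k$ is unbounded over $\BisecT$, so this discrepancy cannot be absorbed into the claimed constant. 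Also note that Lemma~\ref{lem:bisec-edge-1} does not assert that incident edges or their $\gensharp$ values are preserved under bisection; it only says that $v$ stays on the bisection edge of descendants that contain it.

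What you are missing is the full-valence reduction from Step~1 of the proof of Proposition~\ref{pro:gsharp-2d-Kuhn}, which the paper invokes first. One enlarges $\omega(v) = \{T \in \tria\colon v \in \vertices(T)\}$ by bisecting bisection edges that do not contain $v$ until $v$ has full valence; this does not bisect $e$, $e'$, or any edge through $v$, so $\gensharp(e)-\gensharp(e')$ is unchanged. Once $v$ has full valence, Lemma~\ref{lem:bisec-edge-1} keeps $v$ on the bisection edge of every further descendant, so the directions and incidence combinatorics of the edges through $v$ in $\omega(v)$ coincide with those in $\omega_{\tria_0^+}(v)$. The $1$-chain of at most $C(\tria_0)$ simplices can therefore be taken inside $\omega(v)$ itself, with $e$ an edge of the first simplex and $e'$ an edge of the last; summing the $C(\tria_0)+1$ links via Lemma~\ref{lem:gensharp-patch} then gives the bound without any splicing. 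Your final display is correct, but only once the chain is placed in the right patch.
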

\begin{proof}
	Let~$v\in \vertices(\tria)$ be a $0$-macro vertex. 
	The arguments in Step~1 in the proof of Proposition~\ref{pro:gsharp-2d-Kuhn} apply and show that without loss of generality we can assume that $v$ has full valence. 
	Lemma~\ref{lem:gensharp-patch} shows that the maximal $\gensharp$ difference for two edges in the same~$d$-simplex is bounded by~$d-1$. 
	Since all initial simplices in~$\tria_0$ are of type~$d$, by Lemma~\ref{lem:propertiesGenTypeLvl}~\ref{itm:simplex-refEdge} it follows that after~$d$ refinements all initial edges~$e\in \edges(\tria_0)$ have been bisected. 
	Thus, after one full refinement each $0$-macro vertex is fully refined. 
	Hence, for any two given edges containing~$v \in \vertices(\tria_0)$ there exists a sequence of 1-neighboring simplices of length at most $C(\tria_0)$ connecting those edges. 
	Combining these properties yields the statement. 
\end{proof}

\begin{remark}[Alternative bounds on $C(\tria_0)$]
	\label{rmk:bounds-CT0}\ 
	\begin{enumerate}
		\item 
		The constant $C(\tria_0)$ can be bounded by the shape regularity of~$\tria_0$. 
		However, even when the shape regularity degenerates, the constant~$C(\tria_0)$ might not be affected. 
		This can be seen, for example, by scaling a given triangulation in one direction or by considering an initial triangulation consisting of a bisected edge patch with very high simplex valence. 
		\item 
		An alternative quantity, depending more directly on~$\tria_0$ than~$C(\tria_0)$ is
		\begin{align*}
			C'(\tria_0) \coloneqq \max_{v\in \vertices(\tria_0)}\, \max_{T,T'\in \omega_{\tria_0}(v)} \delta^1_{\omega_{\tria_0}(v)}(T,T').
		\end{align*} 
		Since the bisection vertex~$b$ of any initial simplex~$T_0 \in \tria_0$ with vertex~$v\in \vertices(\tria_0)$ satisfies~$\gen(b) = 1$, we have $\gensharp(\simplex{b\leveljump v}) = d+1$. 
		Consequently, all descendants of~$T_0$ resulting from one full refinement and containing~$v$, contain the edge $\simplex{b\leveljump v}$ and are thus 1-neighbors. 
		Hence, the length of any minimal $1$-chain in the vertex patch has increased by at most factor~$2$, i.e., 
		\begin{align*}
			C(\tria_0) \leq 2 C'(\tria_0).
		\end{align*}
	\end{enumerate}
\end{remark}

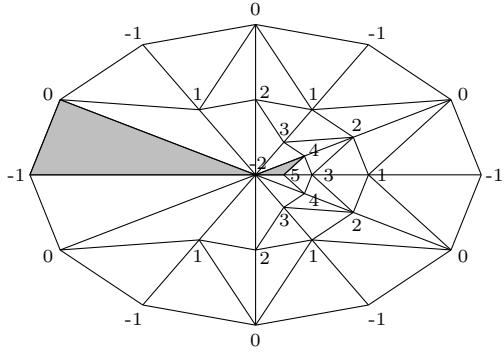
\begin{figure}
	\begin{tikzpicture}[xscale = 1.2, yscale = .8]
\draw (0.3125,0.0) -- (0.0,0.0);
\draw (0.5412658773652742,0.31249999999999994) -- (0.0,0.0);
\draw (1.0825317547305484,0.6249999999999999) -- (2.165063509461097,1.2499999999999998);
\draw (0.5412658773652742,0.31249999999999994) -- (1.0825317547305484,0.6249999999999999);
\draw (0.5412658773652742,0.31249999999999994) -- (0.625,0.0);
\draw (0.0,0.0) -- (-2.1650635094610964,1.250000000000001);
\draw (0.0,0.0) -- (-2.5,1.3877787807814457e-15);
\draw (0.0,0.0) -- (-2.1650635094610973,-1.2499999999999987);
\draw (0.3125,0.0) -- (0.625,0.0);
\draw (0.3125,0.0) -- (0.5412658773652742,0.31249999999999994);
\draw (0.625,0.0) -- (1.25,0.0);
\draw (0.625,0.0) -- (1.0825317547305484,0.6249999999999999);
\draw (2.5,0.0) -- (2.165063509461097,1.2499999999999998);
\draw (2.5,0.0) -- (2.1650635094610955,-1.2500000000000022);
\draw (2.165063509461097,1.2499999999999998) -- (1.2500000000000004,2.1650635094610964);
\draw (1.2500000000000004,2.1650635094610964) -- (6.938893903907228e-16,2.5);
\draw (6.938893903907228e-16,2.5) -- (-1.2499999999999993,2.1650635094610973);
\draw (-1.2499999999999993,2.1650635094610973) -- (-2.1650635094610964,1.250000000000001);
\draw (-2.1650635094610964,1.250000000000001) -- (-2.5,1.3877787807814457e-15);
\draw (-2.5,1.3877787807814457e-15) -- (-2.1650635094610973,-1.2499999999999987);
\draw (-2.1650635094610973,-1.2499999999999987) -- (-1.2500000000000018,-2.1650635094610955);
\draw (-1.2500000000000018,-2.1650635094610955) -- (-2.0816681711721685e-15,-2.5);
\draw (-2.0816681711721685e-15,-2.5) -- (1.249999999999998,-2.1650635094610977);
\draw (1.249999999999998,-2.1650635094610977) -- (2.1650635094610955,-1.2500000000000022);
\draw (1.25,0.0) -- (2.5,0.0);
\draw (1.25,0.0) -- (2.165063509461097,1.2499999999999998);
\draw (1.25,0.0) -- (2.1650635094610955,-1.2500000000000022);
\draw (1.0825317547305484,0.6249999999999999) -- (1.25,0.0);
\draw (1.0825317547305484,0.6249999999999999) -- (0.6250000000000002,1.0825317547305482);
\draw (0.5412658773652742,0.31249999999999994) -- (0.3125000000000001,0.5412658773652741);
\draw (0.6250000000000002,1.0825317547305482) -- (2.165063509461097,1.2499999999999998);
\draw (0.6250000000000002,1.0825317547305482) -- (1.2500000000000004,2.1650635094610964);
\draw (0.6250000000000002,1.0825317547305482) -- (6.938893903907228e-16,2.5);
\draw (0.625,0.0) -- (1.0825317547305477,-0.6250000000000011);
\draw (0.3125,0.0) -- (0.5412658773652739,-0.31250000000000056);
\draw (1.0825317547305477,-0.6250000000000011) -- (2.1650635094610955,-1.2500000000000022);
\draw (1.0825317547305477,-0.6250000000000011) -- (1.25,0.0);
\draw (1.0825317547305477,-0.6250000000000011) -- (0.624999999999999,-1.0825317547305489);
\draw (0.5412658773652739,-0.31250000000000056) -- (0.0,0.0);
\draw (0.624999999999999,-1.0825317547305489) -- (1.249999999999998,-2.1650635094610977);
\draw (0.624999999999999,-1.0825317547305489) -- (2.1650635094610955,-1.2500000000000022);
\draw (0.624999999999999,-1.0825317547305489) -- (-2.0816681711721685e-15,-2.5);
\draw (0.3125000000000001,0.5412658773652741) -- (0.0,0.0);
\draw (0.3125000000000001,0.5412658773652741) -- (1.0825317547305484,0.6249999999999999);
\draw (0.3125000000000001,0.5412658773652741) -- (0.6250000000000002,1.0825317547305482);
\draw (0.3125000000000001,0.5412658773652741) -- (3.469446951953614e-16,1.25);
\draw (3.469446951953614e-16,1.25) -- (0.0,0.0);
\draw (3.469446951953614e-16,1.25) -- (6.938893903907228e-16,2.5);
\draw (3.469446951953614e-16,1.25) -- (0.6250000000000002,1.0825317547305482);
\draw (3.469446951953614e-16,1.25) -- (-0.6249999999999997,1.0825317547305486);
\draw (-0.6249999999999997,1.0825317547305486) -- (0.0,0.0);
\draw (-0.6249999999999997,1.0825317547305486) -- (6.938893903907228e-16,2.5);
\draw (-0.6249999999999997,1.0825317547305486) -- (-1.2499999999999993,2.1650635094610973);
\draw (-0.6249999999999997,1.0825317547305486) -- (-2.1650635094610964,1.250000000000001);
\draw (0.5412658773652739,-0.31250000000000056) -- (0.625,0.0);
\draw (0.5412658773652739,-0.31250000000000056) -- (1.0825317547305477,-0.6250000000000011);
\draw (0.5412658773652739,-0.31250000000000056) -- (0.3124999999999995,-0.5412658773652744);
\draw (0.3124999999999995,-0.5412658773652744) -- (0.0,0.0);
\draw (0.3124999999999995,-0.5412658773652744) -- (1.0825317547305477,-0.6250000000000011);
\draw (0.3124999999999995,-0.5412658773652744) -- (0.624999999999999,-1.0825317547305489);
\draw (0.3124999999999995,-0.5412658773652744) -- (-1.0408340855860843e-15,-1.25);
\draw (-1.0408340855860843e-15,-1.25) -- (0.0,0.0);
\draw (-1.0408340855860843e-15,-1.25) -- (-2.0816681711721685e-15,-2.5);
\draw (-1.0408340855860843e-15,-1.25) -- (0.624999999999999,-1.0825317547305489);
\draw (-1.0408340855860843e-15,-1.25) -- (-0.6250000000000009,-1.0825317547305477);
\draw (-0.6250000000000009,-1.0825317547305477) -- (0.0,0.0);
\draw (-0.6250000000000009,-1.0825317547305477) -- (-1.2500000000000018,-2.1650635094610955);
\draw (-0.6250000000000009,-1.0825317547305477) -- (-2.0816681711721685e-15,-2.5);
\draw (-0.6250000000000009,-1.0825317547305477) -- (-2.1650635094610973,-1.2499999999999987);
\draw (0.03,0.2)	node {\scriptsize -2};
\draw (2.65,0)	node {\scriptsize -1};
\draw (2.3,1.35)	node {\scriptsize 0};
\draw (1.35,2.35)	node {\scriptsize -1};
\draw (0.0,2.75)	node {\scriptsize 0};
\draw (-1.35,2.35)	node {\scriptsize -1};
\draw (-2.3,1.35)	node {\scriptsize 0};
\draw (-2.65,0)	node {\scriptsize -1};
\draw (-2.3,-1.35)	node {\scriptsize 0};
\draw (-1.35,-2.4)	node {\scriptsize -1};
\draw (0.,-2.75)	node {\scriptsize 0};
\draw (1.35,-2.4)	node {\scriptsize -1};
\draw (2.3,-1.35)	node {\scriptsize 0};
\draw (1.4,0)	node {\scriptsize 1};
\draw (1.12,0.83)	node {\scriptsize 2};
\draw (0.64,1.35)	node {\scriptsize 1};
\draw (0.81,0)	node {\scriptsize 3};
\draw (1.12,-0.83)	node {\scriptsize 2};
\draw (0.64,-1.35)	node {\scriptsize 1};
\draw (0.65,0.4125)	node {\scriptsize 4};
\draw (0.32,0.76)	node {\scriptsize 3};
\draw (0.1,1.38)	node {\scriptsize 2};
\draw (-0.64,1.35)	node {\scriptsize 1};
\draw (0.44,0)	node {\scriptsize 5};
\draw (0.65,-0.4125)	node {\scriptsize 4};
\draw (0.32,-0.76)	node {\scriptsize 3};
\draw (0.1,-1.38)	node {\scriptsize 2};
\draw (-0.64,-1.35)	node {\scriptsize 1};

\draw[fill = gray, fill opacity = 0.5] (0.0,0.0) -- (-2.5,1.3877787807814457e-15) -- (-2.1650635094610964,1.250000000000001) -- (0.0,0.0);

\draw[fill = gray, fill opacity = 0.5] (0.0,0.0) -- (0.3125,0.0)  --  (0.5412658773652742,0.31249999999999994) -- (0.0,0.0);
\end{tikzpicture}
	\caption{Triangulation with vertex generations: 
	There is a generation difference $5$ between two intersecting simplices (gray) sharing the high-valence vertex in the center. 
	}
	\label{fig:HighValence}
\end{figure}

\subsection{From~\texorpdfstring{$\sharp$}{\#}-generation to level estimates}\label{subsec:mainResultLocal-level}
In this subsection we transfer the $\gensharp$ estimates of intersecting edges to level estimates, which leads to the main result of this section, Proposition~\ref{prop:MaxLvlJump}. 
This, in turn, is instrumental in the proof of Proposition~\ref{prop:LeavingIncreasesDim} in Section~\ref{subsec:colored-T0}, the key to our main result. 

\begin{lemma}[Local level estimates]
	\label{lem:genSharpToLvl}
	For a triangulation~$\tria \in \BisecT$ with colored~$\tria_0$
	let~$e,e'\in \edges(\tria)$  be edges such that for some number~$N \in \mathbb{N}$ we have
	\begin{align*}
		\gensharp(e) - \gensharp(e') \leq d N. 
	\end{align*} 
	Then, we have the level estimate
\begin{align*}
\level(e) - \level(e') \leq  N.
\end{align*}	
\end{lemma}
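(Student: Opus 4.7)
The plan is to reduce the claimed level estimate to a statement about $\sharp$-generations via the identities already established, after which it becomes elementary arithmetic with ceiling functions.

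First, I would recall from Lemma~\ref{lem:PropSharp}~\ref{itm:lvlSharpEdge} that for every edge $e \in \edges(\tria)$ one has $\levelsharp(e) = \level(e) + 1$, so in particular
\begin{align*}
\level(e) - \level(e') = \levelsharp(e) - \levelsharp(e').
\end{align*}
Hence it suffices to prove $\levelsharp(e) - \levelsharp(e') \leq N$. By the defining identity~\eqref{eq:def-type-level-sharp}, $\levelsharp(e) = \lceil \gensharp(e)/d\rceil$ and analogously for $e'$.

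Next, using the hypothesis $\gensharp(e) - \gensharp(e') \leq dN$, I would divide by $d$ and apply the ceiling. Since $N \in \mathbb{N}$ is an integer, the identity $\lceil x + N \rceil = \lceil x \rceil + N$ yields
\begin{align*}
\levelsharp(e) = \left\lceil \frac{\gensharp(e)}{d} \right\rceil
\leq \left\lceil \frac{\gensharp(e')}{d} + N \right\rceil
= \left\lceil \frac{\gensharp(e')}{d} \right\rceil + N
= \levelsharp(e') + N.
\end{align*}
Combining with the first step completes the argument.

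There is essentially no obstacle here: the lemma is a bookkeeping consequence of the definitions of $\levelsharp$ and $\level$ together with the identity $\levelsharp(e) = \level(e)+1$ proved in the previous subsection. The only subtlety worth flagging is the use of the integrality of $N$ to pull the shift out of the ceiling, which is what makes the factor of $d$ on the right-hand side of the hypothesis exactly match the factor $1$ on the right-hand side of the conclusion.
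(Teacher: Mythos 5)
Your proof is correct and follows essentially the same route as the paper: both reduce to $\levelsharp$ via Lemma~\ref{lem:PropSharp}~\ref{itm:lvlSharpEdge} and then do integer arithmetic on the $\gensharp$ hypothesis. The paper expands $\levelsharp$ through the type identity $\gensharp = d(\levelsharp -1) + \typesharp$ and bounds $\typesharp(e') - \typesharp(e) \leq d-1$ before rounding, whereas you work directly with the ceiling definition and pull the integer shift $N$ out of $\lceil \cdot \rceil$; this is a cosmetic difference in bookkeeping, not a different argument.
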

\begin{proof}
By Lemma~\ref{lem:PropSharp}~\ref{itm:lvlSharpEdge} we have 
		\begin{align*}
			\level(e) - \level(e') 
			&= 	\levelsharp(e) - \levelsharp(e')	\\
			&= 	
			\tfrac{1}{d} (\gensharp(e) - \gensharp(e') + \typesharp(e') - \typesharp(e) ) \\
			& \leq  \tfrac{1}{d} ( d N + d-1 ) = N + 1 - \tfrac{1}{d}.
		\end{align*}
Since the level difference is integer, the claim follows. 
\end{proof}

Applying Lemma~\ref{lem:genSharpToLvl}  to the $\sharp$-generation estimates in 
Section~\ref{subsec:gen-diff-vtx} allows us to bound the following maximal edge level jump.
\begin{definition}[Maximal level jump]
	\label{def:maxLvlJump-2}
	Let~$\tria_0$ be colored and~$v \in \vertices(\mtree)$. 
	We define the maximal level jump~$\jump(v)$ at~$v$ as 
	\begin{align*}
		\jump(v) &\coloneqq \max_{\tria \in \BisecT} \max_{e,e' \in \edges(\tria)\colon v \in e \cap e'}
		\bigabs{\level(e) - \level(e')}.
	\end{align*}
\end{definition}
We aim for bounds on the level difference in terms of the constants 
\begin{align}\label{eq:defJn-2}
	J_n \coloneqq \begin{cases}
		0& \text{for }  n \in \{d-1,d\},\\
		2 & \text{for } n \in \{1,\ldots,d-2 \},\\
		\lceil(C(\tria_0) +1)(1-1/d)\rceil - 2 \;\;& \text{for }n = 0.
	\end{cases}
\end{align}
For $n\in \mathbb{N}$ the constant~$J_n$ estimates by how much the maximal level jump of an $n$-macro vertex may exceed two. 
More precisely, we have the following result.
\begin{proposition}[Level jump]
	\label{prop:MaxLvlJump}
	Suppose that the initial triangulation~$\tria_0$ is colored. 
	Let~$v \in \vertices(\mtree)$ be an arbitrary $n$-macro vertex for~$n \in \{0, \ldots, d\}$, and let~$J_n\in \mathbb{N}_0$ be defined as in \eqref{eq:defJn-2}.
Then the maximal edge level jump satisfies~$\jump(v) \leq 2 + J_n$. 
	That is, for all edges~$e,e'\in \tria\in \BisecT$ with~$e\cap e' = v$ we have the estimate
		\begin{align*}
			\level(e) - \level(e') &\leq 2 + J_n.
		\end{align*}
\end{proposition}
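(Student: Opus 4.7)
The plan is to reduce the level estimate directly to the $\sharp$-generation estimates from Section~\ref{subsec:gen-diff-vtx} via the conversion Lemma~\ref{lem:genSharpToLvl}. Fix $\tria \in \BisecT$ and two edges $e, e' \in \edges(\tria)$ with $e \cap e' = v$. Since $\jump(v)$ involves an absolute value, I may assume $\gensharp(e) \geq \gensharp(e')$ without loss of generality, the reverse case following by exchanging the roles of $e$ and $e'$.

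I would split into three cases according to the dimension $n$ of the macro vertex $v$, and in each case find the smallest $N\in \mathbb{N}$ for which the available $\gensharp$ bound implies $\gensharp(e)-\gensharp(e') \leq dN$. First, for $n \in \set{d-1,d}$, i.e., a non-critical vertex, Lemma~\ref{lem:genSharpNonMacro} yields $\gensharp(e) - \gensharp(e') \leq 2d$, so Lemma~\ref{lem:genSharpToLvl} with $N=2$ gives $\level(e)-\level(e') \leq 2 = 2 + J_n$. Second, for $n \in \set{1,\dots,d-2}$ Lemma~\ref{lem:genSharpnMacro}\ref{itm:genSharpnMacro1} yields $\gensharp(e)-\gensharp(e') \leq 4d$, so Lemma~\ref{lem:genSharpToLvl} with $N=4$ gives $\level(e)-\level(e') \leq 4 = 2 + J_n$.

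For the remaining and most delicate case $n=0$, Lemma~\ref{lem:genSharp0Macro} yields $\gensharp(e) - \gensharp(e') \leq (C(\tria_0)+1)(d-1)$. Choosing
\begin{align*}
	N \coloneqq \bigl\lceil (C(\tria_0)+1)(1-1/d) \bigr\rceil,
\end{align*}
one verifies $(C(\tria_0)+1)(d-1) = d\cdot(C(\tria_0)+1)(1-1/d) \leq dN$, so Lemma~\ref{lem:genSharpToLvl} applies and gives $\level(e)-\level(e') \leq N = 2 + J_0$, matching the definition of $J_0$ in~\eqref{eq:defJn-2}. Taking the maximum over all admissible pairs $(e,e')$ and over $\tria \in \BisecT$ then gives the claimed bound on $\jump(v)$.

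The main obstacle is essentially bookkeeping: nothing substantial remains once the $\gensharp$ bounds of Section~\ref{subsec:gen-diff-vtx} have been established. The only mildly careful point is the passage from a $\gensharp$ bound to a level bound via Lemma~\ref{lem:genSharpToLvl}; since levels are integer-valued, the ceiling in the $n=0$ case precisely accounts for the $\typesharp$-remainder in the proof of that lemma, and the constants $J_n$ have been tailored to absorb exactly this loss. The proof therefore amounts to invoking the correct lemma in each case and reading off the constant.
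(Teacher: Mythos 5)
Your proof is correct and takes essentially the same approach as the paper: both reduce the claim to the $\gensharp$-bounds of Lemmas~\ref{lem:genSharpNonMacro}, \ref{lem:genSharpnMacro}\ref{itm:genSharpnMacro1}, and \ref{lem:genSharp0Macro}, then convert to a level estimate via Lemma~\ref{lem:genSharpToLvl} with the constants $J_n$ chosen exactly to absorb the ceiling in the $n=0$ case. The only difference is that you spell out the choice of $N$ case by case, whereas the paper observes once that each $\gensharp$-bound is at most $(2+J_n)d$ and applies the conversion lemma uniformly.
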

\begin{proof}
	Due to Lemma~\ref{lem:genSharpNonMacro}, Lemma~\ref{lem:genSharpnMacro}~\ref{itm:genSharpnMacro1}, and Lemma~\ref{lem:genSharp0Macro} we have for any~$e,e'\in \mathcal{E}(\mtree) $ with~$v\in e \cap e'$ we have that
	\begin{align*}
		\gensharp(e') - \gensharp(e) \leq  
		\begin{cases}
			2d& \text{for } n \in \{d-1,d\},\\
			4d & \text{for } n \in \{1,\ldots,d-2 \},\\
			(C(\tria_0) +1)(d-1) & \text{for }n = 0. 
		\end{cases} 
	\end{align*}
	The terms on the right-hand side are bounded above by~$(2+J_n)\, d$ with~$J_n$ as defined in \eqref{eq:defJn-2}. 
	The claim then follows using Lemma~\ref{lem:genSharpToLvl}. 
\end{proof}

	\section{Neighborhood and auxiliary triangulation}
	\label{sec:leavingTheNgh}
	In this section we investigate properties of triangulations~$\tria\in \BisecT$ within a neighborhood of a vertex~$v\in \vertices(\tria)$. 
	For this purpose we construct a strongly graded auxiliary triangulation in Section~\ref{subsec:AuxTria} with some beneficial layer structure investigated in Section~\ref{subsec:Layers}. 
	In Section~\ref{subsec:FinerTria} we present conditions under which  this auxiliary triangulation is finer than~$\tria$. 
	This allows us to prove generation estimates for edge chains within a neighborhood in Section~\ref{subsec:ChainsLeavingNgh}. 
	
	The neighborhood is defined as follows. 
	For an arbitrary vertex~$v\in \vertices(\mtree)$ and a number~$m\in \mathbb{N}_0$ with~$m \geq \level(v)$ we define the non-empty uniform vertex patch 
	\begin{align}\label{def:defnodalngbh}
		\omega_m(v) \coloneqq \lbrace T\in \mtree \colon \gen(T) = md \,\text{ and }\,v \in \vertices(T)\rbrace.
	\end{align}
	We define the boundary of the patch~$\omega_m(v)$ as
	\begin{align*}
		\patchbd \coloneqq \bigcup \lbrace F \text{ is a $(d-1)$-simplex (face) in }\omega_m(v) \colon v \not\in F\rbrace.
	\end{align*}
	The set covered by~$\omega_m(v)$ is denoted by
	\begin{align}\label{eq:defOmegahat}
		\overline{\Omega}_m(v) &\coloneqq \bigcup  \omega_m(v)\quad\text{and} \quad
		\Omega_m(v) \coloneqq \overline{\Omega}_m(v) \setminus \partial \omega_m(v).
	\end{align}
	If~$v\in \Omega$ is an interior vertex, the neighborhood~$\Omega_m(v)$ is open, $\patchbd$ is the topological boundary~$\partial \Omega_m(v)$, and~$\overline{\Omega}_m(v)$ its topological closure. 
	Otherwise, if~$v \in \partial \Omega$, then~$\patchbd$ is essentially the part of the vertex patch boundary that lies in the interior of the domain~$\Omega$.

	\begin{lemma}[Macro hyperface]
		\label{lem:passingKfaces}
		For a vertex~$v\in \vertices(\mtree)$ let~$m \in \mathbb{N}_0$ be such that~$m \geq \level(v)$. 
		Let~$n \in \set{0, \ldots, d}$ be such that~$v$ is an $n$-macro vertex as in Definition~\ref{def:MacroVertex} with~$F$ the $n$-subsimplex  of a $d$-simplex in~$\tria_0$ such that~$v$ is contained in~$F$. 
		Then~$\Omega_m(v)$ does not contain any further $n$-subsimplices in~$\tria_0$. 
		In other words, any $n$-subsimplex~$F'\neq F$ in $\tria_0$ satisfies~$F' \cap \Omega_m(v) = \emptyset$. 
	\end{lemma}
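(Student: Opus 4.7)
The plan is to argue by contradiction, assuming that some $x \in F' \cap \Omega_m(v)$ exists with $F' \neq F$ an $n$-subsimplex of $\tria_0$. First I note that $v \notin F'$: since $\tria_0$ is a conforming triangulation, the intersection of the two distinct $n$-subsimplices $F$ and $F'$ is a common subsimplex of dimension strictly less than $n$, and $v$ cannot lie in it because $v$ is in the relative interior of $F$. In particular $x \neq v$.

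Next I would localize $x$ inside a simplex of the patch. Since $x \in \overline{\Omega}_m(v) \setminus \partial\omega_m(v)$, there is $T \in \omega_m(v)$ such that $x$ lies in the relative interior of the unique minimal face $G_x$ of $T$ containing $x$. If $v \notin G_x$, then $G_x$ is contained in the facet of $T$ opposite to $v$, which is part of $\partial\omega_m(v)$; this contradicts $x \notin \partial\omega_m(v)$. Hence $v \in G_x$, and I can write
\begin{align*}
x = \lambda v + (1-\lambda)\, y, \qquad \lambda > 0,
\end{align*}
with $y$ a convex combination of the vertices of $G_x$ other than $v$. Let $T_0 \in \tria_0$ be the initial $d$-simplex with $T \subseteq T_0$. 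Since $v \in T_0$ is in the relative interior of $F$, we have $F \subseteq T_0$. Set $G \coloneqq F' \cap T_0$, which by conformity of $\tria_0$ is a common subsimplex of $F'$ and $T_0$ of dimension at most $n$, with equality precisely when $F' \subseteq T_0$.

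The contradiction is then obtained by a barycentric coordinate argument in $T_0$: since $v$ lies in the relative interior of $F$, its barycentric coordinates with respect to the vertices of $T_0$ are strictly positive on the $n+1$ vertices of $F$ and vanish elsewhere, while the coordinates of $y$ are non-negative. Consequently the coordinates of $x = \lambda v + (1-\lambda)\,y$ are strictly positive at every vertex of $F$ because $\lambda > 0$. On the other hand $x \in G$ forces the coordinates of $x$ to be supported on the at most $n+1$ vertices of $G$. Combining the two constraints, the vertex set of $F$ must be contained in the vertex set of $G$, and a dimension count yields $F = G \subseteq F'$, hence $F = F'$ by equality of dimensions, contradicting $F \neq F'$. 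I expect the main obstacle to be the geometric localization in the second step, namely establishing that every point of $\Omega_m(v) \setminus \{v\}$ lies on a face of some $T \in \omega_m(v)$ that contains $v$; once this is settled, the barycentric argument handles both cases $F' \subseteq T_0$ and $F' \not\subseteq T_0$ uniformly, with no reference to the refinement level $m$.
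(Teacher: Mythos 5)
Your proof is correct, and it is considerably more detailed than the paper's, which dismisses the lemma in one line as following \emph{directly} from the definition of $\Omega_m(v)$ in~\eqref{eq:defOmegahat}. The paper's implicit argument is the standard open-star fact: $\Omega_m(v)$ is precisely the open star of $v$ in the conforming uniform triangulation of generation $md$, and $F'$, being an $n$-subsimplex of $\tria_0$ not containing $v$, is a subcomplex of that triangulation all of whose simplices omit $v$, so it cannot meet the open star. Your barycentric-coordinate argument inside $T_0$ is an explicit and self-contained substitute for that combinatorial-topology fact, and the two steps you identify — first localizing $x$ on a face $G_x$ of a patch simplex with $v \in G_x$ (i.e.\ unwinding what it means to avoid $\partial\omega_m(v)$), then pushing the strict positivity of $v$'s coordinates on $\vertices(F)$ into $x = \lambda v + (1-\lambda)y$ to force $\vertices(F) \subseteq \vertices(F' \cap T_0)$ — do exactly the work the paper silently delegates to the reader. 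One cosmetic point: your final dimension count $F = G \subseteq F'$ implicitly covers the case $F' \not\subseteq T_0$ (where $\dim G < n$ already contradicts $\vertices(F) \subseteq \vertices(G)$) as well as the case $F' \subseteq T_0$ (where $G = F'$ and then $F = F'$); it would read more cleanly to separate the two, but the logic is sound.
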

	\begin{proof}
	The lemma follows directly from the definition in \eqref{eq:defOmegahat}. 
	\end{proof}

	\subsection{Auxiliary triangulation}
	\label{subsec:AuxTria}
	In the following we construct an auxiliary triangulation and investigate its properties. 
	We assume that the initial triangulation~$\tria_0$ is colored. 
	Moreover, we fix a vertex~$v \in \vertices(\mtree)$ and a number~$m\in \mathbb{N}_0$ with
	\begin{align}\label{eq:assump-v-aux}
	 \level(v) < m.
	\end{align} 
	Later we shall see that this condition ensures that~$v$ is the oldest vertex in each $d$-simplex~$T \in \omega_m(v)$ with the vertex patch defined in~\eqref{def:defnodalngbh}. 
	
	Starting from the uniform triangulation of generation~$m d$ we refine infinitely often outside of the uniform vertex patch~$\omega_m(v)$. 
	Later we shall denote the resulting triangulation by~$\tria_m^\infty(v) \in \overline{\BisecT}$ and its restriction to the vertex patch by~$\omega_m^\infty(v)$. 
	Let us make this construction precise:  
	Starting from the uniform triangulation~$\omega_m^0(v) \coloneqq \omega_m(v)$, we successively refine all simplices touching the boundary~$\partial \omega_m(v)$ of the vertex patch. 
	More precisely, we define recursively a sequence of partitions~$(\omega_m^{j}(v))_{j \in \mathbb{N}_0}$. 
	Each partition arises by refining all $d$-simplices in the previous partition that intersect with the boundary~$\patchbd$. In other words, for~$j \in \mathbb{N}$ we define 
	\begin{align}
		\label{eq:refStrategy}
		\begin{aligned}
			\omega_m^{j}(v) 
			\coloneqq \lbrace 
			& T \in \mtree\colon  T \in \omega_m^{j-1}(v) \,\text{ and }\,T \cap \patchbd = \emptyset \rbrace \\
			& \cup \lbrace T \in \mtree\colon  \parent(T) \in \omega_m^{j-1}(v)\,\text{ and }\,\parent(T) \cap \patchbd \neq \emptyset\rbrace.
		\end{aligned}
	\end{align}
	Figure~\ref{fig:Layer} shows the restrictions of such an auxiliary triangulations to a single simplex in~$\omega_m(v)$. 
	By design each~$\omega_m^{j}(v)$ forms a partition of the domain~$\Omega_m(v)$, but it is not obvious that it is conforming. 
	This shall be confirmed in the sequel.  
A partition of~$\Omega_m(v)$ consisting of infinitely many simplices is given by 
	\begin{align*}
		\omega_m^\infty(v) \coloneqq \liminf_{j\to \infty} \omega_m^j(v) = \bigcup_{k =0}^\infty \bigcap_{j = k}^\infty \omega_{m}^j(v). 
	\end{align*}
	Equivalently, the set can be characterized as
	\begin{align}\label{eq:charomegaINfty}
		\omega_m^\infty(v) = \lbrace T \in \omega_m^j(v) \colon j\in \mathbb{N}\,\text{ and }\,T \cap \patchbd = \emptyset\rbrace.
	\end{align}
	By construction the simplices~$T \in \omega_m^j(v)$ that touch the boundary~$\patchbd$, i.e., $T \cap \patchbd \neq \emptyset$, have generation~$\gen(T) = md+j$.  
	Hence, it is natural to extend the partition~$\omega_m^j(v)$ outside of~$\Omega_m(v)$ by a uniform partition of simplices of generation~$md +j$. 
	In this way we obtain for any~$j\in \mathbb{N}_0$ a partition of~$\Omega$ given by
	\begin{align}\label{def:auxTria}
		\tria_m^j(v) \coloneqq \omega_m^{j}(v) \cup \lbrace T \in \mtree\colon \gen(T) = md +j \text{ and } T \cap  \Omega_m(v) = \emptyset\rbrace.
	\end{align}
	This will turn out to be a conforming triangulation in the sense of  Definition~\ref{def:triangulation}, that is $\tria_m^j(v)\in \BisecT$, 
	see Proposition~\ref{prop:regTria} below. 
	Its proof shows that bisecting the simplices at the boundary does not require a closure step and hence conformity is preserved. 
	This fact relies on the following observations. 
	\begin{figure}
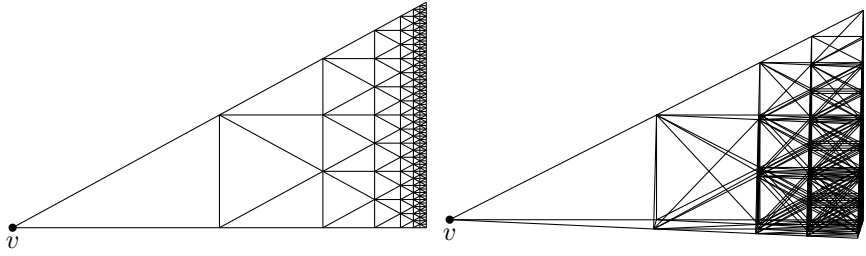

		\begin{minipage}{.45\textwidth}
			\input{PictureTex/Layer2d.tex}
		\end{minipage}
		\begin{minipage}{.45\textwidth}
			\input{PictureTex/Layer3D.tex}
		\end{minipage}
		\vspace*{-.5cm}
		\caption{Triangulations~${\omega}_m^{12}(v)$ for~$d=2$ (left) and~$\omega_m^9(v)$ for~$d=3$ (right) restricted to a single simplex in $\omega_m(v)$.}
		\label{fig:Layer}
	\end{figure}
	
	\begin{lemma}[Boundary vertices]
		\label{lem:boundary-vertices}
		Let~$v \in \vertices(\mtree)$ be a vertex and~$m \in \mathbb{N}_0$ be such that~$\level(v)<m$.
		The following is valid for any~$T \in \omega_m^{j}(v)$ and~$j \in \mathbb{N}_0$. 
		\begin{enumerate}[label=(\roman*)]
			\item 
			\label{itm:boundary-vertices-prop} 
			Let~$T$ intersect with the patch boundary $\partial \omega_m(v)$, i.e., there is a vertex $a \in \vertices(T) \cap \patchbd$. 
			Then, for every vertex~$v' \in \vertices(T) \setminus \set{v}$ with 
			\begin{align*}
				\level(v') 
				< \level(a) \;\; \text { or } \;\;  
				\big(\level(v') 
				= \level(a) \; \text{ and  }\;  \gen(a)<\gen(v')\big)
			\end{align*} 
			we have that~$v'\in \patchbd$. In other words, for simplex $T$ of the form 
\begin{align*}	
			T = \simplex{v_0,\dots,v_{\ell-1}, a, v_{\ell + 1},\dots ,v_{\ell+n}\mid v_{\ell+n+1},\dots, v_d},
\end{align*} 
the vertices $v_0,\dots,v_\ell$ and $v_{\ell+n+1},\dots, v_d$ are on the boundary $\patchbd$.
			\item 
			\label{itm:boundary-vertices-type1} 
			If~$v' \in \vertices(T)$ is a vertex with~$m < \level(v')$ and~$\type(v')=1$, then~$v'$ is not on the boundary~$\patchbd$. 
		\end{enumerate}
	\end{lemma}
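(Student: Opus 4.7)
I will prove (i) and (ii) jointly by induction on $j \ge 0$, using two standing observations. (a) Any simplex $T' \in \omega_m^{j-1}(v)$ of generation exactly $md$ was never bisected and therefore lies in $\omega_m(v)$, so $v \in \vertices(T')$; equivalently, $v \notin \vertices(T')$ forces $\level(T') > m$. (b) For any edge $e' \in \edges(T')$ with bisection vertex $b$, one has $b \in \patchbd$ if and only if $e' \subset \patchbd$: two simplices of $\omega_m^{j-1}(v)$ intersect in a common face, so the $(d-1)$-subcomplex triangulating $\patchbd$ can meet the $1$-simplex $e'$ only in a face of $e'$, and a face of $e'$ containing its midpoint must equal $e'$.

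The base case $j=0$ is immediate: every $T \in \omega_m(v)$ has $\gen(T)=md$, so by Lemma~\ref{lem:propertiesGenTypeLvl}\ref{itm:simplex-edge} all its vertices have level at most $m$, making (ii) vacuous; moreover the $(d-1)$-face of $T$ opposite $v$ lies in $\patchbd$ by definition of the uniform vertex patch, giving (i). For the inductive step, fix $T \in \omega_m^j(v)$. If $T \in \omega_m^{j-1}(v)$ was not bisected, then $T \cap \patchbd = \emptyset$ so (i) is vacuous and (ii) is inherited from the IH. Otherwise $T$ is a child of some $T' \in \omega_m^{j-1}(v)$ with $T' \cap \patchbd \ne \emptyset$; write $e' = \bse(T') = \simplex{b_0, b_1}$ and let $b$ be the bisection vertex.

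For (ii), a vertex $v' \ne b$ inherited from $T'$ is handled directly by the IH. If $v' = b$ and $\type(b)=1$, then $\gen(b) = \gen(T')+1$ forces $\type(T')=d$, so by Lemma~\ref{lem:genbased}\ref{itm:genbaseda} we have $T' = \simplex{v_0, \ldots, v_{d-1} \mid v_d}$ with $\bse(T') = \simplex{v_{d-1} \mid v_d}$ and $\level(v_0) = \cdots = \level(v_{d-1}) = \level(T') \ge m$. If $v \in \vertices(T')$, the level gap forces $v = v_d \in e'$, and $v \notin \patchbd$ yields $e' \not\subset \patchbd$. If $v \notin \vertices(T')$, observation~(a) gives $\level(T') > m$, and Lemma~\ref{lem:genbased2}\ref{itm:genbased2a},\ref{itm:genbased2d} implies that $\head(T')$ has consecutive generations without level jump, so $\type(v_{d-1}) = 1$; the IH applied to $v_{d-1} \in \vertices(T')$ gives $v_{d-1} \notin \patchbd$ and again $e' \not\subset \patchbd$. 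Observation~(b) converts both subcases to $b \notin \patchbd$.

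For (i), take $a \in \vertices(T) \cap \patchbd$ and $v' \in \vertices(T) \setminus \{v\}$ satisfying the level/generation condition relative to $a$. When $v' \ne b$, the IH applied to $T'$ with anchor $a$, or with one of $b_0, b_1$ in the case $a = b$ (where observation~(b) places both endpoints in $\patchbd$), settles the claim; the hypothesis on $v'$ survives the possible substitution because $\gen(b_i) < \gen(b) = \gen(a)$ and $\level(b_i) \le \level(b) = \level(a)$. When $v' = b$, the maximality of $\gen(b)$ in $T$ collapses the hypothesis to $\level(a) = \level(b)$ and $\gen(a) < \gen(b)$; a case analysis via Lemmas~\ref{lem:genbased} and~\ref{lem:genbased2} shows this forces $\type(T') < d$ with $a$ a head vertex of $T'$ at level $\level(T')$, whereupon the IH applied to $T'$ with anchor $a$ pins both $b_0, b_1$ onto $\patchbd$, and observation~(b) upgrades this to $e' \subset \patchbd$ and hence $b \in \patchbd$. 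The main obstacle is precisely this last subcase of (i), where vertex-level boundary information from the IH must be promoted to the edge-level statement $e' \subset \patchbd$; this hinges on the rigid head/tail combinatorics of $T'$ together with observation~(b).
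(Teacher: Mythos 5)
Your proposal runs the same induction on~$j$ as the paper, and your two preliminary observations play the same role as the facts the paper records at the start of its proof (namely~$v\in\bse(T)$ by Lemma~\ref{lem:bisec-edge-1}, and the implicit use that~$\patchbd$ meets an edge of the patch subdivision only in a face of that edge). The argument for~\ref{itm:boundary-vertices-type1} is essentially the paper's Case~$\type(T')=d$ and looks sound: you correctly deduce~$\type(T')=d$ from~$\type(b)=1$, split on whether~$v\in\vertices(T')$, and in the second subcase you correctly combine Lemma~\ref{lem:genbased2}\ref{itm:genbased2a},\ref{itm:genbased2d} with the IH for~\ref{itm:boundary-vertices-type1} to place~$v_{d-1}\notin\patchbd$, from which observation~(b) gives~$b\notin\patchbd$.

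The treatment of~\ref{itm:boundary-vertices-prop} in the subcase~$v'=b$, however, has a genuine gap. You reduce the hypothesis to~$\level(a)=\level(b)$ with~$a\in\vertices(T')\cap\patchbd$, deduce~$\type(T')<d$ and that~$a$ is a head vertex of~$T'$ at level~$\level(T')$, and then invoke the IH with anchor~$a$ to ``pin both~$b_0,b_1$ onto~$\patchbd$.'' But the IH for~\ref{itm:boundary-vertices-prop} explicitly excludes~$v$, and~$\bse(T')$ may well contain~$v$: when~$v\in\vertices(T')$ the vertex~$v$ is the oldest and hence always lies on~$\bse(T')$, cf.\ Lemma~\ref{lem:propertiesGenTypeLvl}\ref{itm:oldestonbse} and Lemma~\ref{lem:bisec-edge-1}. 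In that situation the IH only yields that the other endpoint of~$\bse(T')$ is in~$\patchbd$, while~$v\notin\patchbd$; observation~(b) then delivers~$b\notin\patchbd$, the opposite of what you need, and the argument collapses. To close the gap one must rule this scenario out: if~$v\in\vertices(T')$ then every vertex of~$T'$ on~$\patchbd$ is inherited from the original opposite face in~$\omega_m(v)$ and therefore has level~$m$, whereas you have just shown~$\level(a)=\level(T')>m$; hence no such anchor~$a$ can exist and the subcase is vacuous whenever~$v\in\bse(T')$. This step is not in your write-up. Similarly, in the subcase~$a=b$ the claim that ``the hypothesis on~$v'$ survives the possible substitution'' because~$\gen(b_i)<\gen(b)$ and~$\level(b_i)\le\level(b)$ is too quick: since~$\level(b)=\level(\bse(T'))+1$, the case~$\level(v')=\level(b_0)$ is not covered by the first disjunct of~\ref{itm:boundary-vertices-prop} with anchor~$b_0$, and verifying the second disjunct requires the head/tail combinatorics of Lemma~\ref{lem:genbased} (choosing the anchor~$b_0$ or~$b_1$ depending on whether the tail is a singleton). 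As a minor point, the justification of observation~(b) appeals to the conformity of~$\omega_m^{j-1}(v)$, which is only established in Proposition~\ref{prop:regTria} downstream of this lemma; the observation is nonetheless correct, but should be argued from the convexity of the faces of~$\omega_m(v)$ and the nestedness of descendants inside their initial simplex rather than from conformity of the intermediate partitions.
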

	\begin{proof}
		Before proving the statement by induction, let us collect the consequences of the assumption~$\level(v)<m$, cf.~\eqref{eq:assump-v-aux}. 
		Recall that the vertex patch~$\omega_m(v)$ consists of all $d$-simplices~$T \in \mtree$ containing~$v$ that have generation~$md$, i.e., they have type~$d$ and level~$m$. 
		Hence, in conjunction with Lemma~\ref{lem:genbased2}~\ref{itm:genbased2d}  this ensures that 
		\begin{align}
		\label{eq:lvl-v}
			\level(v) < m = \level(v')  \quad \text{ for all } v' \in \vertices(T) \setminus \{v\}\text{ and }T\in \omega_m(v).
		\end{align}
		That is,~$v$ is the oldest vertex and
		by Lemma~\ref{lem:propertiesGenTypeLvl}~\ref{itm:oldestonbse} we know that~$v \in \bse(T)$ for any~$T \in \omega_m(v)$. 
		Furthermore, by Lemma~\ref{lem:bisec-edge-1} it follows that 
		\begin{align}\label{eq:bisec-v}
			 v \in \bse(T') \quad \text{ for any } T'\in \forest(\omega_m^\infty(v))\text{ with } v \in \vertices(T').
		\end{align}
		We prove the lemma for all~$T \in \omega_m^j(v)$ by induction over~$j \in \mathbb{N}_0$.  
		
		\textit{Base case $j = 0$.} 
		Let~$T \in \omega_m(v)$ be arbitrary. 
		By the definition of the boundary of the vertex patch, all vertices~$v' \in \vertices(T)\setminus \{v\}$ are contained in $\patchbd$. 
		In combination with~\eqref{eq:lvl-v} this implies~\ref{itm:boundary-vertices-prop}. 
		Also, there is no vertex for which the conditions of~\ref{itm:boundary-vertices-type1} are valid, and thus it holds trivially. 
	
		\textit{Induction step.} 
		Suppose that~$T=\simplex{v_0,\dots,v_d}\in \omega_m^{j-1}(v)$ satisfies the claim.  
		We have to show that for each child~$T' \in \omega_m^{j}(v)$ of $T$ the statement holds as well. 
		Since only the simplices touching the boundary~$\patchbd$ are bisected it suffices to consider simplices~$T$ that intersect the boundary. 
		Thus, we may assume that~$T \cap \patchbd \neq \emptyset$. 
		
		\textit{Case 1 ($\type(T)=d$).} 
		In this case we have~$T = \simplex{v_0, \ldots, v_{d-1} \leveljump v_d}$ and the bisection edge is~$\bse(T) = \simplex{ v_{d-1} \leveljump v_d} $, see~Lemma~\ref{lem:genbased}~\ref{itm:genbaseda}.   
		Since the bisection vertex~$b = \bsv([v_{d-1},v_d]) = \vertices(T')\setminus \vertices(T)$ is the only new vertex in~$T'\in \children(T)$, it suffices to prove that~$b \notin \patchbd$. 
		Indeed, with~$\level(b)>\level(T)$ the assumption in~\ref{itm:boundary-vertices-prop} is not satisfied and hence the statement holds. 
	 	Furthermore, with~$\type(b) = 1$ and~$v \notin \patchbd$ also~\ref{itm:boundary-vertices-type1} follows.  
		If~$v \in \vertices(T)$, we know that~$v \in \bse(T)$ due to \eqref{eq:bisec-v}. 
		With~$v \notin \patchbd$ it follows that~$b \notin \patchbd$. 
		If on the other hand~$v \notin \vertices(T)$, then it follows  that~$v_{d-1} \notin \patchbd$, since otherwise by~\ref{itm:boundary-vertices-prop} all vertices of~$T$ would be contained in~$\patchbd$, which is not possible. 
		Again, from~$\bse(T) = \simplex{v_{d-1}\leveljump v_d}$ it follows that~$b\notin \patchbd$. 
		
		\textit{Case 2 ($\type(T)\neq d$)}. 
		This case is equivalent to~$\type(b) \neq 1$ for the bisection vertex~$b = \textup{mid}(\bse(T))$. 
		Hence~\ref{itm:boundary-vertices-type1} holds, since the assumption is not satisfied. 
		It remains to prove~\ref{itm:boundary-vertices-prop}. 
		If~$v_0 \notin \patchbd$, then the induction hypothesis implies that none of the vertices in~$T$ is on the boundary, which contradicts~$T \cap \patchbd \neq \emptyset$. 
		Hence, we have that the vertex~$v_0 \in \patchbd$ and it remains to show that~$b\in \patchbd$. 
		Since~$\type(T)<d$, by Lemma~\ref{lem:propertiesGenTypeLvl}~\ref{itm:simplex-refEdge} we have that~$\level(\bse(T))< \level(v_0)$, and hence the induction hypothesis for~$T$ shows with~\ref{itm:boundary-vertices-prop} that both vertices of~$\bse(T)$ are contained in $\patchbd$. 
		Consequently, also~$b\in \patchbd$ is on the boundary. 
		This concludes the proof. 
	\end{proof}
	Lemma~\ref{lem:boundary-vertices} allows us to show conformity of the auxiliary triangulations. 
	\begin{proposition}[Conforming triangulation]
		\label{prop:regTria}
		For~$v\in \vertices(\mtree)$ let~\eqref{eq:assump-v-aux} be satisfied, that is $\level(v) < m\in \mathbb{N}_0$. 
		Then for each~$j \in \mathbb{N}_0$ the partition~$\tria_m^j(v)$ defined in~\eqref{def:auxTria} is a conforming triangulations of the domain~$\Omega$ in the sense that
		\begin{align*}
			\tria_m^j(v) \in \BisecT\qquad\text{for any }j\in \mathbb{N}_0.
		\end{align*}
	\end{proposition}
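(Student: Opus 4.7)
The plan is to proceed by induction on $j \in \mathbb{N}_0$. The base case $j = 0$ is immediate: $\tria_m^0(v)$ coincides with the uniform refinement of $\tria_0$ at generation $md$, which is reached from $\tria_0$ by $m$ full uniform refinements and therefore lies in $\BisecT$.

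For the inductive step, suppose $\tria_m^{j-1}(v) \in \BisecT$. Let $\mathcal{R} \subset \tria_m^{j-1}(v)$ denote the set of simplices bisected in the transition to $\tria_m^j(v)$, namely the simplices of $\omega_m^{j-1}(v)$ that intersect $\patchbd$ together with all simplices of $\tria_m^{j-1}(v)$ satisfying $T \cap \Omega_m(v) = \emptyset$. I would first verify that every $T \in \mathcal{R}$ has $\gen(T) = md+j-1$: for outside simplices this is built into the definition of $\tria_m^{j-1}(v)$, whereas for simplices of $\omega_m^{j-1}(v)$ touching $\patchbd$ it follows from the recursive construction \eqref{eq:refStrategy}, since any such simplex already present at an earlier step $k < j-1$ would have been bisected at step $k+1$ and therefore could not survive in $\omega_m^{j-1}(v)$ with smaller generation.

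The crux is to show that every simplex $T' \in \tria_m^{j-1}(v)$ containing the bisection edge $\bse(T)$ of some $T \in \mathcal{R}$ also belongs to $\mathcal{R}$. Once this is established, Lemma~\ref{lem:bisec-edge-2}~\ref{itm:bisec-edge-b} applied with the common generation $md+j-1$ gives $\bse(T') = \bse(T)$, so the bisections prescribed by $\mathcal{R}$ are pairwise compatible and the conforming-closure loop of Algorithm~\ref{algo:closure} never fires. Consequently $\tria_m^j(v)$ is produced from $\tria_m^{j-1}(v) \in \BisecT$ by finitely many applications of $\Bisec$ and therefore belongs to $\BisecT$.

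To prove this crux, I would argue by contradiction, supposing $T' \in \omega_m^{j-1}(v)$ with $T' \cap \patchbd = \emptyset$ but $\bse(T) \subset T'$; both endpoints of $\bse(T)$ then lie in the open set $\Omega_m(v)$. If $T$ is outside $\Omega_m(v)$ this is incompatible with $\bse(T) \subset T$. If $T \in \omega_m^{j-1}(v)$ has $\type(T) < d$, Lemma~\ref{lem:propertiesGenTypeLvl}~\ref{itm:simplex-refEdge} yields $\level(\bse(T)) < \level(T)$ and Lemma~\ref{lem:boundary-vertices}~\ref{itm:boundary-vertices-prop} then forces both endpoints of $\bse(T)$ onto $\patchbd$, contradicting the assumption. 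The remaining case $\type(T) = d$ is the main obstacle: here $\bse(T)$ contains the oldest vertex of $T$, and one has to combine the structural information of Lemma~\ref{lem:boundary-vertices}~\ref{itm:boundary-vertices-prop} (placing head vertices up to a fixed position on $\patchbd$) with Lemma~\ref{lem:boundary-vertices}~\ref{itm:boundary-vertices-type1} (ruling out type-one vertices of positive level on $\patchbd$), via a case distinction on whether $v \in \vertices(T)$, to exhibit an endpoint of $\bse(T)$ on $\patchbd$ and thereby close the contradiction.
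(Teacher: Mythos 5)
Your overall strategy matches the paper's: reduce conformity of the step from $\tria_m^{j-1}(v)$ to $\tria_m^j(v)$ to the claim that whenever a simplex $T$ touching $\patchbd$ is to be bisected, its bisection edge $\bse(T)$ also touches $\patchbd$; this forces every simplex sharing $\bse(T)$ to be bisected too, and Lemma~\ref{lem:bisec-edge-2}~\ref{itm:bisec-edge-b} then gives a common bisection edge, so the closure loop never fires. (The paper handles the outer simplices more directly by citing conformity of the uniform refinement at generation $md+j$ via \cite[Thm.~4.3]{Stevenson08} rather than folding them into a set $\mathcal{R}$, but your bookkeeping is harmless.)

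Where the proposal falls short is the case $\type(T)=d$, which you concede is ``the main obstacle'' but only outline. The ingredient you name there is also not the right one: Lemma~\ref{lem:boundary-vertices}~\ref{itm:boundary-vertices-type1} excludes certain newly created type-one vertices from $\patchbd$ and is used inside the induction proving Lemma~\ref{lem:boundary-vertices}, but it does not help you place an endpoint of $\bse(T)$ onto $\patchbd$. The observation that removes the type case-split altogether — and is what the paper actually uses — is that by Lemma~\ref{lem:genbased} the bisection edge of $T=\simplex{v_0,\dots,v_{\ell-1}\leveljump v_\ell,\dots,v_d}$ \emph{always} contains $v_\ell$, the youngest vertex of $\tail(T)$: in case~\ref{itm:genbaseda} one has $v_\ell=v_d$, and in case~\ref{itm:genbasedb} one has $v_\ell=v_{\type(v_0)}$. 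Picking any $v_r\in\vertices(T)\cap\patchbd$ and applying Lemma~\ref{lem:boundary-vertices}~\ref{itm:boundary-vertices-prop} with $a=v_r$ shows $v_\ell\in\patchbd$ in one stroke: for $r<\ell$ the ``lower level'' alternative applies, and for $r\geq\ell$ the ``same level, younger generation'' alternative applies. Hence $\bse(T)\cap\patchbd\neq\emptyset$ uniformly, so neither the three-way split on the location and type of $T$ nor the contradiction framing is needed. Until the $\type(T)=d$ branch is actually closed, the argument is incomplete; with the uniform observation above it collapses into the paper's proof.
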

	\begin{proof}
		For  vertex~$v\in \vertices(\mtree)$ and $\level(v) < m\in \mathbb{N}_0$ we know that the vertex patch~$\omega_m(v)$ is non-empty. Let~$j\in \mathbb{N}_0$ be arbitrary. 
		Conformity of~$\tria_m^j(v)$ at the boundary~$\patchbd$ is immediate, because the triangulation $\lbrace T \in \mtree\colon \gen(T) = md + j\rbrace $ is conforming, cf.~\cite[Thm.~4.3]{Stevenson08}. 
		Thus, it remains to show that conformity is preserved when refining~$\omega_m^{j-1}(v)$ to obtain~$\omega_m^j(v)$, which by definition in~\eqref{eq:refStrategy} arises by bisection of the simplices touching the boundary~$\patchbd$. 
		For this it suffices to show that for any simplex~$T \in \omega_m^{j}(v)$ with~$T \cap \patchbd \neq \emptyset$, the bisection edge touches the boundary~$\patchbd$ as well, i.e.,~$\bse(T) \cap \patchbd \neq \emptyset$. 
		Then bisecting all those simplices leads to a conforming triangulation, since no closure step is needed. 	
		
		Let~$T \in \omega_m^{j}(v)$ be a simplex with~$T \cap \patchbd \neq \emptyset$ of the form
		\begin{align*}
			T = \simplex{v_0,\dots,v_{\ell-1}\leveljump v_\ell,\dots,v_d}\qquad\text{for some } \ell \in \{ 1,\dots,d\}.
		\end{align*} 
		At least one vertex~$v_r$ with index~$r\in \lbrace 0 ,\dots,d\rbrace$ is on the boundary $\patchbd$. 
		In both cases~$\ell\leq r$ and~$\ell>r$ Lemma~\ref{lem:boundary-vertices}~\ref{itm:boundary-vertices-prop} shows that~$v_\ell \in \patchbd$. 
		Since the bisection edge~$\bse(T)$ of $T$ contains the vertex~$v_\ell$, this concludes the proof. 
	\end{proof}
In the following we consider the triangulation
	\begin{align}\label{def:tria_inf}
		\tria_m^\infty(v) \coloneqq \liminf_{j \to \infty} \tria_m^j(v) =  \bigcup_{k=0}^\infty \bigcap_{j = k}^\infty \tria_m^j(v) \in \overline{\BisecT}.
	\end{align}
We aim to show that~$\tria \leq \tria_m^\infty(v)$, that is $\forest(\tria) \subset \forest(\tria_m^\infty(v))$. 
				
\subsection{Layers}
\label{subsec:Layers}	
The partition~$\omega_m^j(v)$ defined in~\eqref{eq:refStrategy} exhibits a useful layer structure as visualized in Figures~\ref{fig:Layer} and~\ref{fig:LayersLem}, which we investigate in the following. 
For arbitrary simplices~$T,T'\in \omega_m^\infty(v)$ let~$\delta_{\omega_m^\infty(v)}(T',T)$ denote the simplex distance  in the triangulation~$\omega_m^\infty(v)$ of $\Omega_m(v)$, as defined in Definition~\ref{def:distances}. 
We define the distance between~$v$ and~$T \in \omega_m^\infty(v)$ in~$\omega_{m}^\infty(v)$ by 
\begin{align}
\label{def:distance-vT}
\delta_{\omega_m^\infty(v)}(v,T) \coloneqq \min_{T' \in \omega_m^\infty(v) \colon v \in \vertices(T')} \delta_{\omega_m^\infty(v)}(T',T). 
\end{align}
For any~$\ell\in \mathbb{N}$ we define the $\ell$-th layer triangulation and the layer covered by it by
	\begin{align}
		\label{eq:DefLambda}
		\lambda_m^\ell(v) \coloneqq \lbrace T \in \omega_m^\infty(v) \colon \delta_{\omega_m^\infty(v)}(v,T) + 1 = \ell \rbrace
		\quad \text{and}\quad 
		\Lambda_m^\ell(v) \coloneqq \bigcup  \lambda^\ell_m(v).
	\end{align}
	Moreover, we set~$\Lambda^0_m(v) \coloneqq \lbrace v \rbrace$. 
	By definition these layers satisfy  for all~$\ell \in \mathbb{N}_0$
	\begin{align}\label{eq:PropLayer}
		\Lambda_m^\ell(v) \subset \Omega_m(v)\qquad\text{and}\qquad \Lambda_m^\ell(v) \cap \Lambda_m^{\ell+2}(v) = \emptyset.
	\end{align}
Since no closure is needed thanks to Proposition~\ref{prop:regTria}, the layer structure builds up in the construction of~$\omega_m^\infty(v)$. 
We define for any~$\ell \in \mathbb{N}_0$ the boundary layer as 
	\begin{align}\label{eq:PropLayer2}
		\gamma_m^\ell(v) \coloneqq \lbrace T \in \omega_m^{\ell d}(v) \colon T \cap \partial \omega_m(v) \neq \emptyset\rbrace\qquad\text{and}\qquad \Gamma_m^\ell(v) \coloneqq \bigcup \gamma_m^\ell(v).
	\end{align}
	The following lemma summarizes properties of the layers illustrated in Figure~\ref{fig:LayersLem}. 
	\begin{figure}
		\begin{tikzpicture}[xscale = 2.7, yscale = 1.5]
	\draw[fill = gray!15] (0.0,0.0) -- (2.0,0.0)--  (2.0,2.0) -- cycle;
	\fill[pattern = north east lines, pattern color = gray] (0.0,0.0) -- (2.0,0.0)--  (2.0,2.0) -- cycle;
	\draw[fill = gray!15] (0.0,0.0) -- (1.5,0.0)--  (1.5,1.5) -- cycle;
	\draw[fill = white] (0.0,0.0) -- (1.0,0.0)--  (1.0,1.0) -- cycle;
			
\draw (1.0,0.0) -- (0,0);
\draw (1.0,1.0) -- (0,0);
\draw (2.0,1.5) -- (2,2);
\draw (1.5,1.5) -- (2,2);
\draw (1.5,1.5) -- (1.0,1.0);
\draw (1.0,0.0) -- (1.0,1.0);
\draw (2.0,1.5) -- (2.0,1.0);
\draw (2.0,1.5) -- (1.5,1.5);
\draw (1.5,0.0) -- (2,0);
\draw (1.5,1.5) -- (2.0,1.0);
\draw (1.5,0.5) -- (2,0);
\draw (1.5,0.5) -- (1.0,1.0);
\draw (1.5,0.5) -- (1.0,0.0);
\draw (1.5,0.5) -- (2.0,1.0);
\draw (1.5,0.0) -- (1.0,0.0);
\draw (1.5,0.0) -- (1.5,0.5);
\draw (2.0,0.5) -- (2,0);
\draw (2.0,0.5) -- (2.0,1.0);
\draw (2.0,0.5) -- (1.5,0.5);
\draw (1.5,1.0) -- (1.0,1.0);
\draw (1.5,1.0) -- (2.0,1.0);
\draw (1.5,1.0) -- (1.5,1.5);
\draw (1.5,1.0) -- (1.5,0.5);
\draw (0.02,0.14)	node {\scriptsize -2};
\draw (2.05,0.05)	node {\scriptsize 0};
\draw (2.05,2.1)	node {\scriptsize -1};
\draw (1.04,1.14)	node {\scriptsize 1};
\draw (1.04,0.14)	node {\scriptsize 2};
\draw (2.05,1.06)	node {\scriptsize 2};
\draw (1.54,1.64)	node {\scriptsize 3};
\draw (1.54,0.64)	node {\scriptsize 3};
\draw (2.05,1.56)	node {\scriptsize 4};
\draw (1.54,0.14)	node {\scriptsize 4};
\draw (2.05,0.56)	node {\scriptsize 4};
\draw (1.54,1.14)	node {\scriptsize 4};
\draw[dotted,thick] (0.0,0.0) -- (0.0,-0.3);
\draw[dotted,thick] (1.0,0.0) -- (1.0,-0.3);
\draw[dotted,thick] (1.5,0.0) -- (1.5,-0.3);
\draw[dotted,thick] (2,0.0) -- (2,-0.3);
\draw (.5,0) node[below] {\scriptsize $\lambda_m^1$};
\draw (1.25,0) node[below] {\scriptsize $\lambda_m^2$};
\draw (1.75,0) node[below] {\scriptsize $\gamma_m^2$};
\node[circle,fill=black,inner sep=0.6pt] (ell) at (0,0) {\,  };
\node[left] (ell2) at (0,0) {$v$};
\end{tikzpicture}
		\begin{tikzpicture}[xscale = 2.7, yscale = 1.5]
	\draw[fill = gray!30] (0.0,0.0) -- (2.0,0.0)--  (2.0,2.0) -- cycle;
	\fill[pattern = north east lines, pattern color = gray] (0.0,0.0) -- (2.0,0.0)--  (2.0,2.0) -- cycle;

	\draw[fill = gray!40] (0.0,0.0) -- (1.75,0.0)--  (1.75,1.75) -- cycle;
	\draw[fill = gray!15] (0.0,0.0) -- (1.5,0.0)--  (1.5,1.5) -- cycle;
	\draw[fill = white] (0.0,0.0) -- (1.0,0.0)--  (1.0,1.0) -- cycle;
	
\draw (1.0,0.0) -- (0,0);
\draw (1.0,1.0) -- (0,0);
\draw (2.0,1.75) -- (2,2);
\draw (1.75,1.75) -- (2,2);
\draw (1.5,1.5) -- (1.0,1.0);
\draw (1.0,0.0) -- (1.0,1.0);
\draw (2.0,1.75) -- (2.0,1.5);
\draw (2.0,1.75) -- (1.75,1.75);
\draw (1.75,0.0) -- (2,0);
\draw (1.75,1.75) -- (1.5,1.5);
\draw (1.75,1.75) -- (2.0,1.5);
\draw (1.5,0.5) -- (1.0,1.0);
\draw (1.5,0.5) -- (1.0,0.0);
\draw (1.75,0.25) -- (2,0);
\draw (1.5,0.0) -- (1.0,0.0);
\draw (1.5,0.0) -- (1.5,0.5);
\draw (1.75,0.0) -- (1.5,0.0);
\draw (1.75,0.0) -- (1.75,0.25);
\draw (2.0,0.25) -- (2,0);
\draw (1.5,1.0) -- (1.0,1.0);
\draw (2.0,0.25) -- (2.0,0.5);
\draw (1.5,1.0) -- (1.5,1.5);
\draw (1.5,1.0) -- (1.5,0.5);
\draw (1.75,0.25) -- (1.5,0.5);
\draw (1.75,0.25) -- (1.5,0.0);
\draw (1.75,0.25) -- (2.0,0.5);
\draw (1.75,1.25) -- (2.0,1.0);
\draw (1.75,1.25) -- (1.5,1.5);
\draw (1.75,1.25) -- (2.0,1.5);
\draw (1.75,1.25) -- (1.5,1.0);
\draw (1.75,0.75) -- (2.0,1.0);
\draw (1.75,0.75) -- (1.5,0.5);
\draw (1.75,0.75) -- (2.0,0.5);
\draw (1.75,0.75) -- (1.5,1.0);
\draw (2.0,0.25) -- (1.75,0.25);
\draw (2.0,1.25) -- (2.0,1.0);
\draw (2.0,1.25) -- (2.0,1.5);
\draw (2.0,1.25) -- (1.75,1.25);
\draw (2.0,0.75) -- (2.0,1.0);
\draw (2.0,0.75) -- (2.0,0.5);
\draw (2.0,0.75) -- (1.75,0.75);
\draw (1.75,1.0) -- (2.0,1.0);
\draw (1.75,1.0) -- (1.5,1.0);
\draw (1.75,1.0) -- (1.75,1.25);
\draw (1.75,1.0) -- (1.75,0.75);
\draw (1.75,1.5) -- (1.5,1.5);
\draw (1.75,1.5) -- (2.0,1.5);
\draw (1.75,1.5) -- (1.75,1.75);
\draw (1.75,1.5) -- (1.75,1.25);
\draw (1.75,0.5) -- (1.5,0.5);
\draw (1.75,0.5) -- (2.0,0.5);
\draw (1.75,0.5) -- (1.75,0.25);
\draw (1.75,0.5) -- (1.75,0.75);
\draw (0.02,0.14)	node {\scriptsize -2};
\draw (2.05,0.05)	node {\scriptsize 0};
\draw (2.05,2.1)	node {\scriptsize -1};
\draw (1.04,1.14)	node {\scriptsize 1};
\draw (1.04,0.14)	node {\scriptsize 2};
\draw (2.05,1.06)	node {\scriptsize 2};
\draw (1.54,1.64)	node {\scriptsize 3};
\draw (1.54,0.64)	node {\scriptsize 3};
\draw (2.05,1.56)	node {\scriptsize 4};
\draw (1.54,0.14)	node {\scriptsize 4};
\draw (2.05,0.56)	node {\scriptsize 4};
\draw (1.54,1.14)	node {\scriptsize 4};
\draw (1.79,1.87)	node {\scriptsize 5};
\draw (1.79,0.37)	node {\scriptsize 5};
\draw (1.79,1.37)	node {\scriptsize 5};
\draw (1.79,0.87)	node {\scriptsize 5};
\draw (2.05,1.81)	node {\scriptsize 6};
\draw (1.79,0.1)	node {\scriptsize 6};
\draw (2.05,0.31)	node {\scriptsize 6};
\draw (2.05,1.31)	node {\scriptsize 6};
\draw (2.05,0.81)	node {\scriptsize 6};
\draw (1.79,1.08)	node {\scriptsize 6};
\draw (1.79,1.58)	node {\scriptsize 6};
\draw (1.79,0.58)	node {\scriptsize 6};
\draw[dotted,thick] (0.0,0.0) -- (0.0,-0.3);
\draw[dotted,thick] (1.0,0.0) -- (1.0,-0.3);
\draw[dotted,thick] (1.5,0.0) -- (1.5,-0.3);
\draw[dotted,thick] (1.75,0.0) -- (1.75,-0.3);
\draw[dotted,thick] (2,0.0) -- (2,-0.3);
\draw (.5,0) node[below] {\scriptsize $\lambda_m^1$};
\draw (1.25,0) node[below] {\scriptsize $\lambda_m^2$};
\draw (1.625,0) node[below] {\scriptsize $\lambda_m^3$};
\draw (1.875,0) node[below] {\scriptsize $\ \, \gamma_m^3$};
\node[circle,fill=black,inner sep=0.6pt] (ell) at (0,0) {\,  };
\node[left] (ell2) at (0,0) {$v$};
\end{tikzpicture}
		\vspace*{-.5cm}
		\caption{Parts of partitions~$\omega_m^4(v)$ and~$\omega_m^6(v)$ with layers~$\lambda_m^\ell(v)$  and~$\gamma_m^\ell(v)$ with respect to~$v$ (containing all simplices above the label) and vertex generations. 
	The layers are shaded in varying intensity and the boundary layer is hatched.}
	\label{fig:LayersLem}
	\end{figure}
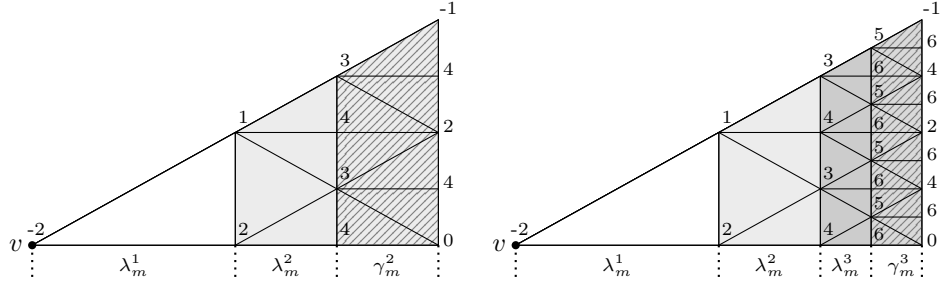

	\begin{lemma}[Properties of layers]\label{lem:Layers}
	For a vertex~$v\in \vertices(\mtree)$ let~$m \in \mathbb{N}_0$ be such that~$\level(v)<m$. 
	Then for all~$\ell\in \mathbb{N}$ the following statements hold:
	\begin{enumerate}
		\item 
		\label{itm:Layer1}
		An alternative characterization of each layer reads
		\begin{align*}
			\lambda^\ell_m(v) &= \omega_m^{\ell d}(v) \setminus \big(\omega_m^{(\ell-1) d}(v) \cup \gamma_m^{\ell}(v)\big)\ \text{ and }\ 
			\Lambda_m^\ell(v) = \overline{\Gamma_m^{\ell-1}(v) \setminus \Gamma_m^{\ell}(v)}.
		\end{align*}
		\item 
		\label{itm:Layer1b}
		Any~$T\in \gamma_m^{\ell}(v)$ has one vertex on the interface~$\Lambda_m^{\ell}(v) \cap \Lambda_m^{\ell+1}(v)$.
		Its other vertices are on the boundary~$\partial \omega_m(v)$ or on the interface~$\Lambda_m^{\ell}(v) \cap \Lambda_m^{\ell+1}(v)$.    
		\item 
		\label{itm:Layer2}
		Any simplex~$T \in \lambda_m^\ell(v)$ satisfies~$\level(T) = m+\ell$.
		\item
		\label{itm:Layer3}  
		Any vertex~$v' \in \vertices(\omega_m^\infty(v)) \setminus ( \patchbd \cup \set{v})$ is contained in some interface~$\Lambda^{\ell}_m(v) \cap  \Lambda^{\ell+1}_m(v)$ with~$\ell \in \mathbb{N}$ and satisfies~$\level(v') = m+\ell$. 
	\end{enumerate}
\end{lemma}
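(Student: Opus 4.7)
The plan is to establish~\ref{itm:Layer1}, \ref{itm:Layer1b}, \ref{itm:Layer2}, and \ref{itm:Layer3} simultaneously by induction on $\ell \in \mathbb{N}$, exploiting the recursive structure of the refinement rule~\eqref{eq:refStrategy}.

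For the base case $\ell = 1$, the hypothesis $\level(v) < m$ together with~\eqref{eq:lvl-v} forces $v$ to be the oldest vertex of each $T \in \omega_m(v)$, so by Lemma~\ref{lem:propertiesGenTypeLvl}\ref{itm:oldestonbse} and~\eqref{eq:bisec-v} the vertex $v$ lies on the bisection edge of every descendant of $T$ containing it. I would trace the $d$ refinement steps from $\omega_m^0(v)$ to $\omega_m^d(v)$: only boundary-touching simplices get bisected, and by Lemma~\ref{lem:boundary-vertices}\ref{itm:boundary-vertices-prop} the unique descendant of $T$ that still contains $v$ and has no vertex on $\patchbd$ is of level $m+1$ and type $d$. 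This descendant is the sole contribution of $T$ to $\lambda_m^1(v)$, while the remaining descendants populate $\gamma_m^1(v)$ or eventually leave $\omega_m^\infty(v)$ according to~\eqref{eq:charomegaINfty}.

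For the inductive step, the refinement pattern repeats one layer inward. By~\ref{itm:Layer1b} at stage $\ell-1$, every $T \in \gamma_m^{\ell-1}(v)$ is type-$d$ at level $m+\ell-1$ with exactly one vertex $v_T$ on the interface $\Lambda_m^{\ell-2}(v) \cap \Lambda_m^{\ell-1}(v)$ (or $v_T = v$ when $\ell = 2$) and the remaining vertices on $\patchbd$. This is precisely the configuration of the base case with $v_T$ in place of $v$, so the same argument produces a unique descendant of level $m+\ell$ in $\lambda_m^\ell(v)$ together with a new boundary shell $\gamma_m^\ell(v)$. Aggregating over all $T \in \gamma_m^{\ell-1}(v)$ yields the refinement-side versions of~\ref{itm:Layer1}, \ref{itm:Layer1b} and \ref{itm:Layer2} at index $\ell$.

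The main obstacle will be reconciling the combinatorial distance in~\eqref{def:distance-vT} with the refinement-round index. I would prove the matching by chaining interface vertices: successive elements of $\lambda_m^1(v), \ldots, \lambda_m^\ell(v)$ are $1$-neighbors through the vertices $v = v_1, v_2, \ldots, v_\ell$, which gives $\delta_{\omega_m^\infty(v)}(v,T) \le \ell - 1$, while any chain of intersecting simplices from $v$ to $T \in \lambda_m^\ell(v)$ must pierce each interface $\Lambda_m^{k-1}(v) \cap \Lambda_m^k(v)$ at least once, delivering the matching lower bound. With \ref{itm:Layer1}--\ref{itm:Layer2} in place, statement~\ref{itm:Layer3} follows by identifying every interior vertex $v' \in \vertices(\omega_m^\infty(v)) \setminus (\patchbd \cup \set{v})$ as the inward bisection vertex created when passing from round $\ell$ to round $\ell+1$, whence $\level(v') = \lceil \gen(v')/d \rceil = m+\ell$.
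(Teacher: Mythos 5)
Your overall structure (induction on $\ell$, using the refinement routine and Lemma~\ref{lem:boundary-vertices}) matches the paper, but the inductive step has a genuine gap. You assert that each $T\in\gamma_m^{\ell-1}(v)$ ``is type-$d$ at level $m+\ell-1$ with exactly one vertex $v_T$ on the interface $\Lambda_m^{\ell-2}(v)\cap\Lambda_m^{\ell-1}(v)$ \dots and the remaining vertices on $\patchbd$,'' and then claim this is ``precisely the configuration of the base case.'' That is not what the induction hypothesis~\ref{itm:Layer1b} gives you: it only says that \emph{one} vertex is on the interface and the \emph{others} are on the boundary \emph{or} on the interface. A simplex in $\gamma_m^{\ell-1}(v)$ may well have several vertices on the interface $\Lambda_m^{\ell-1}(v)\cap\Lambda_m^{\ell}(v)$, so the configuration does \emph{not} reduce to the base case with $v_T$ in place of $v$, and your argument does not produce the inductive step in this more general situation.

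The paper's proof addresses exactly this subtlety. Rather than assuming the base-case pattern, it considers an arbitrary descendant $T''\in\omega_m^{\ell d}(v)$ of $T\in\gamma_m^{\ell-1}(v)$ and shows that $T''$ has either a vertex on $\patchbd$ or its inherited vertex $v_{\textup{old}}$ on the interface (a straightforward consequence of the induction hypothesis and the bisection routine), and then -- the nontrivial part -- that these two alternatives are mutually exclusive. The exclusivity is obtained by a contradiction: if $T''$ simultaneously contained $v_{\textup{old}}$ on the interface and a newly created boundary vertex $v''=\bsv([a,b])$ with $a,b\in\patchbd$, then the face $f$ with vertices $a,b,v_{\textup{old}}$ must have the form $\simplex{a,v_{\textup{old}}\leveljump b}$ or $\simplex{a,b,v_{\textup{old}}}$, and Algorithm~\ref{algo:subsimplex} shows that in neither case is $[a,b]$ the bisection edge of $f$, so $v''$ cannot be created before $v_{\textup{old}}$ has been removed. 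This dichotomy is what makes the refinement-round characterization coincide with the distance-based definition of $\lambda_m^\ell(v)$, giving~\ref{itm:Layer1}. Your proposal does not contain an argument of this kind, and without it the claimed ``aggregation over all $T\in\gamma_m^{\ell-1}(v)$'' does not yield \ref{itm:Layer1}--\ref{itm:Layer2}. You would need to supply that exclusivity argument (or an equivalent one) before the rest of your reasoning goes through.
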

\begin{proof}	
	We prove the statements by induction over~$\ell\in \mathbb{N}$. 
	
	\textit{Base case $\ell = 1$.}
	Let~$T \in \omega_m^0(v)$ be arbitrary but fixed. 
	Since~$\level(v) < m$ and~$\gen(T) = md$, the type~$d$ simplex~$T$ is of the form 
	\begin{align*}
		T = \simplex{v_d,\dots,v_1\leveljump v}.
	\end{align*}
	By definition the vertices~$v_d,\dots,v_1$ are on the vertex patch boundary~$\partial \omega_m(v)$. 
	Let~$T'\in \omega_m^d(v)$ be a descendant of~$T$ containing the vertex~$v \in \vertices(T')$. 
	Algorithm~\ref{algo:genbased} and the definition of~$\omega_m^d(v)$ uniquely determine~$T'$ as
	\begin{align}
		\label{eq:ProofMidT}
		T' = \simplex{ \bsv([v_d,v]),\dots,\bsv([v_1,v])\leveljump v} \in \lambda_m^1(v). 
	\end{align} 
	Therefore, any other descendant~$T'' \in \omega_m^d(v)$ of~$T$  does not contain~$v$ and is thus not in~$\lambda_m^1(v)$.  	
	Furthermore, each bisection removes one vertex and hence each such~$T''$ contains at least one of the vertices~$\{v_1, \ldots, v_d\}$. 
	Thus, it touches the boundary~$\patchbd$ and belongs to~$\gamma_m^1(v)$.
Therefore, the layer~$\lambda_m^1(v)$ satisfies~\ref{itm:Layer1}, that is
	\begin{align*}
		\lambda_m^1(v) = \omega_m^d(v) \setminus \gamma_m^1(v) = \omega_m^d(v) \setminus \big( \omega_m^0(v) \cup \gamma_m^1(v) \big).
	\end{align*}
Due to the definition~$\Gamma_m^0(v) = \overline{\Omega}_m(v) = \bigcup \omega_m^d(v)$ taking the union of those sets proves the second identity in~\ref{itm:Layer1}, that is,
\begin{align*}
	\Lambda^1_m(v)  = \overline{\Gamma_m^0(v) \setminus \Gamma_m^1(v)}.
\end{align*}
Moreover, Algorithm~\ref{algo:genbased} shows that any~$T'' \in \gamma_m^1(v)$ contains the vertex~$\bsv([v_1,v])$. 
According to~\eqref{eq:ProofMidT} this vertex is also contained in~$T'\in \lambda_m^1(v)$, and consequently it is in the interface~$\Lambda_m^1(v)\cap \Lambda_m^2(v)$. 
This yields the first statement in~\ref{itm:Layer1b}. 

	In addition, Algorithm~\ref{algo:genbased} shows that all possible new vertices resulting from~$d$ bisections of~$T$ are of the form~$\bsv([v_j,v])$ or~$\bsv([v_j,v_k])$ with~$1\leq j < k \leq d$. 
	Indeed, vertices that result from the bisection of an edge that contains such a new vertex have higher level than~$m+1$ and thus cannot be created after only~$d$ bisections of~$T$.  
	All vertices~$\bsv([v_j,v])$ with~$j=1,\dots,d$ are on the interface~$\Lambda_m^1(v) \cap \Lambda_m^2(v)$. 
	The vertices~$\bsv([v_j,v_k])$ with~$1\leq j < k \leq d$ are on the boundary~$\partial \omega_m(v)$. 
	This yields the second statement in~\ref{itm:Layer1b}. 
	Any simplex~$T'\in \lambda_m^1(v)$ satisfies~$\gen(T') = (m+1)d$, which shows~\ref{itm:Layer2}.
	Moreover, any new vertex~$v'\in \vertices(\omega_m^d(v)) \setminus \vertices(\omega_m^0(v))$ has level~$\level(v') = m + 1$. 
	In particular all vertices on the interface~$\Lambda_m^1(v) \cap \Lambda_m^2(v)$ are of level~$\level(v') = m + 1$, which yields~\ref{itm:Layer3}. 
	This concludes the proof of the base case.
	
	\textit{Induction step.}
	Let~$\ell > 1$ and let~$T\in \gamma_m^{\ell-1}(v)$.
	The induction hypothesis in~\ref{itm:Layer1b} shows that~$T$ has a vertex~$v'\in \vertices(T)$ with~$v'\in \Lambda^{\ell-1}_m(v) \cap \Lambda^{\ell}_m(v)$. 
	Since by definition~$T \in \gamma_m^{\ell-1}(v)$ touches the boundary~$\patchbd$ and is in~$\omega_m^{(\ell-1)d}(v)$, it is of generation~$(m+\ell-1)d$, and in particular of type~$d$. 
	Algorithm~\ref{algo:genbased} shows that all descendants of $T$ with generation less or equal to~$(m+\ell) d = \gen(T) + d$ share the bisection vertex~$\bsv(\bse(T))$ of $T$. 
	Moreover, one of the descendants~$T' \in \omega_m^{\ell d}(v)$ of~$T$ contains the vertex~$v'$. 
	This ensures that
	\begin{align}
	\label{eq:ProofLayer3}
	\begin{cases}
		\text{each simplex }T'' \in \omega_m^{\ell d}(v) \text{ with ancestor $T \in \gamma_m^{\ell-1}(v)$ }\\
		\text{touches a simplex }T' \in \omega_m^{\ell d}(v) \text{ with } v' \in T' \cap \Lambda^{\ell-1}_m(v).
	\end{cases}
	\end{align}
Any~$T'' \in \omega_m^{\ell d }(v)$ with ancestor~$T\in \gamma_m^{\ell-1}(v)$ satisfies~$\gen(T'') \leq (m + \ell) d$ and according to Algorithm~\ref{algo:genbased} it shares at least one vertex~$v_\textup{old}$ with its ancestor~$T$. 		
Due to the induction hypothesis in~\ref{itm:Layer1b} the vertex~$v_\textup{old}$ of~$T \in \gamma_m^{\ell-1}(v)$ belongs either to the boundary~$\partial \omega_m(v)$ or to the interface~$\Lambda_m^{\ell-1}(v) \cap \Lambda_m^\ell(v)$, that is,
	\begin{align}
		\label{eq:ProofLayer3b}
		T'' \cap \partial \omega_m(v) \neq \emptyset\qquad\text{or}\qquad v_\textup{old} \in \Lambda_m^{\ell-1}(v) \cap \Lambda_m^\ell(v).
	\end{align} 
	We show that both properties in~\eqref{eq:ProofLayer3b} cannot occur for the same simplex~$T'' \in \omega_m^{\ell d}(v)$ by contradiction. 
	For is, we assume that~$v_\textup{old} \in \Lambda_m^{\ell-1}(v) \cap \Lambda_m^\ell(v)$ and there exists a vertex~$v'' \in \vertices(T'')$ with~$v'' \in \partial \omega_m(v)$. 
	Since~$T''\cap \patchbd \neq \emptyset$, the simplex~$T''$ is generated by~$d$ bisections of~$T$, and according to Algorithm~\ref{algo:genbased} the vertex~$v_{\textup{old}}$ is the only vertex shared by~$T$ and~$T''$. 
	Consequently, the vertex~$v'' = \bsv(\simplex{a,b})$ results from the bisection of an edge with vertices~$a,b\in \vertices(T) \cap \partial \omega_m(v)$ on the boundary.
	All vertices except one in the type~$d$ simplex~$T$ are of level~$m+\ell-1$. 
	Moreover, the induction hypothesis in~\ref{itm:Layer3} shows that the vertex~$v_\textup{old}$ on the interface~$\Lambda_m^{\ell-1}(v) \cap \Lambda_m^\ell(v)$ is of level~$m+\ell-1$. 
	Thus, Lemma~\ref{lem:boundary-vertices}~\ref{itm:boundary-vertices-prop} shows that the face consisting of the vertices~$a,b,v_\textup{old}$ reads
	\begin{align*}
		f = \simplex{ a, v_\textup{old} \leveljump b}\qquad\text{or}\qquad f = \simplex{a,b,v_\textup{old}}.
	\end{align*}
	By Algorithm~\ref{algo:subsimplex} in none of the two cases~$[a,b]$ is the bisection edge. 
	Thus, $v''$~cannot be created without removing the vertex~$v_\textup{old}$ from the face (and so the ancestor of $T''$) before. 
	Hence, $T''$~cannot contain~$v_{\textup{old}}$ and $v''$~at the same time, which is a contradiction. 
	In combination with~\eqref{eq:ProofLayer3b} this shows that 
	\begin{align}
		\label{eq:ProofMainHelp}
		\text{either }\quad T'' \cap \partial \omega_m(v) \neq \emptyset\quad\text{or}\quad v_\textup{old} \in \Lambda_m^{\ell-1}(v) \cap \Lambda_m^\ell(v).
	\end{align} 
This proves that~$T''$ belongs either to~$\gamma_m^\ell(v)$ or to $\lambda_m^\ell(v)$, which yields~\ref{itm:Layer1}. 
The property in~\ref{itm:Layer2} is a direct consequence of~\ref{itm:Layer1}. 
Moreover, due to the conformity in Proposition~\ref{prop:regTria} any vertex~$v'\in \vertices(\omega_m^\infty(v))$ with~$v' \in \Lambda_m^\ell(v) \cap \Lambda_m^{\ell+1}(v)$ belongs to a simplex~$T' \in \lambda_m^\ell(v)$. 
Thus by~\ref{itm:Layer2} we find that $\level(v') \leq \level(T') = m+\ell$.  
On the other hand, by~\ref{itm:Layer1} we have that~$v'\in \Lambda_m^\ell(v) \cap \Lambda_m^{\ell+1}(v) \subset \Gamma_m^{\ell-1}(v)$. Then the induction hypothesis~\ref{itm:Layer1b} shows that~$v' \notin \vertices(\gamma_m^{\ell-1}(v))$. 
Therefore~$v'$ results from the bisection of a simplex in~$\gamma_m^{\ell-1}(v)$ or a descendant thereof, which yields that~$m+\ell \leq \level(v')$. 
Combining both inequalities we infer~\ref{itm:Layer3}. 

It remains to show~\ref{itm:Layer1b}.
Let~$T'' \in \gamma_m^\ell(v)$ be a simplex with ancestor~$T\in \gamma_m^{\ell-1}(v)$. 
According to Algorithm~\ref{algo:genbased} the simplex~$T''$ contains the bisection vertex of~$T$. 
Moreover, there exists a descendant~$T' \in \lambda_m^\ell(v)$ of~$T$ that contains this vertex as well.  
Thus, this bisection vertex is on the interface~$  \Lambda_m^{\ell}(v) \cap  \Gamma_m^{\ell}(v) = \Lambda_m^{\ell}(v) \cap \Lambda_m^{\ell+1}(v) $, where the identity is a consequence of~\ref{itm:Layer1}.  
This is the first statement in~\ref{itm:Layer1b}. 

To prove the second statement in~\ref{itm:Layer1b}, we consider~$T'\in \gamma_m^\ell(v)$ with oldest vertex~$v_{\textup{old}}$ and ancestor~$T \in \gamma_m^{\ell-1}(v)$, which are both of type~$d$ and share exactly one vertex, namely~$v_\textup{old}$.  
We use the property that any vertex~$v' \in \vertices(T')\setminus \lbrace v_\textup{old}\rbrace$ results from the bisection of an edge~$[a,b]$ with vertices~$a,b\in \vertices(T)$, cf.~Algorithm~\ref{algo:genbased}.
Due to the induction hypothesis~\ref{itm:Layer1b} these vertices satisfy that
\begin{align*}
a,b \in \partial \omega_m(v) \cap \big(\Lambda_m^{\ell-1}(v) \cap \Lambda_m^{\ell}(v)\big).
\end{align*}
If both vertices~$a,b\in \partial \omega_m(v)$ are on the boundary, then the vertex~$v'$ is on the boundary as well. 
If~$a \in \big(\Lambda_m^{\ell-1}(v) \cap \Lambda_m^{\ell}(v)\big)$ and~$b\in \partial \omega_m(v)$, then there exists a simplex~$T'' \in \lambda_m^\ell(v)$ that contains the edge~$\simplex{v'\leveljump a}$ and in consequence, one has~$v' \in T'' \subset \Lambda_m^{\ell}(v)$ and~$v' \in T' \subset \Gamma_m^\ell(v)$. 
In particular, we have 
\begin{align*}
v' \in \Lambda_m^\ell(v) \cap \Gamma_m^\ell(v) = \Lambda_m^\ell(v) \cap \Lambda^{\ell+1}(v).
\end{align*}
A similar argument applies if we exchange the role of~$a$ and~$b$. 
The case~$a,b \in \Lambda_m^{\ell-1}(v) \cap \Lambda_m^{\ell}(v)$ cannot occur, since the edge~$[a,b]$ belongs to some simplex in~$\lambda_m^{\ell-1}(v)$ and this simplex is not bisected due to the definition of the conforming triangulation~$\omega_m^\infty(v)$, cf.~Proposition~\ref{prop:regTria}. 
Combining these observations yields the second statement in~\ref{itm:Layer1b} for all~$v'\in \vertices(T')\setminus \lbrace v_\textup{old}\rbrace$. If~$v' = v_\textup{old}$, the induction hypothesis yields~$v' \in \partial\omega_m(v)$. 
This concludes the proof. 
\end{proof}
	
\begin{remark}[Sharpness of $\gamma \leq 2$]
	\label{rem:sharpGradingEst}
		The layer structure presented in Lemma~\ref{lem:Layers} provides an example of a chain of intersecting simplices~$T_0,T_1,\dotsc \in \tria_m^\infty(v)$ with
		\begin{align*}
			\level(T_N) - \level(T_0) = N\qquad\text{for all }N\in \mathbb{N}.
		\end{align*}	
		With the considerations in Section~\ref{subsec:meshGrading} this shows that grading~$\gamma=2$ in Theorem~\ref{thm:main-grading2} is attained by the auxiliary triangulations. 
		Hence, the estimate~$\gamma\leq 2$ in Theorem~\ref{thm:main-grading2} is optimal in the sense that the upper bound is sharp.
	\end{remark}
	We conclude this subsection with the following observation.
	\begin{lemma}[Bisection edge]
		\label{lem:RefEdgeOnBdd}
		Let~$v\in \vertices(\mtree)$ with~$\level(v) < m \in \mathbb{N}_0$.
		Let the simplex~$T \in \lambda^\ell_m(v)$ be in the $\ell$-th layer  of~$\omega_k^\infty(v)$, for some $\ell \in \mathbb{N}$, with bisection edge~$\bse(T) \eqqcolon \simplex{a,b} \in \edges(T)$.
		\begin{enumerate}
			\item 
			If~$\type(T) \neq d$, then~$\bse(T)$ is on the interface~$\Lambda^{\ell-1}_m(v)\cap  \Lambda^{\ell}_m(v)$. 
			\item 
			If~$\type(T) = d$, then~$\bse(T)$ crosses the layer, in the sense that
			\begin{align*}
				b \in  \Lambda^{\ell-1}_m(v) \cap  \Lambda^{\ell}_m(v)\quad \text{and}\quad a\in \Lambda^\ell_m(v) \cap  \Lambda^{\ell+1}_m(v).
			\end{align*}
		\end{enumerate}
	\end{lemma}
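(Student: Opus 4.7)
The strategy is to combine the layer structure from Lemma~\ref{lem:Layers} with the relation between $\level(\bse(T))$ and $\level(T)$ provided by Lemma~\ref{lem:propertiesGenTypeLvl}~\ref{itm:simplex-refEdge}. The key observation is that the possible levels of vertices in any $T \in \lambda_m^\ell(v)$ are severely restricted, and this restriction together with the type of $T$ essentially forces the conclusion.

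First I would argue that every vertex of $T \in \lambda_m^\ell(v)$ has level either $m + \ell - 1$ or $m + \ell$, with the sole exception that $v$ itself may be a vertex (with $\level(v) < m$) when $\ell = 1$. This follows from Lemma~\ref{lem:Layers}: the characterization in~\ref{itm:Layer1} shows $T \cap \patchbd = \emptyset$, so vertices of $T$ lie in $\Omega_m(v)$; part~\ref{itm:Layer3} then places each such vertex (other than $v$) on some interface $\Lambda^{j-1}_m(v) \cap \Lambda^j_m(v)$ with level $m + j - 1$; and the disjointness property~\eqref{eq:PropLayer} together with $T \subset \Lambda^\ell_m(v)$ restricts the admissible interfaces to $j \in \{\ell, \ell+1\}$.

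Next I would observe that $\lambda_m^1(v)$ consists exactly of the simplices containing $v$ (since $\delta_{\omega_m^\infty(v)}(v,T)=0$ iff $v \in \vertices(T)$), and that all such simplices are of type $d$. Indeed, $v$ is the unique oldest vertex (as $\level(v) < m < m+1 = \level(T)$) so $T$ has the form $\simplex{\ldots \leveljump v}$ with a singleton tail, and by Lemma~\ref{lem:genbased2}~\ref{itm:genbased2d} this means $\type(T) = d$. Hence Case 1 ($\type(T) \neq d$) forces $\ell \geq 2$ and in particular $v \notin \vertices(T)$.

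For Case 1, Lemma~\ref{lem:propertiesGenTypeLvl}~\ref{itm:simplex-refEdge} yields $\level(\bse(T)) = m + \ell - 1$, and since the edge level is the maximum of its endpoint levels, both endpoints must have level $\leq m + \ell - 1$. The level constraint above leaves only the value $m + \ell - 1$, placing both endpoints on $\Lambda^{\ell-1}_m(v) \cap \Lambda^\ell_m(v)$. For Case 2, the same lemma gives $\level(\bse(T)) = m + \ell$, and Lemma~\ref{lem:genbased} together with Lemma~\ref{lem:genbased2}~\ref{itm:genbased2d} shows $T = \simplex{v_0, \ldots, v_{d-1} \leveljump v_d}$ with $\bse(T) = \simplex{v_{d-1} \leveljump v_d}$, so $a = v_{d-1}$ has level $m+\ell$ and lies on $\Lambda^\ell_m(v) \cap \Lambda^{\ell+1}_m(v)$, while $b = v_d$ has level $< m + \ell$. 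The level constraint then forces either $\level(b) = m + \ell - 1$ with $b$ on $\Lambda^{\ell-1}_m(v) \cap \Lambda^\ell_m(v)$, or $\ell = 1$ with $b = v$, which lies in $\Lambda^0_m(v) \cap \Lambda^1_m(v) = \{v\}$. The main subtlety is this $\ell=1$ boundary case, but it is handled cleanly once one identifies $\lambda_m^1(v)$ as consisting entirely of type-$d$ simplices and interprets the innermost interface accordingly.
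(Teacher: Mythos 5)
Your proposal is correct and follows essentially the same route as the paper, which simply combines Lemma~\ref{lem:propertiesGenTypeLvl}~\ref{itm:simplex-refEdge} (relating $\level(\bse(T))$ to $\level(T)$ via $\type(T)$) with Lemma~\ref{lem:Layers}~\ref{itm:Layer3} (placing interior vertices on specific interfaces with specific levels). You spell out the pigeonhole argument and the $\ell=1$ boundary case, which the paper leaves implicit, but the core mechanism is identical.
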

	\begin{proof}
		Combining  Lemma~\ref{lem:propertiesGenTypeLvl}~\ref{itm:simplex-refEdge} and Lemma~\ref{lem:Layers}~\ref{itm:Layer3} yields the claim.
	\end{proof}

	\subsection{Finer triangulation}
	\label{subsec:FinerTria}
	In this subsection we present conditions under which the auxiliary triangulation defined in the previous subsection is finer than a given triangulation~$\tria\in \BisecT$. 
	The proof uses further properties of the auxiliary triangulation involving the notion of pre-diamonds and edge patches. The latter reads for any edge~$e\in \edges(\tria)$ and triangulation~$\tria\in \BisecT$ 
	\begin{align*}
		\omega_\tria(e) \coloneqq \lbrace T\in \tria \colon e\in \edges(T)\rbrace.
	\end{align*}
	
	\begin{definition}[Pre-diamond]
		\label{def:preDiamond}
		For~$e\in \edges(\tria)$ and~$\tria\in \BisecT$ we call the edge patch~$\omega_\tria(e)$ \emph{pre-diamond}, if~$e$ is the bisection edge of each simplex~$T\in \omega_\tria(e)$.
	\end{definition}
	~
	In the literature the collection of simplices arising from the bisection of a pre-diamond is referred to as diamond, cf.~\cite{WeissDeFloriani11}, which motivates the name pre-diamond.
	Let~$v\in \vertices(\mtree)$ and~$m\in \mathbb{N}_0$ with~$\level(v)< m$. 
	Recall the definition of the auxiliary triangulation $	\tria_m^\infty(v)$, cf.~\eqref{def:tria_inf}, as
	\begin{align*}
		\tria_m^\infty(v) \coloneqq \bigcup_{\ell=0}^\infty \bigcap_{j = \ell}^\infty \tria_m^j(v) \in \overline{\BisecT}.
	\end{align*} 
	We call all edges~$e\in \edges(\omega_m(v))$ of simplices in~$\omega_m(v)$ with~$\type(e) = 1$ and~$v \in e$ \emph{type one diagonals} of~$\omega_m(v)$. 
	Before establishing the main result of this subsection we need the following two results. 
	They characterize edges on type one diagonals as pre-diamonds and investigate chains of such edge patches. 
	
	\begin{lemma}[Pre-diamonds in $\tria_m^\infty(v)$]
		\label{lem:FinerTriaPreDiamonds}
		Let~$v\in \vertices(\mtree)$  be a vertex and let~$m  \in \mathbb{N}_0$ with~$\level(v)<m$. 
		Then, for all edges~$e\in \edges(\omega_m^\infty(v))$ the following equivalence holds: 
		\begin{align*}
			\omega_{\tria_m^\infty(v)}(e) \text{ is a pre-diamond if and only if }e\text{ lies on a type one diagonal of } \omega_m(v).
		\end{align*}
	\end{lemma}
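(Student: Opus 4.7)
The plan is to reduce the equivalence to combinatorial statements about the inner layer $\lambda_m^1(v)$, using the explicit description of its simplices from Lemma~\ref{lem:Layers} together with the bisection rule of Algorithm~\ref{algo:genbased}. First, I identify type one diagonals explicitly: by Lemma~\ref{lem:typeOneEdges} and Lemma~\ref{lem:propertiesGenTypeLvl}~\ref{itm:refe-typeOneVertex}, each $T \in \omega_m(v)$ has a unique type~$1$ edge which coincides with $\bse(T)$, so the type one diagonals of $\omega_m(v)$ are precisely the edges $[v,w]$ with $\type(w)=1$. The first refinement step~\eqref{eq:refStrategy} bisects every such diagonal (all $T \in \omega_m(v)$ touch $\patchbd$); the outer half $[b,w]$ with $b=\bsv([v,w])$ touches $\patchbd$ and is excluded from $\omega_m^\infty(v)$, whereas the inner half $[v,b]$ persists as an edge of $\lambda_m^1(v)$. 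From the base case in the proof of Lemma~\ref{lem:Layers}, the simplices of $\lambda_m^1(v)$ are precisely $\widetilde T = \simplex{b_0,\dots,b_{d-1} \mid v}$ with $b_j = \bsv([v,w_j])$ for $(w_j)_j$ the other vertices of the associated $T \in \omega_m(v)$ sorted by decreasing generation; the oldest outer $b_{d-1}$ corresponds to the type-$1$-partner $w_{d-1}$ of $v$ in $T$, and $\bse(\widetilde T) = [v,b_{d-1}]$.

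For the ``if'' direction, if $e = [v,b]$ lies on a type one diagonal $[v,w]$, then $\type(w)=1$ is a global property, so $w$ is the type-$1$-partner of $v$ in every $T \in \omega_{\omega_m(v)}([v,w])$; hence $b = b_{d-1}$ and $e = \bse(\widetilde T)$ for every associated $\widetilde T$. Since no simplex of $\tria_m^\infty(v)$ outside $\lambda_m^1(v)$ contains $v$, the edge patch $\omega_{\tria_m^\infty(v)}(e)$ consists exactly of these $\widetilde T$, yielding the pre-diamond property. For the ``only if'' direction, I argue the contrapositive. If $v \in e$ but $[v,w_j]$ is not a type one diagonal, then $\type(w_j) \neq 1$; picking any $T \in \omega_m(v)$ containing $[v,w_j]$, the type-$1$-partner $w' \neq w_j$ of $v$ in $T$ produces $\bse(\widetilde T) = [v,\bsv([v,w'])] \neq e$, breaking the pre-diamond condition. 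If $v \notin e$ and $e$ is an edge of some $\widetilde T \in \lambda_m^1(v)$, then Lemma~\ref{lem:bisec-edge-1} forces $v \in \bse(\widetilde T)$, again giving $e \neq \bse(\widetilde T)$.

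The main obstacle is the residual subcase $v \notin e$ with $e$ belonging only to simplices of outer layers $\lambda_m^\ell(v)$, $\ell \geq 2$. Here I would exploit the generation range $[(m+\ell-1)d+1,(m+\ell)d]$ admitted by Lemma~\ref{lem:Layers}~\ref{itm:Layer1} to locate within the edge patch a simplex of type strictly less than $d$, whose bisection edge by Lemma~\ref{lem:RefEdgeOnBdd} must lie on the inner interface $\Lambda^{\ell-1}_m(v)\cap\Lambda^\ell_m(v)$ and thus differ from $e$. The careful point is to verify that such a witness simplex always exists within $\omega_{\tria_m^\infty(v)}(e)$, which requires tracking the $\gensharp$-ordering of edges incident to the interface vertices of $e$ via Lemma~\ref{lem:gensharp-patch} and ruling out the possibility that $e$ is uniquely $\gensharp$-oldest across the full patch.
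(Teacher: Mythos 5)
Your argument handles only the innermost layer $\lambda_m^1(v)$ cleanly; both directions of the equivalence are essentially unproven for the outer layers, which is where the substance of the lemma lies. The paper handles them by an inductive argument over layers (Steps~3--5 of its proof): for each~$\ell \geq 2$ it shows that $\omega(e)$ being a pre-diamond in~$\lambda_m^\ell(v)$ is equivalent to a combinatorial condition on the ancestor edge patch~$\omega(\overline e)$ in~$\gamma_m^{\ell-1}(v)$ with~$\overline e \supset e$, and then propagates this condition down to layer one by induction. Your proposal has no such mechanism, and this is not a small omission.

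Concretely: (1)~For the ``if'' direction you only treat $e = [v,b]$ with $v \in e$, i.e.\ the first segment~$e_1$ of the chain along a type one diagonal. By Corollary~\ref{cor:ChainTypeOneDia} every segment~$e_\ell$, $\ell \geq 2$, of that chain also lies on the type one diagonal and must have a pre-diamond patch; you give no argument for these, and they cannot be handled by the ``no simplex outside $\lambda_m^1(v)$ contains $v$'' reasoning, since $v \notin e_\ell$. (2)~For the ``only if'' direction in outer layers, your sketch proposes to find a witness simplex of type~$< d$ in~$\omega(e)$ by ``ruling out the possibility that $e$ is uniquely $\gensharp$-oldest across the full patch.'' But $e$ being uniquely $\gensharp$-oldest in every~$T \in \omega(e)$ is (via Lemma~\ref{lem:PropSharp}~\ref{itm:gsharp-oldest}) precisely the condition that each~$T$ has~$e = \bse(T)$, i.e.\ that $\omega(e)$ is a pre-diamond. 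So ruling it out is exactly the statement to be proven, not a step towards it, and Lemma~\ref{lem:gensharp-patch} alone (which only bounds $\gensharp$-spreads within a single simplex) does not decide it. The ancestor-patch recursion in the paper is precisely the tool that replaces this circular step: it reduces the question across one layer to an analogous but strictly smaller one, with the base case $\omega_m(v) = \gamma_m^0(v)$ that you do handle.
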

	\begin{proof} 
			Let~$v\in \vertices(\mtree)$  be a vertex and let~$m \in \mathbb{N}_0$ be such that~$\level(v)<m$. 
			Furthermore, let~$e \in \edges(\tria_m^\infty(v))$ be an edge with edge patch~$\omega(e) \coloneqq \omega_{\tria_m^\infty(v)}(e)$. 
		
		\textit{Step 1 (Type one edge).}
		Suppose that~$\type(e) \neq 1$. Lemma~\ref{lem:propertiesGenTypeLvl}~\ref{itm:refe-typeOneVertex} shows that any~$T\in \lambda^\ell_m(v)$ with~$\ell \in \mathbb{N}$ and $e = \bse(T)$ satisfies~$\type(T) \neq d$.
		Lemma~\ref{lem:RefEdgeOnBdd} yields
		\begin{align*}
			e \subset \Lambda^{\ell-1}_m(v) \cap \Lambda_m^\ell(v).
		\end{align*}
		Hence, there exists a simplex~$T' \in {\lambda}^{\ell-1}_m(v)$ with~$e\in \edges(T')$ and by Lemma~\ref{lem:Layers}~\ref{itm:Layer2} we have that~$\level(T') = m + \ell - 1 < m + \ell = \level(T)$. 
		Since by definition simplices in a pre-diamond have the same generation and level, the patch~$\omega(e)$ containing~$T$ and~$T'$ cannot be a pre-diamond. 
		Thus, by contraposition we obtain that
		\begin{align}
			\label{eq:proofStep1}
			\text{if }\omega(e) \text{ is a pre-diamond, then we have $\type(e) = 1$}.
		\end{align}
		
			\textit{Step 2 (Layer).} 	
		If~$\omega(e)$ is a pre-diamond, then by~\eqref{eq:proofStep1} the type of~$e$ is one. 
		By the definition of pre-diamonds and  by Lemma~\ref{lem:propertiesGenTypeLvl}~\ref{itm:refe-typeOneVertex} all~$T \in \omega(e)$ are of type~$d$. 
		They also have the same level, and thus, according to Lemma~\ref{lem:Layers}~\ref{itm:Layer2}, are contained in one layer~$\lambda_m^\ell(v)$ for some~$\ell\in \mathbb{N}$. 
		Indeed, by Lemma~\ref{lem:Layers}~\ref{itm:Layer3} we know that~$\omega(e) \subset \lambda_m^\ell(v)$ for~$\ell \in \mathbb{N}$ such that~$\level(e) = m + \ell$. 
		Collecting these properties, for any~$e \in \edges(\lambda_m^\ell(v))$ with~$\ell \in \mathbb{N}$ we have that
		\begin{align}
			\label{eq:proofStep2}
			\begin{cases}
				\text{if }\omega(e) \text{ is a pre-diamond, then we have $\omega(e) \subset \lambda_m^\ell(v)$,}\\
				\text{$\type(T) = d$ for all $T\in \omega(e)$, and $\type(e) = 1$}.
			\end{cases}
		\end{align}
		
		\textit{Step 3 (Ancestors).}
		Let~$T\in \lambda_m^\ell(v)$ be a simplex with~$\ell \in \mathbb{N}$. 
		Then, by Lemma~\ref{lem:Layers}~\ref{itm:Layer1} it has a type~$d$ ancestor 
		\begin{align*}
			\overline{T} \in \gamma_m^{\ell-1}(v). 
		\end{align*}
		\textit{Case 1.}
		Suppose that the ancestor~$\overline{T}$ has at least two vertices not contained in the boundary~$\patchbd$. 
		Hence, it has an edge~$e'$ on the interface~$\Lambda^{\ell-1}_m(v) \cap \Lambda^{\ell}_m(v)$. 
		The conformity of~$\tria_m^\infty(v)$ in Proposition~\ref{prop:regTria} ensures that~$e'$ is not bisected in the construction of~$\omega_m^\infty(v)$.   
		Algorithm~\ref{algo:genbased} shows that after~$d$ bisections of the type~$d$ simplex~$\overline{T}$ all edges in~$\edges(\overline{T})$ would be bisected, in particular the edge~$e'$. 
		Thus,~$T$ results from strictly less than~$d$ bisections of~$\overline{T}$ and hence~$\type(T) \neq d$.\\
		\textit{Case 2.}
		Suppose now that the ancestor~$\overline{T}$ has~$d$ vertices on the boundary~$\patchbd$. 
		Therefore, any descendant resulting from~$d-1$ bisections of~$\overline{T}$ has at least one vertex on the boundary. 
		Hence, according to the definition of~$\omega^{\ell d}_m(v)$ any descendant of~$\overline{T}$ in~$\lambda_m^\ell(v)$ has a generation larger or equal to~$\gen(\overline{T}) + d = (m+\ell)d$, and this holds in particular for~$T$.  Lemma~\ref{lem:Layers}~\ref{itm:Layer2} implies that~$\level(T) = m+\ell$, and thus~$\gen(T) = (m+\ell)d$. 
		This yields~$\type(T) = d$.
		
		Altogether, for arbitrary~$T \in \lambda_m^\ell(v)$ and its ancestor~$\overline{T} \in \gamma_m^{\ell-1}(v)$ one has that
		\begin{align}
			\label{eq:Step3Ancestors_1}
			\type(T) = d \;\text{ if and only if }
			\; \text{all vertices but one of  $\overline{T}$ are in $\patchbd$.}
		\end{align}
		Let us show for any~$e \in \edges(\lambda_m^\ell(v))$ with~$\ell \in \mathbb{N}$ the following equivalence:
		\begin{align}
			\label{eq:Step3Ancestors}
			\begin{cases}
				\text{$\omega(e) \subset \lambda_m^\ell(v)$ is a pre-diamond if and only if} \\
				\text{$\type(e) = 1$ and the ancestor in $\gamma_m^{\ell-1}(v)$ of each  }
				\\	
			 \text{simplex in $\omega(e)$ 
			 	has all vertices but one on $\partial \omega_m(v)$.}
			\end{cases}
		\end{align}
		Indeed, the implication is an immediate consequence of~\eqref{eq:proofStep2} and~\eqref{eq:Step3Ancestors_1}. 
		For the reverse implication, thanks to Lemma~\ref{lem:Layers}~\ref{itm:Layer3} and with~$\type(e)=1$ one can infer that~$\omega(e) \subset \lambda_m^\ell(v)$. 
		Lemma~\ref{lem:Layers}~\ref{itm:Layer2} shows that~$\level(T) = m+\ell$ for any~$T \in \omega(e)$. 
		Furthermore, since for any~$T \in \omega(e)$ its ancestor~$\overline{T}$ in~$\gamma_m^{\ell-1}(v)$ has all vertices but one on~$\patchbd$, by~\eqref{eq:Step3Ancestors_1} it follows that~$\type(T) = d$ for any~$T \in \omega(e)$. 
		Now, all~$T \in \omega(e)$ have the same type and the same level, and thus the same generation, which implies that~$\bse(T) = e$ for any~$T \in \omega(e)$. 
		This shows that $\omega(e)$ is a pre-diamond. 
	
		\textit{Step 4 (Ancestor patch)}. 
		We want to prove that for any~$e \in \edges(\lambda_m^\ell(v))$ we have
		\begin{align}
			\label{eq:Step4AncestorPatch}
			\begin{cases}
				\text{if $\omega(e)$ is a pre-diamond, then the union of its ancestors in }\gamma_m^{\ell-1}(v) \\
				\text{forms an edge patch $\omega(\overline{e}) \subset  \gamma_m^{\ell-1}(v)$ with type one edge $\overline{e} \supset e$}.
			\end{cases}
		\end{align}	
		Let~$\omega(e)\subset \lambda_m^\ell(v)$ be a pre-diamond. 
		By~\eqref{eq:proofStep1} we have~$\type(e) = 1$.  
		Let~$T \in \omega(e)$ be arbitrary with  ancestor~$\overline{T} \in \gamma_m^{\ell-1}(v)$.  
		The equivalence in~\eqref{eq:Step3Ancestors} yields that~$\overline{T}$ has all vertices but one on the boundary. 
		We refer to the non-boundary vertex of~$\overline{T}$ as~$\overline{v} \notin \patchbd$. 
		By the bisection routine, due to~$\type(\overline{T})=d$ the type one edge $e$ results from the first bisection of~$\overline{T}$, and hence~$\bsv(\bse(\overline{T})) \in e$. 
		Since~$e$ does not touch the boundary~$\patchbd$ and~$\overline{v}$ is the only vertex of~$\overline{T}$ that is not on~$\patchbd$, $e$ is uniquely determined as~$e = \simplex{ \bsv(\bse(\overline{T})) \leveljump \overline{v}}$. 
		Furthermore, we have~$e \subset \overline{e} \coloneqq \bse(\overline{T})$, which is of type one since~$\type(\overline{T}) = d$, cf.~Lemma~\ref{lem:propertiesGenTypeLvl}~\ref{itm:refe-typeOneVertex}. 
		Also, since~$T \in \omega(e)$ is arbitrary this implies that the union of ancestors of~$\omega(e)$ in~$\gamma_m^{\ell-1}(v)$ form the edge patch~$\omega(\overline{e})\coloneqq \omega_{\tria_{m}^{(\ell-1)d}(v)}(\overline{e})$. 
		This proves~\eqref{eq:Step4AncestorPatch}. 
		
		\textit{Step 5 (Boundary patch)}.  
		For edge patches~$\omega(\overline{e}) \subset \gamma_m^{\ell-1}(v)$ with~$\ell \in \mathbb{N}$ and with some type one edge $\overline{e}$ we claim the following property: 
		\begin{align}
			\label{eq:Step5BoundaryPatch}
			\begin{cases}
				\text{The edge patch $\omega(\overline{e})$ has all vertices but one on the boundary $\partial \omega_m(v)$}\\
				\text{if and only if $\overline{e}$ is on a type one diagonal}. 
			\end{cases}
		\end{align}
		We prove this claim by induction. 
		The base case~$\ell = 1$ follows by definition of~$\omega_m(v) = \gamma_m^0(v)$. 
		Hence, we aim to verify the statement for~$\ell>1$. 
		Let~$\omega(\overline{e}) \subset \gamma_m^{\ell-1}$ be the edge patch of~$\overline{e} \eqqcolon \simplex{v_1 \leveljump v_0}$ with~$\type(\overline{e}) = \type(v_1)=1$. 
		
		\textit{Step 5.1 (``$\,\Rightarrow$'').}
		Suppose~$\omega(\overline{e})$ has all vertices but one on the boundary~$\patchbd$. 
	Lemma~\ref{lem:boundary-vertices}~\ref{itm:boundary-vertices-type1} shows that the type one vertex~$v_1 \notin \patchbd$, and hence~$v_0 \in \patchbd$. 
	
The vertex~$v_1$ results from the bisection of a type one edge~$\doublebar{e}$ with~$\level(\doublebar{e}) = \level(\overline{e})-1$,  cf.~Lemma~\ref{lem:propertiesGenTypeLvl}~\ref{itm:refe-typeOneVertex}. 
Thus, the ancestors in~$\gamma_m^{\ell-2}(v)$ of the simplices in~$\omega(\overline{e})$ belong to the edge patch~$\omega(\doublebar{e}) \subset \gamma_m^{\ell-2}(v)$. 

Let us now show that each ancestor~$\doublebar{T}\in \omega(\doublebar{e})$ has all vertices but one on the boundary~$\patchbd$. 
Indeed, if such a~$\doublebar{T}$ has two interior vertices, then by the bisection routine in Algorithm~\ref{algo:genbased} its descendant in~$\omega(\overline{e})$ contains at least two interior vertices as well. 
This contradiction verifies our claim. 

Since~$v_1 \not\in \partial \omega_m(v)$, the edge~$\doublebar{e}$ is not on the  boundary~$\patchbd$, that is, there exists a vertex~$v_{\doublebar{e}}\in \vertices(\doublebar{e})$ with~$v_{\doublebar{e}}\not\in \partial \omega_m(v)$. 
In case~$\overline{e} \not\subset\doublebar{e}$, then the edge~$[v_{\doublebar{e}},v_0] \in \edges(\gamma_m^{\ell-2}(v))$ exists. 
As~$\doublebar{T}$ has all vertices but one on the boundary, its descendants in~$\gamma_m^{\ell-1}(v)$ result from~$d$ bisections of~$\doublebar{T}$. 
Thus, there is an ancestor~$\overline{T} \in\omega(\overline{e})$ with~$\bsv[v_{\doublebar{e}},v_0] \in \vertices(\omega(\overline{e}))$.  
Since~$v_{\doublebar{e}}$ is not on the boundary~$\patchbd$, this vertex is not on the boundary~$\patchbd$ either. 
Furthermore, the simplex~$\overline{T}$ contains the interior vertex~$v_1$. 
Thus, it has two vertices that are not on the boundary~$\patchbd$. 
This  contradicts the assumptions on~$\omega(\overline{e})$. 
Therefore, we have that~$\overline{e} \subset \doublebar{e}$. 
Consequently, by the induction hypothesis~$\overline{e} \supset \doublebar{e}$ lies on a type one diagonal.

\textit{Step 5.2 (``$\,\Leftarrow$'').} 
Let~$\overline{e}$ be on a type one diagonal. 
There exists an edge patch~$\omega(\doublebar{e}) \subset \gamma_m^{\ell - 2}(v)$ with~$\overline{e} \subset \doublebar{e}$ and $\level(\overline{e}) = \level(\doublebar{e})+1$.
Since~$\doublebar{e}$ is on a type one diagonal, by the induction hypothesis all vertices but one of~$\omega(\doublebar{e})$ are on the boundary~$\patchbd$. 
The bisection routine in Algorithm~\ref{algo:genbased} then shows that all vertices in ~$\vertices(\omega(\overline{e}))\setminus \lbrace v_1\rbrace$ must be on the boundary~$\patchbd$.
		
\textit{Step 6 (Conclusion).}
		Combining \eqref{eq:proofStep2} and~\eqref{eq:Step3Ancestors}--\eqref{eq:Step5BoundaryPatch} concludes the proof. 
	\end{proof}

Combining Lemmas~\ref{lem:Layers} and ~\ref{lem:FinerTriaPreDiamonds} leads to the following observation on chains of type one pre-diamonds. 

\begin{corollary}[Chain of pre-diamonds]
	\label{cor:ChainTypeOneDia}
Let~$v\in \vertices(\mtree)$ and let~$m\in \mathbb{N}_0$ be such that~$\level(v) < m$. 
Then each type one diagonal~$\overline{e} \in \edges(\omega_m(v))$ is covered by a chain of type one edges~$e_1,e_2,\dots \in \edges(\tria_m^\infty(v)) \cap \overline{e}$ with~$v \in e_1$ that satisfies for all~$\ell \in \mathbb{N}$
\begin{align}
	\label{eq:PropTypeOneChain}
 e_\ell \cap e_{\ell+1} \neq \emptyset, \quad\level(e_\ell) = m +\ell,\quad \omega_{\tria_m^\infty(v)}(e_\ell)\text{ is a pre-diamond}.
\end{align}
\end{corollary}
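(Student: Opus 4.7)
Given the type one diagonal $\overline{e} \in \edges(\omega_m(v))$, write $\overline{e} = [v, v_1]$ where $v_1 \in \partial \omega_m(v)$ is its type-one endpoint of level $m$ (it lies on $\partial \omega_m(v)$ because $v$ is the unique interior vertex of $\omega_m(v)$ by the hypothesis $\level(v) < m$). The plan is to construct the chain iteratively by following successive bisections of the residual segment of $\overline{e}$ towards $v_1$, and then to invoke Lemma~\ref{lem:FinerTriaPreDiamonds} for the pre-diamond property.

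Set $b_0 \coloneqq v$ and, for $\ell \geq 1$, put $b_\ell \coloneqq \bsv([b_{\ell-1}, v_1])$ and $e_\ell \coloneqq [b_{\ell-1}, b_\ell]$. Since each $b_\ell$ is the midpoint of a sub-segment of $\overline{e}$, all $b_\ell$ lie on $\overline{e}$, so $e_\ell \subset \overline{e}$, $v = b_0 \in e_1$, and $e_\ell \cap e_{\ell+1} = \{b_\ell\} \neq \emptyset$. A direct induction on $\ell$ using Algorithm~\ref{algo:subsimplex} applied to $[b_{\ell-1}, v_1]$ (whose endpoints always have differing levels, so the first case of the algorithm applies) yields $\level(b_\ell) = m + \ell$ and $\type(b_\ell) = 1$, and consequently $\level(e_\ell) = m + \ell$ and $\type(e_\ell) = 1$.

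The remaining task is to verify $e_\ell \in \edges(\omega_m^\infty(v))$; the pre-diamond property of $\omega_{\tria_m^\infty(v)}(e_\ell)$ then follows from Lemma~\ref{lem:FinerTriaPreDiamonds}, since $e_\ell \subset \overline{e}$ lies on a type one diagonal of $\omega_m(v)$. I would argue inductively that the edge patch of the residual segment $[b_{\ell-1}, v_1]$ in $\omega_m^{(\ell-1)d}(v)$ is contained in $\gamma_m^{\ell-1}(v)$, because every one of its simplices meets $\partial \omega_m(v)$ through the vertex $v_1$. In the refinement to $\omega_m^{\ell d}(v)$ this edge patch is simultaneously bisected, producing the new vertex $b_\ell$ and the new edge $e_\ell$; the resulting children containing $e_\ell$ but not $v_1$ form a pre-diamond which, by Lemma~\ref{lem:Layers}~\ref{itm:Layer1}, lies in $\lambda_m^\ell(v)$. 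Since simplices in $\lambda_m^\ell(v)$ do not touch $\partial \omega_m(v)$, they are not refined any further in the construction of $\omega_m^\infty(v)$, so this pre-diamond persists and $e_\ell \in \edges(\omega_m^\infty(v))$.

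The main technical obstacle is the inductive tracking of the edge patch of $[b_{\ell-1}, v_1]$ throughout the refinement, and in particular confirming that $[b_{\ell-1}, v_1]$ is the common bisection edge of that patch so that no conformal closure is required when it is split. This uses Lemma~\ref{lem:Layers} together with an ancestor analysis in the spirit of Steps~4 and~5 of the proof of Lemma~\ref{lem:FinerTriaPreDiamonds}, which shows that pre-diamonds on the diagonal in one layer originate from pre-diamonds on the coarser diagonal in the next-inner boundary layer. With this iterative description in hand, the type-one and level-$m+\ell$ statements follow from Algorithm~\ref{algo:subsimplex} and the pre-diamond property is an immediate consequence of Lemma~\ref{lem:FinerTriaPreDiamonds}.
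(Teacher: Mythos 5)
Your proposal is correct and is essentially the unfolding of the proof the paper leaves implicit (the paper presents the corollary as an immediate combination of Lemmas~\ref{lem:Layers} and~\ref{lem:FinerTriaPreDiamonds} without a written proof). The construction of the midpoints $b_\ell$ via Algorithm~\ref{algo:subsimplex}, the resulting level and type computations, and the appeal to Lemma~\ref{lem:FinerTriaPreDiamonds} for the pre-diamond property together with Lemma~\ref{lem:Layers} for the layer bookkeeping are exactly the right ingredients. One small imprecision worth flagging: after the edge patch of $[b_{\ell-1},v_1] \subset \gamma_m^{\ell-1}(v)$ is bisected, the \emph{immediate} children (generation $(m+\ell-1)d+1$, type $1$) do not yet form a pre-diamond of $e_\ell$ --- a pre-diamond of a type-one edge must consist of type-$d$ simplices, so it only appears after $d$ further bisections at generation $(m+\ell)d$, which is precisely the set identified by Lemma~\ref{lem:FinerTriaPreDiamonds} as contained in $\lambda_m^\ell(v)$. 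Since you invoke Lemma~\ref{lem:FinerTriaPreDiamonds} for this conclusion anyway, the slip is purely one of wording and does not affect the argument.
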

	After collecting these auxiliary results we are now in the position to verify the main result in this section. 
	More specifically, we show that under suitable conditions the auxiliary triangulation is finer than a given triangulation. 
	For this purpose, let us recall the definition of the maximal level jump in a vertex, cf.~ Definition~\ref{def:maxLvlJump-2}, 
		\begin{align*}
			\jump(v) &\coloneqq \max_{\tria \in \BisecT} \max_{e,e' \in \edges(\tria)\colon v \in e \cap e'}
			\bigabs{\level(e) - \level(e')} \quad \text{ for } v \in \vertices(\mtree). 
		\end{align*}
	
	\begin{proposition}[Auxiliary triangulation]
		\label{prop:FinerTria}
		Let~$\tria \in \BisecT$ with~$\tria_0$ colored and let~$v \in \vertices(\tria)$ be a vertex. 
		Suppose that~$m \in \setN_0$ satisfies
		\begin{align}
			\label{eq:ass-finer-tria}
			\begin{aligned}
				\level(v) &< m 
				\qquad
				\text{and} \qquad \jump(v) + \min_{e \in \edges(\tria)\colon v \in e} \level(e) \leq m+1. 
			\end{aligned}
		\end{align}
		Then the triangulation~$\tria^\infty_m(v)$ is finer than~$\tria$ in the sense that~$\tria \leq \tria^\infty_m(v)$.
	\end{proposition}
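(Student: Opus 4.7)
My plan is to prove $\forest(\tria) \subset \forest(\tria_m^\infty(v))$ by a minimality argument. Simplices $T \in \tria$ with $T \cap \Omega_m(v) = \emptyset$ automatically belong to $\forest(\tria_m^\infty(v))$, since taking $j$ with $md + j \geq \gen(T)$ the descendants of $T$ of generation $md + j$ lie in $\tria_m^j(v)$ outside $\Omega_m(v)$, so $T \in \forest(\tria_m^j(v))$. Hence I only need to control simplices of $\tria$ intersecting $\Omega_m(v)$.

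Assume the inclusion fails and select a minimal-generation witness $T^* \in \forest(\tria) \setminus \forest(\tria_m^\infty(v))$. Its parent $T_p \coloneqq \parent(T^*)$ lies in $\forest(\tria_m^\infty(v))$ by minimality, and since each bisection in $\tria_m^j(v)$ contributes both children to the forest, $T_p$ must actually be a leaf of $\tria_m^\infty(v)$. Because $\tria_m^\infty(v)$ is infinitely refined outside $\Omega_m(v)$, no such leaves exist there, so $T_p \in \omega_m^\infty(v)$ and therefore $T_p \in \lambda_m^\ell(v)$ for some $\ell \geq 1$, while $\tria$ bisects this interior leaf to produce $T^*$.

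In the base case $\ell = 1$, Lemma~\ref{lem:RefEdgeOnBdd} combined with $\Lambda_m^0(v) \cap \Lambda_m^1(v) = \{v\}$ forces $\type(T_p) = d$ and $\bse(T_p) = [a, v]$ with $a \in \Lambda_m^1(v) \cap \Lambda_m^2(v)$. The bisection vertex $b \coloneqq \bsv(\bse(T_p))$ then satisfies $\level(b) = m + 2$ by Lemma~\ref{lem:PropSharp}~\ref{itm:lvlSharpEdge}, hence the child of $T_p$ containing $v$ carries the edge $[v, b]$ of level $m + 2$. Combined with Lemma~\ref{lem:bisec-edge-1}, any further refinement of this child by $\tria$ only produces edges at $v$ of even higher level, which contradicts the bound $\level(e) \leq m + 1$ for all edges $e \in \edges(\tria)$ at $v$ that follows from~\eqref{eq:ass-finer-tria} together with the definition of $\jump(v)$. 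For $\ell \geq 2$, I would reduce to $\ell = 1$ by induction on $\ell$: the bisection of $T_p \in \lambda_m^\ell(v)$ in $\tria$ forces, via conformity of $\tria$, the bisection of neighbouring simplices sharing $\bse(T_p)$; using Lemma~\ref{lem:RefEdgeOnBdd} together with the pre-diamond chain structure from Lemma~\ref{lem:FinerTriaPreDiamonds} and Corollary~\ref{cor:ChainTypeOneDia}, these neighbours can be located in $\lambda_m^{\ell-1}(v)$, and iterating returns us to the base case.

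The main obstacle is the cascade step for $\ell \geq 2$: one must verify that the bisection of an interior simplex in $\lambda_m^\ell(v)$ really propagates inward toward $v$ rather than merely outward toward $\partial \omega_m(v)$, where the auxiliary triangulation is already infinitely refined and thus offers no contradiction. The required bookkeeping splits according to whether $\type(T_p) = d$, so $\bse(T_p)$ crosses the layer and lies on the type one diagonal covered by the chain $e_1, \ldots, e_\ell$ of Corollary~\ref{cor:ChainTypeOneDia}, or $\type(T_p) \neq d$, in which case $\bse(T_p)$ sits on the interface $\Lambda_m^{\ell-1}(v) \cap \Lambda_m^\ell(v)$; in each case the neighbouring simplex in the inner layer whose bisection is forced by conforming closure has to be identified explicitly, exploiting the deterministic selection of bisection edges from Algorithm~\ref{algo:subsimplex}.
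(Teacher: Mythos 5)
Your setup is a sensible variant of the paper's: instead of choosing a minimal refinement $\tria$ violating $\tria \leq \tria_m^\infty(v)$, you pick a minimal-generation witness $T^* \in \forest(\tria) \setminus \forest(\tria_m^\infty(v))$ and correctly place its parent as a leaf $T_p \in \lambda_m^\ell(v)$. Since $\gensharp(\bse(T_p)) = \gen(T_p)+1 = \gen(T^*)$ by Lemma~\ref{lem:PropSharp}~\ref{itm:gsharp-oldest2}, your minimality is equivalent to minimizing the $\gensharp$ of the bisected edge, and from this one can in fact deduce (via Lemma~\ref{lem:PropSharp}~\ref{itm:gsharp-oldest} and conformity of $\tria$) that $\omega_{\tria_m^\infty(v)}(\bse(T_p))$ must be a pre-diamond, hence that $\bse(T_p)$ lies on a type one diagonal of $\omega_m(v)$ by Lemma~\ref{lem:FinerTriaPreDiamonds}. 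You should make this explicit, because it is what forces $\type(T_p)=d$ in general. Your base case $\ell=1$ is essentially right: $T^*$ acquires an edge at $v$ of level $m+2$, which exceeds the bound $m+1$ that follows from \eqref{eq:ass-finer-tria} and the definition of $\jump(v)$.

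The inductive step for $\ell \geq 2$, however, is wrong as written and cannot be rescued in the form you propose. The pre-diamond $\omega_{\tria_m^\infty(v)}(\bse(T_p))$ lies \emph{entirely} within the single layer $\lambda_m^\ell(v)$: all its elements have the same level $m+\ell$ and type $d$ (see Step~2 of the proof of Lemma~\ref{lem:FinerTriaPreDiamonds}). Consequently, conforming closure of the bisection of $\bse(T_p)$ forces further bisections only inside $\lambda_m^\ell(v)$ --- it neither touches $\lambda_m^{\ell-1}(v)$ nor propagates inward. There is no closure-driven cascade that ``returns you to the base case''. The paper instead traces the type-one-diagonal chain $e_1, \dots, e_N$ of Corollary~\ref{cor:ChainTypeOneDia}, with $\bse(T_p)=e_N$ and $v\in e_1$: minimality implies $e_1,\dots,e_{N-1}$ are present and unbisected in $\tria$, and a separate argument using the sharp local $\gensharp$ estimate of Lemma~\ref{lem:genSharpNonMacro} (with its uniqueness clause in the equality case), \emph{not} conformity, shows that the patches $\omega_\tria(e_j)$ are pre-diamonds in $\tria$ for $j=N-1,\dots,1$. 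The contradiction then comes from the refined jump bound: $\omega_\tria(e_1)$ being a pre-diamond forces the level jump at $v$ in $\tria$ to be at most $\jump(v)-1$ (otherwise one further bisection would exceed $\jump(v)$), which together with $\level(e_1)=m+1$ and \eqref{eq:ass-finer-tria} gives $m+1\leq m$. This sharpened jump argument is the key missing ingredient that your proposal does not anticipate.
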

	\begin{proof}
		By assumption we have~$\level(v)<m$ and~\eqref{eq:assump-v-aux} is satisfied. 
		Thus, the triangulation~$\tria^\infty_m(v)$ is well defined and it remains to show that~$\tria \leq \tria^\infty_m(v)$.
		We proceed by contradiction and assume that $\tria \not\leq \tria^\infty_m(v)$. 
		Without loss of generality we may assume that~$\tria$ is minimal in the sense that any proper coarsening~$\tria'$ of~$\tria$, i.e.,~$\tria' \leq \tria$ with $\tria' \neq \tria$, satisfies~$\tria' \leq \tria^\infty_m(v)$. 
		Due to this minimality there exists exactly one edge~$\widetilde{e} \in \tria^\infty_m(v)$ that is bisected in~$\tria$ 
		and the edge patch
\begin{align*}		
		 \omega_{\tria_m^\infty(v)}(\widetilde{e}) \,\text{ is a pre-diamond}.
\end{align*}
		By the properties of~$\tria_m^\infty (v)$ in Lemma~\ref{lem:FinerTriaPreDiamonds} the edge~$\widetilde{e}$ lies on a type one diagonal of~$\omega_m(v)$. 
		Thus, by Corollary~\ref{cor:ChainTypeOneDia} there exists a chain of type one edges~$e_1,\dots, e_N \in \edges(\tria_m^\infty(v))$ with~$N \in \mathbb{N}$, $\widetilde{e} = e_N$, with consecutive levels, and pre-diamond edge patches. 
		By the minimality of~$\tria$ none of the edges $e_1, \ldots, e_{N-1}$ is bisected in~$\tria$. 
		Let us also show that all edges~$e_1, \ldots, e_{N-1}$ are contained in~$\edges(\tria)$. 
		The identity~$\level(e_\ell) = m+\ell$ and the tree structure of bisections of the type one diagonal show for all~$\ell=2,\dots,N$ that each edge~$e_\ell$ is a child of the sibling of~$e_{\ell-1}$. 
		In particular, the edge~$\widetilde{e} = e_N$ and its descendant in~$\tria$ can only exist, if the edges~$e_1, \ldots, e_{N-1}$ exist. 
		Thus, we have~$e_1, \ldots, e_{N-1} \in \edges(\tria)$.
		
		We denote by~$e \in \edges(\tria)$ the edge resulting from the bisection of~$e_N = \widetilde{e}$ which touches~$e_{N-1}$ in the sense that~$e \cap e_{N-1} \eqqcolon v_{N-1}\in \vertices(\tria)$. 
		This edge satisfies 
\begin{align*}
\level(e) = \level(e_N) + 1 = m+N+1.
\end{align*}
		If~$N = 1$, then we have that~$v \in e$. 
		Using the previous identity, the definition of~$\jump(v)$ and the assumption on~$m$ leads to the  contradiction
		\begin{align*}
m+2 = \level(e) \leq \max_{e' \in \edges(\tria)\colon v \in e'} \level(e') \leq \jump(v) + \min_{e' \in \edges(\tria)\,:\, v \in e'} \level(e') = m+1. 
		\end{align*}
		Hence, it remains to consider the case~$N \geq 2$. 
		Recall that~$e_1, \ldots, e_{N-1} \in \edges(\tria)$ and none of those edges is bisected in~$\tria$. 
		In~$\tria_m^\infty(v)$ their edge patches are pre-diamonds, but the patches in~$\tria$ could be larger. 
		In order to show that this is not the case let~$T \in \omega_\tria(e_{N-1})$ be a simplex with a type one edge~$e' \in \edges(T)$. 
	 Lemma~\ref{lem:typeOneEdges} states that $\level(e') = \level(T) = \level(e_{N-1}) = m + N  -1$. 
	 Thus, we have the $\gensharp$-jump
\begin{align*}
	2d = d\, (\level(e) - \level(e')) = d\, (\levelsharp(e) - \levelsharp(e')) = \gensharp(e) - \gensharp(e').
\end{align*}
According to Lemma~\ref{lem:genSharpNonMacro} this shows that any type one edge~$e'\in \edges(T)$ coincides with the type one edge $e_{N-1}$. 
In other words, $T$ contains no other type one edges than~$e_{N-1}$. 
Thus, Lemma~\ref{lem:typeOneEdges} yields that~$\type(T) = d$. 
Combining this with~$\level(T) = m+N-1$ shows that $\gen(T) = d (m+N-1)$ and hence the edge patch $\omega_\tria(e_{N-1})$ is a pre-diamond. 
This argument can be applied inductively: Since~$\omega_\tria(e_{N-1})$ is a pre-diamond, it could be bisected without closure in~$\tria$. 
Then the same arguments as above show that~$\omega_{\tria}(e_{N-2})$ is a pre-diamond and so on.

	Let~$e_0 \in \edges(\tria)$ denote the edge with~$\level(e_0) = \min_{e' \in \edges(\tria)\,:\, v \in e'} \level(e')$. 
	The worst possible level jump at~$v$ is~$\jump(v)$. 
	However, since~$\omega_\tria(e_1)$ is a pre-diamond, the level jump in~$\tria$ at~$v$ is at most~$\jump(v)-1$. 
	Indeed, if the level jump in~$\tria$ would be~$\jump(v)$ a bisection of~$e_1$ would result in a level jump~$\jump(v)+1$ at $v$ which contradicts the definition of~$\jump(v)$. 
This observation leads to
\begin{align*}
	m+1 = \level(e_1) &\leq \level(e_0) + \jump(v)-1
	=  \jump(v) + \min_{e' \in \edges(\tria)\,:\, v \in e'} \level(e') -1 \leq  m.
\end{align*}
This contradiction concludes the proof. 
	\end{proof}
		
	\subsection{Edge chains in neighborhoods}
	\label{subsec:ChainsLeavingNgh}
	In the following we investigate chains of edges within a neighborhood. 
	For~$\tria\in \BisecT$ let a \emph{chain of edges}~$e_1,\dots, e_N \in \edges(\tria)$ with~$N \in \setN$ be connected by vertices~$v_0, \dots, v_N \in \vertices(\tria)$, i.e.,
	\begin{align}
		\label{eq:defRelatedNodalChain}
		e_j = [v_{j-1},v_j]\qquad\text{for all }j=1,\dots,N.
	\end{align} 
	A key property of such chains using the layer structure as introduced in~\eqref{eq:DefLambda} is stated in the following lemma and illustrated in Figure~\ref{fig:LayersLem}:
	There are no simplices in~$\mtree$ that contain both a vertex in~$\Lambda_m^\ell(v)$ and a vertex in~$\Lambda_m^{\ell + 2 + r}(v)$ with~$\ell,r\in \mathbb{N}_0$.  
	 Thus, an edge with one vertex in~$\Lambda_m^\ell(v)$ has either a vertex that is not contained in the neighborhood~$\Omega_m(v)$ or a vertex that is within the strip  of layers~$\bigcup^{\ell+1}_{r=\ell-1} \Lambda_m^r(v)$. 
	 
	\begin{lemma}[Passing through layers]
		\label{lem:leavingLayers}
Let~$\tria\in \BisecT$ be a triangulation with colored $\tria_0$. 
Let~$e_1,\dots,e_N \in \edges(\tria)$ with~$N \in \mathbb{N}$ be a chain of edges with corresponding vertex chain $v_0,\dots,v_N\in \vertices(\tria)$ as in \eqref{eq:defRelatedNodalChain}. 
	If the chain remains in the neighborhood~$\Omega_m(v_0)$ with~$\level(v_0) < m \in \mathbb{N}_0$ in the sense that~$v_0,\dots,v_{N}\in \Omega_m(v_0)$, then one has that
		\begin{align*}
			v_\ell \in  \bigcup_{j=0}^{\ell} {\Lambda}_m^j(v_0) \qquad\text{for all }\ell = 0,\dots,N.
		\end{align*} 
	\end{lemma}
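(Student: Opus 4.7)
The plan is induction on $\ell$. The base case $\ell = 0$ holds immediately since $\Lambda_m^0(v_0) = \lbrace v_0 \rbrace$. For the inductive step, I assume $v_\ell \in \Lambda_m^r(v_0)$ for some $r \in \lbrace 0, \dots, \ell \rbrace$, and aim to show that $v_{\ell+1} \in \Lambda_m^{r'}(v_0)$ for some $r' \leq r+1$, which gives $r' \leq \ell+1$ as required.

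The inductive step reduces to the key geometric claim highlighted in the paragraph preceding the lemma: \emph{no simplex $T \in \mtree$ has two vertices in non-adjacent layers $\Lambda_m^j(v_0)$ and $\Lambda_m^{j+2+r}(v_0)$ with $j, r \in \mathbb{N}_0$}. Granting this, since $e_{\ell+1} = [v_\ell, v_{\ell+1}] \in \edges(\tria)$ is contained in some $d$-simplex $T \in \tria \subset \mtree$ and $v_\ell, v_{\ell+1} \in \vertices(T) \cap \Omega_m(v_0)$ by hypothesis, the claim forces $v_{\ell+1}$ into a layer $\Lambda_m^{r'}(v_0)$ with $|r' - r| \leq 1$, so $r' \leq r + 1 \leq \ell + 1$ as needed.

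The main obstacle is establishing the key claim, which I would prove by contradiction: assume some $T \in \mtree$ has vertices $a \in \Lambda_m^r(v_0)$ and $b \in \Lambda_m^s(v_0)$ with $s \geq r+2$. Since $\Lambda_m^r(v_0) \cap \Lambda_m^{r+2}(v_0) = \emptyset$ by~\eqref{eq:PropLayer}, the segment $[a, b] \subset T$ must cross the interior of $\Lambda_m^{r+1}(v_0)$, which is subdivided in $\omega_m^\infty(v_0)$ into simplices of level $m+r+1$ by Lemma~\ref{lem:Layers}~\ref{itm:Layer2}. The strategy is to compare $T$ with $\omega_m^\infty(v_0)$ via the master-tree structure, for instance through the common refinement $\tria \vee \tria_m^\infty(v_0) \in \overline{\BisecT}$. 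Either $T$ refines some $S \in \omega_m^\infty(v_0)$ (then $a, b \in S$, and Lemma~\ref{lem:Layers}~\ref{itm:Layer1b} confines them to at most two adjacent layers), or $T$ is a common ancestor of simplices in $\omega_m^\infty(v_0)$, in which case Lemma~\ref{lem:boundary-vertices} together with $\level(v_0) < m$ controls the vertices of $T$ relative to $\patchbd$, and combining with the level identities of Lemma~\ref{lem:Layers}~\ref{itm:Layer3} and the bisection rule in Algorithm~\ref{algo:genbased} should yield the contradiction.

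The coarse case, where $T$ is an ancestor of layer-$(r+1)$ simplices, is the delicate one: here the bulk of the work lies in tracking how $T$ could possibly accommodate vertices in widely separated layers despite the uniform refinement structure underlying $\omega_m^\infty(v_0)$. I expect the argument to use that $v_0$ must then lie in $T$ as its oldest vertex and that the bisection edge of $T$ falls on an edge already split in the construction of $\omega_m^\infty(v_0)$, which contradicts $T \in \mtree$.
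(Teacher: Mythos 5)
Your skeleton — induction on $\ell$, with the inductive step reduced to a statement that an edge of the chain cannot "skip" layers — matches the paper's. Your base case and the logic of the reduction are fine. The gap is in the key claim itself and its proof.

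You attempt to prove the strong statement that no $T \in \mtree$ contains vertices in non-adjacent layers, and propose a case analysis via the common refinement $\tria \vee \tria_m^\infty(v_0)$, distinguishing whether $T$ is coarser or finer than the auxiliary partition. This sketch is not carried out: for the "coarse-ancestor" case you say you "expect" an argument involving $v_0$ being the oldest vertex of $T$ and the bisection edge of $T$ contradicting $T \in \mtree$, but this chain of implications is not substantiated, and it is not clear it would go through — a coarse ancestor $T$ has lower generation than the auxiliary simplices and its bisection edge need not collide with anything constructed in $\omega_m^\infty(v_0)$. Moreover, Lemma~\ref{lem:Layers}~\ref{itm:Layer1b} and~\ref{itm:Layer3}, which you invoke, concern simplices and vertices \emph{in} $\omega_m^\infty(v_0)$ or $\gamma_m^\ell(v_0)$, not arbitrary $T\in\mtree$, so they cannot be applied directly to the coarse case.

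The paper avoids this difficulty by proving only a one-directional "absorbing" property. Introducing
\begin{align*}
\Gamma_{m,\textup{int}}^\ell(v_0) \coloneqq \Omega_m(v_0) \setminus \bigcup_{j=0}^{\ell} \Lambda_m^j(v_0),
\end{align*}
it uses Lemma~\ref{lem:Layers}~\ref{itm:Layer1} and~\ref{itm:Layer1b} to observe that every vertex of $\mtree$ lying in $\Gamma_{m,\textup{int}}^\ell(v_0)$ arises from bisecting simplices of the boundary layer $\gamma_m^\ell(v_0)$, so every $T \in \mtree$ possessing such a vertex is a descendant of some simplex in $\gamma_m^\ell(v_0)$ and hence contained in $\Gamma_m^\ell(v_0)$. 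Since $\Gamma_m^\ell(v_0)$ is disjoint from $\bigcup_{j=0}^{\ell-1}\Lambda_m^j(v_0)$, no edge of $\tria$ can connect $\Gamma_{m,\textup{int}}^\ell(v_0)$ to the first $\ell-1$ layers, which forces $v_\ell$ to lie in $\bigcup_{j=0}^{\ell}\Lambda_m^j(v_0)$ or outside $\Omega_m(v_0)$. This is strictly weaker than your claim about non-adjacent layers, needs no case distinction between coarse and fine $T$, and makes no assumption about $T$ belonging to (or being refined by) the auxiliary triangulation. To complete your proof you would need either to supply the missing contradiction argument in full detail or to switch to the absorbing-set argument.
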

	\begin{proof}
		Let the edge chain~$e_1,\dots,e_N \in \edges(\tria)$ and with vertices~$v_0,\dots,v_N\in \vertices(\tria)$ be as stated. 
		We prove the result for all vertices~$v_\ell$ with $\ell = 0,\dots,N-1$ by induction over $\ell$. 
		The base case~$v_0 \in {\Lambda}^0_m(v_0)$ follows by definition.
		For the induction step we assume that $v_{\ell-1}\in \bigcup_{j=0}^{\ell-1} {\Lambda}^j_m(v_0)$ for some~$\ell < N-1$. 
					Recall the definition of the boundary layer introduced in~\eqref{eq:PropLayer2} as
					\begin{align*}
						\gamma_m^{\ell}(v_0) = \lbrace T \in \omega^{\ell d}_m(v_0) \colon T \cap \partial \omega_m(v) \neq \emptyset\rbrace.
					\end{align*}
		Lemma~\ref{lem:Layers}~\ref{itm:Layer1b} implies that the vertices of all~$T\in \gamma_m^{\ell}(v_0)$ are either on the interface~$\Lambda^{\ell}_{m}(v_0) \cap \Lambda^{\ell+1}_{m}(v_0)$ or on the boundary~$\partial \omega_m(v_0)$.

		Let us define the set 
		\begin{align*}
			\Gamma_{m,\textup{int}}^\ell(v_0) \coloneqq
			\Omega_m(v_0) \setminus \bigcup_{j=0}^{\ell} {\Lambda}^j_m(v_0). 
		\end{align*}		
		For the special case of an interior vertex~$v_0 \notin \partial \Omega$ this set coincides with the interior of~$\Gamma_m^\ell(v_0)$.   	
		Using Lemma~\ref{lem:Layers}~\ref{itm:Layer1} all vertices in~$\tria$ that are in~$\Gamma_{m,\textup{int}}^\ell(v_0)$ result from bisections of simplices in~$\gamma_m^{\ell}(v_0)$. 
	In particular any~$T \in \mtree$ with a vertex in~$\Gamma_{m,\textup{int}}^\ell(v_0)$ is contained in~$\Gamma_{m}^\ell(v_0)$. 
	The latter set has no intersection with $\bigcup_{j = 0}^{\ell-1} \Lambda_m^j(v_0)$.  
	Hence, there is no edge with one vertex in~$\Gamma_{m,\textup{int}}^\ell(v_0) $ and one vertex in~$\bigcup_{j = 0}^{\ell-1}\Lambda_m^{j}(v_0)$. 
Since~$v_\ell$ is connected to~$v_{\ell-1} \in \Lambda_m^{\ell-1}(v_0)$ by an edge in~$\edges(\tria)$, this observation implies that $v_\ell \in \bigcup_{j=0}^{\ell} {\Lambda}^j_m(v_0)$ or that~$v_\ell \not\in \Omega_m(v_0)$ and hence concludes the proof.
	\end{proof}

	The previous lemma shows that a chain of edges has to `jump' over the remaining layers in order to leave a neighborhood. 
	Thus, such an edge cannot be too small, and its level cannot not be too high, which is made precise in the following proposition. 
	
	\begin{proposition}[Leaving a neighborhood]
		\label{prop:leavongOldNode}
		Let~$\tria\in \BisecT$ be a triangulation with colored~$\tria_0$ and chain of edges~$e_1,\dots,e_N\in \edges(\tria)$ with vertices~$v_0,\dots,v_N \in \vertices(\tria)$ as in~\eqref{eq:defRelatedNodalChain}. 
		Suppose the chain leaves a neighborhood~$\Omega_m(v_0)$ with~$\level(v_0) < m \in \mathbb{N}_0$ in the sense that~$v_j \in \Omega_m(v_0)$ for all~$j=0,\dots,N-1$ and that~$v_N \not \in \Omega_m(v_0)$. 
		Then the last edge~$e_N$ satisfies the estimate
		\begin{align*}
			\level(e_N) < m+N.
		\end{align*}
	\end{proposition}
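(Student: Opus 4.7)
The strategy is to analyze the unique type-$d$ simplex realizing $\level(e_N)$, trace its ancestry back to the uniform vertex patch $\omega_m(v_0)$, identify it with a boundary-layer simplex of the auxiliary triangulation, and then extract the level bound from the layer constraint provided by Lemma~\ref{lem:leavingLayers}.

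First I observe that $v_N$ lies on the patch boundary. Indeed, $v_{N-1} \in \Omega_m(v_0)$ is not on $\partial\omega_m(v_0)$, so every simplex of $\mtree$ with vertex $v_{N-1}$ is contained in $\overline{\Omega}_m(v_0)$; since $v_N$ lies in such a simplex (namely the one containing $e_N$), we have $v_N \in \overline{\Omega}_m(v_0) \setminus \Omega_m(v_0) = \partial\omega_m(v_0)$. Next, Lemma~\ref{lem:typeDsimplices} furnishes a type-$d$ simplex $T \in \mtree$ with $e_N \in \edges(T)$ and $\level(T) = \level(e_N)$. Its ancestor $T_0$ at generation $md$ must belong to $\omega_m(v_0)$: because $v_{N-1} \in T_0 \cap \Omega_m(v_0)$ and $\Omega_m(v_0)$ is disjoint from $\partial\omega_m(v_0)$, the simplex $T_0$ in the conforming uniform triangulation at generation $md$ cannot lie on the far side of the patch boundary; hence $v_0 \in \vertices(T_0)$.

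Setting $\ell \coloneqq \level(T) - m$, the refinement chain from $T_0$ down to $T$ comprises $\ell d$ bisections. Since $v_N \in T \cap \partial\omega_m(v_0)$ and $T$ is contained in every intermediate ancestor, each of these ancestors meets $\partial\omega_m(v_0)$ and is therefore bisected by the rule in~\eqref{eq:refStrategy}. A straightforward induction on $j = 0, \ldots, \ell d$ then yields $T \in \omega_m^{\ell d}(v_0)$, and since $T$ itself touches $\partial\omega_m(v_0)$ we conclude $T \in \gamma_m^\ell(v_0)$. Lemma~\ref{lem:Layers}\ref{itm:Layer1b} now asserts that every vertex of $T$ not on $\partial\omega_m(v_0)$ lies on the interface $\Lambda_m^\ell(v_0) \cap \Lambda_m^{\ell+1}(v_0)$; in particular $v_{N-1} \in \Lambda_m^{\ell+1}(v_0)$.

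To conclude, I invoke Lemma~\ref{lem:leavingLayers}, which locates $v_{N-1}$ in $\bigcup_{j=0}^{N-1} \Lambda_m^j(v_0)$. Since adjacent closed layers are the only pairs with nonempty intersection (cf.~\eqref{eq:PropLayer}), the fact $v_{N-1} \in \Lambda_m^{\ell+1}(v_0) \cap \Lambda_m^j(v_0)$ for some $j \leq N-1$ forces $\ell \leq j \leq N-1$, whence $\level(e_N) = \level(T) = m + \ell \leq m + N - 1 < m + N$. The edge case $\ell = 0$, which by the interface criterion can occur only when $v_{N-1} = v_0$ (and hence $N = 1$), is covered by the same inequality. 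The principal technical hurdle is the identification $T \in \gamma_m^\ell(v_0)$: a priori $T$ is only an element of $\mtree$, not of any particular auxiliary partition $\omega_m^j(v_0)$, so one must propagate the boundary contact of $T$ upward along its refinement chain to conclude that every ancestor is bisected at the corresponding step of the construction~\eqref{eq:refStrategy}. Once this identification is in place, the rest of the argument is purely combinatorial, using only the layer decomposition of Lemma~\ref{lem:Layers} together with Lemma~\ref{lem:leavingLayers}.
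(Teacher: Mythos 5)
The proposal follows the same overall strategy as the paper: apply Lemma~\ref{lem:leavingLayers} to place $v_{N-1}$ in $\bigcup_{j\le N-1}\Lambda_m^j(v_0)$, invoke Lemma~\ref{lem:typeDsimplices} to produce a type-$d$ simplex $T$ with $e_N\in\edges(T)$ and $\level(T)=\level(e_N)$, and exploit the boundary-layer structure to bound $\level(T)$. Your execution of the middle step is actually more precise than the paper's: rather than arguing informally that $T$ ``is in or is an ancestor of'' $\gamma_m^{N-1}(v_0)$, you pin down exactly which $\gamma_m^\ell(v_0)$ contains $T$ by induction on the refinement chain, and then use Lemma~\ref{lem:Layers}~\ref{itm:Layer1b} together with the disjointness of non-adjacent layers to force $\ell\le N-1$. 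That part of the argument is sound and buys a cleaner conclusion.

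The gap is in the opening step. Your assertion that ``every simplex of $\mtree$ with vertex $v_{N-1}$ is contained in $\overline{\Omega}_m(v_0)$'' is not justified by the mere fact that $v_{N-1}\notin\patchbd$, and it is in fact \emph{false} when $v_{N-1}=v_0$: since $\level(v_0)<m$, there are simplices in $\mtree$ of generation $<md$ having $v_0$ as a vertex that extend well outside $\overline{\Omega}_m(v_0)$. The true reason the claim holds when $v_{N-1}\neq v_0$ is a generation argument: $v_{N-1}\in\Omega_m(v_0)\setminus\{v_0\}$ forces $\gen(v_{N-1})>md$ (otherwise $v_{N-1}$ would be a vertex of the uniform generation-$md$ triangulation, hence $v_0$ or a boundary vertex), whence any $T''\ni v_{N-1}$ has $\gen(T'')>md$ and is confined to its generation-$md$ ancestor, which must lie in $\omega_m(v_0)$. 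Your later steps inherit this gap: the passage to an ancestor $T_0$ at generation $md$ tacitly assumes $\gen(T)\ge md$, i.e.\ $\level(e_N)\ge m$, which does not follow automatically when $v_{N-1}=v_0$. The fix is routine but must be made: if $\level(e_N)<m$ the conclusion $\level(e_N)<m+N$ is immediate since $N\ge 1$; otherwise one has $\gen(T)\ge md$ and the ancestry induction applies. Note also that the step-1 conclusion $v_N\in\patchbd$ is stronger than what you actually use: the inductive argument only needs $T\cap\patchbd\neq\emptyset$, which already follows from $e_N$ crossing $\patchbd$ (one endpoint in $\Omega_m(v_0)$, the other outside). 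Replacing the unjustified claim with this weaker observation and adding the trivial case $\level(e_N)<m$ closes the gap. Finally, your parenthetical ``and hence $N=1$'' in the treatment of $\ell=0$ is not strictly implied (the chain could in principle revisit $v_0$), but this does not affect the estimate.
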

	\begin{proof}
		Suppose that the edge chain~$e_1,\dots,e_N\in \edges(\tria)$ with~$\tria\in \BisecT$ and $N\in\mathbb{N}$ and related vertices~$v_{0},\dots,v_N \in \vertices(\tria)$ satisfies the assumptions as stated in the proposition. 
		Lemma~\ref{lem:leavingLayers} implies that 
		\begin{align*}
			v_{N-1} \in \bigcup_{\ell=0}^{N-1} {\Lambda}_m^\ell(v_0).
		\end{align*}
		Thus, according to Lemma~\ref{lem:Layers}~\ref{itm:Layer1} the edge~$e_N = [v_{N-1},v_N]$ crosses the boundary layer~$\Gamma_m^{N-1}(v_0) \coloneqq \bigcup \gamma_m^{N-1}(v_0)$ with
		\begin{align*}
			\gamma_m^{N-1}(v_0)\coloneqq \lbrace T \in \omega_m^{(N-1)d}(v_0) \colon T \cap \partial \Omega_m(v_0) \neq \emptyset\rbrace.
		\end{align*}
		Then Lemma~\ref{lem:typeDsimplices} states the existence of a simplex~$T\in \mtree$ with $e_N\in \edges(T)$ and $\gen(T) = \level(e_N)d$.
		Since~$e_N$ and therefore also~$T$ touches the boundary~$\partial \Omega_m(v_0)$ and the layer $\bigcup_{\ell=0}^{N-1} {\Lambda}_m^\ell(v_0)$, the simplex~$T$ is either in~$\gamma_m^{N-1}(v_0)$ or the ancestor of a simplex in~$\gamma_m^{N-1}(v_0)$. 
		Since all simplices in~$\gamma_m^{N-1}(v_0)$ have level~$N+m-1$, the level of~$T$ must be strictly smaller than~$m+N$ and we have that
		\begin{align*}
			&\level(e_N) \leq \level(T) < m+N.\qedhere
		\end{align*}
	\end{proof}
	Since we can control the level of edges leaving a neighborhood, it remains to investigate edge chains that stay inside a neighborhood. 
	This can be done in situations where the related auxiliary triangulation is finer than the given triangulation~$\tria$, a property derived in Proposition~\ref{prop:FinerTria}. 
	
	\begin{proposition}[Staying in a neighborhood]
		\label{prop:stayInNgh}
		Let~$\tria\in \BisecT$ be a triangulation with colored~$\tria_0$ and let~$e_1,\dots,e_N\in \edges(\tria)$ with~$N \in \mathbb{N}$ be a chain of edges with vertices~$v_0,\dots,v_N \in \vertices(\tria)$ as in~\eqref{eq:defRelatedNodalChain}. 
		Suppose that the chain does not leave the neighborhood~$\Omega_m(v_0)$ with~$\level(v_0) \leq m$ in the sense that~$v_0,\dots,v_N\in \Omega_m(v_0)$. 
		Moreover, let us assume that~$\tria \leq \tria_m^\infty(v_0)$. 
		Then we have that
		\begin{align*}
			\level(e_j) \leq m + j \quad \text{ for all } j=1,\dots,N.  
		\end{align*}
	\end{proposition}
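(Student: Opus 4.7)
The plan is to bound $\level(e_j)$ by first bounding the level of each vertex $v_j$ in the chain via the layer analysis of Section~\ref{subsec:Layers}, and then using the generation--level relation to translate this into the desired edge estimate. Concretely, the edge bound $\level(e_j) \leq m + j$ will follow from the vertex bound $\level(v_j) \leq m + j$ (and its analogue for $v_{j-1}$), which in turn is an immediate consequence of Lemmas~\ref{lem:leavingLayers} and~\ref{lem:Layers}~\ref{itm:Layer3}.

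First, since $v_0,\dots,v_N \in \Omega_m(v_0)$ and $\level(v_0) < m$ (as required by the construction of $\tria_m^\infty(v_0)$), Lemma~\ref{lem:leavingLayers} applies and yields
\begin{align*}
v_j \in \bigcup_{k=0}^{j} \Lambda_m^k(v_0) \qquad \text{for all } j = 0, \ldots, N.
\end{align*}
The hypothesis $\tria \leq \tria_m^\infty(v_0)$ ensures $\vertices(\tria) \subset \vertices(\tria_m^\infty(v_0))$, so each $v_j$ is a vertex of $\omega_m^\infty(v_0)$. Moreover, since $\Omega_m(v_0) = \overline{\Omega}_m(v_0) \setminus \partial \omega_m(v_0)$ by~\eqref{eq:defOmegahat}, none of the $v_j$ lies on the patch boundary. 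Thus, for any $v_j \neq v_0$, Lemma~\ref{lem:Layers}~\ref{itm:Layer3} provides an integer $\ell \geq 1$ with $v_j \in \Lambda_m^{\ell}(v_0) \cap \Lambda_m^{\ell+1}(v_0)$ and $\level(v_j) = m + \ell$. Because $v_j \in \Lambda_m^{\ell}(v_0) \cap \bigcup_{k=0}^{j} \Lambda_m^{k}(v_0)$ and layers overlap only on interfaces, one must have $\ell \leq j$, so $\level(v_j) \leq m + j$. The remaining case $v_j = v_0$ is trivial: $\level(v_0) < m \leq m + j$ for $j \geq 1$.

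To conclude, note that by the definition in~\eqref{eq:gen-simplices-sub} one has $\gen(e_j) = \max(\gen(v_{j-1}),\gen(v_j))$, while~\eqref{eq:generation-level-type} together with $\type \in \set{1,\dots,d}$ gives $\gen(v) \leq d\,\level(v)$ for every vertex. Combining these with the vertex bounds above yields
\begin{align*}
\gen(e_j) \leq d \, \max\bigl(\level(v_{j-1}),\level(v_j)\bigr) \leq d(m+j),
\end{align*}
so $\level(e_j) = \lceil \gen(e_j)/d \rceil \leq m + j$, which is the desired estimate.

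The genuinely difficult work is already encapsulated in Lemmas~\ref{lem:leavingLayers} and~\ref{lem:Layers}~\ref{itm:Layer3}; once the layer structure of $\omega_m^\infty(v_0)$ and the explicit level identity for interior vertices are available, the remaining argument reduces to the elementary bookkeeping above. Consequently, I do not anticipate any serious obstacle in this step; the only point that requires a moment's care is ensuring that $v_j$ really is a vertex of $\omega_m^\infty(v_0)$, which is handled by combining $\tria \leq \tria_m^\infty(v_0)$ with the fact that $v_j$ lies off the patch boundary.
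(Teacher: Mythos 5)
Your proof is correct and rests on the same core machinery — Lemma~\ref{lem:leavingLayers} together with the layer structure of Lemma~\ref{lem:Layers} — but you reach the edge bound by a slightly different route. The paper invokes $\tria\leq\tria_m^\infty(v_0)$ to place $e_j$ (or a descendant of it) inside a simplex $T\in\lambda_m^\ell(v_0)$ with $\ell\leq j$, then applies Lemma~\ref{lem:Layers}~\ref{itm:Layer2} to conclude $\level(e_j)\leq\level(T)=m+\ell$; this requires a short observation about descendants, since $e_j$ itself need not be an edge of the finer triangulation. You instead use $\tria\leq\tria_m^\infty(v_0)$ only to certify that each $v_j$ is a vertex of $\omega_m^\infty(v_0)$, then read off $\level(v_j)=m+\ell\leq m+j$ from Lemma~\ref{lem:Layers}~\ref{itm:Layer3} and translate that to the edge via $\gen(e_j)=\max(\gen(v_{j-1}),\gen(v_j))$ and $\gen(\bigcdot)\leq d\,\level(\bigcdot)$. (In fact $\level(e_j)=\max(\level(v_{j-1}),\level(v_j))$ follows directly from $\level=\lceil\gen/d\rceil$, so the detour through generations could be skipped.) The trade-off is that you avoid the descendant bookkeeping but must verify the vertex membership and handle $v_j=v_0$ separately; both routes are about the same length and difficulty.
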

	\begin{proof}
		Suppose that the assumptions of the proposition are valid and let~$j\in \lbrace 1,\dots,N\rbrace$ be arbitrary. 
		By Lemma~\ref{lem:leavingLayers} we know that 
		\begin{align*}
			v_{j-1} \in \bigcup_{\ell = 0}^{j-1} \Lambda_m^\ell(v_0)\qquad\text{and}\qquad 
			v_j \in \bigcup_{\ell = 0}^j \Lambda_m^\ell(v_0).
		\end{align*}
		Thus, due to the property~$\tria \leq \tria_m^\infty(v_0)$ the edge~$e_j = [v_{j-1},v_j]$ or one of its descendants is contained in a simplex~$T\in \lambda_m^\ell(v_0)$ for some~$\ell \leq j$. 
		The property of the layers in Lemma~\ref{lem:Layers}~\ref{itm:Layer2} shows that~$\level(e_j) \leq \level(T) \leq m+j$, which proves the statement.  
	\end{proof}

\section{Global level estimates}
\label{sec:GenDiffEdgeChain}
In this section we combine the local level estimates derived in Section~\ref{sec:locGenDiff} with the results on neighborhoods in  Section~\ref{sec:leavingTheNgh} to conclude our main result in Theorem~\ref{thm:main-grading2}.   
In Subsection~\ref{subsec:colored-T0} we prove the main theorem for colored initial triangulations. 
Then, in Subsection~\ref{sec:stevenson} we extend the result to initial triangulations satisfying the matching neighbor condition by Stevenson. 
In both cases we only have to show level estimates for chains of simplices, cf.~Proposition~\ref{pro:genest} and Proposition~\ref{pro:genest-stevenson} below. 
Indeed, then Lemma~\ref{lem:lvlDiffToMain} proves that the grading is in both cases $\gamma \leq 2$, which leads to Theorem~\ref{thm:main-grading2}.    

In the following we repeatedly consider an \emph{edge chain}~$e_0, \ldots, e_N \in \edges(\tria)$ for $N \in \mathbb{N}$, with corresponding \emph{vertex chain}~$v_0, \ldots, v_N \in \vertices(\tria)$, i.e., 
\begin{align}
	\label{eq:defRelatedNodalChain2}
e_j = [v_{j-1},v_j] \;\; \text{ and } \;\; v_{j-1} = e_{j-1} \cap e_{j}  \quad \text{ for } j = 1, \ldots, N. 
\end{align}
Compared to \eqref{eq:defRelatedNodalChain} only the first edge $e_0$ is added.

\subsection{Colored initial triangulation}
\label{subsec:colored-T0}
We start with level estimates for triangulations~$\tria\in \BisecT$ with colored initial triangulation~$\tria_0$. 
The following result uses the constants~$J_n \in \mathbb{N}_0$ defined in~\eqref{eq:defJn-2} and 
\begin{align}
	\label{eq:genconst}
\genconst \coloneqq 1 + \sum_{n=0}^{d-2} J_n.
\end{align}
They depend only on the initial triangulation $\tria_0$, cf.~\eqref{def:C0} and Remark~\ref{rmk:bounds-CT0}. 
Note that for each dimension~$n \in \{0, \ldots, d-2\}$ of critical macro-vertices, the respective constant~$J_n$ enters only once in \eqref{eq:genconst}. 
Recall the notion of distance~$\delta_\tria$ between simplices in Definition~\ref{def:distances}.

\begin{proposition}[Level and generation estimates]
	\label{pro:genest}
  Suppose that~$\tria_0$ is colored and let~$\tria \in \BisecT$ be an arbitrary triangulation. 
  \begin{enumerate}
 \item 
 \label{itm:PropLvlGenEst1}
 Any edge chain~$e_0,\dots,e_N\in \edges(\tria)$ as in \eqref{eq:defRelatedNodalChain2} satisfies 
 \begin{align*}
 \level(e_N) - \level(e_0) \leq N + \genconst.
 \end{align*}
 \item \label{itm:PropLvlGenEst2a}
 All simplices~$T,T'\in \tria$ satisfy that
 \begin{align*}
 	\level(T') - \level(T) &\leq \delta_{\tria}(T',T) + \genconst,\\
   \gen(T') - \gen(T) &\leq  (\delta_{\tria}(T',T) + \genconst + 1)d - 1. 
  \end{align*}
  \end{enumerate}
\end{proposition}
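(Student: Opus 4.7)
Part~\ref{itm:PropLvlGenEst2a} reduces directly to part~\ref{itm:PropLvlGenEst1} via an edge chain construction. Given $T,T'\in\tria$ with $\delta_\tria(T,T')=N$, fix a simplex chain $T_0=T,T_1,\dots,T_N=T'$ and shared vertices $w_i\in T_{i-1}\cap T_i$ for $i=1,\dots,N$; extend by an arbitrary $w_0\in T$ and by $w_{N+1}\in T'$ chosen so that $[w_N,w_{N+1}]$ is an edge of $T'$ of level $\level(T')$. Such a choice is always available because, inspecting the head--tail decomposition underlying Lemma~\ref{lem:propertiesGenTypeLvl}\ref{itm:simplex-edge}, every vertex of a simplex is joined to at least one head vertex by an edge realising the simplex's level. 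The sequence $[w_{i-1},w_i]\in\edges(T_{i-1})$ for $i=1,\dots,N+1$ is then an edge chain of $N+1$ edges in $\tria$, and part~\ref{itm:PropLvlGenEst1} combined with $\level([w_0,w_1])\leq\level(T)$ and $\level([w_N,w_{N+1}])=\level(T')$ yields $\level(T')-\level(T)\leq N+\genconst$. The generation estimate then follows from $\gen=d(\level-1)+\type$ and $\type\in\{1,\dots,d\}$.

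For part~\ref{itm:PropLvlGenEst1} I would argue inductively using the neighborhood machinery of Section~\ref{sec:leavingTheNgh}, applied at the initial vertex $v_0$ of the chain. Choose
\begin{align*}
m\coloneqq \max\bigl(\level(v_0)+1,\;\jump(v_0)+\min_{e\in\edges(\tria)\colon v_0\in e}\level(e)-1\bigr)
\end{align*}
so that Proposition~\ref{prop:FinerTria} guarantees $\tria\leq\tria_m^\infty(v_0)$. If $v_0$ is an $n_0$-macro vertex, Proposition~\ref{prop:MaxLvlJump} and the trivial inequality $\min_e\level(e)\leq\level(e_0)$ imply $m\leq\level(e_0)+1+J_{n_0}$. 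Now distinguish two cases. If the entire vertex chain stays inside $\Omega_m(v_0)$, Proposition~\ref{prop:stayInNgh} gives $\level(e_N)\leq m+N$ and hence $\level(e_N)-\level(e_0)\leq N+1+J_{n_0}\leq N+\genconst$, since $J_{n_0}\leq\sum_{n=0}^{d-2}J_n=\genconst-1$ (with $J_{n_0}=0$ when $v_0$ is non-critical). Otherwise, let $K$ be the first index with $v_K\notin\Omega_m(v_0)$; Proposition~\ref{prop:leavongOldNode} provides the partial bound $\level(e_K)<m+K$, and the argument would be iterated on the strictly shorter tail $e_K,\dots,e_N$ starting from $v_K$.

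The main obstacle I expect is the bookkeeping needed to ensure that each $J_n$ with $n\in\{0,\dots,d-2\}$ enters $\genconst$ only once during this iteration, rather than being incurred afresh at every critical vertex the chain encounters; a naive recursion would produce an extra $\genconst$ per iteration, blowing up with $N$. The structural remedy is to refine the inductive hypothesis so that it carries the set $S\subseteq\{0,\dots,d-2\}$ of critical macro dimensions already charged, strengthening the inequality to $\level(e_N)-\level(e_0)\leq N+1+\sum_{n\in S}J_n$, and then to verify that $S$ can only grow by at least one element at each recursion step. The key ingredient here is Lemma~\ref{lem:passingKfaces}: the patch $\Omega_m(v_0)$ of an $n$-macro vertex $v_0$ contains no other $n$-subsimplex of $\tria_0$, so leaving $\Omega_m(v_0)$ forces the chain to cross into structurally new territory whose potential contributions are already captured by the $+N$ term. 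Once this invariant is secured, the $+1$ in $\genconst$ appears naturally as the boundary correction $m-\level(e_0)\leq 1$ already present in the non-critical case, and the sum $\sum_{n=0}^{d-2}J_n$ assembles from the distinct critical dimensions encountered along the chain.
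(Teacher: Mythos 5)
Your Part~\ref{itm:PropLvlGenEst2a} is fine and mirrors the paper's reduction: you build an edge chain of $N+1$ edges with the last edge hitting the youngest vertex of $T'$ (so that $\level(e_N)=\level(T')$, $\gen(e_N)=\gen(T')$) and the first edge satisfying $\level(e_0)\leq\level(T)$, then apply part~\ref{itm:PropLvlGenEst1} and the identity $\gen=d(\level-1)+\type$. The minor appeal to the head--tail structure is unnecessary since any two vertices of a $d$-simplex are joined by an edge, but that does not affect correctness.

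The gap is in Part~\ref{itm:PropLvlGenEst1}, and you have correctly located it but not repaired it. Your proposed invariant --- that the set $S$ of charged critical dimensions grows at each recursion step --- is false. Lemma~\ref{lem:passingKfaces} only says that $\Omega_m(v_0)$ contains no \emph{other} $n$-subsimplex of $\tria_0$; it says nothing about what happens \emph{after} the chain leaves $\Omega_m(v_0)$. Once outside, the chain can perfectly well reach the neighborhood of a different $0$-macro vertex (another initial vertex of $\tria_0$), so your recursion would charge $J_0$ again, and in the worst case each of the $\Theta(N)$ leaving events re-charges the same $J_n$. The phrase ``structurally new territory captured by the $+N$ term'' does not cash out into an inequality.

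The paper avoids this by decoupling the iteration over leaving events from the charging of $J_n$. Corollary~\ref{cor:SeqOfNgh} first iterates over leaving events using only Proposition~\ref{prop:leavongOldNode} --- which needs nothing but $\level(v_0)<m$ and charges no $J_n$ at all --- until it reaches a neighborhood where the chain stays. Only then is $J_n$ incurred, via Proposition~\ref{prop:LeavingIncreasesDim}, and the crucial point is its ``Moreover'' clause: if the next macro vertex $v_k$ has dimension $n'\leq n$, then by Lemma~\ref{lem:passingKfaces} it lies on the \emph{same} macro face $F$ as $v_0$, so the chain from $v_0$ to $v_k$ sits inside a single initial simplex $\mathfrak{T}_0$, where the Kuhn bound (Proposition~\ref{pro:gsharp-2d-Kuhn}) gives level jumps $\leq 2$; this lets one verify the hypothesis of Proposition~\ref{prop:FinerTria} and conclude $\tria\leq\tria^\infty$, terminating the recursion. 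The recursion therefore continues only when $n'>n$, forcing the charged dimensions to be \emph{strictly increasing}, which caps the total charge at $\sum_{n=0}^{d-2}J_n=\Gamma-1$. Your proposal lacks precisely this ``Moreover'' mechanism and its within-one-macro-simplex argument; without it there is no way to prevent repeated charges, and the constant blows up.
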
 
The remainder of the subsection is devoted to the proof of this proposition. 
A key step in the proof is the following consequence of Proposition~\ref{prop:leavongOldNode}.
\begin{corollary}[Sequence of neighborhoods]
\label{cor:SeqOfNgh}
  Let~$\tria\in \BisecT$ be a triangulation with colored initial triangulation~$\tria_0$. 
  Let~$e_0,\ldots, e_N \in \edges(\tria)$ with~$N \in \mathbb{N}$ be a chain of edges in~$\tria$ with corresponding vertex chain~$v_0,\ldots, v_N \in \vertices(\tria)$ as in~\eqref{eq:defRelatedNodalChain2}. 
  Let $m\in \mathbb{N}$ be such that~$\level(e_0) < m$. 
  Then there exists an index~$k \in \mathbb{N}_0$ with~$k \leq N$ such that the remaining chain stays in a neighborhood in the sense that
  \begin{align*}
    v_j \in \Omega_{m+k}(v_k)\qquad\text{ for all } j = k,\ldots, N.
  \end{align*}
  Additionally, we have the level estimate
  \begin{align*}
      \level(e_{k}) < m + k.
  \end{align*}
\end{corollary}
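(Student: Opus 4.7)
The plan is to construct the index $k$ by a greedy iteration that walks along the chain and, every time the chain escapes the current neighborhood, restarts the neighborhood at the first vertex outside. The key invariant to maintain along the iteration is the level bound $\level(e_{k_i}) < m + k_i$, which is exactly what Proposition~\ref{prop:leavongOldNode} delivers each time an escape occurs.

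More precisely, set $k_0 \coloneqq 0$. The hypothesis $\level(e_0) < m$ together with $\level(v_0) \leq \level(e_0)$ ensures the invariant at stage $0$ and makes the neighborhood $\Omega_{m+k_0}(v_{k_0}) = \Omega_m(v_0)$ well-defined. At a general stage $i$, suppose $k_i \leq N$ has been defined with $\level(e_{k_i}) < m + k_i$, hence $\level(v_{k_i}) < m + k_i$. If the sub-chain satisfies $v_j \in \Omega_{m+k_i}(v_{k_i})$ for all $j = k_i, \ldots, N$, then $k \coloneqq k_i$ is the desired index and the statement follows. Otherwise, let $k_{i+1} \in \set{k_i+1, \ldots, N}$ be the smallest index with $v_{k_{i+1}} \notin \Omega_{m+k_i}(v_{k_i})$. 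Applying Proposition~\ref{prop:leavongOldNode} to the sub-chain $e_{k_i+1}, \ldots, e_{k_{i+1}}$ (of length $k_{i+1}-k_i$) with base vertex $v_{k_i}$ and parameter $m + k_i$ yields
\begin{align*}
\level(e_{k_{i+1}}) < (m+k_i) + (k_{i+1}-k_i) = m + k_{i+1},
\end{align*}
which re-establishes the invariant at stage $i+1$.

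Since $k_0 < k_1 < k_2 < \cdots$ is a strictly increasing sequence of non-negative integers bounded by $N$, the iteration must terminate in at most $N$ steps; in the extreme case $k_i = N$ the ``remaining chain'' degenerates to the single vertex $v_N$ and the containment statement is trivial. The only subtlety to watch is that the hypothesis $\level(v_{k_i}) < m + k_i$ of Proposition~\ref{prop:leavongOldNode} is available at each stage; this is where the invariant pays off, via $\level(v_{k_i}) \leq \level(e_{k_i}) < m + k_i$. Everything else is bookkeeping on indices, so I do not anticipate any serious obstacle beyond carefully propagating the shift $m \mapsto m + k_i$ in the neighborhood parameter together with the corresponding shift in the sub-chain length.
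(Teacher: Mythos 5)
Your proposal is correct and follows essentially the same approach as the paper: a greedy iteration that walks along the chain, applies Proposition~\ref{prop:leavongOldNode} to the sub-chain between consecutive escape points to re-establish the level invariant~$\level(e_{k_i}) < m + k_i$, and terminates because the escape indices strictly increase and are bounded by~$N$.
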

\begin{proof}
   Let~$e_0,\ldots, e_N \in \edges(\tria)$ be an edge chain with vertices~$v_0,\ldots, v_N \in \vertices(\tria)$ as in~\eqref{eq:defRelatedNodalChain2}.   
   Let~$m \in \mathbb{N}_0$ be such that~$m >\level(e_0)$. 
   Since~$\level(v_0)\leq \level(e_0) < m$, the neighborhood~$\Omega_m(v_0)$ as in~\eqref{eq:defOmegahat} is well defined.
  If the chain does not leave the neighborhood~$\Omega_m(v_0)$, then by assumption the corollary is valid with~$k=0$. 
  
  Otherwise, if the chain leaves~$\Omega_m(v_0)$ for the first time after~$k_1\in \mathbb{N}$ steps, i.e., $v_{k_1} \notin \Omega_m(v_0)$, then Proposition~\ref{prop:leavongOldNode} applies and yields
  \begin{align*}
    \level(e_{k_1}) < m + k_1.
  \end{align*}
  We proceed inductively. For~$j\geq 1$ we define~$k_{j+1}\in \mathbb{N}$ as the index where the chain leaves the neighborhood~$\Omega_{m + k_{j}}(v_{k_{j}})$ for the first time, i.e.,~$v_{k_{j+1}} \notin \Omega_{m + k_{j}}(v_{k_{j}})$. 
  By the induction hypothesis we have that
  \begin{align*}
  	\level(v_{k_j}) \leq  \level(e_{k_j})< m+k_j.  
  \end{align*} 
 Applying Proposition~\ref{prop:leavongOldNode} inductively to the chain~$e_{k_{j}+1}, \ldots, e_{k_{j+1}}$ with length $k_{j+1}-k_j$ we conclude that
  \begin{align*}
  \level(e_{k_{j+1}}) < (m + k_{j}) + (k_{j+1} - k_j) = m + k_{j+1}. 
  \end{align*}
  Since the chain consists of finitely many edges, there is a last neighborhood that the chain does not leave. 
  This concludes the proof of the corollary. 
\end{proof}

The key step in the proof of Proposition~\ref{pro:genest} is to apply Corollary~\ref{cor:SeqOfNgh}. 
Then we combine the local generation estimates derived in Section~\ref{sec:locGenDiff} with Proposition~\ref{prop:FinerTria} on the auxiliary triangulations to conclude the that the chain does not leave a neighborhood $\Omega_{m+k}(v_k)$ and that $\tria \leq \tria_{m+k}^\infty(v_k)$. 
This allows us to apply Proposition~\ref{prop:stayInNgh} on chains that stay in a neighborhood, which leads to the final result. 

Unfortunately, this strategy fails if~$v_k$ is a critical $n$-macro vertex, i.e. if~$n\leq d-2$, since on high valence vertices large level jumps may occur. 
For this reason, we extend Corollary~\ref{cor:SeqOfNgh} to be able to treat those level jumps. 
Note that this can be done in a sophisticated manner so that each macro dimension has to be handled only once.   
The following extension involves the constants~$J_n \in \mathbb{N}_0$ as defined in~\eqref{eq:defJn-2}. 
\begin{proposition}[Increasing hyperface dimension]
	\label{prop:LeavingIncreasesDim}
For~$\tria\in \BisecT$ with colored initial triangulation~$\tria_0$, suppose that~$e_0,\dots, e_N \in \edges(\tria)$, with~$N \in \mathbb{N}$, is an edge chain with vertices~$v_0,\dots,v_N$ as in~\eqref{eq:defRelatedNodalChain2}. 
Let~$v_0$ be an $n$-macro vertex and let~$m \in \mathbb{N}_0$ be such that~$m>\level(e_0)$.  
Furthermore, assume that the chain does not leave the neighborhood~$\Omega_m(v_0)$ in the sense that
\begin{align}
	\label{eq:NotLeaving}
v_0,\dots, v_N \in \Omega_m(v_0).
\end{align} 
Then there exists an index~$k \in \mathbb{N}_0$ with~$k\leq N$ such that the remaining chain $e_{k+1},e_{k+2},\ldots \in \edges(\tria)$ does not leave the neighborhood~$\Omega_{m + k + J_n}(v_k)$ and
\begin{align}
	\label{eq:PropHyperFaceLeaving}
\begin{aligned}
\level(e_k) < m + k + J_n.
\end{aligned}
\end{align}
If~$v_k$ is an $n'$-macro vertex with dimension~$n' \leq n$, then additionally one has that
\begin{align}
	\label{eq:PropHyperStay}
\tria\leq \tria^\infty_{m+k+J_n}(v_k).
\end{align}
\end{proposition}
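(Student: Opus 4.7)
The plan is to construct $k$ by iterating a first-exit procedure, and then to verify the finer-triangulation claim via Proposition~\ref{prop:FinerTria}.

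Setting $k_0 \coloneqq 0$ and $M_i \coloneqq m + k_i + J_n$, I would maintain along the iteration the invariant $\level(e_{k_i}) < M_i$, which at the start is ensured by the hypothesis $\level(e_0) < m \leq M_0$. At stage $i$, if the subchain $v_{k_i},\dots,v_N$ stays in $\Omega_{M_i}(v_{k_i})$, I set $k \coloneqq k_i$ and stop. Otherwise I let $k_{i+1}$ be the first exit index; since $\level(v_{k_i}) \leq \level(e_{k_i}) < M_i$, Proposition~\ref{prop:leavongOldNode} applied to the subchain $e_{k_i+1},\dots,e_{k_{i+1}}$ with base vertex $v_{k_i}$ yields $\level(e_{k_{i+1}}) < M_i + (k_{i+1}-k_i) = M_{i+1}$, restoring the invariant. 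Finiteness of the chain guarantees termination, producing $k$ with $\level(e_k) < m+k+J_n$ and with the remaining chain staying in $\Omega_{m+k+J_n}(v_k)$. This handles the unconditional part of the statement.

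For the finer-triangulation conclusion under the hypothesis that $v_k$ is $n'$-macro with $n' \leq n$, I would apply Proposition~\ref{prop:FinerTria} at $v_k$ with parameter $M_k = m+k+J_n$. The first condition $\level(v_k) < M_k$ is immediate from the invariant, and the second condition $\jump(v_k) + \min_{e\ni v_k} \level(e) \leq M_k + 1$ combines $\jump(v_k) \leq 2 + J_{n'}$ (from Proposition~\ref{prop:MaxLvlJump}) with a bound on the smallest edge level at $v_k$. In the base case $k = 0$ the hypothesis $m > \level(e_0)$ directly gives $\min_{e \ni v_0}\level(e) \leq \level(e_0) \leq m-1$, and the sum amounts to $(2+J_n)+(m-1) = M_0 + 1$, exactly as required.

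The main obstacle is verifying the second condition when $k > 0$ and $v_k$ is critical (so that $J_{n'} > 0$): the naive bound $\min \leq \level(e_k) \leq M_k - 1$ only yields $\jump(v_k) + \min \leq M_k + 1 + J_{n'}$, leaving a gap of $J_{n'}$. Closing this gap requires exploiting the structural consequences of the iterative construction---namely, that the terminal $v_k$ lies on the topological boundary $\partial \omega_{M_{i-1}}(v_{k_{i-1}})$ of the previous neighborhood, which forces the existence of sufficiently old (low-level) edges at $v_k$ coming from the coarser ambient triangulation outside the previous neighborhood. Combined with Lemma~\ref{lem:passingKfaces}, which restricts which $n$-macro subsimplices of $\tria_0$ can meet $\Omega_m(v_0)$, this reduces the minimum edge level at $v_k$ by the missing $J_{n'}$ and restores the Proposition~\ref{prop:FinerTria} estimate.
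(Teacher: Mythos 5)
Your iterative first-exit construction of $k$ is correct and is precisely the content of Corollary~\ref{cor:SeqOfNgh} applied with $m+J_n$ in place of $m$; this gives both~\eqref{eq:PropHyperFaceLeaving} and the containment of the remaining chain in $\Omega_{m+k+J_n}(v_k)$, as in the paper.

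For~\eqref{eq:PropHyperStay} you have located the difficulty correctly: with $\jump(v_k)\le 2+J_{n'}$ and $\min_{e'\ni v_k}\level(e')\le \level(e_k)\le m+k+J_n-1$, the naive estimate falls short of~\eqref{eq:ass-finer-tria} by $J_{n'}$. Your proposed remedy, however, does not close this gap. That $v_k$ lies on (or just outside) the boundary of the previous exit patch gives no control on $\min_{e'\ni v_k}\level(e')$: nothing prevents $\tria$ from being highly refined on the far side of that patch, so there is no reason to expect coarse edges at $v_k$ coming from there. The paper closes the gap by a different mechanism. Lemma~\ref{lem:passingKfaces} forces $v_k$ to lie in the same $n$-macro subsimplex $F$ of $\tria_0$ as $v_0$, so in particular $n'=n$ and there is an initial simplex $\mathfrak{T}_0\in\tria_0$ containing both $v_0$ and $v_k$. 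Inside $\mathfrak{T}_0$ the level jump at $v_0$ is bounded by $2$ rather than $2+J_n$ (Proposition~\ref{pro:gsharp-2d-Kuhn} with Lemma~\ref{lem:genSharpToLvl}), so Proposition~\ref{prop:FinerTria} applies already with parameter $m$ and yields $\tria|_{\mathfrak{T}_0}\le\tria_m^\infty(v_0)|_{\mathfrak{T}_0}$. Then Lemma~\ref{lem:leavingLayers} places $v_k$ in a layer $\Lambda^\ell_m(v_0)$ with $\ell\le k$, and Proposition~\ref{prop:stayInNgh} applied inside $\mathfrak{T}_0$ produces an edge $e\in\edges(\tria)$ with $v_k\in e$ and $\level(e)\le m+k$, which bounds $\min_{e'\ni v_k}\level(e')$ sharply enough to verify~\eqref{eq:ass-finer-tria} at $v_k$ with parameter $m+k+J_n$. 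The missing $J_n$ is thus recovered from a nested, sharper application of Proposition~\ref{prop:FinerTria} inside the macro simplex, not from the boundary position of $v_k$; as written, your argument for~\eqref{eq:PropHyperStay} has a genuine gap.
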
 
	\begin{proof}
	Suppose the assumptions of the proposition hold. 
	The statement in~\eqref{eq:PropHyperFaceLeaving} follows by applying Corollary~\ref{cor:SeqOfNgh} with~$m+ J_n$ instead of~$m$.  
	It remains to show~\eqref{eq:PropHyperStay}. We therefore assume that~$v_k$ is an $n'$-macro vertex with $n'\leq n$. We would like to apply Proposition~\ref{prop:FinerTria} with~$v = v_k$ and $m$ replaced by $m+k+J_n$. 
	Since~\eqref{eq:PropHyperFaceLeaving} ensures that 
	\begin{align}\label{est:leavedim-1}
		\level(v_k) \leq \level(e_k) < m + k + J_n.
	\end{align}
it remains to verify that for the maximal level jump as in Definition~\ref{def:maxLvlJump-2} we have 
	\begin{align}
		\label{est:leavedim-2}
		\jump(v_k) + \min_{e\in \edges(\tria)\colon v_k\in e} \level(e) \leq  J_n + m + k +1.
	\end{align}
Lemma~\ref{lem:passingKfaces} states that the only $n'$-subsimplex of $\tria_0$ with $n'\leq n$ that intersects~$\Omega_m(v_0)$ is the $n$-macro subsimplex~$F$ with~$v_0\in F$. 
It follows that~$n' = n$ and that~$v_k \in F$. 
Hence, there exists an initial simplex~$\mathfrak{T}_0 \in \tria_0$ with~$e_0 \subset \mathfrak{T}_0$ and $v_k\in \mathfrak{T}_0$. 
Within this macro simplex level jumps are bounded by two, cf.~Proposition~\ref{pro:gsharp-2d-Kuhn} and Lemma~\ref{lem:genSharpToLvl}. 
Hence, Proposition~\ref{prop:FinerTria} implies that
\begin{align}
	\label{eq:ProofSMallerInMacor}
	\tria|_{\mathfrak{T}_0} \coloneqq \lbrace T \in \tria\colon T \subset \mathfrak{T}_0\rbrace \leq  \lbrace T \in \tria_m^\infty(v_0) \colon T \subset \mathfrak{T}_0\rbrace \eqqcolon \tria_m^\infty(v_0)|_{\mathfrak{T}_0}. 
\end{align}
Because by~\eqref{eq:NotLeaving} the chain remains in the neighborhood $\Omega_m(v_0)$, Lemma~\ref{lem:leavingLayers} ensures that 
$v_k \in \bigcup_{\ell=0}^k \Lambda_m^\ell(v_0)$.
Since~$v_k\in\vertices(\tria|_{\mathfrak{T}_0})$ it follows that
\begin{align*}
	v_k \in \bigcup_{\ell=0}^k \Lambda_m^\ell(v_0)|_{\mathfrak{T}_0} \qquad\text{with }\Lambda_m^\ell(v_0)|_{\mathfrak{T}_0} \coloneqq \lbrace T \in \Lambda_m^\ell(v_0)\colon T \subset \mathfrak{T}_0\rbrace.
\end{align*}
Consequently, there is an edge chain in~$\tria_m^\infty(v_0)|_{\mathfrak{T}_0}$ consisting of at most~$k$ edges that connects~$v_0$ and~$v_k$.
Thanks to this Proposition~\ref{prop:stayInNgh} applies and shows that there is an edge~$e\in \edges(\tria|_{\mathfrak{T}_0})$ with~$v_k \in e$ and 
\begin{align*}
	\level(e) \leq m + k.
\end{align*}
Therefore, we have that
\begin{align*}
	\jump(v_k) + \min_{e'\in \mathcal{E}(\tria)\colon v \in e'} \level(e') &\leq J_n + \level(e) \leq m + k + J_n + 1,
\end{align*}
which proves~\eqref{est:leavedim-2}. 
With both~\eqref{est:leavedim-1} and~\eqref{est:leavedim-2} Proposition~\ref{prop:FinerTria} yields the claim. 
	\end{proof}

We are now in the position to prove Proposition~\ref{pro:genest}. 
\begin{proof}[Proof of Proposition~\ref{pro:genest}]
Let~$\tria\in \BisecT$ be a triangulation with colored~$\tria_0$.

\textit{Step 1 (Level estimates for edge chains)}.
Let~$e_0,\dots,e_N \in \edges(\tria)$ be an edge chain with related vertices~$v_0,\dots,v_N$ as in \eqref{eq:defRelatedNodalChain2}. 
We set~$m\coloneqq \level(e_0) + 1$ and we apply Corollary~\ref{cor:SeqOfNgh} to obtain existence of an index~$k_1 \in \mathbb{N}_0$ with $k_1 \leq N$ such that
\begin{align}
	\label{eq:proofFinal1}
\level(e_{k_1}) < m + k_1 \quad\text{and}\quad v_{r} \in \Omega_{m+k_1}(v_{k_1})\quad\text{ for all } r = k_1,\dots, N. 
\end{align}
Let~$v_{k_1}$ be an $n$-macro vertex with $n \in \{0, \ldots, d\}$. 
The properties in~\eqref{eq:proofFinal1} allow us to apply  Proposition~\ref{prop:LeavingIncreasesDim}, with $m$  replaced by $m+k_1$ and $v_0$ replaced by $v_{k_1}$ in the statement. 
It shows that there exists an index~$k_2 \geq k_1$ with 
\begin{align}
	\label{eq:proofFinal2}
\level(e_{k_2}) < m + k_2 + J_n \quad\text{and}\quad v_{r} \in \Omega_{m+k_2 + J_n}(v_{k_2})\text{ for all }r=k_2,\ldots,N. 
\end{align}
Let $v_{k_2}$ be an $n'$-macro vertex with~$n' \in \{0, \ldots, d\}$.

 \textit{Case 1.}
If~$n'\leq n$, Proposition~\ref{prop:LeavingIncreasesDim} yields that $\tria \leq \tria^\infty_{m + k_2 + J_n}(v_{k_2})$.
Using this and \eqref{eq:proofFinal2} by Proposition~\ref{prop:stayInNgh}  it follows that 
\begin{align*}
\level(e_{r}) \leq m + J_n + r \qquad \text{for all }r = k_2,\dots,N.
\end{align*}

\textit{Case 2.} 
If~$v_{k_2}$ is an $n'$-macro vertex with~$n'> n$, we apply  Proposition~\ref{prop:LeavingIncreasesDim} again with $m$ replaced by $m+k_2+J_n$ and $e_0$ replaced by $e_{k_2}$. Hence, there exists an index~$k_3 \geq k_2$ such that for all~$r=k_3,\dots,N$ one has that
\begin{align*}
\level(e_{k_3}) < m + k_3 + J_n + J_{n'} \qquad\text{and}\qquad v_{r} \in \Omega_{m+k_3 + J_n + J_{n'}}(v_{k_3}). 
\end{align*}
We repeat the argument inductively starting again in~\eqref{eq:proofFinal2}. 
Since the macro dimension can increase at most~$d+1$ times, this procedure terminates.
From this we obtain the upper bound in~\ref{itm:PropLvlGenEst1} with~$\genconst$ as defined in \eqref{eq:genconst}, that is,
\begin{align}
	\label{eq:LvlSharpEst}
	\begin{aligned}
\level(e_N) &\leq m + N + \sum_{n=0}^d J_n = \level(e_0) + N + 1 + \sum_{n=0}^{d-2} J_n \\
&= \level(e_0) + N + \Gamma.
\end{aligned}
\end{align}  

\textit{Step 2 (From edges to simplices).}
Let~$T_0,T_N\in \tria$ denote two simplices with distance~$\delta_\tria(T_0,T_N) = N \in \mathbb{N}$. They are connected by an edge chain~$e_1,\dots,e_{N-1}\in \edges(\tria)$ with vertices
\begin{align*}
 e_1 \cap T_0 = v_0 \in \vertices(T_0)\qquad\text{and}\qquad e_{N-1} \cap T_N = v_{N-1}\in \vertices(T_N).
\end{align*}
Let $e_N\in \edges(T_N)$ denote an edge that contains $v_{N-1}$ and the youngest vertex of~$T_N$. 
The edge $e_N$ is connected to $e_{N-1}$ and satisfies
\begin{align}
	\label{eq:Proofateadsaa1}
	\begin{aligned}
\gen(T_N) &= \max_{v\in \vertices(T)} \gen(v) = \gen(e_N) \leq \level(e_N) d,\\
 \level(T_N)& = \max_{v\in \vertices(T)} \level(v)  = \level(e_N).
	\end{aligned}
\end{align}
For any edge~$e_0\in \edges(T_0)$ with $v_0 \in e_0$ we have that
\begin{align}\label{eq:Proofateadsaa2}
\level(e_0) \leq \max_{e\in \edges(T_0)} \level(e) = \level(T_0).
\end{align}
Then $e_0,\dots,e_N\in \edges(\tria)$ is an edge chain and hence \eqref{eq:LvlSharpEst} applies. Combining \eqref{eq:LvlSharpEst} with \eqref{eq:Proofateadsaa1} and \eqref{eq:Proofateadsaa2}
yields the first inequality in~\ref{itm:PropLvlGenEst2a} in the sense that
\begin{align*}
\level(T_N) - \level(T_0) \leq \level(e_N) - \level(e_0) \leq N + \Gamma.
\end{align*}
Moreover, we obtain by \eqref{eq:LvlSharpEst}, \eqref{eq:Proofateadsaa1}, and \eqref{eq:generation-level-type} that
\begin{align*}
\gen(T_N) - \gen(T_0) & \leq \level(e_N)d - (\level(T_0) - 1) d - \type(T_0) \\
 & \leq (\level(e_N) - \level(e_0))d + d - 1\\
 & \leq (N + \Gamma + 1)d -1.  
\end{align*}
This concludes the proof.
\end{proof}

\subsection{Matching neighbor condition}
\label{sec:stevenson}
The level estimates in Proposition~\ref{pro:genest} are proved under the assumption that~$\tria_0$ is colored. 
In this section we show that the level estimates easily extend to the case of initial triangulation satisfying the more general \emph{matching neighbor condition} by Stevenson~\cite[Sec.~4]{Stevenson08}. 
The matching neighbor condition guarantees that successive uniform refinements of $\tria_0$ are conforming, see Theorem~4.3 and Remark~4.4 in \cite{Stevenson08}. This is the only property we use, and for this reason we refrain from presenting the definition.
As above~$\tria_0^+$ denotes the triangulation arising from~$d$~uniform refinements of~$\tria_0$. 

\begin{lemma}[Colored $\tria_0^+$]
	\label{lem:tria0plus_colored}
If a triangulation~$\tria_0$ satisfies the matching neighbor condition, then~$\tria_0^+$ is colored in the sense that it satisfies Assumption~\ref{ass:initial-triangulation}. 
Furthermore, for any $T \in \tria_0^+$ and its descendants Algorithms~\ref{algo:genbased} and~\ref{algo:Traxler} coincide.
\end{lemma}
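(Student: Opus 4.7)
The plan is to transfer the natural coloring of the Kuhn cube (from Remark~\ref{rem:algo-kuhn}) to~$\tria_0^+$ using the fact that after~$d$ uniform refinements each initial simplex~$T_0 \in \tria_0$ acquires a Tucker–Whitney-type structure, and then to use the matching neighbor condition to verify consistency across initial simplices.

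First, I would identify each~$T_0 = [v_0, \dots, v_d] \in \tria_0$ with a standard Kuhn simplex via the affine bijection sending~$v_0 \mapsto 0$ and~$v_0 + \fre_{\pi(1)} + \dots + \fre_{\pi(j)} \mapsto v_j$. Under~$d$ uniform refinements the image becomes the Kuhn partition of the Kuhn cube~$[0,1]^d$ into~$d!\,2^d$ Kuhn simplices (cf.~Remark~\ref{rem:algo-kuhn} and Figure~\ref{fig:TuckerWhitner}), so the set of new vertices corresponds to~$\set{0, \tfrac{1}{2}, 1}^d$. On this local Kuhn cube I define a local coloring~$\frc_{T_0}$ by~$\frc_{T_0}(0) \coloneqq d$ and~$\frc_{T_0}(w) \coloneqq (2\|w\|_1 - 1) \bmod (d+1)$ for~$w \neq 0$, which on~$\set{0,1/2,1}^d$ assigns all values in~$\set{0,\ldots,d}$ and yields a proper~$(d+1)$-coloring of each simplex~$T \in T_0|_{\tria_0^+}$. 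The key verification is that within each Kuhn simplex arising from~$d$ uniform refinements, the~$d+1$ vertices receive~$d+1$ distinct colors, which can be checked on the standard Kuhn simplex~\eqref{eq:kuhn-simplex-sorted} and extended to its reflections by symmetry.

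The main obstacle is to verify consistency across initial simplices, i.e., that for two initial simplices~$T_0, T_0' \in \tria_0$ sharing a face~$F$, the colorings~$\frc_{T_0}$ and~$\frc_{T_0'}$ agree on all vertices of~$\tria_0^+$ lying in~$F$. For this I would invoke the matching neighbor condition~\cite[Sec.~4]{Stevenson08}: it is equivalent to the fact that all uniform refinements of~$\tria_0$ are conforming, so in particular the refinement of~$T_0$ and~$T_0'$ produce the same subdivision of the shared face~$F$. The local coloring~$\frc_{T_0}$ restricted to~$F$ depends only on the induced affine parametrization of~$F$ as a~$(d-1)$-face of the Kuhn cube, and the matching neighbor condition precisely ensures that~$T_0$ and~$T_0'$ induce the \emph{same} such parametrization on~$F$ (up to permutations preserving the color count modulo~$d+1$). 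Hence the values of~$\frc_{T_0}$ and~$\frc_{T_0'}$ coincide on~$F \cap \vertices(\tria_0^+)$, and~$\frc \coloneqq \bigcup_{T_0} \frc_{T_0}$ is a well-defined proper~$(d+1)$-coloring of~$\tria_0^+$.

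For the second claim, observe that once~$\tria_0^+$ is colored, the generation of every~$T \in \tria_0^+$ and its descendants can be computed via~\eqref{eq:gen-initial} and the bisection routine produces a vertex ordering compatible with Lemma~\ref{lem:maubach}. Then the equivalence of Algorithms~\ref{algo:genbased} and~\ref{algo:Maubach} established in Section~\ref{sec:reform-using-gener}, together with the well-known equivalence of Algorithms~\ref{algo:Maubach} and~\ref{algo:Traxler} (see~\cite{Maubach95,Traxler97,Stevenson08}), yields that Algorithms~\ref{algo:genbased} and~\ref{algo:Traxler} coincide on~$T$ and all its descendants. Thus both parts of the lemma follow.
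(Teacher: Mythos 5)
The key step in your argument breaks down: the coloring formula $\frc_{T_0}(w) = (2\|w\|_1 - 1) \bmod (d+1)$ does not yield a proper $(d+1)$-coloring of $\tria_0^+$. Take $d = 2$ and the Kuhn simplex $T_\pi = [(0,0),(1,0),(1,1)]$. After two uniform refinements, the simplex $[(1,0),(1/2,0),(1/2,1/2)] \in \tria_0^+$ appears, and your formula assigns color $1$ both to $(1,0)$ (since $2\|(1,0)\|_1-1 = 1$) and to $(1/2,1/2)$ (since $2\|(1/2,1/2)\|_1 - 1 = 1$). More structurally, the $\ell_1$-norm of $w$ is the wrong invariant: the color of a midpoint $w$ should be governed by the number of half-coordinates of $w$ (equivalently, the $\ell_1$-length of the original edge that was bisected, which is the quantity that determines at which of the $d$ uniform refinement steps $w$ is created), and every original vertex in $\vertices(\tria_0)$ should receive the same color $d$, not a $\|\cdot\|_1$-dependent one. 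Your intended verification "within each Kuhn simplex arising from $d$ uniform refinements, the $d+1$ vertices receive $d+1$ distinct colors" therefore fails. In addition, the cross-simplex consistency argument you sketch — that the matching neighbor condition makes the parametrizations of a shared face agree "up to permutations preserving the color count modulo $d+1$" — is too vague to carry the burden it is given; the matching neighbor condition constrains the reflected structure of neighboring simplices, and translating that into your local Kuhn-cube colorings would require a separate careful argument.

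The paper's proof avoids both obstacles by working globally with generations instead of local Kuhn-cube parametrizations: it assigns generation $-d$ to all $v \in \vertices(\tria_0)$ and generation $-d+j$ to every vertex created in the $j$-th uniform refinement, for $j = 1, \ldots, d$. Since $\tria_0$ satisfies the matching neighbor condition, all uniform refinements are conforming, and the $d$ uniform refinements of the type-$d$ simplices bisect each edge of $\tria_0$ exactly once and bisect no newly created edge; hence every $T \in \tria_0^+$ has one vertex of each generation $-d, \ldots, 0$, which is precisely the coloring via $\frc = -\gen$. Global consistency is automatic because the generation of a vertex depends only on the refinement step at which it is created, not on any choice of Kuhn-cube coordinates in an adjacent initial simplex. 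Your argument for the second claim — that Algorithm~\ref{algo:genbased} and Algorithm~\ref{algo:Traxler} coincide on $\tria_0^+$ and its descendants once the coloring is in place, via the equivalences established in Section~\ref{sec:reform-using-gener} — is fine and essentially what the paper does.
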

\begin{proof}
	To all vertices~$\vertices(\tria_0)$ we assign the
	generation $-d$. 
	The new vertices of the $j$-th uniform refinement are assigned with the generation $-d+j$ for~$j = 1, \ldots, d$. 
	Hence, each $d$-simplex in~$\tria_0^+$ has vertices with generation~$-d, \ldots, 0$. 
	This is because~$d$ uniform refinements of simplices with type~$d$ (tag~$\gamma = 0$ in Algorithm~\ref{algo:Traxler} by Traxler) bisect each edge in~$\tria_0$ exactly once and no newly created edge is bisected. 
	For simplices in~$\tria_0^+$ and their descendants the generation based routine in Algorithm~\ref{algo:genbased} and the routine by Traxler in Algorithm~\ref{algo:Traxler} coincide.
\end{proof}
Defining the generation of any $T_0 \in \tria_0$ as $\gen(T_0) = -d$ and increasing the generation by one in each bisection leads to generations satisfying the identity \eqref{eq:gen-simplices-vertices}, and levels as in \eqref{eq:def-type-level} for any $T \in \tria \in \Bisec(\tria_0)$ with coloring as in Lemma~\ref{lem:tria0plus_colored}.
Based on this we obtain a generalization of Proposition~\ref{pro:genest} involving the constant 
\begin{align}
	\label{eq:def-genconst2}
\genconst^+ \coloneqq \genconst + 1\qquad\text{with }\genconst\text{ defined in \eqref{eq:genconst}}.
\end{align}

\begin{proposition}[Estimates for matching neighbor]
	\label{pro:genest-stevenson}
	Suppose that~$\tria_0$ satisfies the matching neighbor condition and let~$\tria \in \BisecT$ be a triangulation.  
 For all~$T,T'\in \tria$ we have
  \begin{align*}
  	\level(T') - \level(T) & \leq \delta_{\tria}(T',T) + \genconst^+,\\
    \gen(T') - \gen(T) &\leq  (\delta_{\tria}(T',T) + \genconst^+ +  1)d - 1. 
  \end{align*}
\end{proposition}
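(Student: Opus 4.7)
The plan is to reduce to the colored case via Lemma~\ref{lem:tria0plus_colored}. Since $\tria_0^+$ is colored, Proposition~\ref{pro:genest} applies to every triangulation in $\Bisec(\tria_0^+)$ with the constant $\genconst$ from \eqref{eq:genconst}.

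Concretely, I would introduce the common refinement $\widetilde\tria := \tria \vee \tria_0^+ \in \Bisec(\tria_0^+)$, which exists because $\tria_0^+ \leq \widetilde\tria$. For each $T \in \tria$ I would choose an associated simplex $T^\sharp \in \widetilde\tria$ with $T^\sharp \subseteq T$ according to the two cases: if $\gen(T) \geq 0$, the simplex $T$ itself lies in $\widetilde\tria$ and I set $T^\sharp := T$; otherwise $\gen(T) \in \{-d,\ldots,-1\}$, the simplex $T$ is a strict ancestor of simplices in $\tria_0^+$, and in the region of $T$ the triangulation $\widetilde\tria$ coincides with the generation-zero simplices of $\tria_0^+$ contained in $T$, any of which may be chosen as $T^\sharp$. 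In the latter case $\gen(T^\sharp) = 0$ and hence $\level(T^\sharp) = 0$, while $\level(T) \in \{-1, 0\}$, so the uniform offset bound $\level(T^\sharp) - \level(T) \in \{0, 1\}$ holds in all cases.

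Given a shortest chain $T_0 = T, T_1, \ldots, T_N = T'$ in $\tria$ realizing $\delta_{\tria}(T, T') = N$, I would lift it to a chain of length $N$ in $\widetilde\tria$ with endpoints $T^\sharp = S_0 \subseteq T$ and $T'^\sharp = S_N \subseteq T'$, by selecting at each consecutive pair subsimplices $S_j \subseteq T_j$ and $S_{j+1} \subseteq T_{j+1}$ in $\widetilde\tria$ that share a vertex of the common subsimplex $T_j \cap T_{j+1}$. Applying Proposition~\ref{pro:genest} to $\widetilde\tria \in \Bisec(\tria_0^+)$ then gives $\level(T'^\sharp) - \level(T^\sharp) \leq \delta_{\widetilde\tria}(T^\sharp, T'^\sharp) + \genconst \leq N + \genconst$, and combining with the offset bound yields
\[
\level(T') - \level(T) \leq \level(T'^\sharp) - \level(T^\sharp) + 1 \leq \delta_{\tria}(T', T) + \genconst^+.
\]
The generation estimate follows from the identity \eqref{eq:generation-level-type} using $\type(S) \in \{1, \ldots, d\}$, which gives $\gen(T') \leq d\, \level(T')$ and $\gen(T) \geq d\,(\level(T)-1) + 1$.

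The main obstacle is realizing the chain lifting without length blow-up across consecutive coarse simplices. At some $T_j \in \tria$ with $\gen(T_j) < 0$, its subdivision in $\widetilde\tria$ into the generation-zero simplices of $\tria_0^+$ restricted to $T_j$ may force different candidates for $S_j$ at the interfaces with $T_{j-1}$ and $T_{j+1}$, and a priori these need not coincide or even share a vertex when the two interfaces lie on geometrically opposite faces of $T_j$. The resolution exploits that the shared vertices $v_j \in T_j \cap T_{j+1}$ remain vertices in $\widetilde\tria$, guiding a compatible choice of $S_j$ through the bounded local refinement of $T_j$, with any residual extra step absorbed in the passage from $\genconst$ to $\genconst^+ = \genconst + 1$.
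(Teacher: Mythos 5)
Your reduction to the colored case via $\tria_0^+$ is the right starting idea and is also how the paper proceeds, but the mechanism you use — lifting the entire chain into $\widetilde\tria = \tria \vee \tria_0^+$ — is different from the paper's and has an unresolved gap. You correctly identify the obstacle yourself: at a coarse $T_j$ (of generation $< d$ in the original indexing, so subdivided in $\widetilde\tria$), the sub-simplex of $T_j$ that touches $T_{j-1}$ and the one that touches $T_{j+1}$ may differ, forcing at least one extra link in the lifted chain to cross the interior of $T_j$. The claim that "any residual extra step" is absorbed by $\genconst^+ = \genconst + 1$ only supplies a budget of one additional step, whereas a shortest chain in $\tria$ can pass through several coarse simplices, each of which may contribute an extra step. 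Nothing in the bisection structure bounds the number of such coarse traversals by one, so the lifted chain may have length $N + k$ for arbitrary $k$, and the estimate degrades accordingly. The paper avoids this entirely: it does not lift the coarse prefix at all. Instead, it lets $M$ be the smallest index with $\gen(T_j) \geq d$ for all $j \geq M$, replaces $T_{M-1}$ by a single descendant $T_{M-1}^+ \in \tria_0^+$ touching $T_M$, applies Proposition~\ref{pro:genest} only to the tail chain $T_{M-1}^+, T_M, \ldots, T_N$ of length $N - M + 1 \leq N$, and then uses the trivial bound $\level(T_{M-1}^+) - \level(T_0) \leq 1$ (since $\level(T_{M-1}^+) = 1$ and $\level(T_0) \geq 0$). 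This way the entire coarse prefix is discarded — the levels there are automatically small — and the constant increases by exactly one, which matches the definition of $\genconst^+$. Your separate case split on $\gen$, the observation that the statement is invariant under the generation shift, and the deduction of the generation bound from \eqref{eq:generation-level-type} are fine; the flaw is solely in the chain-lifting step, which cannot be repaired without abandoning the "lift the whole chain" strategy in favour of a truncation argument like the paper's.
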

\begin{proof}
The proof is based on the corresponding estimates for~$\Bisec(\tria_0^+)$ in Proposition~\ref{pro:genest} available thanks to the fact that~$\tria_0^+$ is colored, cf.~Lemma~\ref{lem:tria0plus_colored}.

Let~$\tria \in \BisecT$ be a triangulation with simplices~$T,T' \in \tria$. 
It suffices to consider the case~$T \neq T'$.
If~$\gen(T') < d$, the fact that~$\delta_\tria(T',T) \geq 1$ leads to the claim. 
For this reason, we focus on the case~$\gen(T')\geq d$. Let~$T_0,\dots,T_N \in \tria$ denote a shortest chain of intersecting simplices with~$T_0 = T$ and~$T_N = T'$, for~$N = \delta_{\tria}(T,T')$, cf.~Definition~\ref{def:distances}. 
Let~$M \in \set{ 0,\dots , N}$ denote the smallest index such that~$\gen(T_j) \geq d$ for all~$j=M,\dots,N$. 
Then~$T_M, \dots, T_N \in \tria_0^+$ is a simplicial chain in~$\Bisec(\tria_0^+)$.  
	Hence Proposition~\ref{pro:genest} applies and shows that
	\begin{align*}
		\level(T_N) - \level(T_M) \leq \delta_{\tria}(T_M,T_N) + \Gamma  = N-M + \Gamma. 
	\end{align*}
	If~$M=0$ this yields the statement of the lemma.
	If on the other hand~$M\geq 1$, then by the choice of~$M$ we have that~$\gen(T_{M-1}) < d$. 
	Thus, there exists a descendant~$T_{M-1}^+\in \tria_0^+$ of~$T_{M-1}$ with~$\gen(T_{M-1}^+) = d$ and~$T_{M-1}^+ \cap T_M \neq \emptyset$. 
	Applying Proposition~\ref{pro:genest} to the triangulation~$\tria \vee \tria_0^+ \in \Bisec(\tria_0^+)$ and the chain of intersecting simplices~$T_{M-1}^+,T_M,\dots, T_N$ yields that
	\begin{align*}
		\level(T_N) - \level(T_{M-1}^+) \leq N-(M-1) + \genconst.
	\end{align*}
	Since by~\eqref{eq:def-type-level} we have that~$\level(T_{M-1}^+) - \level(T_0) \leq 1$ and using the definition of~$\genconst^+$ in~\eqref{eq:def-genconst2} altogether we obtain the estimate
	\begin{align*}
		\level(T_N) - \level(T_0) \leq N + \genconst + 1 = N + \Gamma^+.
	\end{align*}
	This proves the level estimate. The generation estimate follows similarly. 
\end{proof}

\begin{remark}[More general initial triangulations]
Note that it is immediate to extend the results to initial triangulations for which a colored triangulation is generated by a fixed number~$r\in \mathbb{N}$ of full uniform refinements. 
	By the same arguments one obtains level estimates and grading~$\gamma\leq 2$, with constant $\genconst$ increased by~$r$. 
\end{remark}

\section{Summary: Proof of Theorem~\ref{thm:main-grading2}}
\label{sec:summary}
\begin{figure}
	\begin{tikzpicture}
		\tikzset{
			punkt/.style={rectangle,rounded corners,draw=black, very thick,text width=0.3\textwidth,minimum height=2em,text centered},
		}
		
		\node[punkt, inner sep=5pt] (thm) {Theorem~\ref{thm:main-grading2}: $\gamma \leq 2$ (sharp)};
		
		\node[punkt, inner sep=5pt, above = of thm] (lvlest) {level estimates for chains};
		\node(lvlestmin) at (lvlest) {};
		\node(lvlestmax) at (lvlest) {};		
		\node [inner sep=0,minimum size=0,above = of lvlest] (dummy0) {}; 		
		\node [inner sep=0,minimum size=0,above = of dummy0] (dummy1) {}; 
		\node [inner sep=0,minimum size=0,below = -5pt of dummy1] (dummy1min) {}; 
		\node [inner sep=0,minimum size=0,below = -2pt of dummy1] (dummy1max) {};		
		\node [inner sep=0,minimum size=0,above = of dummy1] (dummy2) {};  
		
		\node[punkt, inner sep=5pt, left = of dummy2] (localest) {local level estimates\\ (Sec.~\ref{sec:locGenDiff})};
		
		\node[punkt, inner sep=5pt, right = of dummy2] (ngbh) {auxiliary triangulation \& neighborhood (Sec.~\ref{sec:leavingTheNgh})};
		
		\node [inner sep=0,minimum size=0,left = of ngbh.south east] (ngbhsoutheast) {}; 		
		
		\path (localest.south)
		edge[very thick,decorate,decoration={brace,mirror,raise=.0cm,amplitude=8pt}]
		(localest.south -|ngbh.south);

		\draw[<-, shorten <=13pt,very thick,left,align=right]  (lvlestmin) to node {$\tria_0$ colored\\ (Prop.~\ref{pro:genest}) \\ \vphantom{(Prop.~\ref{pro:genest-stevenson})} } (dummy1min); 
		\draw[<-, shorten <=13pt,very thick,right,align=left]  (lvlestmax) to node {$\tria_0$ with matching\\neighbor condition \\			(Prop.~\ref{pro:genest-stevenson})} (dummy1min); 	
		
		\draw[<-, shorten <=1pt,very thick,right]  (thm) to node {Lem.~\ref{lem:lvlDiffToMain}} (lvlest); 
		
		\draw[<-, shorten <=1pt,very thick,right]  (thm) to [bend right=35]  node {Rem.~\ref{rem:sharpGradingEst}} (ngbhsoutheast); 
	\end{tikzpicture}
	\caption{Outline of the proof of main result in Theorem~\ref{thm:main-grading2}.}\label{fig:OutlineProof}
\end{figure}
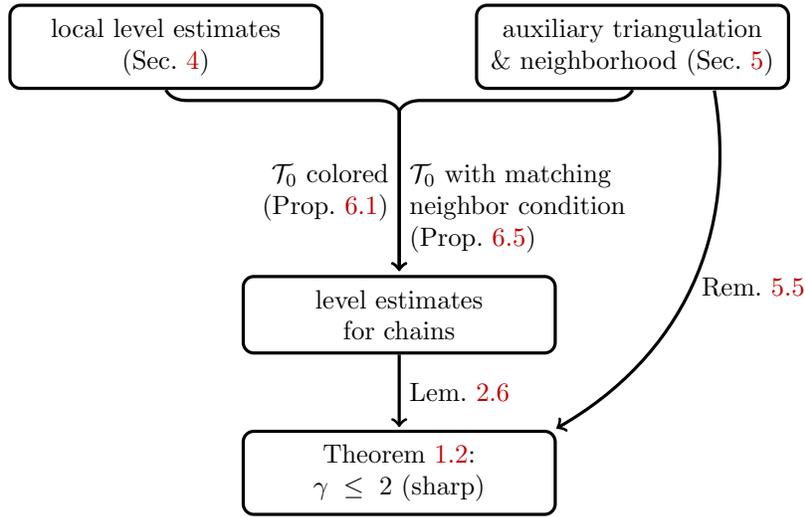
We conclude with a summary of the arguments leading to the main result in Theorem~\ref{thm:main-grading2}, see Figure~\ref{fig:OutlineProof}. 
It states that there are constants~$0<c_1\leq c_2 < \infty$ such that for each~$\tria\in \BisecT$ there exists a function~$h_\tria\colon \tria \to (0,\infty)$ with 
\begin{align}
	\label{eq:Summary-a}
c_1\, \textup{diam}(T) \leq h_\tria(T) &\leq c_2\, \textup{diam}(T) &&\text{for all }T\in \tria,\\
\label{eq:Summary-b}
 h_\tria(T) &\leq 2 h_\tria(T')&&\text{for all }T,T'\in \tria\text{ with }T\cap T'\neq \emptyset,
\end{align}
cf.~Definition~\ref{def:grading}. 
By Definition~\ref{def:regMeshSize-T} the mesh size function~$h_{\tria}$  has grading~$\gamma \leq 2$, which means that~\eqref{eq:Summary-b} is satisfied. 
It remains to verify the equivalence~$h_\tria \eqsim \textup{diam}(T)$ in~\eqref{eq:Summary-a}. 
Lemma~\ref{lem:lvlDiffToMain} states that this follows from the level estimate
\begin{align}
	\label{eq:Summary2}
		\level(T) - \level(T')\leq  \delta_{\tria}(T,T') + c_0  \qquad\text{for all }T,T'\in \tria\in \BisecT
\end{align}
with constant~$c_0 < \infty$. 
The equivalence constants~$c_1$, $c_2$ in~\eqref{eq:Summary-a} depend solely on~$c_0$, the shape regularity of~$\tria_0$, and the quasi-uniformity of~$\tria_0$. 
Combining local generation estimates derived in Section~\ref{sec:locGenDiff} with properties of a specific auxiliary triangulation investigated in Section~\ref{sec:leavingTheNgh} we arrive at Proposition~\ref{pro:genest} and~\ref{pro:genest-stevenson}, for colored initial triangulation $\tria_0$, and for~$\tria_0$ satisfying the matching neighbor condition by Stevenson, respectively. 
These propositions prove the level estimate~\eqref{eq:Summary2} with constant~$c_0$ depending only on the maximal edge valence in~$\tria_0$ and on~$d$. 
Combining these arguments shows that the grading is~$\gamma \leq 2$. 
Furthermore, Remark~\ref{rem:sharpGradingEst} states that this upper bound is sharp. 
This concludes the proof of Theorem~\ref{thm:main-grading2}.

\printbibliography 

\end{document}